\documentclass[12pt,reqno]{amsart}
\usepackage{mathrsfs,amssymb,graphicx,verbatim,amsmath,amsfonts}
\usepackage{paralist}
\usepackage[breaklinks,pdfstartview=FitH]{hyperref}
\usepackage{upgreek}

\renewcommand{\le}{\leqslant}
\renewcommand{\ge}{\geqslant}
\renewcommand{\leq}{\leqslant}

\renewcommand{\setminus}{\smallsetminus}
\renewcommand{\gamma}{\upgamma}

\newcommand{\B}{\mathcal{B}}
\newcommand{\n}{\{1,\ldots,n\}}
\renewcommand{\L}{\mathscr{L}}
\newcommand{\X}{\mathscr X}
\newcommand{\f}{\varphi}
\newcommand{\p}{\psi}

\newcommand{\T}{\mathfrak{T}}
\renewcommand{\d}{\delta}

\newcommand{\e}{\varepsilon}
\newcommand{\R}{\mathbb R}
\newcommand{\1}{\mathbf 1}
\renewcommand{\a}{\mathfrak{a}}
\renewcommand{\b}{\mathfrak{b}}
\newcommand{\G}{\mathbb G}
\newtheorem{theorem}{Theorem}[section]
\newtheorem{proposition}[theorem]{Proposition}
\newtheorem{lemma}[theorem]{Lemma}
\newtheorem{corollary}[theorem]{Corollary}
\newtheorem{claim}[theorem]{Claim}
\newtheorem{fact}[theorem]{Fact}
\theoremstyle{remark}

\newtheorem{remark}[theorem]{Remark}
\newtheorem{question}[theorem]{Question}
\theoremstyle{definition}
\newtheorem{definition}[theorem]{Definition}
\newcommand{\cone}{\mathrm{\mathbf{Cone}}}
\newcommand{\zigzag}{{\textcircled z}}

\newcommand{\circr}{{\textcircled r}}

\renewcommand{\subset}{\subseteq}

\newcommand{\A}{\mathscr A}
\newcommand{\C}{\mathfrak C}

\newcommand{\F}{\mathcal F}
\newcommand{\N}{\mathbb N}

\newcommand{\eqdef}{\stackrel{\mathrm{def}}{=}}

\newcommand{\Lip}{\mathrm{Lip}}
\newcommand{\oz}{\zigzag}

\DeclareMathOperator{\diam}{diam}
\DeclareMathOperator{\girth}{girth}

\begin{document}

\title[Expanders for Hadamard spaces and random graphs]{Expanders with respect to {H}adamard spaces and random graphs}

\author{Manor Mendel}
\address {Mathematics and Computer Science Department\\
The Open University of Israel\\
1 University
Road, P.O. Box 808 Raanana 43107, Israel}
\email{mendelma@gmail.com}.
\author{Assaf Naor}
\address{Courant Institute\\ New York University\\ 251 Mercer Street, New York NY 10012, USA}
\email{naor@cims.nyu.edu}

\date{}

\vspace{-0.27in}
\begin{abstract}
It is shown that there exists a sequence of $3$-regular graphs
$\{G_n\}_{n=1}^\infty$ and a Hadamard space $X$ such that
$\{G_n\}_{n=1}^\infty$ forms an expander sequence with respect to
$X$, yet random regular graphs are not expanders with respect to
$X$. This answers a question of~\cite{NS11}. $\{G_n\}_{n=1}^\infty$
are also shown to be expanders with respect to random regular
graphs, yielding a deterministic sublinear time constant factor
approximation algorithm for computing the average squared distance
in subsets of a random graph. The proof uses the Euclidean cone over
a random graph, an auxiliary continuous geometric object that allows
for the implementation of martingale methods.
\end{abstract}


\maketitle

\vspace{-0.5in}

{\footnotesize \setcounter{tocdepth}{4} \tableofcontents}

\section{Introduction}

Throughout this paper all graphs are unweighted, non-oriented, and
finite, and they are allowed to have parallel edges and self-loops,
unless stated otherwise. Given a graph $G$, we denote its vertices
by $V_G$ and its edges by $E_G$. If $G$ is connected then we denote
the shortest-path metric that it induces on $V_G$ by $d_G$.


Fix $d\in \N$ and let $\{G_n\}_{n=1}^\infty$ be a sequence
of $d$-regular graphs such that $\lim_{n\to \infty} |V_{G_n}|=\infty$.
Then $\{G_n\}_{n=1}^\infty$ is an expander sequence (see
e.g.~\cite{HLW}) if and only if for every sequence of Hilbert space
valued functions $\{f_n:V_n\to \ell_2\}_{n=1}^\infty$ we have
\begin{multline}\label{eq:compute average}
\frac{1}{|V_{G_n}|^2}\sum_{(x,y)\in V_{G_n}\times V_{G_n}} \|f_n(x)-f_n(y)\|_2^2\\\asymp \frac{1}{|E_{G_n}|}
\sum_{\{u,v\}\in E_{G_n}} \|f_n(u)-f_n(v)\|_2^2.
\end{multline}
Here and in what follows, when we write $A\asymp B$ we mean that
there exist two universal constants $c,C\in (0,\infty)$ such that
$cA\le B\le CB$. Also, in what follows the notations $A\lesssim B$
and $B\gtrsim A$ mean that $A\le KB$ for some universal constant
$K\in (0,\infty)$. If we need to allow $K$ to depend on parameters,
we indicate this by subscripts, thus e.g. $A \lesssim_{\alpha,\beta}
B$ means that $A \leq K(\alpha,\beta) B$ for some
$K(\alpha,\beta)\in (0,\infty)$ which is allowed to depend only on
the parameters $\alpha$ and $\beta$.

The asymptotic identity~\eqref{eq:compute average} says that
whenever one assigns a vector to each vertex of $G_n$, the average
squared distance between these vectors can be estimated up
to universal constant factors by averaging those squared distances
that correspond to edges of $G_n$, an average of only $d|V_{G_n}|$
numbers rather than the full $|V_{G_n}|^2$ pairwise distances.
This
geometric characterization of expanders as  ``universal average
Euclidean distance approximators" (following terminology
of~\cite{BGS}) is easy to prove (its proof
will also be explained in the ensuing discussion).

It is natural to investigate the possible validity
of~\eqref{eq:compute average} when the Hilbertian metric is replaced
other metrics. \eqref{eq:compute average}
comprises of two asymptotic inequalities, one of which holds true in
any metric space: if $G$ is a $d$-regular graph and $(X,d_X)$ is a
metric space then for every $f:V_G\to X$ we have
\begin{equation}\label{eq:upper average bound trivial}
\frac{1}{|E_G|}
\sum_{\{u,v\}\in E_G} d_X\big(f(u),f(v)\big)^2\le \frac{4}{|V_G|^2}\sum_{(x,y)\in V_G\times V_G} d_X\big(f(x),f(y)\big)^2.
\end{equation}
Indeed, by the triangle inequality and the convexity of $t\mapsto
t^2$,

\begin{equation}\label{eq:uvw}
 d_X(f(u),f(v))^2\le
2d_X(f(u),f(w))^2+2d_X(f(w),f(v))^2
\end{equation}
for every $u,v,w\in V_G$. Since $G$ is $d$-regular, the bound~\eqref{eq:upper average bound
trivial} follows by averaging~\eqref{eq:uvw} over the set $\{(u,v,w)\in V_G\times V_G\times V_G:\ \{u,v\}\in
E_G\}$.

The nontrivial content of~\eqref{eq:compute average} is therefore
the fact that the left hand side of~\eqref{eq:compute average} can
be bounded by a multiple (independent of $n$ and $f$) of the right
hand side of~\eqref{eq:compute average}. Thus, given a regular graph
$G$ and a metric space $(X,d_X)$, let $\gamma(G,d_X^2)$ be the
infimum over those $\gamma\in (0,\infty]$ such that for every
$f:V_G\to X$,
\begin{equation}\label{eq:def gamma}
\frac{1}{|V_G|^2}\sum_{(u,v)\in V_G\times V_G}d_X\big(f(u),f(v)\big)^2\le
\frac{\gamma}{|E_G|}\sum_{\{u,v\}\in E_G} d_X\big(f(u),f(v)\big)^2.
\end{equation}

 If $X=\R$ with $d_\R(s,t)\eqdef |s-t|$ then by expanding the squares in~\eqref{eq:def gamma} one checks that
$$
\gamma(G,d_\R^2)=\frac{1}{1-\lambda_2(G)},
$$
where $\lambda_2(G)$ denotes the second largest eigenvalue of the
normalized adjacency matrix of the graph $G$.  Despite the fact that
there is no actual spectrum present in this geometric context, we
think of $\gamma(G,d_X^2)$ as the reciprocal of the spectral gap of
$G$ with respect to $(X,d_X)$. The value of $\gamma(G,d_X^2)$ is
sensitive to the choice of metric space $(X,d_X)$ and it can be very
different from the reciprocal of the spectral gap of $G$. We refer
to~\cite{MN-towards} for more information on nonlinear spectral
gaps.

 Given $d\in \N$, a sequence of $d$-regular graphs $\{G_n\}_{n=1}^\infty$ is said to be an expander sequence with respect to a metric space $(X,d_X)$ if $\lim_{n\to \infty} |V_{G_n}|=\infty$ and $\sup_{n\in \N} \gamma(G_n,d_X^2)<\infty$. Note that by Cheeger's inequality~\cite{Che70,AM85} for every graph $G$ we have
$$
|X|\ge 2\implies \gamma(G,d_X^2)\gtrsim \frac{1}{\sqrt{1-\lambda_2(G)}}.
$$
Hence, unless $X$ is a singleton, if $\{G_n\}_{n=1}^\infty$ is an expander sequence with respect to $(X,d_X)$ then $\sup_{n\in \N} \lambda_2(G_n)<1$. This means that for every nontrivial metric space $(X,d_X)$, being an expander sequence with respect to $(X,d_X)$ is a stronger requirement than being an expander sequence in the classical sense.

Nonlinear spectral gaps first arose in the context of bi-Lipschitz
embeddings; notable examples include the works of
Enflo~\cite{Enf76}, Gromov~\cite{Gromov-filling},  Bourgain, Milman
and Wolfson~\cite{BMW}, Pisier~\cite{Pisier-type}, Linial, London
and Rabinovich~\cite{LLR}, and Matou\v{s}ek~\cite{Mat97} (examples
of more recent applications of nonlinear spectral gaps to
bi-Lipschitz embeddings appear in~\cite{BLMN05,KN06}).
Gromov~\cite{Gromov-random-group} studied nonlinear spectral gaps in
the context of coarse embeddings and the Novikov conjecture, a
direction that has been pursued in the works of Ozawa~\cite{Ozawa},
Kasparov and Yu~\cite{KY06}, V.~Lafforgue~\cite{Laff08,Laf09,Laf10},
Pisier~\cite{pisier-2008}, and ourselves~\cite{MN-towards}.
Nonlinear spectral gaps also arise in fixed point theory for group
actions; see the works of Wang~\cite{Wang98,Wang00},
Gromov~\cite{Gromov-random-group}, Izeki and Nayatani~\cite{IN05},
Pansu~\cite{Pan09}, Naor and Silberman~\cite{NS11}, and Izeki, Kondo
and Nayatani~\cite{IKN12}. We refer to the works of
Gromov~\cite{Gro01} and Pichot~\cite{Pic08} for additional geometric
applications of nonlinear spectral gaps. In
Section~\ref{sec:sublinear} we discuss the relevance of nonlinear
spectral gaps to approximation algorithms, based on ideas of Barhum,
Goldreich and Shraibman~\cite{BGS}.

Answering a question of Kasparov and Yu~\cite{KY06}, V. Lafforgue
proved~\cite{Laff08} that there exists a sequence of bounded degree
graphs that are expanders with respect to {\em every} uniformly
convex normed space; such graph sequences are called
super-expanders.
In~\cite{MN-towards} we found a different construction of
super-expanders; here we show that our method can be applied to
situations in which it seems difficult to use Lafforgue's
(algebraic) approach.

It is a challenging question to characterize those metric spaces with respect to which there exist expander sequences. Random regular graphs are with high probability expanders in the classical sense (i.e., with respect to $\R$), but we are far from understanding those metric spaces with respect to which random regular graphs are expanders. One of the consequences of the results obtained here is that there exists a metric space $(X,d_X)$ with respect to which there exists an expander sequence, yet almost surely random regular graphs are not expanders with respect to $(X,d_X)$. No such example was previously known.

Let $\G_n$ be the set of all graphs on the vertex set
$\{1,\ldots,n\}$. The subset of $\G_n$ consisting of all the
connected graphs is denoted $\G_n^{\mathrm{con}}$. Given an integer
$d\ge 3$, let $\mathcal{G}_{n,d}$ be the probability measure on
$\G_n$ which is uniform over all those graphs in $\G_n$ that are
$d$-regular and have no self-loops and no parallel edges.
By~\cite{Wor81}, $\lim_{n\to
\infty}\mathcal{G}_{n,d}(\G_n^{\mathrm{con}})=1$.

A Hadamard space (also known as a complete $CAT(0)$ space) is a complete metric space $(X,d_X)$ with the property that for every $x,y\in X$ there exists a point $w\in X$ such that for every $z\in X$ we have
\begin{equation}\label{eq:def CAT(0)}
d_X(z,w)^2+\frac14 d_X(x,y)^2\le \frac12 d_X(z,x)^2+\frac12 d_X(z,y)^2.
\end{equation}
See the books~\cite{Jos97,BH-book} and the survey~\cite{Sturm03} for
an extensive account of Hadamard spaces.
One of the main questions left open in~\cite{NS11} (specifically,
see page 1547 of~\cite{NS11}) is whether or not it is true that if
$(X,d_X)$ is a Hadamard space that admits at least one expander
sequence then every classical expander sequence is also an expander
sequence with respect to $(X,d_X)$. Theorem~\ref{thm:main1} below,
which is the first of our two main theorems, answers this question.

\begin{theorem}\label{thm:main1}
There exist a Hadamard space $(X,d_X)$ and a sequence of $3$-regular graphs $\{G_n\}_{n=1}^\infty$  with $\lim_{n\to\infty} |V_{G_n}|=\infty$
such that
\begin{equation}\label{eq:G_n are indeed expanders}
\sup_{n\in \N} \gamma(G_n,d_X^2)<\infty,
\end{equation}
yet there exists $c\in (0,\infty)$ such that for every $d\in \N$,
\begin{equation}\label{eq:random graphs are not expanders}
\lim_{n\to \infty}\mathcal{G}_{n,d}\left(\left\{H\in \G_n:\ \gamma(H,d_X^2)\ge c(\log_d n)^2\right\}\right)=1.
\end{equation}
\end{theorem}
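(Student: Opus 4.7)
My plan is to build a single Hadamard space $X$ that (i) metrically contains random regular graphs with bounded distortion, forcing their $X$-spectral gap to be as large as their diameter squared, while (ii) still admits a nonlinear Poincaré inequality strong enough for the iterative (nonlinear zigzag-type) construction from~\cite{MN-towards} to produce a $3$-regular expander sequence $\{G_n\}_{n=1}^\infty$ with respect to $X$. The abstract identifies the key new device: the Euclidean cone, which upgrades any $\mathrm{CAT}(1)$ space of diameter $\le \pi$ to a Hadamard space and, crucially, retains enough radial continuity to support martingale arguments. Concretely, for each $d\ge 3$, each $n\in \N$, and each $d$-regular $H\in \G_n$ of girth $\ge c\log_d n$, view $H$ as a metric graph with edge lengths $L_H \eqdef 2\pi/\girth(H)$; this metric graph is $\mathrm{CAT}(1)$ of diameter $\le \pi$ by the standard criterion that a metric graph is $\mathrm{CAT}(1)$ iff its shortest cycle has length $\ge 2\pi$. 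Let $Y_H$ denote this rescaled metric graph and set
$$X \;\eqdef\; \Bigl(\bigoplus_{d,n,H} \cone(Y_H)\Bigr)_{\ell_2},$$
an $\ell_2$-sum over all such $H$, which is Hadamard as an $\ell_2$-sum of Hadamard spaces.

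For~\eqref{eq:random graphs are not expanders}, with probability $1-o(1)$ a random $H\sim \mathcal{G}_{n,d}$ satisfies $\girth(H),\diam(H)$, and its average pairwise distance are all $\asymp \log_d n$, and is one of the graphs used in the definition of $X$. The embedding $f(v)\eqdef (v,1)\in \cone(Y_H)\subset X$ at cone-height $1$ then has edge distances $\asymp L_H \asymp 1/\log_d n$ in $X$ and typical pair distances $\asymp L_H\cdot \log_d n \asymp 1$. Substituting $f$ into~\eqref{eq:def gamma} yields $\gamma(H,d_X^2)\gtrsim (\log_d n)^2$, as required.

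For~\eqref{eq:G_n are indeed expanders}, the sequence $\{G_n\}$ would be produced by applying the Mendel--Naor iterative construction from~\cite{MN-towards} with $X$ as the target. That construction propagates a uniform bound on $\gamma(\cdot,d_X^2)$ from a base graph provided one can establish a quantitative Markov-type / nonlinear Poincaré inequality for $X$-valued functions. \emph{The main obstacle}, which is the principal new technical ingredient the authors attribute to the cone structure, is to prove this inequality uniformly across every summand $\cone(Y_H)$. The plan is to exploit the radial/angular decomposition $x\leftrightarrow (r(x),\theta(x))\in [0,\infty)\times Y_H$ and to define a Doob-type martingale using the $\mathrm{CAT}(0)$ barycenter inside each $\cone(Y_H)$: the radial coordinate can be controlled by a scalar Markov-type estimate, and the angular coordinate by a cone-convexity inequality derived purely from the Hadamard axiom~\eqref{eq:def CAT(0)}. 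Because the resulting bound depends only on the cone's $\mathrm{CAT}(0)$ structure and not on the geometry of the base $Y_H$, it transfers uniformly to the $\ell_2$-sum $X$ and closes the iterative argument. Carrying out this cone-martingale estimate---without invoking uniform convexity of the target, which is the standard assumption in this line of work---is the delicate step, and is what the abstract advertises as the main technical novelty.
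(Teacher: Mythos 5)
Your overall blueprint---Gromov--Kondo cones over rescaled metric graphs, aggregated into a single Hadamard space, so that short edges force a $(\log_d n)^2$ lower bound---is indeed the correct one and matches the paper's strategy. But there are two concrete gaps, the first of which is fatal as stated.

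\textbf{First, the girth of a random $d$-regular graph is $O(1)$ with high probability, not $\asymp \log_d n$.} The expected number of $k$-cycles in $\mathcal{G}_{n,d}$ converges to $(d-1)^k/(2k)$ for fixed $k$, so with probability bounded away from $0$ the graph already contains a triangle, and the probability of girth exceeding any constant threshold is bounded below $1$. Consequently your rescaling $L_H = 2\pi/\girth(H)$ does not make $Y_H$ have diameter $\lesssim \pi$ for a typical random $H$ (the ratio $\diam/\girth$ blows up like $\log n$), a typical random $H$ is \emph{not} one of the graphs indexing your $\ell_2$-sum, and the embedding $v\mapsto (v,1)\in\cone(Y_H)\subset X$ does not exist. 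The paper circumvents exactly this by first proving (Lemma~\ref{lem:number of short cycles}) that a.a.s.\ there are at most $\sqrt{n}$ cycles of length $< \lceil(\log_{d-1}n)/7\rceil$, deleting one edge from each such cycle to obtain a pruned graph $L$ with girth $\gtrsim\log_d n$ and $\diam(L)\lesssim\log_d n$ (Lemmas~\ref{lem:edge deletion distant cycles} and~\ref{lem:random graph in good family}), embedding $(V_H,d_L)$ rather than $(V_H,d_H)$ into the cone, and then separately bounding the contribution of the $\le \sqrt n$ deleted edges to the right-hand Poincar\'e sum using $d_L(x,y)\le K\log_d n$; see the proof of Corollary~\ref{cor:random graphs are not expanders wrt X_K}. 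Without this edge-surgery step, your lower bound argument does not get started.

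\textbf{Second, the upper bound needs more than a spectral-calculus inequality for $X$.} Theorem~\ref{thm:zigzag tool} requires, in addition to the nonlinear spectral calculus for Hadamard spaces (Theorem~\ref{thm:MNext}, which is cited, not reproved), a base graph $H$ with $\gamma_+(H,d_X^2)\le \sqrt{m/(2K)}$, and this is where the delicate design constraint on the cone family lives: if $X$ is built from \emph{all} high-girth graphs with $\diam\lesssim\girth$, there is no a priori reason a fixed finite base graph should have bounded $\gamma_+$ against every cone summand simultaneously. The paper handles this by imposing condition~(2) of Definition~\ref{def:good family}: every subset of $\Sigma(G)$ of size $\le\sqrt{|V_G|}$ has cone with bounded $L_1$-distortion. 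This lets a fixed base graph $W_k^*$ with $\gamma_+(W_k^*,d_{L_1}^2)<\infty$ (obtained via Matou\v{s}ek extrapolation, Lemma~\ref{lem:matousek extrpolation plus snowflake}) automatically satisfy $\gamma_+(W_k^*,d_{\cone(\Sigma(G))}^2)\lesssim 1$ for all large $G\in\F_K^*$, because only $\le 2|V_{W_k^*}|\le\sqrt{|V_G|}$ cone-points are ever used; see~\eqref{eq:use c1 K bound}--\eqref{eq:check size condition to use zigzag theorem}. Your proposal omits any such constraint, so the iterative argument has no seed. Establishing that the constraint is compatible with condition~\eqref{eq:random graphs are not expanders}---i.e., that random graphs, after edge deletion, land in the constrained family---is the substantive content of Lemma~\ref{lem:random graph in good family} and Section~\ref{sec:embed sparse}, which your sketch leaves untouched.
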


Theorem~\ref{thm:main1} answers the above mentioned question from~\cite{NS11}. Indeed, by~\eqref{eq:G_n are indeed expanders} the Hadamard space $(X,d_X)$ admits some expander sequence. Random regular graphs are asymptotically almost surely classical expanders~\cite{Bol88}, i.e., there exists $C\in (0,\infty)$ such that for every integer $d\ge 3$,
\begin{equation}\label{eq:random graphs are classical expanders}
\lim_{n\to \infty}\mathcal{G}_{n,d}\left(\left\{H\in \G_n:\ \gamma(H,d_\R^2)\le C\right\}\right)=1.
\end{equation}
Consequently, it follows from~\eqref{eq:random graphs are not expanders} and~\eqref{eq:random graphs are classical expanders} that not all classical expander sequences are expanders with respect to $(X,d_X)$.

Theorem~\ref{thm:main1} yields the first known example of a metric
space $(X,d_X)$ with respect to which random regular graphs are
asymptotically almost surely not expanders, yet there does exist a
special graph sequence $\{G_n\}_{n=1}^\infty$ that is an expander
sequence with respect to $(X,d_X)$. Observe that
$\{G_n\}_{n=1}^\infty$ is a fortiori a classical expander sequence.


The graphs $\{G_n\}_{n=1}^\infty$ of Theorem~\ref{thm:main1} have desirable properties which no other expander sequence is known to satisfy. Specifically, $\{G_n\}_{n=1}^\infty$ are expanders with respect to random regular graphs. This is made precise in the following theorem.

\begin{theorem}\label{thm:main2} There exists a universal constant $\Gamma\in (0,\infty)$ and a
sequence of $3$-regular graphs $\{G_n\}_{n=1}^\infty$  with
\begin{equation}\label{eq:bounded ratios}
\lim_{n\to\infty} |V_{G_n}|=\infty\qquad \mathrm{and} \qquad \sup_{n\in \N} \frac{|V_{G_{n+1}}|}{|V_{G_n}|}<\infty,
\end{equation}
such that for every integer $d\ge 3$ we have
\begin{equation}\label{eq:main thm random graph}
\lim_{m\to \infty}\mathcal{G}_{m,d}\left(\left\{H\in \G_m^{\mathrm{con}}:\ \sup_{n\in \N}
\gamma\!\left(G_n,d_H^2\right)< \Gamma\right\}\right)=1.
\end{equation}
\end{theorem}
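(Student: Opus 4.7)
The plan is to prove Theorem~\ref{thm:main2} with the same sequence $\{G_n\}_{n=1}^\infty$ used in Theorem~\ref{thm:main1}, which was constructed in \cite{MN-towards} by iterating a base graph operation and shown to be a super-expander via metric martingale inequalities. For each random graph $H \in \mathcal{G}_{m,d}$ I would associate a Hadamard space, namely the Euclidean cone $\cone(H)$ over $H$ viewed as a $1$-dimensional geodesic complex with each edge rescaled to length $\ell > 2\pi/3$. Since the graphs in the support of $\mathcal{G}_{m,d}$ are simple by definition, so that $\girth(H)\ge 3$, the rescaled $H$ has girth strictly greater than $2\pi$ as a metric complex; by the standard criterion for $1$-dimensional complexes to be CAT(1), together with Berestovskii's theorem, $\cone(H)$ is then a complete CAT(0), hence Hadamard, space. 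Because $\gamma$ is scale-invariant in its target, the specific choice of $\ell$ is immaterial.

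The second step is to show that $\sup_n \gamma\!\left(G_n, d_{\cone(H)}^2\right) \leq \Gamma_0$ for a universal constant $\Gamma_0$, with probability tending to $1$ in $H$. This would be an application of the martingale-based machinery by which $\{G_n\}$ were shown in \cite{MN-towards} and in the proof of Theorem~\ref{thm:main1} to be expanders with respect to a broad class of metric targets. The point is that the bound on $\gamma(G_n,\cdot)$ in that framework depends only on a quantitative martingale-type or barycentric modulus of the target, which $\cone(H)$ should inherit uniformly in $H$ from its CAT(0) structure and the explicit form of the Euclidean cone construction.

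The heart of the proof is the third step: transferring from $\gamma(G_n, d_{\cone(H)}^2)$ to $\gamma(G_n, d_H^2)$. Given $f: V_{G_n} \to V_H$, one lifts $f$ to $F: V_{G_n} \to \cone(H)$ by placing each image $f(u)$ at an appropriate height in the cone; the nonlinear spectral inequality for $\cone(H)$ applied to $F$ yields an inequality involving cone distances, which one must then relate to $d_H$-distances. The main obstacle is the saturation of the cone metric: for vertices at the same height one has $d_{\cone(H)}\!\left((u,1),(v,1)\right)^2 \leq 4$ regardless of $d_H(u,v)$, while $d_H$ itself can range up to $\diam(H) = \Theta(\log_d m)$ a.a.s. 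Overcoming this near-sightedness requires a multi-scale lift --- aggregating expander inequalities across a range of heights or base points in the cone --- together with probabilistic properties of $H$ (rapid mixing, concentration of ball sizes, local tree-likeness), so that every scale of $d_H$ is captured with bounded total loss. The bounded-ratio condition in~\eqref{eq:bounded ratios} is inherited from the iterative construction of $\{G_n\}$ and requires no additional argument.
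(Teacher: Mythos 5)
Your proposal differs substantially from the paper's argument, and while the overall intuition about cone saturation is on target, two of the three steps contain genuine gaps that the paper resolves by a different route.

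\textbf{Step 2 is the central gap.} You assert that the martingale machinery yields $\sup_n\gamma(G_n,d_{\cone(H)}^2)\le\Gamma_0$ ``uniformly in $H$ from its CAT(0) structure.'' This would mean that the $\{G_n\}$ are expanders with respect to every Hadamard space with a universal constant, which is not what the paper proves and is, as far as is known, open. The iteration of Theorem~\ref{thm:zigzag tool} has two hypotheses: the nonlinear spectral calculus~\eqref{eq:spectral calculus condition}, which by Theorem~\ref{thm:MNext} does hold uniformly over all $CAT(0)$ targets, and the existence of a \emph{base graph} $W$ with $\gamma_+(W,d_X^2)\le\sqrt{m/2K}$, which is \emph{not} a consequence of CAT(0)-ness. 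The paper verifies the base-graph hypothesis only for the deliberately engineered space $\X_K=\cone(U_{\F_K})$, and the verification (equation~\eqref{eq:use c1 K bound}) leans on the defining property~\eqref{eq:sqrt subset in l1} of $\F_K$: cones over small subsets embed into $L_1$, so the base graph need only be an $L_1$-expander. Nothing of the sort is automatic for $\cone(\Sigma(H))$ when $H$ is a random regular graph with your scaling; a random $H$ has girth $O(1)$ and diameter $\Theta(\log m)$, so $H\notin\F_K$, and $\cone(\Sigma(H),\ell\,d_{\Sigma(H)})$ with $\ell>2\pi/3$ does not sit inside $\X_K$. Indeed the paper's Theorem~\ref{thm:main1} shows $\gamma(H,d_{\X_K}^2)\gtrsim(\log_d m)^2$, so one should expect $\cone(\Sigma(H))$ to be geometrically incompatible with $\X_K$ in general.

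\textbf{Step 3's resolution of saturation is not what the paper does, and the ``multi-scale lift'' is not fleshed out.} You correctly identify that with $\ell>2\pi/3$ the cone metric saturates at scale $\Theta(1)$ in $d_H$, while $\diam(H)\asymp\log_d m$. The paper avoids this entirely by using the scaling $\sigma=2\pi/\girth(L)\asymp1/\log_d m$ (see~\eqref{eq:choose sigma}), where $L=L_H^t$ is $H$ with $O(\sqrt m)$ edges removed to kill all cycles shorter than $t\asymp\log_d m$. After this rescaling $\Sigma(H)$ has $O(1)$ diameter, so a single lift at constant height embeds $(\Sigma(H),d_{\Sigma(H)})$ into $\cone(\Sigma(H),\sigma d_{\Sigma(H)})$ with $O(1)$ distortion --- that is exactly assertion (V) of Proposition~\ref{prop:structure cone random}. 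The price is that $\cone(\Sigma(H),\sigma d_{\Sigma(H)})$ is \emph{not} CAT(0), because $\sigma$ is far below $2\pi/\girth(H)$. The paper therefore does not try to apply CAT(0) tools to this cone directly; it decomposes it as $\cone(A_1)\cup\cone(A_2)$ (Proposition~\ref{prop:structure cone random}), where $\cone(A_1)$ --- the neighborhood of the deleted short-cycle edges --- embeds into $L_1$ via the sparse-graph embedding theory of Section~\ref{sec:embed sparse} and Proposition~\ref{thm:cone in L_1}, while $\cone(A_2)$ embeds into $\X_K$ via the cone over the high-girth graph $L\in\F_K$. The two Poincar\'e inequalities (one for $L_1$ via Matou\v sek extrapolation, one for $\X_K$) are then glued by Lemma~\ref{lem:cone union}. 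None of these ingredients --- edge deletion to raise girth, the structure theorem for $\cone(\Sigma(H))$, $c_1(\cone(L_1))<\infty$, or the union-of-cones Poincar\'e inequality --- appear in your plan, and each is substantive; ``aggregating expander inequalities across a range of heights'' is not a mechanism that captures them.

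\textbf{Step 1 is essentially correct} (simple graphs have girth $\ge3$, so edges of length $\ell\ge2\pi/3$ make $\Sigma(H)$ CAT(1) and $\cone(\Sigma(H))$ Hadamard), but this fact plays no role in the paper's proof precisely because the scaling that makes the cone useful destroys the CAT(1) condition. The key conceptual move in the paper that you should internalize is that one does \emph{not} need the cone over $H$ to be CAT(0): one only needs to control its nonlinear spectral gap, and this is achieved by patching together an $L_1$-embeddable piece (from sparsity near short cycles) with a piece that embeds into a fixed Hadamard space built from high-girth graphs.
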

In~\eqref{eq:main thm random graph} we restrict to $H\in
\G_m^{\mathrm{con}}$ because the metric $d_H$ is defined only when
$H$ is connected. Explicit bounds on the rates of convergence
in~\eqref{eq:random graphs are not expanders} and~\eqref{eq:main thm
random graph} are given Section~\ref{sec:slightly stronger}.

 Take $G\in \{G_n\}_{n=1}^\infty$ and write  $V_{G}=\{1,\ldots,k\}$ for some $k\in \N$. Theorem~\ref{thm:main2} asserts that almost surely as $m\to \infty$, if $H$ is a uniformly random $m$-vertex $d$-regular graph then for every $v_1,\ldots,v_k\in V_H$ the average of $d_H(v_i,v_j)^2$ over all $i,j\in \{1,\ldots,k\}$ is at most a constant multiple of the average of $d_H(v_i,v_j)^2$ over those $i,j\in \{1,\ldots,k\}$ that are joined by an edge of $G$. Note that the latter average is over $3k/2\asymp k$ numbers while the former average is over $\binom{k}{2}\asymp k^2$ numbers.

 Thus $G$ is an especially constructed fixed graph that serves as a ``sparse template" for
 computing the average squared distance between any $k$ vertices in a random regular graph.
 This yields {\em sublinear  (deterministic) time} approximate computation of average distances
 in random graphs: our input size is $\Omega(k^2)$, namely all the pairwise distances,
 while using $G$ we estimate $\frac{1}{k^2}\sum_{i=1}^k\sum_{j=1}^k d_H(x_i,x_j)^2$ by making
 only $O(k)$ distance queries. See Section~\ref{sec:sublinear} for more on this topic.

 Once the graph $G$ is given to us, the above statement about the average squared
 shortest-path distance of any $k$ vertices of the random graph $H$ involves elementary combinatorics and probability. Nevertheless, our proof of this statement uses methods from analysis and geometry that are interesting in their own right.

 Specifically, in~\cite{MN-towards} we introduced an
 iterative approach to the construction of super-expanders, building
 on the zigzag iteration of Reingold, Vadhan and
 Wigderson~\cite{RVW}. This approach uses estimates on martingales in uniformly convex Banach spaces.
 In~\cite{MN13-ext} we extended the estimates that were needed for the construction of super-expanders
 (namely nonlinear spectral calculus inequalities; see Remark~\ref{rem:calculus} below) to Hadamard spaces
 using an appropriate notion of nonlinear martingale. In order to apply these methods in the
 present setting, we consider the one-dimensional simplicial complex obtained by including all
 the edges of $H$ as unit intervals. We then work with the {\em Euclidean cone} over this one-dimensional
 simplicial complex, which is an auxiliary two-dimensional (random) continuous object on which martingale
 methods can be applied. The definition of the Euclidean cone over a metric space will be recalled in
 Section~\ref{sec:euclidean cone} below.
 We prove that if $H$ is sampled from $\mathcal{G}_{n,d}$ then with high probability its Euclidean cone is
 a (non-disjoint) union of two sets $A_1,A_2$ such that $A_1$ admits a bi-Lipschitz embedding into $L_1$
 and $A_2$ admits a bi-Lipschitz embedding into a Hadamard space. We then treat $A_1$ directly using Matou\v{s}ek's extrapolation lemma for Poincar\'e inequalities~\cite{Mat97}, and we treat $A_2$ using the methods of~\cite{MN-towards,MN13-ext}.

 The implementation of the above strategy  is
not straightforward, relying on a variety of geometric and analytic
tools;
 a more detailed overview of our proof of Theorem~\ref{thm:main2} appears in Section~\ref{sec:union sketch} and Section~\ref{sec:main3}.
 At this juncture we only wish to stress that our proof of Theorem~\ref{thm:main2}
 introduces a way to reason about random graphs that is potentially useful in other contexts: we consider such a  graph as being embedded
   in a larger auxiliary continuous geometric object that allows for the use of analytic methods, even though the statement being proved involves only the vertices of
the original graph.

\medskip

\noindent{\bf Previous work.} Nonlinear spectral gaps have been
studied in the literature from several points of view, leading to
some notational inconsistencies. Here we use the notation that
arises naturally from bi-Lipschitz embedding theory.
Pichot~\cite{Pic08} denotes the reciprocal of $\gamma(G,d_X^2)$ by
$\lambda_1(G,X)$.  In reference to Gromov's original
definition~\cite{Gro01,Gromov-random-group}, Pansu~\cite{Pan09} and
Kondo~\cite{Kondo} denote the same quantity by
$\lambda^{\mathrm{Gro}}(G,X)$ and $\lambda_1^{\mathrm{Gro}}(G,X)$,
respectively. When $X$ is a Hadamard space, a closely related
quantity, known today as Wang's invariant, was introduced by
Wang~\cite{Wang98,Wang00}; Wang's invariant is always within a
factor of $2$ of the reciprocal of $\gamma(G,d_X^2)$. For Hadamard
spaces, Izeki and Nayatani~\cite{IN05} introduced an invariant
that can be used to control the  ratio between Wang's invariant for
a graph  $G$ and the classical spectral gap of $G$.

We were motivated to revisit the question of~\cite{NS11} that
Theorem~\ref{thm:main1} answers by the recent work of
Kondo~\cite{Kondo}. Kondo's goal in~\cite{Kondo} was to construct a
Hadamard space for which the Izeki-Nayatani invariant is trivial.
He succeeded to do so by using the Euclidean cone over certain
expander graphs. In particular he obtained a Hadamard space that
contains bi-Lipschitzly copies of some (classical) expanders. Gromov
considered the same construction in~\cite{Gro01,Gromov-random-group}
for a different but related purpose. The main point of
Theorem~\ref{thm:main1} is to prove that the space $X$ admits a
sequence of expanders; we achieve this by modifying the Gromov-Kondo
construction so that it will be compatible with the method to
construct nonlinear expanders of~\cite{MN-towards}. The fact that
our graphs are expanders with respect to random graphs requires
additional work, relying on a structural result for Euclidean cones
over random graphs that is presented in Section~\ref{sec:structure
cone}.

\medskip

\noindent{\bf Roadmap.}  In Section~\ref{sec:sublinear} we describe
an algorithmic implication of Theorem~\ref{thm:main2}. Because the
proofs of Theorem~\ref{thm:main1} and Theorem~\ref{thm:main2} use
ingredients from several fields, Section~\ref{sec: prem} is devoted
to a detailed explanation of the background and main tools that will
be used in the proof of Theorem~\ref{thm:main1} and
Theorem~\ref{thm:main2}. The proofs themselves are given in
Section~\ref{sec:proofs and main statement}, a section that is
self-contained modulo some (quite substantial) ingredients that are
presented in Section~\ref{sec: prem} and whose proof appears in
later sections. The high-level structure of the argument is best
discerned from reading Section~\ref{sec:proofs and main statement},
since it uses as a ``black box" some conceptual ingredients whose
proof is quite lengthy. Section~\ref{sec:euclidean cone}
investigates the Lipschitz structure of Euclidean cones, proving in
particular that the Euclidean cone over $L_1$ admits a bi-Lipschitz
embedding into $L_1$. Section~\ref{sec:embed sparse} is devoted to
showing that sufficiently sparse graphs admit a bi-Lipschitz
embedding into $L_1$. Section~\ref{sec:random} deals with random
regular graphs, proving in particular a crucial structure theorem
(that holds true with high probability) for the Euclidean cone over
a random graph. In Section~\ref{sec:KF} we present a partial result
towards an open question that was posed by J. Kleinberg. For the
formulation of Kleinberg's question itself see
Section~\ref{sec:kleinberg}.

\section{Sublinear average distance approximation algorithms}\label{sec:sublinear}

Suppose that $(X,d_X)$ is a metric space and we have oracle
access to pairwise distances in $X$. Given $x_1,\ldots,x_n\in X$
write
\begin{equation}\label{eq:defA-universal}
A\eqdef
\frac{1}{n^2}\sum_{i=1}^n\sum_{j=1}^n d_X(x_i,x_j)^2.
\end{equation}
One can compute $A$ exactly with $n^2$ distance queries. But, we
wish to estimate $A$ in {\em sublinear time}, i.e., with only
$o(n^2)$ distance queries.

Indyk~\cite{Indyk99} proved that it is possible to approximate $A$
up to a factor of $1+\e$ by querying the distances between
$O(n/\e^{7/2})$ pairs of points chosen uniformly at random. Barhum,
Goldreich and Shraibman~\cite{BGS} improved the required number of
uniformly random pairs of points to $O(n/\e^2)$, which is
asymptotically tight~\cite{BGS}.

The above simple randomized sampling algorithm shows that for every
$x_1,\ldots,x_n\in X$ one can find $O(n)$ pairs of points from
$\{x_1,\ldots,x_n\}$ whose average distance is within $O(1)$ of $A$,
but these pairs depend on the initial point set
$\{x_1,\ldots,x_n\}\subset X$. Following~\cite{BGS}, for $D\in
[1,\infty)$ we say that a graph $G=(\{1,\ldots,n\},E)$ is a $D$-{\em
universal approximator} with respect to $(X,d_X^2)$ if there exists
(a scaling factor) $s\in (0,\infty)$ such that for every
$x_1,\ldots,x_n\in X$ we have

$$
\frac{1}{n^2}\sum_{i=1}^n\sum_{j=1}^n d_X(x_i,x_j)^2\le
\frac{s}{|E|}\sum_{\{i,j\}\in E} d_X(x_i,x_j)^2\le \frac{D}{n^2}\sum_{i=1}^n\sum_{j=1}^n d_X(x_i,x_j)^2.
$$

In~\cite{BGS} it is shown that there exists $c\in (0,\infty)$ such
that if $G=(\{1,\ldots,n\},E)$ is a $D$-universal approximator with
respect to $(X,d_X^2)$ for {\em every} metric space $(X,d_X)$ then
$$
|E|\gtrsim \frac{n^{1+c/\sqrt{D}}}{\sqrt{D}}.
$$
It was also shown in~\cite{BGS} that there exists $C\in (0,\infty)$
such that for every $D\in [1,\infty)$ and $n\in \N$ there exists a
graph $G=(\{1,\ldots,n\},E)$ that is a $D$-universal approximator
with respect to $(X,d_X^2)$ for {\em every} metric space $(X,d_X)$,
and such that
$$
|E|\le \sqrt{D}\cdot n^{1+C/\sqrt{D}}.
$$
We note that~\cite{BGS} deals with the analogous question for
universal approximators when the quantity $A$
in~\eqref{eq:defA-universal} is defined with the distances raised to
power $1$ rather than being squared. However, the arguments
of~\cite{BGS} easily extend mutatis mutandis to yield the above
stated results (and, in fact, to analogous statements when $A$ is
defined in terms of distances raised to power $p$ for any $p\ge 1$).

We thus have a satisfactory understanding of the size of universal
approximators with respect to {\em all} metric spaces. But, for
special metric spaces it is possible to obtain better tradeoffs.
Indeed, in~\cite{BGS} it is observed that an expander graph is a
linear size $O(1)$-universal approximator with respect to Hilbert
space; this is nothing more than an interpretation
of~\eqref{eq:compute average}, though by being more careful, and
using Ramanujan graphs~\cite{LPS,Mar88} of appropriate degree, it is
shown in~\cite{BGS} how to obtain a $1+\e$ approximation for every
$\e\in (0,1)$.

Due to~\eqref{eq:upper average bound trivial} and~\eqref{eq:def
gamma}, for every graph $G$ and every metric space $(X,d_X)$, if we
set $D=4\gamma(G,d_X^2)$ then $G$ is $D$-universal approximator with
respect to $(X,d_X^2)$. Hence, the super-expanders of~\cite{Laf09}
and~\cite{MN-towards} are $O_X(1)$-universal approximators with
respect to $(X,\|\cdot\|_X^2)$ for every uniformly convex Banach
space $(X,\|\cdot\|_X)$ (by $O_X(1)$ we mean that the approximation
factor depends only on $X$, in fact, it depends only on the modulus
of uniform convexity of $X$).

Theorem~\ref{thm:main2} yields the only known construction of
bounded degree $O(1)$-universal approximators with respect to random
regular graphs; this is the content of Theorem~\ref{thm:universal
approx} below. Graphs sampled from $\mathcal{G}_{n,d}$ occur in
various application areas, e.g. in networking, where they serve as
models for peer-to-peer networks~\cite{peer-2-peer-spaa}. Therefore,
a data structure that can compute quickly the average squared
distance of a given subset of a random regular graph is of
theoretical interest. However, the potential practicality of our
data structure is questionable because the approximation guarantee
is a large universal constant; we made no attempt to improve this
aspect of the construction.

\begin{theorem}\label{thm:universal approx}
There exists $D\in [1,\infty)$ and for every $n\in \N$ there exists
a graph $U_n=(\{1,\ldots,n\},E_n)$ with $|E_n|=O(n)$ such that for
every two integers $m,d\ge 3$, if $H$ is sampled from the
restriction of $\mathcal{G}_{m,d}$ to $\G_m^{\mathrm{con}}$ then
with probability that tends to $1$ as $m\to \infty$ the graphs
$\{U_n\}_{n=1}^\infty$ are $D$-universal approximators with respect
to $(V_H,d_H^2)$.
\end{theorem}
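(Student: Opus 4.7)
The plan is to bootstrap from Theorem~\ref{thm:main2}: the graphs $\{G_n\}_{n=1}^\infty$ supplied there are already $O(1)$-universal approximators with respect to $(V_H, d_H^2)$ for a typical random $H$, but only for the specific sizes $\ell_n := |V_{G_n}|$. I would interpolate to every $n \in \N$ using the bounded-ratio hypothesis $R := \sup_n \ell_{n+1}/\ell_n < \infty$ from~\eqref{eq:bounded ratios}. For $n < \ell_1$, set $U_n$ to be the complete graph on $\{1,\ldots,n\}$. For $n \geq \ell_1$, let $k=k(n)$ be the unique integer with $\ell_k \leq n < \ell_{k+1}$ and partition $\{1,\ldots,n\}$ into $\ell_k$ clusters $V_1,\ldots,V_{\ell_k}$ of nearly-equal size; each cluster has cardinality at most $q := \lceil n/\ell_k \rceil \leq R$. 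Let $U_n$ consist of (a) every intra-cluster edge, so that each $V_u$ spans a clique, and (b) for every $\{u,v\} \in E(G_k)$, all edges of the complete bipartite graph between $V_u$ and $V_v$. Then $U_n$ has maximum degree at most $4R-1$ and $|E(U_n)| = O(n)$, with constants depending only on $R$.

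By~\eqref{eq:upper average bound trivial} it suffices to show that almost surely $\sup_n \gamma(U_n, d_H^2) \leq C$ for some universal $C$. I would condition on $H$ lying in the event $\{\sup_n \gamma(G_n, d_H^2) < \Gamma\}$ guaranteed by Theorem~\ref{thm:main2}, and fix arbitrary $x_1,\ldots,x_n \in V_H$. For each cluster $u$, choose a representative $y_u \in V_u$ minimizing $\sum_{a \in V_u} d_H(x_a,x_{y_u})^2$; by averaging, this minimum is at most $M_u/|V_u|$, where $M_u := \sum_{a,b \in V_u} d_H(x_a,x_b)^2$. For $a \in V_u$, $b \in V_v$ with $u \neq v$, the triangle inequality through $y_u, y_v$ together with $(p+r+s)^2 \leq 3(p^2+r^2+s^2)$ gives, upon summing over $(a,b) \in V_u \times V_v$,
\begin{equation*}
\sum_{a \in V_u,\, b \in V_v} d_H(x_a,x_b)^2 \;\leq\; 3 M_u + 3 M_v + 3 q^2\, d_H(x_{y_u},x_{y_v})^2.
\end{equation*}
Summing over ordered pairs $u \neq v$ and applying Theorem~\ref{thm:main2} to the $\ell_k$-tuple $(x_{y_u})_{u=1}^{\ell_k}$ bounds $\sum_{u \neq v} d_H(x_{y_u},x_{y_v})^2$ by $\frac{2\Gamma \ell_k}{3}\sum_{\{u,v\} \in E(G_k)} d_H(x_{y_u},x_{y_v})^2$. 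By construction $\{y_u,y_v\} \in E(U_n)$ whenever $\{u,v\} \in E(G_k)$, so each such summand is dominated by the sum of $d_H(x_a,x_b)^2$ over the $U_n$-edges between $V_u$ and $V_v$. Combined with the identity $\sum_u M_u = 2\sum_{\{a,b\} \in E_{\mathrm{intra}}(U_n)} d_H(x_a,x_b)^2$, and normalizing by $n^2 \asymp q^2 \ell_k^2$ and $|E(U_n)| \asymp q^2\ell_k$, the arithmetic yields $\gamma(U_n, d_H^2) = O(\Gamma R^2)$ uniformly in $n$, as required.

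The principal design choice on which the argument hinges is including the \emph{complete} bipartite graph between $V_u$ and $V_v$ for every $\{u,v\} \in E(G_k)$. This guarantees that the \emph{a posteriori} choice of representatives $(y_u,y_v)$ always lands on a $U_n$-edge, making the transfer to the $U_n$-edge sum free; a sparser option such as a matching between $V_u$ and $V_v$ would force an additional intra-cluster Poincar\'e estimate whose validity would depend on the random (hence uncontrolled) metric geometry of $H$. The price of completeness is only a factor of $q^2 = O(R^2)$ in $|E(U_n)|/\ell_k$ and in the final $\gamma$, which is the one place where the bounded-ratio hypothesis~\eqref{eq:bounded ratios} is genuinely used. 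The main obstacle is therefore combinatorial rather than analytic: engineering $U_n$ so that the Theorem~\ref{thm:main2} guarantee, available only along the sparse sequence of sizes $\{\ell_k\}$, lifts to every $n \in \N$ without sacrificing the $O(1)$-universal approximator property.
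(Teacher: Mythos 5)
Your argument is correct (up to some constant-tracking slack --- e.g.\ the ratio of block sizes satisfies $|V_v|/|V_u|\leq 2$ rather than $\leq 1$ in the displayed estimate, and the upper inequality $\frac{s}{|E_n|}\sum_{E_n}d_H^2\leq\frac{D}{n^2}\sum d_H^2$ demanded by the definition of a $D$-universal approximator still needs the routine verification via the $O(R)$ maximum degree of $U_n$ and $|E_n|\gtrsim n$), but it runs in the opposite direction from the paper's proof. You choose $k$ with $|V_{G_k}|\leq n<|V_{G_{k+1}}|$ and \emph{blow up} $G_k$ --- each vertex becomes a small clique, each $G_k$-edge a complete bipartite block --- whereas the paper chooses $k$ with $n\leq|V_{G_k}|\leq Mn$ and \emph{contracts}: it partitions $V_{G_k}$ into $n$ nearly-equal blocks $A_1,\ldots,A_n$ and takes $E_n$ to be the multiset of $G_k$-edges running between blocks, so that $|E_n|=|E_{G_k}|=\frac{3}{2}|V_{G_k}|\lesssim n$ immediately. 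The contraction is the slicker move: given $x_1,\ldots,x_n\in V_H$, composing $i\mapsto x_i$ with the block map produces $f:V_{G_k}\to V_H$ to which $\gamma(G_k,d_H^2)<\Gamma$ applies directly, with no triangle-inequality decomposition, no choice of cluster representatives, and no need to control which edges survive. Your blow-up is an honest alternative, and your observation that the complete bipartite block guarantees $\{y_u,y_v\}\in E(U_n)$ for the a posteriori representatives is exactly the point where a sparser interpolation would fail; both routes use the bounded-ratio hypothesis~\eqref{eq:bounded ratios} in the same essential way (to keep block sizes $O(1)$) and both yield $|E_n|=O(n)$ and $D=O(\Gamma)$ up to universal factors.
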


\begin{proof}
Let $\{G_k\}_{k=1}^\infty$ be the graphs from
Theorem~\ref{thm:main2}. Fixing $n\in \N$, it follows
from~\eqref{eq:bounded ratios} that there exists $k\in \N$ such that
$n\le |V_{G_k}|\le Mn$, where $M\in \N$ is a universal constant.
Partition $V_{G_k}$ into $n$ disjoint sets $A_1,\ldots,A_n\subset
V_{G_k}$ satisfying

\begin{equation}\label{eq:A_i size}
\forall\, i\in \{1,\ldots,n\},\qquad |A_i|\in \left\{ \left\lfloor
\frac{|V_{G_k}|}{n}\right\rfloor,\left\lfloor
\frac{|V_{G_k}|}{n}\right\rfloor+1\right\}.
\end{equation}
 Define the edge
multi-set $E_n$ of $U_n$ by letting the number of edges joining
$i\in \{1,\ldots,n\}$  and $j\in \{1,\ldots,n\}$ be equal to the
total number of edges joining $A_i$ and $A_j$ in $G_k$.
 Then $|E_n|\le |E_{G_k}|=\frac32|V_{G_k}|\le \frac32Mn$.

Suppose that $H\in \G_m$ is connected and
\begin{equation}\label{eq:sup gamma assumption}
\sup_{k\in \N} \gamma\left(G_k,d_H^2\right)< \Gamma,
\end{equation}
where $\Gamma$ is the constant from Theorem~\ref{thm:main2}. It
follows from~\eqref{eq:main thm random graph} that if $H\in \G_m$ is
sampled from $\mathcal{G}_{m,d}$ then the above asumptions hold true
with probability that tends to $1$ as $m\to \infty$.

Fix $x_1,\ldots,x_n\in V_H$ and define $f:V_{G_k}\to V_H$ by setting
$f(u)=x_i$ for $u\in A_i$. By the definition of $\gamma(G_k,d_H^2)$
combined with~\eqref{eq:sup gamma assumption} and~\eqref{eq:upper
average bound trivial},

\begin{multline}\label{use gamma le Gamma}
\frac{1}{|V_{G_k}|^2} \sum_{(u,v)\in V_{G_k}\times
V_{G_k}}d_H(f(u),f(v))^2\le \frac{\Gamma}{|E_{G_k}|}\sum_{\{u,v\}\in
E_{G_k}}d_H(f(u),f(v))^2\\\le \frac{4\Gamma}{|V_{G_k}|^2} \sum_{(u,v)\in V_{G_k}\times
V_{G_k}}d_H(f(u),f(v))^2.
\end{multline}
Since
\begin{equation*}
 \sum_{(u,v)\in V_{G_k}\times
V_{G_k}}d_H(f(u),f(v))^2 = \sum_{i=1}^n\sum_{j=1}^n
|A_i|\cdot|A_j|\cdot d_H(x_i,x_j)^2,
\end{equation*}
it follows from~\eqref{eq:A_i size} that
\begin{align}\label{eq:1/4 4}
\frac{n^2\lfloor
|V_{G_k}|/n\rfloor^2}{|V_{G_k}|^2}
&\le\frac{\frac{1}{|V_{G_k}|^2}
\sum_{(u,v)\in V_{G_k}\times V_{G_k}}d_H(f(u),f(v))^2}{\frac{1}{n^2}\sum_{i=1}^n\sum_{j=1}^n d_H(x_i,x_j)^2}
\nonumber\\&\le \frac{n^2\left(\lfloor
|V_{G_k}|/n\rfloor+1\right)^2}{|V_{G_k}|^2}.
\end{align}
Also, by the definition of $E_n$ and $f$ we have
\begin{equation}\label{eq:use edge definition}
\sum_{\{u,v\}\in
E_{G_k}}d_H(f(u),f(v))^2=\sum_{\{i,j\}\in
E_n}d_H(x_i,x_j)^2.
\end{equation}
By substituting~\eqref{eq:1/4 4} and~\eqref{eq:use edge definition}
into~\eqref{use gamma le Gamma} we conclude that

\begin{align*}
\frac{1}{n^2}\sum_{i=1}^n\sum_{j=1}^n d_H(x_i,x_j)^2&\le
\frac{\Gamma|V_{G_k}|^2|E_n|}{n^2\lfloor
|V_{G_k}|/n\rfloor^2|E_{G_k}|}\cdot \frac{1}{|E_n|}\sum_{\{i,j\}\in
E_n}d_H(x_i,x_j)^2\\&\le \frac{4\Gamma\left(1+\frac{1}{\lfloor
|V_{G_k}|/n\rfloor}\right)^2}{n^2}\sum_{i=1}^n\sum_{j=1}^n
d_H(x_i,x_j)^2\\&\le \frac{16\Gamma}{n^2}\sum_{i=1}^n\sum_{j=1}^n
d_H(x_i,x_j)^2.
\end{align*}
This means that $U_n$ is a $D$-universal approximator with respect
to $(V_H,d_H^2)$, where $D=16\Gamma$.
\end{proof}

\begin{remark}\label{rem:linear time}
A straightforward inspection of our proof of Theorem~\ref{thm:main2}
reveals that for every $n\in \N$ the universal approximator $U_n$ of
Theorem~\ref{thm:universal approx} can be constructed in
deterministic $O(n)$ time.
\end{remark}

\begin{remark}
Let $(X,d_X)$ be a metric space and $p\in [1,\infty)$. As noted
above, one can study universal approximators with respect to
$(X,d_X^p)$, i.e., given $x_1,\ldots,x_n\in X$ the goal is
approximate computation of the quantity
$\frac{1}{n^2}\sum_{i=1}^n\sum_{j=1}^n d_X(x_i,x_j)^p$ with $o(n^2)$
distance queries. The references~\cite{Indyk99,BGS} deal with $p=1$,
but as we mentioned earlier, they extend painlessly to general $p\ge
1$. Here we have only dealt with the case $p=2$, but we speculate
that our argument can be modified to yield bounded degree universal
approximators with respect to $(H,d_H^p)$ for any $p\in (1,\infty)$,
where $H$ is a random $d$-regular graph. We did not, however,
attempt to carry out our proof when $p\neq 2$. We also speculate
that the case $p=1$ requires more substantial new ideas, as the type
of martingale arguments that we use typically fail at the endpoint
$p=1$.
\end{remark}

\subsection{A question of J. Kleinberg}\label{sec:kleinberg}

In connection with Theorem~\ref{thm:main2} and the algorithmic
context described in Section~\ref{sec:sublinear}, Jon Kleinberg asked us
whether or not two stochastically independent random $3$-regular graphs are
asymptotically almost surely expanders with respect to each other.
Formally, we have the following open question.

\begin{question}[Jon Kleinberg]\label{Q:kleinberg}
Does there exist a universal constant $K\in (0,\infty)$ such that
$$
\lim_{\min\{m,n\}\to\infty} \mathcal{G}_{m,3}\times \mathcal{G}_{n,3}
\left[\left\{(G,H)\in \G_m\times \G_n^{\mathrm{con}}:\ \gamma\left(G,d_H^2\right)\right\}\le K\right]=1.
$$
\end{question}

While a positive solution of Question~\ref{Q:kleinberg} does not
formally imply that there exist graphs $\{G_n\}_{n=1}^\infty$ as in
Theorem~\ref{thm:main2}, such a statement would be a step towards a
probabilistic construction of such graphs. At present we do not have
methods to argue about the nonlinear spectral gap of random graphs.
In particular, it is a major open question whether or not random
$3$-regular graphs are super-expanders with positive probability.
Thus, all the known constructions in the context of nonlinear
spectral gaps are deterministic, with the only two methods that are
currently available being Lafforgue's algebraic
approach~\cite{Laff08} and our iterative approach~\cite{MN-towards}.
It seems, however, that the problem of obtaining a probabilistic
proof of the existence of expanders with respect to random graphs is
more tractable, and for this reason we believe that
Question~\ref{Q:kleinberg} is a promising research direction. In
Section~\ref{sec:KF} we describe a partial result towards a positive
solution of Question~\ref{Q:kleinberg} that we obtained through
discussions with Uriel Feige; we thank him for allowing us to
include his insights here.

\section{Preliminaries}\label{sec: prem}

In this section we set notation and terminology that will be used throughout
the ensuing discussion. We also present the tools and main steps
that will be used in the proofs of Theorem~\ref{thm:main1} and
Theorem~\ref{thm:main2}.

\subsection{Graph theoretical definitions}\label{sec:simple graphs} We start by defining some basic concepts in graph theory. Most of what we describe here is standard or self-evident terminology, and in fact some of it was used in the introduction without being defined explicitly.

Recall that graphs can have parallel edges and self-loops
unless stated otherwise. Thus,  when discussing a graph $G$ it will always be
understood that  $E_G$ is a {\em multi-subset} of unordered pairs of vertices, i.e., for every $u,v\in V_G$ the unordered pair $\{u,v\}$ is allowed
to appear in $E_G$ multiple times. For $(u,v)\in V_G\times V_G$, denote by
$E_G(u,v)$ the number of times that $\{u,v\}$ appears in $E_G$. The graph $G$ is therefore determined by the integer matrix
$(E_G(u,v))_{(u,v)\in V_G\times V_G}$. A graph is called {\em simple} if it contains no self-loops and no parallel edges, which is equivalent to requiring that $E_G(u,u)=0$ for every $u\in V_G$ and $E_G(u,v)\in \{0,1\}$ for every $u,v\in V_G$. Given $S,T\subset V_G$, we denote by $E_G(S,T)$ the multi-set of all edges in $E_G$ that join a vertex in $S$ and a vertex in $T$ (hence for $(u,v)\in V_G\times V_G$ we have $E_G(u,v)=|E_G(\{u\},\{v\})|$). When $S=T$ we use the simpler notation $E_G(S)=E_G(S,S)$.

The degree of a vertex $u\in V_G$ is
$\deg_G(u)= \sum_{v\in V_G}E_G(u,v).$
Under this convention each self-loop contributes $1$ to the degree of a vertex.
For $d\in \N$, a
graph $G$ is $d$-regular if $\deg_G(u)=d$ for every $u\in V_G$.

For a graph $G$ and a symmetric function $K:V_G\times V_G\to \R$
satisfying $K(u,u)=0$ for every $u\in V_G$, when we write the sum
$\sum_{\{u,v\}\in E_G}K(u,v)$ we mean that each edge is counted
once, i.e,
\begin{align*}
\nonumber\sum_{\{u,v\}\in
E_G}K(u,v)&\eqdef\frac12\sum_{\substack{(u,v)\in V_G\times
V_G\\\{u,v\}\in E_G}}K(u,v)\\&=\frac12 \sum_{(u,v)\in V_G\times V_G}
E_G(u,v)K(u,v).
\end{align*}

If $G$ is a connected graph then the diameter of the
metric space $(V_G,d_G)$ is denoted $\diam(G)$. The one-dimensional
simplicial complex induced by a graph $G$ is denoted $\Sigma(G)$.
Thus $\Sigma(G)$ is obtained from $G$ by including the edges of $G$
as unit intervals. The geodesic distance on $\Sigma(G)$ is denoted
$d_{\Sigma(G)}$. Then
\begin{equation}\label{eq:diam complex}
\diam(G)\le \diam(\Sigma(G))\le \diam(G)+1.
\end{equation}

Given a graph $G$, a subset $C\subseteq V_G$ is called a cycle of $G$ if it is possible to write $C=\{x_1,\ldots,x_k\}$, where the vertices $x_1,\ldots,x_k\in V$ are distinct and
\begin{equation}\label{eq:cycle edges}
\{x_1,x_2\},\{x_2,x_3\},\ldots,\{x_{k-1},x_k\},\{x_k,x_1\}\in E_G.
\end{equation}
Thus a self-loop is a cycle of length $1$ and a parallel edge
induces a cycle of length $2$. By definition, $|E_G(C)|\ge |C|$ for
every cycle $C\subseteq V_G$. A cycle $C\subseteq V_G$ is said to be
an induced cycle if $|E_G(C)|=|C|$, i.e., $E_G(C)$ consists only of
the edges listed in~\eqref{eq:cycle edges}. Note that the smallest
cycle in $G$ is necessarily induced.  The girth of a graph $G$,
denoted $\girth(G)$, is the size of the smallest cycle in $G$. If a connected graph
$G$ does not contain a cycle, i.e., $G$ is a tree, then we shall
use the convention $\girth(G)=2\diam(G)$.

The normalized adjacency matrix of a $d$-regular graph $G$, denoted $A_G$, is the $|V_G|$ by $|V_G|$ symmetric stochastic matrix whose entry at $(u,v)\in V_G\times V_G$ is equal to $E_G(u,v)/d$. The decreasing rearrangement of the eigenvalues of $A_G$ is denoted $$1=\lambda_1(G)\ge \lambda_2(G)\ge \ldots\ge \lambda_{|V_G|}(G).$$



\subsection{Bi-Lipschitz embeddings}\label{sec:prem bi-lip}

A metric space $(U,d_U)$ is said to admit a bi-Lipschitz embedding with distortion at most $D\in [1,\infty)$ into a metric space $(V,d_V)$ if there exists (a scaling factor) $s\in (0,\infty)$ and $f:U\to V$ that satisfies
\begin{equation}\label{eq:def distortion}
\forall\, x,y\in U,\qquad sd_U(x,y)\le d_V\big(f(x),f(y)\big)\le Dsd_U(x,y).
\end{equation}
The infimum over those $D\in (0,\infty)$ for which $(U,d_U)$ admits a bi-Lipschitz embedding with distortion at most $D$ into $(V,d_V)$ is denoted $c_{(V,d_V)}(U,d_U)$, or simply  $c_V(U)$ if the respective metrics are clear from the context (if no such $D\in (0,\infty)$ exists then we set $c_V(U)=\infty$). For $p\in [1,\infty]$ we use the simpler notation $c_p(U)=c_{L_p}(U)$. The quantity $c_2(U)$ is known as the Euclidean distortion of $U$ and the quantity $c_1(U)$ is known as the $L_1$ distortion of $U$.

If $G$ is a connected simple graph with $|V_G|=n$ then
$c_p(\Sigma(G))\lesssim n$ for every $p\in [1,\infty]$. Indeed,
write $V_G=\{1,\ldots,n\}$ and let $e_1,\ldots,e_n\in \R^n$ be the
coordinate basis of $\R^n$. If $\{i,j\}\in E_G$ and $x\in \Sigma(G)$
is a point on the unit interval that corresponds to the edge joining
$i$ and $j$, then map $x$ to $(1-t)e_i+te_j$, where $t$ is the
distance between $x$ and $i$ in $\Sigma(G)$. This trivial embedding
can be improved: one has $c_p(\Sigma(G))\lesssim \log(n+1)$. The
estimate $c_p(V_G,d_G)\lesssim \log(n+1)$ is the classical Bourgain
embedding theorem~\cite{Bourgain-embed}, but it is simple to argue
that the same bound holds true for embeddings of the one-dimensional
simplicial complex of $G$ rather than just its vertices. Using the better
bound $c_p(\Sigma(G))\lesssim \log(n+1)$ in what follows can improve
the implicit constants in Theorem~\ref{thm:main1} and
Theorem~\ref{thm:main2}, but since we ignore constant factors here
it will suffice to use the trivial bound $c_p(\Sigma(G))\lesssim n$.

For future use we also recall that an argument from~\cite{LLR} (see
also~\cite[Sec.~1.1]{MN-towards}) shows that for every connected
$d$-regular graph $H$ and every metric space $(Z,d_Z)$,
 \begin{equation}\label{eq:distortion lower gamma}
 c_{(Z,d_Z)}\left(V_H,d_H\right)\gtrsim \frac{\log_d n}{\sqrt{\gamma(H,d_Z^2)}}.
 \end{equation}

\subsection{Nonlinear absolute spectral gaps}\label{sec:asolute gap definition}
Fix $d,n\in \N$ and suppose that $G$ is an $n$-vertex $d$-regular graph and that $(X,d_X)$ is a metric space. Define $\gamma_+(G,d_X^2)$ to be the infimum over those $\gamma_+\in (0,\infty]$ such that for every two mappings
$f,g:V_G\to X$ we have
\begin{multline}\label{eq:def gamma+}
\frac{1}{n^2}\sum_{(u,v)\in V_G\times V_G}d_X\big(f(u),g(v)\big)^2\\ \le
\frac{\gamma_+}{nd}\sum_{(u,v)\in V_G\times V_G} E_G(u,v)\cdot d_X\big(f(u),g(v)\big)^2.
\end{multline}
Comparison of~\eqref{eq:def gamma+} to~\eqref{eq:def gamma} reveals that $\gamma(G,d_X^2)\le \gamma_+(G,d_X^2)$.

 With this notation one has
$$
\gamma_+\!\left(G,d_\R^2\right)=\frac{1}{1-\max\left\{\lambda_2(G),-\lambda_{|V_G|}(G\right\}}.
$$
For this reason we think of $\gamma_+(G,d_X^2)$ as measuring the reciprocal of the nonlinear absolute spectral gap of $G$ with respect to $(X,d_X)$.

Despite the fact that Theorem~\ref{thm:main1} and Theorem~\ref{thm:main2} deal with the quantity $\gamma(\cdot,\cdot)$, for their proofs it will be very convenient to work with the quantity $\gamma_+(\cdot,\cdot)$. See~\cite{MN-towards} for more on nonlinear absolute spectral gaps, specifically Section~2.2 of~\cite{MN-towards} for information on the relation between $\gamma(\cdot,\cdot)$ and $\gamma_+(\cdot,\cdot)$.

As in~\cite{MN-towards}, it is convenient to also work with nonlinear spectral gaps of symmetric stochastic matrices. Thus, letting $M=(m_{ij})$ be an $n$ by $n$ symmetric stochastic matrix and  $(X,d_X)$ be a metric space, define $\gamma(M,d_X^2)$ to be the infimum over those $\gamma\in (0,\infty]$ such that for every $x_1,\ldots,x_n\in X$ we have
\begin{equation*}\label{eq:def gamma matrix}
\frac{1}{n^2}\sum_{i=1}^n\sum_{j=1}^n d_X(x_i,x_j)^2\le \frac{\gamma}{n} \sum_{i=1}^n\sum_{j=1}^n m_{ij} d_X(x_i,x_j)^2.
\end{equation*}
Also,
define $\gamma_+(M,d_X^2)$ to be the infimum over those $\gamma_+\in (0,\infty]$ such that for every $x_1,\ldots,x_n,y_1,\ldots,y_n\in X$ we have
\begin{equation*}
\frac{1}{n^2}\sum_{i=1}^n\sum_{j=1}^n d_X(x_i,y_j)^2\le \frac{\gamma}{n} \sum_{i=1}^n\sum_{j=1}^n m_{ij} d_X(x_i,y_j)^2.
\end{equation*}
Under these definitions, one checks that for every regular graph $G$
we have $\gamma(G,d_X^2)=\gamma(A_G,d_X^2)$ and
$\gamma_+(G,d_X^2)=\gamma_+(A_G,d_X^2)$.

\subsection{Graph products, edge completion, and Ces\`aro averages}\label{sec:products}

Fix $d_1,d_2\in \N$. Let $G_1$ be a $d_1$-regular graph and let
$G_2$ be a $d_2$-regular graph. Suppose that $|V_{G_2}|=d_1$. Then
one can construct a new graph, called the {\em zigzag product} of
$G_1$ and $G_2$ and denoted $G_1\oz G_2$. This construction is due
to Reingold, Vadhan and Wigderson~\cite{RVW}. We will not need to
recall the definition of $G_1\oz G_2$ here: all that we will use
below is that $G_1\oz G_2$ is $d_2^2$-regular,  $|V_{G_1\oz
G_2}|=|V_{G_1}|\cdot|V_{G_2}|$, and for every metric space $(X,d_X)$
we have
\begin{equation}\label{eq:zigzag sub multiplicativity}
\gamma_+\!\left(G_1\oz G_2,d_X^2\right)\le \gamma_+\!\left(G_1,d_X^2\right)\cdot \gamma_+\!\left(G_2,d_X^2\right)^2.
\end{equation}
The inequality~\eqref{eq:zigzag sub multiplicativity} is due
to~\cite{MN-towards}.

Under the same assumptions on the graphs $G_1,G_2$, i.e.,  that $G_1$ is $d_1$-regular,  $G_2$ is $d_2$-regular, and $|V_{G_2}|=d_1$, one can also construct a new graph, called the {\em replacement product} of $G_1$ and $G_2$ and denoted $G_1 \circr G_2$. This construction is due to Gromov~\cite{Gromov-filling} (see also~\cite{RVW} and~\cite[Sec~8.3]{MN-towards}). Again, we will not need to recall the definition of $G_1\circr G_2$ here: all that we will use below is that the graph $G_1\circr G_2$ is $(d_2+1)$-regular, $|V_{G_1\circr G_2}|=|V_{G_1}|\cdot|V_{G_2}|$, and for every metric space $(X,d_X)$ we have
\begin{equation}\label{eq:replacement sub multiplicativity}
\gamma_+\!\left(G_1\circr G_2,d_X^2\right)\le 3(d_2+1)\cdot\gamma_+\!\left(G_1,d_X^2\right)
\cdot \gamma_+\!\left(G_2,d_X^2\right)^2.
\end{equation}
The inequality~\eqref{eq:replacement sub multiplicativity} is due to~\cite{MN-towards}.

Fix $d\in \N$ and suppose that $G$ is a $d$-regular graph. For every
integer $D\ge d$ one can define a new graph called the $D$-{\em edge
completion} of $G$, and denoted $\mathscr{C}_D(G)$.
See~\cite[Def.~2.8]{MN-towards} for the definition of
$\mathscr{C}_D(G)$. All that we will use below is that $\mathscr{C}_D(G)$ is
$D$-regular, $V_{\mathscr{C}_D(G)}=V_G$, and for every metric space $(X,d_X)$ we have
\begin{equation}\label{eq:completion gamma+}
\gamma_+\!\left(\mathscr{C}_D(G),d_X^2\right)\le 2\gamma_+\!\left(G,d_X^2\right).
\end{equation}
The proof of~\eqref{eq:completion gamma+} is contained in~\cite[Lem.~2.9]{MN-towards}.

We will also work below with {\em Ces\`aro averages} of graphs. Given $d,m\in \N$ and a $d$-regular graph $G$, its $m$th Ces\`aro average $\A_m(G)$ is a new graph defined by $V_{\A_m(G)}=V_G$ and
$$
\forall\, (u,v)\in V_G\times V_G,\qquad E_{\A_m(G)}(u,v)\eqdef \sum_{t=0}^{m-1} d^{m-1-t} \left(A_G^t\right)_{u,v},
$$
where we recall that $A_G$ is a the normalized adjacency matrix of $G$. One checks that $\A_m(G)$ is $md^{m-1}$-regular and that the adjacency matrix of $\A_m(G)$ is the corresponding Ces\`aro average of $A_G$, i.e.,
$$
A_{\A_m(G)}=\frac{1}{m}\sum_{t=0}^{m-1} A_G^t.
$$

The following lemma will be used (twice) in Section~\ref{sec:main3}.
\begin{lemma}\label{lem:from MN plus replacement}
Fix two integers $d,n\ge 3$ and let $G$ be a $d$-regular graph with
$|V_G|=2n$. Then there exists a $3$-regular graph $G^*$ satisfying
$|V_{G^*}|=36dn=18d|V_G|$ such that for every metric space
$(X,d_X)$,
\begin{equation}\label{eq:d^4 goal}
\gamma_+\!\left(G^*,d_X^2\right)\lesssim d^4\gamma\!\left(G,d_X^2\right).
\end{equation}
\end{lemma}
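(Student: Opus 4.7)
I would build $G^*$ by composing the three constructions recalled in Section~\ref{sec:products} --- the Cesàro average $\A_m(\cdot)$, the zigzag product $\oz$, and the replacement product $\circr$. Specifically, I propose
$$
G^*\;:=\;\bigl(\A_2(G)\oz K_1\bigr)\circr C_9,
$$
where $K_1$ is any fixed $3$-regular non-bipartite graph on $2d$ vertices (such graphs exist whenever $d\ge 3$) and $C_9$ is the $9$-cycle. The graph $\A_2(G)$ is $2d$-regular on $|V_G|=2n$ vertices, so that $\A_2(G)\oz K_1$ is $3^2=9$-regular on $2n\cdot 2d=4nd$ vertices, and then $(\A_2(G)\oz K_1)\circr C_9$ is $(2+1)=3$-regular on $4nd\cdot 9=36dn=18d|V_G|$ vertices, matching the required vertex count.

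\emph{Chain of submultiplicativity inequalities.} The heart of the proof is to chain together
\begin{align*}
\gamma_+\!\bigl(G^*,d_X^2\bigr)
&\stackrel{\eqref{eq:replacement sub multiplicativity}}{\lesssim}
\gamma_+\!\bigl(\A_2(G)\oz K_1,d_X^2\bigr)\cdot\gamma_+\!\bigl(C_9,d_X^2\bigr)^2\\
&\stackrel{\eqref{eq:zigzag sub multiplicativity}}{\le}
\gamma_+\!\bigl(\A_2(G),d_X^2\bigr)\cdot\gamma_+\!\bigl(K_1,d_X^2\bigr)^2\cdot\gamma_+\!\bigl(C_9,d_X^2\bigr)^2.
\end{align*}
The Cesàro step is precisely what converts the one-sided $\gamma$ appearing on the right-hand side of~\eqref{eq:d^4 goal} into the two-sided $\gamma_+$ demanded by the above submultiplicativity estimates: the normalized adjacency of $\A_2(G)$ is $\tfrac12(I+A_G)$, whose spectrum lies in $[0,1]$, and a standard lemma from~\cite{MN-towards} gives $\gamma_+(\A_2(G),d_X^2)\lesssim\gamma(G,d_X^2)$.

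\emph{Bounding the auxiliary factors.} It remains to control $\gamma_+(C_9,d_X^2)$ and $\gamma_+(K_1,d_X^2)$ uniformly in $(X,d_X)$. For this I would invoke the crude universal estimate
$$
\gamma_+\!\bigl(\Gamma,d_X^2\bigr)\;\lesssim\;|V_\Gamma|^2,
$$
valid for every connected non-bipartite graph $\Gamma$ and every metric space $(X,d_X)$. The proof is a standard alternating-walk argument: because $\Gamma$ is non-bipartite, any two vertices are joined by an alternating walk of length $O(|V_\Gamma|)$ (of the desired parity), and Cauchy--Schwarz combined with a symmetric random-walk / flow decomposition so that each edge of $\Gamma$ carries a balanced share of the pair-to-pair ``flow" yields the stated bound. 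Substituting $\Gamma=C_9$ gives a universal constant, and $\Gamma=K_1$ gives $\gamma_+(K_1,d_X^2)\lesssim(2d)^2\asymp d^2$. Multiplying through,
$$
\gamma_+(G^*,d_X^2)\;\lesssim\;d^4\cdot\gamma(G,d_X^2),
$$
which implies~\eqref{eq:d^4 goal} since $\gamma\le\gamma_+$ always.

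\emph{Main obstacle.} The only non-mechanical ingredient is the universal bound $\gamma_+(\Gamma,d_X^2)\lesssim|V_\Gamma|^2$: naively routing each ordered pair $(u,v)$ along a single alternating walk could make a few edges absorb a disproportionate amount of the flow, losing an extra factor of $|V_\Gamma|$. One has to choose the walks symmetrically (e.g.\ as a uniform random walk of an appropriate length, averaged over endpoints) so that Cauchy--Schwarz does not leak additional factors. Once this universal estimate is in hand, the lemma is a direct composition of~\eqref{eq:zigzag sub multiplicativity}, \eqref{eq:replacement sub multiplicativity}, and the Cesàro-to-$\gamma_+$ lemma of~\cite{MN-towards}.
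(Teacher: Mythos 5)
Your construction is a legitimate variant of the paper's, built on the same core pipeline (chain the zigzag and replacement submultiplicativity bounds, bound the auxiliary factors, and convert $\gamma$ to $\gamma_+$ at the start), but it differs in two respects, one of which introduces an unnecessary gap.

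The paper replaces your Ces\`aro step by Lemma~2.6 of~\cite{MN-towards}, which turns the $d$-regular $G$ on $2n$ vertices into a $4d$-regular graph $G'$ on $n$ vertices satisfying $\gamma_+(G',d_X^2)\le 8\gamma(G,d_X^2)$; this vertex halving, not a Ces\`aro average, is what the paper uses to pass from $\gamma$ to $\gamma_+$. Your route keeps the vertex set and uses $\A_2(G)$, which is $2d$-regular on $2n$ vertices. The numerology then works out either way ($2n\cdot 2d=n\cdot 4d=4nd$, and the final graph has $36dn$ vertices), and your fact $\gamma_+(\A_2(G),d_X^2)\lesssim\gamma(G,d_X^2)$ is \emph{true}: since $A_{\A_2(G)}=\tfrac12(I+A_G)$ has self-loop weight $\ge\tfrac12$ at every vertex, the two-point triangle inequality $d_X(f(u),g(v))^2\le 2d_X(f(u),f(v))^2+2d_X(f(v),g(v))^2$ together with stochasticity converts the one-sided Poincar\'e inequality for $f$ alone into the two-sided one, losing only universal constants. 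However, this is not the statement of Lemma~2.6 in~\cite{MN-towards}, and you should not attribute it there without checking; you should supply the (short) self-loop argument rather than cite "a standard lemma."

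The second gap is your reliance on a purported universal estimate $\gamma_+(\Gamma,d_X^2)\lesssim|V_\Gamma|^2$ for arbitrary connected non-bipartite regular $\Gamma$. This estimate does hold (your worry about unbalanced routing is overcautious: any pair can be routed along a near-simple odd walk of length $O(|V_\Gamma|)$, and since each pair contributes $O(1)$ to the congestion of each directed edge, even crude routing gives the $|V_\Gamma|^2$ bound with constant depending only on the regularity), but you do not need such generality. The paper simply takes the auxiliary graph to be the self-looped cycle $C_{4d}^\circ$ and cites the explicit bound $\gamma_+(C_{4d}^\circ,d_X^2)\le 192d^2$ from Lemma~2.1 of~\cite{MN-towards}. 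Your argument becomes airtight and requires no new lemma if you take $K_1=C_{2d}^\circ$ (the $2d$-cycle with one self-loop at each vertex, which is $3$-regular and non-bipartite), for which the very same Lemma~2.1 gives $\gamma_+(C_{2d}^\circ,d_X^2)\lesssim d^2$. With those two repairs your route is a correct, mildly different proof of the lemma.
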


\begin{proof}
By Lemma~2.6 in~\cite{MN-towards}, there exists a $4d$-regular graph $G'$ with $|V_{G'}|=n$ such that $\gamma_+(G',d_X^2)\le 8\gamma(G,d_X^2)$ for every metric space $(X,d_X)$. Let $C_{4d}^\circ$ be the cycle of length $4d$ in which each vertex has exactly one self-loop (thus $C_{4d}^\circ$ is a $3$-regular graph). Since $G'$ is  a $4d$-regular graph, we may form the zigzag product $G''\oz C_{4d}^\circ$, which is a $9$-regular graph with $4dn$ vertices. By~\cite[Lem.~2.1]{MN-towards} we have $\gamma_+(C_{4d}^\circ,d_X^2)\le 192 d^2$. It therefore follows from~\eqref{eq:zigzag sub multiplicativity} that
\begin{equation}\label{d^4 easy}
\gamma_+\!\left(G'',d_X^2\right)\le 8\gamma(G,d_X^2)\cdot (192 d^2)^2\lesssim d^4 \gamma(G,d_X^2).
\end{equation}
Now, let $C_9$ be the cycle of length $9$ (thus $C_9$ is a
$2$-regular graph with $9$ vertices). Define $G^*$ to be the replacement product $G^*=G''\circr C_9$, which is a $3$-regular graph with $36dn$ vertices. By~\cite[Lem.~2.1]{MN-towards} we have $\gamma_+(C_9,d_X^2)\le 648$, so~\eqref{eq:d^4 goal} follows from~\eqref{d^4 easy} and~\eqref{eq:replacement sub multiplicativity}.
\end{proof}

\subsubsection{A zigzag iteration}\label{sec:iteration lemma} Theorem~\ref{thm:zigzag tool} below is a (much simpler)
variant of the iterative procedure by which we constructed
super-expanders in~\cite{MN-towards}, building on the zigzag
iteration of Reingold, Vadhan and Wigderson~\cite{RVW}.

\begin{theorem}\label{thm:zigzag tool} Fix $K\in [1,\infty)$ and two integers $n,d\ge 3$ with $n\ge d^3$. Suppose that $(X,d_X)$ is a metric space with the property that for every regular graph $G$ we have
\begin{equation}\label{eq:spectral calculus condition}
\gamma_+\!\left(\A_m(G),d_X^2\right)\le K\max\left\{1,\frac{\gamma_+(G,d_X^2)}{m}\right\},
\end{equation}
where
\begin{equation}\label{eq:def m}
m\eqdef \left\lfloor\frac{\log n}{3\log d}\right\rfloor.
\end{equation}
Suppose further that there exists a $d$-regular graph $H$ with $|V_H|=n$ such that
\begin{equation}\label{eq:base graph assumption}
\gamma_+\!\left(H,d_X^2\right)\le \sqrt{\frac{m}{2K}}.
\end{equation}
Then there exists a sequence of $3$-regular graphs
$\{G_j\}_{j=1}^\infty$ satisfying
\begin{equation}\label{eq:Gj cardinality}
\forall\, j\in \N,\qquad \left|V_{G_j}\right|=9d^2n^j,
\end{equation}
and
\begin{equation}\label{eq:Gj gamma+ vanilla}
\forall\, j\in \N,\qquad \gamma_+\!\left(G_j,d_X^2\right)\lesssim d^8K \gamma_+\!\left(H,d_X^2\right)^2.
\end{equation}
\end{theorem}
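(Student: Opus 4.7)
The plan is to mimic the zigzag iteration of Reingold, Vadhan and Wigderson~\cite{RVW} in the nonlinear form of~\cite{MN-towards}, with the critical substitution of the Ces\`aro averaging operation $\A_m(\cdot)$ in place of ordinary graph squaring. In the nonlinear setting one cannot amplify $\gamma_+$ by squaring, so the spectral calculus hypothesis~\eqref{eq:spectral calculus condition} is used as a substitute: it yields a $1/m$ multiplicative improvement of $\gamma_+$ under $\A_m$. The calibration~\eqref{eq:base graph assumption} is engineered so that a single iteration step leaves $\gamma_+$ bounded, hence $\gamma_+$ remains bounded as the vertex count grows by a factor of $n$ per step.

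I would build inductively a sequence $\{F_j\}_{j\in\N}$ of $d^2$-regular graphs satisfying $|V_{F_j}|\asymp n^j$ and the invariant
\[
\gamma_+\!\left(F_j,d_X^2\right)\;\leq\;2K\gamma_+\!\left(H,d_X^2\right)^2.
\]
The recursion is
\[
F_{j+1}\;\eqdef\;\mathscr{C}_n\bigl(\A_m(F_j)\bigr)\oz H,
\]
starting from a suitable $F_1$ obtained by applying the same operation to $H$ itself. The degree of $\A_m(F_j)$ is $m(d^2)^{m-1}$, which is at most $n$ by the choice~\eqref{eq:def m} and the hypothesis $n\geq d^3$; consequently the edge completion $\mathscr{C}_n(\cdot)$ is well-defined and raises the degree to exactly $n=|V_H|$, as is required for the ensuing zigzag with $H$ to be legal. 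The zigzag restores the degree to $d^2$ and multiplies the vertex count by $n$. Chaining~\eqref{eq:spectral calculus condition},~\eqref{eq:completion gamma+} and~\eqref{eq:zigzag sub multiplicativity} yields the contraction
\[
\gamma_+\!\left(F_{j+1},d_X^2\right)\;\leq\;2K\max\!\left\{1,\,\frac{\gamma_+(F_j,d_X^2)}{m}\right\}\gamma_+\!\left(H,d_X^2\right)^2,
\]
and~\eqref{eq:base graph assumption} reads $2K\gamma_+(H,d_X^2)^2\leq m$, so the invariant is self-reproducing once it holds for $F_1$ (which it does, since $\gamma_+(H,d_X^2)\leq\sqrt{m/(2K)}\leq m$ implies $\gamma_+(F_1,d_X^2)\leq 2K\gamma_+(H,d_X^2)^2$).

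The desired $3$-regular graphs $G_j$ are then obtained by applying Lemma~\ref{lem:from MN plus replacement} to $F_j$. Since $F_j$ is $d^2$-regular with $\asymp n^j$ vertices, that lemma produces a $3$-regular graph of order $\asymp d^2 n^j$ with nonlinear absolute spectral gap $\lesssim (d^2)^4\gamma_+(F_j,d_X^2)\lesssim d^8 K\gamma_+(H,d_X^2)^2$, proving~\eqref{eq:Gj gamma+ vanilla}. Hitting the exact vertex count $9d^2n^j$ in~\eqref{eq:Gj cardinality} requires a careful choice of the initial graph $F_1$ and a routine parity adjustment so that Lemma~\ref{lem:from MN plus replacement} can be applied.

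The conceptual crux is the one-line contraction inequality displayed above: it is obtained by chaining the three sub-multiplicative estimates and applying~\eqref{eq:base graph assumption}. The main technical obstacle lies in combinatorial bookkeeping—arranging degrees so that every zigzag product is legal, ensuring parity of vertex counts when invoking Lemma~\ref{lem:from MN plus replacement}, verifying the degree bound $m(d^2)^{m-1}\leq n$ at every stage, and hitting the prescribed cardinality $9d^2 n^j$ exactly. These are routine but attention-intensive details, and they will account for most of the length of the actual proof.
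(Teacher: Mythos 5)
Your core iteration is exactly the paper's: build a sequence of $d^2$-regular graphs via $W_{j+1}=\mathscr{C}_n(\A_m(W_j))\oz H$, use~\eqref{eq:spectral calculus condition}, \eqref{eq:completion gamma+} and~\eqref{eq:zigzag sub multiplicativity} to get the contraction, and use~\eqref{eq:base graph assumption} to make the invariant $\gamma_+(W_j,d_X^2)\le 2K\gamma_+(H,d_X^2)^2$ self-reproducing. That part of your proposal is correct and identical in spirit and substance.

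Where you diverge — and where the proposal does not quite close — is the base case and the final degree-reduction step. The paper starts from $W_1\eqdef\mathscr{C}_{d^2}(H)$, which has $n$ vertices; your ``apply the same operation to $H$ itself'' would give $n^2$ vertices, shifting all subsequent cardinalities by a factor of $n$. More importantly, you reduce to degree $3$ by invoking Lemma~\ref{lem:from MN plus replacement}. That lemma takes a $D$-regular graph on an \emph{even} number $2N$ of vertices and produces a $3$-regular graph on $18DN=36DN/2$ vertices; applied to a $d^2$-regular $F_j$ on $n^j$ vertices it would yield $18d^2n^j$ vertices, not the $9d^2n^j$ claimed in~\eqref{eq:Gj cardinality}, and it is only legal when $n^j$ is even, which is not guaranteed (the hypotheses only give $n\ge d^3$, so $n$ may be odd and then $n^j$ is odd for all $j$). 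Your remark that this is a ``routine parity adjustment'' glosses over a genuine obstruction: there is no obvious massaging of $F_1$ that makes $n^j$ even for all $j$ while preserving the exact power $n^j$.

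The paper avoids both issues by performing the degree reduction directly: first $W_j\oz C_{d^2}^\circ$ (zigzag with the $3$-regular self-looped $d^2$-cycle; legal since $W_j$ has degree $d^2=|V_{C_{d^2}^\circ}|$, no parity needed), giving a $9$-regular graph on $d^2n^j$ vertices with $\gamma_+\lesssim d^8K\gamma_+(H,d_X^2)^2$ by~\eqref{eq:zigzag sub multiplicativity} and $\gamma_+(C_{d^2}^\circ,d_X^2)\le 12d^4$; then a replacement product $\circr\,C_9$, legal because the degree is now $9=|V_{C_9}|$, yielding a $3$-regular graph on exactly $9d^2n^j$ vertices with a further bounded loss via~\eqref{eq:replacement sub multiplicativity}. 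You should replace the appeal to Lemma~\ref{lem:from MN plus replacement} with these two explicit products (and set $W_1=\mathscr{C}_{d^2}(H)$) to get~\eqref{eq:Gj cardinality} on the nose and without any parity hypothesis.
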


\begin{proof}
We first claim that for every $j\in \N$ there exists a
$d^2$-regular graph $W_j$ such that $|V_{W_j}|=n^j$ and
\begin{equation}\label{eq:W_j gamma+}
\gamma_+\!\left(W_j,d_X^2\right)\le 2K\gamma_+\!\left(H,d_X^2\right)^2.
\end{equation}
The proof of this statement is by induction on $j$. Define $W_1$ to
be the $d^2$-edge completion of $H$, i.e,
$$
W_1\eqdef
\mathscr{C}_{d^2}(H).
$$
Supposing that $W_j$ has been defined, the Ces\`aro average
$\A_m(W_j)$ is an $md^{2(m-1)}$-regular graph with
$|V_{\A_m(W_j)}|=|V_{W_j}|=n^j$. Recalling~\eqref{eq:def m}, we have
$md^{2(m-1)}\le n$. We can therefore form the edge completion
$\mathscr{C}_n\left(\A_m(W_j)\right)$, which is an $n$-regular
graph, and consequently the following zigzag product is well
defined.
\begin{equation}\label{eq:Wj+1}
W_{j+1}\eqdef \mathscr{C}_n\left(\A_m(W_j)\right)\oz H.
\end{equation}
The graph $W_{j+1}$ is $d^2$-regular and
$|V_{W_{j+1}}|=n|V_{\mathscr{C}_n\left(\A_m(W_j)\right)}|=n^{j+1}$.
Moreover,

\begin{eqnarray*}
\gamma_+\!\left(W_{j+1},d_X^2\right)&\stackrel{\eqref{eq:zigzag sub multiplicativity}\wedge
\eqref{eq:Wj+1}}{\le}& \gamma_+\!\left(\mathscr{C}_n\left(\A_m(W_j)\right),d_X^2\right)\cdot
\gamma_+\!\left(H,d_X^2\right)^2\\
&\stackrel{\eqref{eq:completion gamma+}}{\le}& 2\gamma_+\!\left(\A_m(W_j),d_X^2\right)\cdot
\gamma_+\!\left(H,d_X^2\right)^2\\
&\stackrel{\eqref{eq:spectral calculus condition}}{\le}& 2K\max\left\{1,\frac{\gamma_+(W_j,d_X^2)}{m}\right\}\cdot \gamma_+\!\left(H,d_X^2\right)^2\\
&\stackrel{\eqref{eq:W_j gamma+}}{\le} & 2K\max\left\{1,\frac{2K\gamma_+\!\left(H,d_X^2\right)^2}{m}\right\}
\cdot \gamma_+\!\left(H,d_X^2\right)^2\\
&\stackrel{\eqref{eq:base graph assumption}}{=}& 2K\gamma_+\!\left(H,d_X^2\right)^2.
\end{eqnarray*}
This completes the inductive construction of $\{W_j\}_{j=1}^\infty$.

Next, let $C_{d^2}^\circ$ be the cycle of length $d^2$ in which each
vertex has exactly one self-loop. We can form the zigzag product $W_j\oz
C_{d^2}^\circ$, which is a $9$-regular graph with $d^2n^j$ vertices.
By~\cite[Lem.~2.1]{MN-towards} we have
$\gamma_+\!\left(C_{d^2}^\circ,d_X^2\right)\le 12d^4$, so we deduce
from~\eqref{eq:zigzag sub multiplicativity} and~\eqref{eq:W_j
gamma+} that
\begin{equation}\label{eq:d8}
\gamma_+\!\left(W_j\oz
C_{d^2}^\circ,d_X^2\right)\le 2K\gamma_+\!\left(H,d_X^2\right)^2\cdot (12 d^4)^2\lesssim d^8K\gamma_+\!\left(H,d_X^2\right)^2.
\end{equation}

Recalling that $C_9$ denotes the simple cycle of length $9$, define $G_j$ to be the
replacement product
$$
G_j\eqdef \left(W_j\oz
C_{d^2}^\circ\right)\circr C_9.
$$
Then $G_j$ is a $3$-regular graph and $|V_{G_j}|=9d^2n^j$.
By~\cite[Lem.~2.1]{MN-towards} we have $\gamma_+(C_9,d_X^2)\le 648$.
It therefore follows from~\eqref{eq:replacement sub
multiplicativity} and~\eqref{eq:d8} that the desired
estimate~\eqref{eq:Gj gamma+ vanilla} holds true.
\end{proof}

\begin{remark}\label{rem:calculus}
The condition~\eqref{eq:spectral calculus condition} is called a
{\em nonlinear spectral calculus inequality}; see~\cite{MN-towards}
for an explanation of this terminology. Looking ahead, we will rely
here on the fact that such a nonlinear spectral calculus inequality
is available for Hadamard spaces, as proved in~\cite{MN13-ext}.
\end{remark}

\subsection{$CAT(0)$ and $CAT(1)$ spaces}\label{sec:CAT} We need to briefly recall basic definitions related to curvature upper bounds in the sense of Aleksandrov; see~\cite{BH-book} for much more on this topic.


A  metric space $(X,d_X)$ is a $CAT(1)$ space if it satisfies the following conditions. First, for every $x,y\in X$ with $d_X(x,y)<\pi$ there exists a geodesic joining $x$ to $y$, i.e., a curve $\phi:[0,1]\to X$ that satisfies $d_X(\phi(t),x)=td_X(x,y)$ and $d_X(\phi(t),y)=(1-t)d_X(x,y)$
 for every $t\in [0,1]$. Suppose that $x,y,z\in X$ satisfy
 $$
 d_X(x,y)+d_X(y,z)+d_X(z,x)<2\pi,
 $$
and that $\phi_{x,y},\phi_{y,z},\phi_{z,x}:[0,1]\to X$ are geodesics
joining $x$ to $y$, $y$ to $z$, and $z$ to $x$, respectively. Let
$S^2$ be the unit Euclidean sphere in $\R^3$, and let $d_{S^2}$
denote the geodesic metric on $S^2$ (thus the diameter of $S^2$
equals $\pi$ under this metric). As explained in~\cite{BH-book},
there exist $a,b,c\in S^2$ such that $d_{S^2}(a,b)=d_X(x,y)$,
$d_{S^2}(b,c)=d_X(y,z)$ and $d_{S^2}(c,a)=d_X(z,x)$. Let
$\f_{a,b},\f_{b,c},\f_{c,a}:[0,1]\to S^2$ be geodesics joining $a$
to $b$, $b$ to $c$, and $c$ to $a$, respectively. Then the remaining
requirement in the definition of a $CAT(1)$ space is that for every
$s,t\in [0,1]$ we have $d_X(\phi_{x,y}(s),\phi_{y,z}(t))\le
d_{S^2}(\f_{a,b}(s),\f_{b,c}(t))$.

Following Gromov~\cite{Gro01,Gromov-random-group} and
Kondo~\cite{Kondo}, we shall now describe a simple way to obtain
$CAT(1)$ spaces from graphs. Suppose that $\F$ is a family of
connected graphs that satisfies
\begin{equation}\label{eq:def R_F}
R_\F\eqdef \sup_{G\in \F} \frac{\diam(G)}{\girth(G)}<\infty.
\end{equation}
Define a metric space $(U_\F,d_\F)$ as follows. $U_\F$ is the formal
disjoint union of the one-dimensional simplicial complexes
$\{\Sigma(G)\}_{G\in \F}$, i.e.,
\begin{equation}\label{eq:def U_F}
U_\F\eqdef \bigsqcup_{G\in \F}\Sigma(G).
\end{equation}
The metric $d_\F:U_\F\times U_\F\to [0,\infty)$ is defined by
\begin{equation}\label{eq:def metric on U_F}
d_\F(x,y)\eqdef \left\{
\begin{array}{ll} \frac{2\pi d_{\Sigma(G)}(x,y)}{\girth(G)}&\mathrm{if}\ \exists G\in \F\  \mathrm{such\ that}\  x,y\in \Sigma(G),\\
2\pi\left(R_\F+1\right)&\mathrm{otherwise}. \end{array}\right.
\end{equation}
$d_\F$ is indeed a metric because for every $G\in \F$ and every $x,y\in \Sigma(G)$,
\begin{multline}\label{eq:rescaled diameter bound}
\frac{2\pi d_{\Sigma(G)}(x,y)}{\girth(G)}\le
\frac{2\pi\diam(\Sigma(G))}{\girth(G)}\stackrel{\eqref{eq:diam
complex}}{\le}
\frac{2\pi\left(\diam(G)+1\right)}{\girth(G)}\\\stackrel{\eqref{eq:def
R_F}}{\le}
\frac{2\pi\left(R_\F\girth(G))+1\right)}{\girth(G)}\le
2\pi\left(R_\F+1\right).
\end{multline}

We claim that $(U_\F,d_\F)$ is a $CAT(1)$ space. Indeed, since for
distinct $G,H\in \F$ we have $d_\F(\Sigma(G),\Sigma(H))>2\pi$ and
for every $G\in \F$ the metric space $(\Sigma(G),2\pi
d_{\Sigma(G)}/\girth(G))$ is geodesic, every $x,y\in U_\F$ with
$d_\F(x,y)<\pi$ can be joined by a geodesic. Moreover, if $x,y,z\in
U_\F$ satisfy $d_\F(x,y)+d_\F(y,z)+d_\F(z,x)<2\pi$ then necessarily
$x,y,z\in \Sigma(G)$ for some $G\in \F$, in which case we have
\begin{equation}\label{eq:<girth}
d_{\Sigma(G)}(x,y)+d_{\Sigma(G)}(y,z)+d_{\Sigma(G)}(z,x)<\girth(G).
\end{equation}
It follows that
\begin{equation}\label{eq:girth/2}
\max\left\{d_{\Sigma(G)}(x,y),d_{\Sigma(G)}(y,z),d_{\Sigma(G)}(z,x)\right\}<\frac{\girth(G)}{2}.
\end{equation}
Indeed, if~\eqref{eq:girth/2} fails then without loss of generality
we may assume that $d_{\Sigma(G)}(z,x)\ge \girth(G)/2$. By the
triangle inequality we would then have
$d_{\Sigma(G)}(x,y)+d_{\Sigma(G)}(y,z)\ge d_{\Sigma(G)}(z,x)\ge
\girth(G)/2$, so that~\eqref{eq:<girth} would be violated.
By~\eqref{eq:girth/2} the geodesic triangle whose vertices are
$x,y,z$ is contained (isometrically) in a metric tree, and since
metric trees are easily seen to be $CAT(1)$ spaces
(see~\cite[Sec.~II.1.15]{BH-book}), this completes the verification
that $(U_\F,d_\F)$ is a $CAT(1)$ space.

A metric space $(X,d_X)$ is said to be a $CAT(0)$ space if every two
points $x,y\in X$ can be joined by a geodesic, and for every
$x,y,z\in X$, every geodesic $\phi:[0,1]\to X$ with $\phi(0)=y$ and
$\phi(1)=z$, and every $t\in [0,1]$ we have
\begin{equation}\label{eq:t-CAT0}
d_X(x,\phi(t))^2\le (1-t)d_X(x,y)^2+td_X(x,z)^2-t(1-t)d_X(y,z)^2.
\end{equation}
A Hadamard space is a complete $CAT(0)$ space, in which case it
suffices to require the validity of~\eqref{eq:t-CAT0} for
$t=\frac12$.

\subsubsection{Nonlinear spectral calculus for $CAT(0)$
spaces}\label{sec:calculus CAT(0)} In~\cite{MN13-ext} we proved that
$CAT(0)$ metric spaces satisfy a nonlinear spectral calculus
inequality such as~\eqref{eq:spectral calculus condition}, a tool
that will be used crucially in what follows. This is a special case
of a more general result that is proved in~\cite{MN13-ext}, but we
will not formulate it here in order to avoid introducing terminology
that is not needed for the present context (we will use the zigzag
iteration of Theorem~\ref{thm:zigzag tool} {\em only} for $CAT(0)$
spaces).

\begin{theorem}[\cite{MN13-ext}]\label{thm:MNext} Let $(X,d_X)$
be a $CAT(0)$ space. Then for every $m,n\in \N$ and every $n$ by $n$
symmetric and stochastic matrix $M$,
\begin{equation}\label{eq:from MN13}
\gamma_+\!\left(\frac{1}{m}\sum_{t=0}^{m-1}M^t,d_X^2\right)\lesssim \max\left\{1,\frac{\gamma_+(M,d_X^2)}{m}\right\},
\end{equation}
\end{theorem}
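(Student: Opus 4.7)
The plan is to reduce the inequality to a martingale-type argument that exploits Sturm's barycenter theory in $CAT(0)$ spaces. The quantity $\gamma_+(M,d_X^2)$ is a nonlinear Poincar\'e inequality controlling the ``variance'' of an $X$-valued pair of functions by one step of averaging, and for $CAT(0)$ targets the averaging operator admits a natural nonlinear generalization through barycenters.

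First, I would introduce the nonlinear averaging operator $P_M$ on functions $f:\{1,\ldots,n\}\to X$ by letting $(P_M f)(i)$ be the Sturm barycenter (Fr\'echet mean) of the probability measure $\sum_j m_{ij}\delta_{f(j)}$. The crucial $CAT(0)$ input is Sturm's variance inequality: for every $y\in X$ and every $i$,
$$\sum_{j=1}^n m_{ij}\,d_X\bigl(y,f(j)\bigr)^2 \;\geq\; d_X\bigl(y,(P_M f)(i)\bigr)^2+\sum_{j=1}^n m_{ij}\,d_X\bigl((P_M f)(i),f(j)\bigr)^2.$$
This is the nonlinear substitute for the Hilbert-space identity separating a second moment into a squared mean plus a variance term, and it is essentially the only $CAT(0)$-specific ingredient.

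Second, given $f,g:\{1,\ldots,n\}\to X$ (the two functions appearing in the definition of $\gamma_+$ for the Ces\`aro average), I would form the two trajectories $f_t=P_M^t f$ and $g_t=P_M^t g$, and telescope Sturm's inequality along the sum $\tfrac{1}{m}\sum_{t=0}^{m-1}M^t$. At each time $t$, Sturm's inequality produces both a one-step ``Dirichlet form'' for the pair $(f_t,g_t)$ and a variance-contraction term measuring how far $(f_{t+1},g_{t+1})$ has moved toward the time-$t$ barycenter. Summing over $t=0,\ldots,m-1$, the Dirichlet-form contributions assemble exactly into the Ces\`aro Dirichlet form appearing in the definition of $\gamma_+\bigl(\tfrac{1}{m}\sum_t M^t,d_X^2\bigr)$, while the variance-contraction terms quantify how much pairwise distance variance decreases over the $m$ iterations.

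Third, the assumption $\gamma_+(M,d_X^2)\eqdef\gamma$ enters by bounding the variance of $(f_t,g_t)$ at each intermediate time by $\gamma$ times its one-step Dirichlet form. Combining this with the telescoped Sturm decomposition yields a dichotomy: either the starting variance $\tfrac{1}{n^2}\sum_{i,j}d_X(f(i),g(j))^2$ is already comparable to the total Ces\`aro Dirichlet form (so $\gamma_+$ of the average is $O(1)$), or the variance must have decayed by a factor comparable to $m$ over the iteration (so $\gamma_+$ of the average is $O(\gamma/m)$). Taking the maximum yields~\eqref{eq:from MN13}. The main obstacle is that the operator $P_M$ is genuinely nonlinear, so $P_M^t$ is \emph{not} the barycenter operator associated with the linear matrix $M^t$, and naive spectral decomposition is unavailable. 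One must absorb the discrepancy between iterating $P_M$ and averaging $M^t$ through repeated applications of Sturm's inequality in a delicate bookkeeping argument, which is precisely where the ``nonlinear martingale'' machinery of~\cite{MN13-ext} is required: the sequence $(f_t)$ is viewed as a discrete-time $X$-valued martingale, and the $CAT(0)$ quadrilateral inequality plays the role of the $L^2$-orthogonality of martingale increments. Extracting the sharp linear factor of $m$ (rather than a weaker quantitative gain) is the substantive part, and is the reason the argument does not reduce to the elementary linear spectral calculus for Ces\`aro averages of self-adjoint contractions.
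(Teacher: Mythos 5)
The paper does not prove this theorem; it is stated with the citation to~\cite{MN13-ext} and used as a black box, so there is no proof in this paper to compare against. The ingredients you name --- Sturm's barycentric variance inequality, a nonlinear martingale structure, and the CAT(0) analogue of $L^2$-orthogonality of increments --- are the right family of tools and do appear in~\cite{MN13-ext}, so you are reading the citation correctly at the level of strategy.

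The concrete gap is in your second step. Telescoping Sturm's inequality along the iterated barycenters $f_t = P_M^t f$, $g_t = P_M^t g$ does not make ``the Dirichlet-form contributions assemble exactly into the Ces\`aro Dirichlet form.'' The Ces\`aro Dirichlet form weights $d_X(f(i),g(j))^2$ between the \emph{original} values by $\tfrac{1}{m}\sum_t (M^t)_{ij}$, whereas the one-step Dirichlet forms produced by Sturm's inequality at stage $t$ involve squared distances among the iterates $f_t,g_t$, and in a CAT(0) space there is no identity linking $(P_M^t f)(i)$ to the barycenter of $\sum_j (M^t)_{ij}\delta_{f(j)}$ --- these coincide only in Hilbert space. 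The ``delicate bookkeeping'' you defer to is exactly this missing comparison; it is not bookkeeping but the substance of the theorem, and it is also where the sharp linear factor of $m$ has to come from. Until you replace the exact-assembly claim with a concrete mechanism --- for instance by building the nonlinear martingale from conditional barycenters along trajectories of the Markov chain generated by $M$, rather than by iterating $P_M$ --- the argument does not close.
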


\subsection{The Euclidean cone}\label{sec:euclidean cone} Let $(X,d_X)$ be a metric space. The Euclidean cone over $X$, denoted $\cone(X,d_X)$ or simply
$\cone(X)$ when the metric on $X$ is clear from the context, is
defined to be the completion of $(0,\infty)\times X$ under the
metric
\begin{equation}\label{eq:def cone}
d_{\cone(X)}\big((s,x),(t,y)\big)\eqdef \sqrt{s^2+t^2-2st\cos\left(\min\{\pi,d_X(x,y)\}\right)}.
\end{equation}
This definition is due to~\cite{Ber-Cone}; see also the exposition
in~\cite{ABN86} and~\cite{BH-book} (in particular, the fact
that~\eqref{eq:def cone} satisfies the triangle inequality is proved
in Proposition~5.9 of~\cite[Chapter~I.5]{BH-book}). When $X$ itself
is complete, the completion of $(0,\infty)\times X$ under the metric
defined in~\eqref{eq:def cone} amounts to extending the
definition~\eqref{eq:def cone} to $[0,\infty)\times X$ and
identifying all of the points in $\{0\}\times X$ (i.e., adding one
additional point to $(0,\infty)\times X$ as the ``cusp" of the
cone). The results that are discussed below pass immediately from a
space to its completion, so we will mostly ignore this one-point
completion in the ensuing arguments.

Observe that~\eqref{eq:def cone} can be rewritten as follows
\begin{multline}\label{eq:1-cos identity}
d_{\cone(X)}\big((s,x),(t,y)\big)\\=\sqrt{(s-t)^2+2st\left(1-\cos\left(\min\{\pi,d_X(x,y)\}\right)\right)}.
\end{multline}
For the sake of later comparisons between cones, we record here the following very simple fact.
\begin{fact}\label{fact:easy}
Fix $D\in [1,\infty)$ and two metric space $(X,d_X)$ and $(Y,d_Y)$. Suppose that $f:X\to Y$ satisfies
\begin{equation}\label{eq:D-bilip}
\forall\, x,y\in X,\qquad d_X(x,y)\le d_Y(f(x),f(y))\le Dd_X(x,y).
\end{equation}
 Then for every distinct $(s,x),(t,y)\in (0,\infty)\times X$ we have
\begin{equation}\label{eq:lif D-bilip}
1\le \frac{d_{\cone(Y)}\big((s,f(x)),(t,f(y))\big)}{d_{\cone(X)}\big((s,x),(t,y)\big)}\le D.
\end{equation}
\end{fact}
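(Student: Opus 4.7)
The plan is to work directly from the explicit formula (3.41), rewritten via~\eqref{eq:1-cos identity} as
\[
d_{\cone(X)}\big((s,x),(t,y)\big)^2=(s-t)^2+2st\bigl(1-\cos(a)\bigr),\qquad a\eqdef\min\{\pi,d_X(x,y)\},
\]
and analogously with $b\eqdef\min\{\pi,d_Y(f(x),f(y))\}$ for $\cone(Y)$. The hypothesis~\eqref{eq:D-bilip} at the level of clamped distances gives $a\le b\le Da$: indeed, whether $d_X(x,y)\le\pi$ or not, one checks directly that $\min\{\pi,Dd_X(x,y)\}\le D\min\{\pi,d_X(x,y)\}$, and $\min\{\pi,d_Y(f(x),f(y))\}\ge\min\{\pi,d_X(x,y)\}$ since $d_Y(f(x),f(y))\ge d_X(x,y)$.

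For the lower bound $d_{\cone(Y)}\ge d_{\cone(X)}$, I would use the monotonicity of $\cos$ on $[0,\pi]$: from $a\le b\le\pi$ it follows that $1-\cos(b)\ge 1-\cos(a)$, and since $2st\ge 0$ this yields the termwise comparison of the two squared distances. This handles the case $x\ne y$; if $x=y$ then $f(x)=f(y)$, so $a=b=0$ and both squared distances collapse to $(s-t)^2$.

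For the upper bound, the key step is to establish
\[
1-\cos(b)\le D^2\bigl(1-\cos(a)\bigr),
\]
after which the desired inequality $d_{\cone(Y)}^2\le D^2 d_{\cone(X)}^2$ follows upon noting that $(s-t)^2\le D^2(s-t)^2$ since $D\ge 1$, and combining. Via the half-angle identity $1-\cos(\theta)=2\sin^2(\theta/2)$, the display above reduces to $\sin(b/2)\le D\sin(a/2)$. Since $a,b\in[0,\pi]$ we have $a/2,b/2\in[0,\pi/2]$, where the function $t\mapsto \sin(t)/t$ is decreasing; together with $a\le b$ this gives
\[
\frac{\sin(b/2)}{b/2}\le \frac{\sin(a/2)}{a/2},
\]
hence $\sin(b/2)\le (b/a)\sin(a/2)\le D\sin(a/2)$, using $b\le Da$ in the final step. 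Taking square roots of the resulting inequality $d_{\cone(Y)}^2\le D^2 d_{\cone(X)}^2$ completes the proof. There is no real obstacle; the only mildly delicate point is the verification that the clipping at $\pi$ preserves the bi-Lipschitz control, which the argument $\min\{\pi,Dt\}\le D\min\{\pi,t\}$ takes care of in one line.
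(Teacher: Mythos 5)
Your proof is correct and follows essentially the same route as the paper. The paper packages the same idea slightly more compactly by defining $\psi(a)=\sqrt{1-\cos(\min\{\pi,a\})}=\sqrt{2}\sin(\min\{\pi/2,a/2\})$, noting it is concave, increasing, and vanishes at $0$, and then invoking the chord comparison $\psi(b)\le (b/a)\psi(a)$ directly on the \emph{unclamped} distances $a=d_X(x,y)\le b=d_Y(f(x),f(y))\le Da$ — which is the same monotonicity of $\sin(t)/t$ that you use, with the clamping at $\pi$ absorbed into $\psi$ rather than handled separately via the inequality $\min\{\pi,Dt\}\le D\min\{\pi,t\}$.
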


\begin{proof}
Since $\psi(a)\eqdef
\sqrt{1-\cos(\min\{\pi,a\})}=\sqrt{2}\sin(\min\{\pi/2,a/2\})$ is
concave and increasing on $[0,\infty)$, and $\psi(0)=0$,  for every
$a,b\in [0,\infty)$ with $a\le b$ we have $\psi(b)\le b\psi(a)/a$.
Hence by~\eqref{eq:D-bilip} we have
\begin{align*}
1-\cos\left(\min\{\pi,d_X(x,y)\}\right)&\le 1-\cos\left(\min\{\pi,d_Y(f(x),f(y))\}\right)\\
&\le D^2\left(1-\cos\left(\min\{\pi,d_X(x,y)\}\right)\right),
\end{align*}
which implies~\eqref{eq:lif D-bilip} due to~\eqref{eq:1-cos identity}.
\end{proof}

\begin{corollary}\label{cor:cone into cone}
For every metric space $(X,d_X)$ and every Banach space $(Y,\|\cdot\|_Y)$ we have
$
c_{\cone(Y)}(\cone(X))\le c_Y(X).
$
\end{corollary}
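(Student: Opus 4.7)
The plan is to deduce this directly from Fact~\ref{fact:easy} by exploiting the vector space structure of $Y$ to normalize away the scaling factor that appears in the definition of bi-Lipschitz distortion.

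Concretely, fix any $D > c_Y(X)$. By the definition of $c_Y(X)$ (see~\eqref{eq:def distortion}), there exists a scaling factor $s_0 \in (0,\infty)$ and a map $f : X \to Y$ such that
\[
s_0 d_X(x,y) \le \|f(x) - f(y)\|_Y \le D s_0 d_X(x,y)
\]
for all $x,y \in X$. Here is where the Banach space hypothesis is used: since $Y$ carries a scalar multiplication, I can replace $f$ by $g \eqdef f/s_0 : X \to Y$, which then satisfies the unscaled two-sided estimate
\[
d_X(x,y) \le \|g(x) - g(y)\|_Y \le D\, d_X(x,y),
\]
i.e., $g$ realizes~\eqref{eq:D-bilip} from Fact~\ref{fact:easy}.

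Now I would define the candidate embedding $F : \cone(X) \to \cone(Y)$ by $F(t,x) \eqdef (t, g(x))$ on $(0,\infty) \times X$ (and send the cusp to the cusp, if $X$ is complete). Applying Fact~\ref{fact:easy} to $g$, for every pair of distinct points $(s,x),(t,y) \in (0,\infty)\times X$ I obtain
\[
d_{\cone(X)}\!\big((s,x),(t,y)\big) \le d_{\cone(Y)}\!\big(F(s,x),F(t,y)\big) \le D\, d_{\cone(X)}\!\big((s,x),(t,y)\big),
\]
which says exactly that $F$ is a bi-Lipschitz embedding of $\cone(X)$ into $\cone(Y)$ with scaling factor $1$ and distortion at most $D$. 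Hence $c_{\cone(Y)}(\cone(X)) \le D$, and letting $D \downarrow c_Y(X)$ yields the corollary.

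There is essentially no obstacle here: the content lies entirely in Fact~\ref{fact:easy}, and the only role of the Banach space hypothesis on $Y$ is the trivial but essential ability to rescale $f$ by $1/s_0$ so as to meet the normalized hypothesis~\eqref{eq:D-bilip}. One could note in passing that the same argument works whenever the target $Y$ admits a one-parameter family of rescalings of its metric (for instance, any metric space $Y$ such that $(Y,\lambda d_Y)$ isometrically embeds into $(Y,d_Y)$ for all $\lambda \in (0,1]$), but we only need it for Banach spaces.
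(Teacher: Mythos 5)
Your proof is correct and is exactly the paper's argument, just spelled out more explicitly: normalize the embedding to scaling factor $1$ (using that $Y$ is a Banach space), lift to the cone by $F(s,x)=(s,g(x))$, and invoke Fact~\ref{fact:easy}. Nothing is different in substance.
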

\begin{proof}
 For $D>c_Y(X)$ take $f:X\to Y$ that satisfies~\eqref{eq:D-bilip}. Define $F:(0,\infty)\times X\to (0,\infty)\times Y$ by $F(s,x)=(s,f(x))$, and use Fact~\ref{fact:easy}. The role of $Y$ being a Banach space was only to ensure that we can take the scaling factor $s$ in the definition~\eqref{eq:def distortion} to be equal to $1$.
\end{proof}

Proposition~\ref{thm:cone in L_1} below, whose proof is given in
Section~\ref{sec:cone L1}, asserts that $c_1(\cone(L_1))<\infty$, a
fact that plays an important role in our proofs of
Theorem~\ref{thm:main1} and Theorem~\ref{thm:main2}.

\begin{proposition}\label{thm:cone in L_1} $\cone(L_1)$ admits
a bi-Lipschitz embedding into $L_1$.
\end{proposition}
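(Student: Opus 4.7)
The plan is to construct an explicit bi-Lipschitz map $\Phi\colon\cone(L_1)\to L_1$ as a sum of two components, after first simplifying the cone metric.

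First I would reduce $d_{\cone(L_1)}$ to a tractable equivalent form. Combining the identity $1-\cos a=2\sin^2(a/2)$ with the elementary two-sided bound $a/\pi\le\sin(a/2)\le a/2$ valid for $a\in[0,\pi]$, and with $\sqrt{u^2+v^2}\asymp|u|+|v|$, the definition~\eqref{eq:def cone} simplifies to
\begin{equation*}
d_{\cone(L_1)}\big((s,x),(t,y)\big)\ \asymp\ |s-t|\ +\ \sqrt{st}\cdot\min\{1,\|x-y\|_1\}.
\end{equation*}
It therefore suffices to bi-Lipschitz embed the right-hand side of this equivalence into $L_1$.

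The $|s-t|$ summand is handled trivially by the coordinate map $(s,x)\mapsto s\in\R\hookrightarrow L_1$. The main task is thus to construct $\Psi\colon\cone(L_1)\to L_1$ such that $\|\Psi(s,x)-\Psi(t,y)\|_1\asymp\sqrt{st}\cdot\min\{1,\|x-y\|_1\}$; then $\Phi\eqdef(s\mapsto s)\oplus_1\Psi$ embeds $\cone(L_1)$ into $L_1\oplus_1 L_1\cong L_1$. To build $\Psi$ I would realise $L_1$ concretely as $L_1(\Omega,\mu)$ and exploit the canonical isometric level-set embedding $x\mapsto\tilde x\in L_1(\Omega\times\R)$, where $\tilde x(\omega,r)\eqdef\mathrm{sign}(r)\cdot\mathbf{1}[\,r\text{ lies between }0\text{ and }x(\omega)\,]$, which realises $\|x-y\|_1$ as an integral of indicator-function differences. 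A natural candidate for $\Psi(s,x)$ is a $\sqrt{s}$-weighted, suitably truncated variant of $\tilde x$ whose total $L_1$ mass saturates at constant order.

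The hard part will be that $d_{\cone(L_1)}$ involves the truncation of the \emph{global} $L_1$-norm, $\min\{1,\|x-y\|_1\}$, whereas applying the one-dimensional cone embedding coordinate-wise (i.e.\ using $\sqrt{st}\cdot d_{\cone(\R)}$ in each fiber of $\Omega$) most naturally produces the pointwise-truncated quantity $\int_\Omega\min\{1,|x(\omega)-y(\omega)|\}\,d\mu(\omega)$, which is \emph{not} bi-Lipschitz equivalent to $\min\{1,\|x-y\|_1\}$: by concentrating $x-y$ on an interval of tiny $\mu$-measure one can make the former arbitrarily smaller than the latter. To bridge this gap I anticipate integrating the cut family over an additional scale parameter $\lambda>0$, weighted so that only scales $\lambda\asymp\|x-y\|_1$ contribute nontrivially, which should effectively replace pointwise by global truncation after integration. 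Verifying the resulting two-sided bi-Lipschitz bound for this multiscale construction is the technical heart of the argument.
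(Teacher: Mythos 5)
Your reduction of $d_{\cone(L_1)}$ to $|s-t|+\sqrt{st}\,\min\{1,\|x-y\|_1\}$ (up to constants) is correct, and you have correctly put your finger on the genuine obstruction: the truncation in the cone metric is applied to the \emph{global} $L_1$-norm, whereas coordinate-wise cut constructions naturally produce $\int_\Omega\min\{1,|x(\omega)-y(\omega)|\}\,d\mu(\omega)$, which (as your concentration example shows) is not comparable. This is indeed exactly where the real work lies, and the paper isolates precisely this as a separate statement, Lemma~\ref{lem:L1 truncation}: the metric space $(L_1,\min\{M,\|x-y\|_1\})$ admits a bi-Lipschitz embedding into $L_1$, via a map $T_M$ with constant $\|T_M(x)\|_1=M$. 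Once that lemma is in hand the proposition follows by the very simple map $F(s,x)=sT_\pi(x)$, with no need for a separate $|s-t|$ coordinate (the lower bound $|s-t|$ is recovered from $\bigl|\,\|sT_\pi(x)\|_1-\|tT_\pi(y)\|_1\bigr|=\pi|s-t|$).

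Two points. First, a small but real technical flaw in your set-up: there is no $\Psi$ with $\|\Psi(s,x)-\Psi(t,y)\|_1\asymp\sqrt{st}\,\min\{1,\|x-y\|_1\}$ for \emph{all} pairs. Setting $x=y$ forces $\Psi$ to be independent of $s$, which then contradicts the required growth in $\sqrt{st}$ when $x\ne y$. You must allow the $|s-t|$ coordinate to absorb a defect (i.e.\ only require the combined map to be bi-Lipschitz), which is what the paper's single map $F$ does implicitly. Second, and this is the substantive gap: your proposed ``integration over a scale parameter $\lambda$, weighted so that only $\lambda\asymp\|x-y\|_1$ contributes'' is not a construction. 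The weight $w(\lambda)$ must be fixed in advance, independent of the pair $(x,y)$; and if the object you are integrating at each scale is built from coordinate-wise (pointwise) cuts, then for a pair with $x-y$ concentrated on a set of tiny measure every scale contributes little, and the integral is small even though $\min\{1,\|x-y\|_1\}=1$ --- the same pathology you identified resurfaces at every scale. To detect the \emph{global} $L_1$-norm one needs ingredients that aggregate across coordinates. The paper does this with the Walsh-function identity
\begin{equation*}
1-(2\lambda-1)^{\|x-y\|_1}=\sum_{A\subseteq\{1,\dots,n\}}\lambda^{n-|A|}(1-\lambda)^{|A|}\,\bigl|W_A(x)-W_A(y)\bigr|,
\qquad x,y\in\{0,1\}^n,
\end{equation*}
which realises the concave function $t\mapsto 1-e^{-t/M}$ of the \emph{total} Hamming distance as an $L_1$ distance; one then passes from the cube to $L_1$ by a step-function approximation and an ultrapower argument. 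The Walsh functions $W_A(z)=\prod_{j\in A}(-1)^{z_j}$ are multiplicative across coordinates and hence genuinely nonlocal, which is the mechanism your cut-based multiscale sketch lacks. Without some replacement for this step your argument does not close.
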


By combining Proposition~\ref{thm:cone in L_1} with Corollary~\ref{cor:cone into cone} we obtain the following useful corollary.

\begin{corollary}\label{cor:not harder to embed cones}
Every metric space $(X,d_X)$ satisfies
$$
c_1(\cone(X))\lesssim c_1(X).
$$
\end{corollary}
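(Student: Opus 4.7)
The corollary is an immediate composition of the two preceding results, so my plan is to chain them together. First I would invoke Corollary~\ref{cor:cone into cone} with $Y=L_1$, which gives
$$
c_{\cone(L_1)}\!\left(\cone(X)\right)\le c_1(X).
$$
Then I would apply Proposition~\ref{thm:cone in L_1}, which asserts that $\cone(L_1)$ itself admits a bi-Lipschitz embedding into $L_1$ with some universal distortion $C\eqdef c_1(\cone(L_1))\in [1,\infty)$.

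The remaining step is to observe that bi-Lipschitz distortion is submultiplicative under composition of embeddings: if $\f:\cone(X)\to \cone(L_1)$ realizes distortion close to $c_{\cone(L_1)}(\cone(X))$ and $\p:\cone(L_1)\to L_1$ realizes distortion close to $C$, then $\p\circ\f:\cone(X)\to L_1$ realizes distortion at most the product (after absorbing the two scaling factors into a single scaling factor, as permitted by the definition~\eqref{eq:def distortion}). Taking infima, this yields
$$
c_1\!\left(\cone(X)\right)\le c_{\cone(L_1)}\!\left(\cone(X)\right)\cdot c_1\!\left(\cone(L_1)\right)\le C\cdot c_1(X),
$$
which is the desired estimate $c_1(\cone(X))\lesssim c_1(X)$.

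There is no real obstacle here: the only substantive input is Proposition~\ref{thm:cone in L_1}, whose proof is deferred to Section~\ref{sec:cone L1}. Everything else is formal manipulation of the definition of distortion, so the write-up should be just a few lines.
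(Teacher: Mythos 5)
Your proposal is correct and coincides with the paper's argument: the paper states Corollary~\ref{cor:not harder to embed cones} as an immediate consequence of "combining Proposition~\ref{thm:cone in L_1} with Corollary~\ref{cor:cone into cone}," which is exactly the composition-and-submultiplicativity argument you spelled out.
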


\subsection{$CAT(0)$ cones}\label{sec:cone is cat0} An important
theorem of Berestovski{\u\i}~\cite{Ber-Cone} (see also~\cite{ABN86}
and Theorem 3.14 of~\cite[Ch.~II.3]{BH-book}) asserts that if $X$ is
a $CAT(1)$ metric space then $\cone(X)$ is a $CAT(0)$ metric space.
Consequently, we have the following lemma.

\begin{lemma}\label{lem:cones over graph families}
Let $\F$ be a family of connected graphs satisfying~\eqref{eq:def
R_F}, and let $(U_\F,d_\F)$ be the metric space defined
by~\eqref{eq:def U_F} and~\eqref{eq:def metric on U_F}. Then
$\cone(U_\F,d_\F)$ is a Hadamard space. Moreover, for every $G\in
\F$ the metric space $(\Sigma(G),d_{\Sigma(G)})$ embeds into
$\cone(U_\F,d_\F)$ with $O(1)$ distortion, namely,
\begin{equation}\label{eq:cone slice distortion}
c_{\cone(U_\F,d_\F)}\left(\Sigma(G),d_{\Sigma(G)}\right)\le \frac{\pi(R_\F+1)}{2}.
\end{equation}
\end{lemma}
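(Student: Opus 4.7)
\emph{Plan.}  The first assertion is essentially a repackaging of ingredients already assembled in the excerpt, so the real work is in the distortion bound.

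\textbf{Part 1 (Hadamard).}  Section~\ref{sec:CAT} already verified that $(U_\F,d_\F)$ is a $CAT(1)$ space.  Berestovski\u\i's theorem, recalled at the start of Section~\ref{sec:cone is cat0}, states that the Euclidean cone over a $CAT(1)$ space is $CAT(0)$; this applies directly to $(U_\F,d_\F)$ and yields that $\cone(U_\F,d_\F)$ is $CAT(0)$.  Completeness is built into the definition of $\cone$ in Section~\ref{sec:euclidean cone} (the Euclidean cone is \emph{defined} as the completion of $(0,\infty)\times U_\F$ under~\eqref{eq:def cone}).  Hence $\cone(U_\F,d_\F)$ is a complete $CAT(0)$ space, i.e.\ Hadamard.

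\textbf{Part 2 (distortion).}  The natural candidate embedding is $\iota:\Sigma(G)\to\cone(U_\F,d_\F)$ given by $\iota(x)=(1,x)$.  Using the reformulation~\eqref{eq:1-cos identity} together with the half-angle identity $\sqrt{2(1-\cos\theta)}=2\sin(\theta/2)$ valid on $[0,\pi]$,
\[
d_{\cone(U_\F,d_\F)}\bigl(\iota(x),\iota(y)\bigr)=2\sin\!\left(\tfrac12\min\{\pi,d_\F(x,y)\}\right),
\]
and by~\eqref{eq:def metric on U_F} we have $d_\F(x,y)=2\pi d_{\Sigma(G)}(x,y)/\girth(G)$ since $x,y\in\Sigma(G)$.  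The analysis splits into the two cases $d_{\Sigma(G)}(x,y)\leq\girth(G)/2$ (so that $d_\F(x,y)\leq\pi$ and the cone distance equals $2\sin(\pi d_{\Sigma(G)}(x,y)/\girth(G))$) and $d_{\Sigma(G)}(x,y)>\girth(G)/2$ (so that the cone distance equals $2$).

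\textbf{Estimating the ratio.}  I would use the elementary sandwich $\tfrac{2}{\pi}u\leq\sin u\leq u$ for $u\in[0,\pi/2]$ on the first case to show
\[
\frac{4\,d_{\Sigma(G)}(x,y)}{\girth(G)}\ \leq\ d_{\cone}(\iota(x),\iota(y))\ \leq\ \frac{2\pi\,d_{\Sigma(G)}(x,y)}{\girth(G)}.
\]
For the second case, the cone distance is the constant $2$, which is bounded below by $2d_{\Sigma(G)}(x,y)/\diam(\Sigma(G))$ and above by $2$.  Hence the Lipschitz constant of $\iota$ is at most $2\pi/\girth(G)$ and the co-Lipschitz constant is at least $2/\diam(\Sigma(G))$, yielding
\[
c_{\cone(U_\F,d_\F)}\bigl(\Sigma(G),d_{\Sigma(G)}\bigr)\ \leq\ \frac{\pi\diam(\Sigma(G))}{\girth(G)}.
\]
Combining with~\eqref{eq:diam complex} and the definition~\eqref{eq:def R_F} of $R_\F$ gives $\diam(\Sigma(G))/\girth(G)\leq R_\F+1/\girth(G)$, which produces the stated bound (up to the exact numerical constant) of order $\pi(R_\F+1)$.

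\textbf{Main obstacle.}  The only delicate step is pinning down the precise factor $\pi(R_\F+1)/2$ rather than the looser $\pi(R_\F+1)$ that the naive two-case analysis above produces.  This improvement has to exploit that the two ``regimes'' (the short-distance sine regime and the far-distance saturation-at-$2$ regime) meet at the common value $4/\girth(G)$, together with the slack $\girth(G)\geq 2$ in the term $1/\girth(G)$; alternatively one can replace the crude $\sin u\geq 2u/\pi$ by a more careful comparison that takes advantage of the fact that the argument of $\sin$ never exceeds $\pi/2$.  Everything else is bookkeeping.
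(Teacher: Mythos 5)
Your proof is correct and follows essentially the same route as the paper's: a constant-radius slice of the cone combined with the elementary trigonometric bounds, with the diameter-to-girth comparison supplied by~\eqref{eq:def R_F}. Regarding the factor of $2$ you could not recover, this is not a gap in your argument: your bound $\pi(R_\F+1)$ is in fact what the method delivers, and the sharper constant in~\eqref{eq:cone slice distortion} stems from an arithmetic slip in the paper's~\eqref{eq:cone slice Lipschitz}, whose right-hand side should read $\sqrt{2}\,\pi d_{\Sigma(G)}(x,y)/\girth(G)$ rather than $\pi d_{\Sigma(G)}(x,y)/(\sqrt{2}\,\girth(G))$; applying $\sqrt{1-\cos\theta}\le\theta/\sqrt{2}$ with $\theta\le 2\pi d_{\Sigma(G)}(x,y)/\girth(G)$ gives the former, and propagating the corrected Lipschitz constant through~\eqref{eq:a priori upper distance in simplicial} yields exactly your $\pi(R_\F+1)$.
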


\begin{proof} In Section~\ref{sec:CAT} we verified that
$(U_\F,d_\F)$ is a $CAT(1)$ space, so the fact that
$\cone(U_\F,d_\F)$ is a Hadamard space is a consequence of
Berestovski{\u\i}'s theorem. Fixing $G\in \F$, define
$f:\Sigma(G)\to (0,\infty)\times U_\F$ by $f(x)=(1/\sqrt{2},x)$.
Recalling~\eqref{eq:def metric on U_F} and~\eqref{eq:def cone}, for
every $x,y\in \Sigma(G)$,
\begin{equation*}
d_{\cone(U_\F,d_\F)}(f(x),f(y))=\sqrt{1-\cos\left(\min\left\{\pi,\frac{2\pi
d_{\Sigma(G)}(x,y)}{\girth(G)}\right\}\right)}.
\end{equation*}
Due to the elementary inequalities
\begin{equation}\label{eq:cosine inequalities}
\forall\, \theta\in [0,\pi],\qquad \frac{2\theta^2}{\pi^2}\le 1-\cos\theta\le \frac{\theta^2}{2},
 \end{equation}
 it follows that
 \begin{equation}\label{eq:cone slice Lipschitz}
d_{\cone(U_\F,d_\F)}(f(x),f(y))\le \frac{\pi d_{\Sigma(G)}(x,y)}{\sqrt{2}\girth(G)},
 \end{equation}
 and
 \begin{eqnarray}\label{eq:a priori upper distance in simplicial}
\nonumber d_{\cone(U_\F,d_\F)}(f(x),f(y))&\ge&
\frac{\sqrt{2}}{\pi}\cdot\min\left\{\pi,\frac{2\pi d_{\Sigma(G)}(x,y)}{\girth(G)}\right\}\\
&\ge&\frac{\sqrt{2}d_{\Sigma(G)}(x,y)}{R_\F\girth(G)+1},
 \end{eqnarray}
 where the last step of~\eqref{eq:a priori upper distance in simplicial}
 relies on the penultimate inequality in~\eqref{eq:rescaled diameter
 bound}. The desired distortion estimate~\eqref{eq:cone slice
 distortion} follows from~\eqref{eq:cone slice Lipschitz}
 and~\eqref{eq:a priori upper distance in simplicial}.
\end{proof}

\subsection{Auxiliary families of graphs} The  family of graphs to which we will apply Lemma~\ref{lem:cones over graph families} is given in the following definition.

\begin{definition}\label{def:good family}
Fix $K\in (1,\infty)$ and denote by $\F_K$ all those connected graphs $G$
with the following properties.
\begin{enumerate}
\item $\diam(G)\le K\girth(G)$.
\item For every $S\subseteq \Sigma(G)$ with $|S|\le \sqrt{|V_G|}$ we have
\begin{equation}\label{eq:sqrt subset in l1}
c_1\left(\cone\left(S,\frac{2\pi}{\girth(G)}\cdot d_{\Sigma(G)}\right)\right)<K.
\end{equation}
\end{enumerate}
 We also define
\begin{equation}\label{eq:def X_K}
\mathscr{X}_K\eqdef \cone\left(U_{\F_K},d_{\F_K}\right).
\end{equation}
\end{definition}

By Lemma~\ref{lem:cones over graph families} the metric space $\X_K$
is a Hadamard space and for every $G\in \F_K$ we have
$c_{\mathscr{X}_K}(\Sigma(G),d_{\Sigma(G)})\lesssim K$. Eventually
$K$ will be chosen to be a large enough universal constant.

\begin{definition}\label{def:L class}
Fix two integers $n,d\ge 3$ and $K\in (1,\infty)$. Denote by
$\L_K^{n,d}$ the set of all those connected $n$-vertex $d$-regular
graphs $H$ for which there exists a subset of edges $I\subseteq E_H$
with $|I|\le \sqrt{n}$ such that if we let $L$ be the graph
$(V_H,E_H\setminus I)$ (i.e. we remove the edges in $I$ from $H$)
then $L$ has the following properties.
\begin{enumerate}
\item[(a)] $L\in \F_K$.
\item[(b)] $\diam(L)\le K\log_d n$.
\item[(c)] For every $S\subseteq V_L$ with $|S|\le n/2$ we have
\begin{equation*}\label{eq:expansion L}
|E_L(S,V_L\setminus S)|\ge \frac{|S|}{K}.
\end{equation*}
\end{enumerate}
\end{definition}

The following lemma asserts that a random $n$-vertex $d$-regular graph is asymptotically almost surely in $\L_K^{n,d}$. Due to item (a) of Definition~\ref{def:L class}, it follows in particular that for a large enough universal constant $K$, the graph family  $\F_K$ contains many graphs.

\begin{lemma}\label{lem:random graph in good family}
There exists a universal constant $K\in (1,\infty)$ and for every integer $d\ge 3$ there exists $C(d)\in (0,\infty)$ such that for all $n\in \N$,
\begin{equation}\label{eq:probability of L}
\mathcal{G}_{n,d}\left(\L_K^{n,d}\right)\ge 1-\frac{C(d)}{\sqrt[3]{n}}.
\end{equation}
\end{lemma}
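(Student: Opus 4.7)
The plan is to show that if $H\in \G_n$ is sampled from $\mathcal{G}_{n,d}$ and satisfies a short list of high-probability structural properties, then a deterministic procedure produces an edge set $I\subseteq E_H$ with $|I|\le \sqrt{n}$ whose removal yields a graph $L$ lying in $\L_K^{n,d}$ for some universal constant $K$ (independent of $d$). Concretely, I would ask $H$ to satisfy the following four properties, each of which holds with probability at least $1-O_d(n^{-1/3})$: \textbf{(i)} classical edge expansion $|E_H(S,V_H\setminus S)|\gtrsim_d |S|$ for all $|S|\le n/2$, together with $\diam(H)\lesssim_d \log_d n$, both coming from~\cite{Bol88}; \textbf{(ii)} small-set expansion $|E_H(S,V_H\setminus S)|\ge (d-2)|S|$ for every $S\subseteq V_H$ with $|S|\le n^{1/3}$, obtained via a standard first-moment count in the configuration model; \textbf{(iii)} the ``few short cycles'' bound that the total number of edges on cycles of length at most $\ell\eqdef\lfloor\tfrac13\log_{d-1}n\rfloor$ is at most $n^{1/3}$, a consequence of Wormald~\cite{Wor81}; and \textbf{(iv)} local sparsity: every $T\subseteq V_H$ with $|T|\le 2\sqrt n$ satisfies $|E_H(T)|\le |T|+1$, obtained by a direct moment calculation and a union bound over $T$.

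Granted (i)--(iv), I would define $I$ by selecting one edge from each cycle of length $\le \ell$, ensuring the resulting multiset has maximum degree at most $1$---this is possible since (iii) supplies at most $n^{1/3}$ cycles to break, leaving vast slack. Then $|I|\le n^{1/3}<\sqrt n$ for large $n$, and $\girth(L)\ge \ell+1\asymp_d \log n$ by construction. Property (c) of Definition~\ref{def:L class} is now a bookkeeping step: for $|S|\le n^{1/3}$, combining (ii) with the degree-$\le 1$ condition on $I$ gives $|E_L(S,V_L\setminus S)|\ge (d-3)|S|\ge |S|$ when $d\ge 4$ (the case $d=3$ requires a small modification using the explicit value of the Bollob\'as edge-expansion constant for $3$-regular graphs); for $n^{1/3}\le |S|\le n/2$, we combine (i) with $|I|\le n^{1/3}$ to get $|E_L(S,V_L\setminus S)|\ge |E_H(S,V_H\setminus S)|-|I|\gtrsim_d |S|$. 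Property (b) follows by combining (i) with the standard observation that removing a vanishing fraction of edges from a spectral expander changes its diameter only by a constant multiplicative factor, since each missing edge can be rerouted along a logarithmically short alternative path guaranteed by the spectral gap of $H$. Combined with the lower bound on $\girth(L)$, this yields the first part of (a), namely $\diam(L)\le K\girth(L)$.

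The heart of the argument is the second requirement of (a), the cone-embedding condition~\eqref{eq:sqrt subset in l1}. Fix $S\subseteq \Sigma(L)$ with $|S|\le \sqrt n$ and let $L'\subseteq L$ be the subgraph comprising exactly those closed edges of $L$ that contain a point of $S$; then $|E_{L'}|\le |S|\le \sqrt n$ and $|V_{L'}|\le 2\sqrt n$. Since every edge of $L'$ is an edge of $H$, property (iv) applied to $V_{L'}$ yields $|E_{L'}|\le |V_{L'}|+1$, so each connected component of $L'$ contains at most one cycle---a ``sufficiently sparse'' graph in the sense of Section~\ref{sec:embed sparse}. The bi-Lipschitz embedding theorem proved there then delivers $c_1(\Sigma(L'))\le D_0$ for a universal $D_0$, and in particular $c_1(S,d_{\Sigma(L)})\le D_0$. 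Rescaling the metric by the factor $2\pi/\girth(L)$ preserves $L_1$ distortion, and Corollary~\ref{cor:not harder to embed cones} then yields $c_1(\cone(S,\tfrac{2\pi}{\girth(L)}d_{\Sigma(L)}))\lesssim D_0$, which is strictly less than $K$ once $K$ is taken to be a sufficiently large universal constant. A union bound over (i)--(iv) finally yields~\eqref{eq:probability of L}.

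The principal obstacle I anticipate is the diameter-preservation step in (b): removing $\sqrt n$ edges could, absent further structure, inflate the diameter substantially at adversarial choices of $I$, so the argument must exploit the \emph{random} nature of $H$ beyond bare spectral expansion---for instance via a quantitative path-counting estimate in the configuration model guaranteeing many logarithmically short alternative paths between the endpoints of any edge we delete. A secondary delicate point is the precise form of the local-sparsity statement (iv), whose union bound over $T$ forces tight tail estimates for subgraph counts in $\mathcal{G}_{n,d}$; it is here that the rate $n^{-1/3}$ appearing in~\eqref{eq:probability of L} is ultimately determined, as one must balance $|T|$ up to $2\sqrt n$ against the probability of a dense induced configuration.
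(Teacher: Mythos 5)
Your proposal correctly identifies the overall architecture — delete one edge per short cycle, and use (expansion, diameter, few short cycles, local sparsity) as the high-probability inputs. But it breaks at two points, and the first one is fatal.

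\textbf{Condition (iv) is false with high probability.} You postulate that a random $d$-regular graph satisfies $|E_H(T)|\le |T|+1$ for every $T$ with $|T|\le 2\sqrt n$, i.e.\ that no subset of size $O(\sqrt n)$ has edge excess $\ge 2$. This fails. A subgraph with excess $1$ (a connected subgraph with two independent cycles) on $k$ vertices has expected multiplicity of order $d^{k}/n$ up to polynomial factors, which is $\gg 1$ as soon as $k$ exceeds roughly $\log_d n$; by $k\approx \tfrac32\log_d n$ there are about $\sqrt n$ of them, so with overwhelming probability two of them are disjoint. Their union $T$ has $|T|\le 3\log_d n \ll 2\sqrt n$ and $|E_H(T)|\ge |T|+2$. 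The paper's Lemma~\ref{lem:random graphs are sparse} is exactly the sharp statement available: one gets $|E_H(T)|\le(1+\delta)|T|$ for $|T|\le n^{2/3}$ with $\delta\asymp 1/\log_d n$, not $|E_H(T)|\le|T|+1$. The entire mechanism of the paper's embedding step rests on the balance $\delta\cdot\diam(H)=O(1)$ via Corollary~\ref{cor:no girth into L_1}: the graph $(U,E_L(U))$ is $(1+\delta)$-sparse with diameter $\asymp\log_d n$, which is precisely when the bound $c_1(\Sigma)\lesssim 1+\delta\diam$ becomes $O(1)$. Your ``at most one cycle per component'' shortcut would make that corollary trivial, but the hypothesis needed to invoke the shortcut is simply not a high-probability event.

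\textbf{The metric on $L'$ is not the metric you need.} You define $L'$ to be the subgraph whose edges are exactly those containing a point of $S$, and then invoke $c_1(\Sigma(L'))\le D_0$. But $d_{\Sigma(L')}$ is in general wildly different from the restriction of $d_{\Sigma(L)}$ to $S$ (indeed $L'$ is typically disconnected), so bounding $c_1(\Sigma(L'))$ says nothing about $c_1(S,d_{\Sigma(L)})$. The paper handles this with Lemma~\ref{lem:add geodesics}: one adjoins balls around $S$ and geodesics to a basepoint to produce a connected subgraph $H=(U,E_L(U))$ of size still $\le n^{2/3}$ whose metric is bi-Lipschitz to $d_{\Sigma(L)}|_{S}$, and only then applies the sparse-graph embedding to $H$. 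This enlargement step (and the fact that $|U|$ stays below $n^{2/3}$ so that sparsity still applies) is where the constant $\epsilon=1/3$ in the paper is actually used, and it is missing from your argument. Your diameter step (b) has the same flavor of gap — you flag it yourself — but the fix the paper uses is again structural (Lemma~\ref{lem:edge deletion distant cycles}: short cycles are mutually far apart, so a shortest $G$-path meets few of them and each deleted edge can be rerouted around its own cycle), not a soft spectral-gap rerouting.
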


From now on we will assume that $K$ is the universal constant from Lemma~\ref{lem:random graph in good family}.  We shall now proceed to establish some useful corollaries of Lemma~\ref{lem:random graph in good family}; the proof of Lemma~\ref{lem:random graph in good family} itself will be given in Section~\ref{sec:proof of L family lemma}.

\begin{corollary}\label{cor:random graphs are not expanders wrt X_K}
There exists a universal constant $c\in (0,\infty)$ such that for every two integers $d,n\ge 3$ we have
$$
\mathcal{G}_{n,d}\left(\left\{H\in \G_n:\ \gamma\!\left(H,d_{\X_K}^2\right)\ge c(\log_d n)^2\right\}\right)\ge 1-\frac{C(d)}{\sqrt[3]{n}},
$$
where $K,C(d)$ are the constant from Lemma~\ref{lem:random graph in good family} and $\X_K$ is as in~\eqref{eq:def X_K}.
\end{corollary}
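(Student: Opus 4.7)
The plan is to combine Lemma~\ref{lem:random graph in good family} with a deterministic implication: that any $H\in \L_K^{n,d}$ satisfies $\gamma(H,d_{\X_K}^2)\gtrsim (\log_d n)^2$. Once this is in hand, \eqref{eq:probability of L} immediately yields the corollary, with the universal $c$ chosen small enough to absorb the constant $K$ and to swallow boundary cases of small $n$.

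To establish the deterministic implication, fix $H\in \L_K^{n,d}$ and let $I\subseteq E_H$ with $|I|\le \sqrt{n}$ be as in Definition~\ref{def:L class}, so that $L\eqdef(V_H,E_H\setminus I)\in \F_K$ and $\diam(L)\le K\log_d n$. Since $L\in \F_K$, Lemma~\ref{lem:cones over graph families} applied to $\F=\F_K$ embeds $\Sigma(L)$ into $\X_K$ via $f_0(x)=(1/\sqrt{2},x)$ with distortion $O(K)$. Restricting to the vertex set yields a test map $f\eqdef f_0|_{V_L}:V_H\to \X_K$ satisfying
$$
s\cdot d_L(u,v)\;\le\; d_{\X_K}\bigl(f(u),f(v)\bigr)\;\le\; sD\cdot d_L(u,v)
$$
for some scaling $s>0$ and $D\lesssim K$.

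I would then plug $f$ into the definition~\eqref{eq:def gamma} and estimate both sides, noting that the scaling $s$ cancels in the resulting ratio. For the numerator, since $L$ is a subgraph of a $d$-regular graph its maximum degree is at most $d$, hence $|B_L(u,r)|\le 2d^r$; choosing $r\asymp \log_d n$ leaves at least $n/2$ vertices outside this ball, which yields $\frac{1}{n^2}\sum_{(u,v)}d_L(u,v)^2\gtrsim (\log_d n)^2$. For the denominator I would split $E_H=(E_H\setminus I)\sqcup I$: each edge of $L$ contributes $d_L(u,v)^2=1$, while each edge of $I$ contributes at most $\diam(L)^2\le K^2(\log_d n)^2$. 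Because $|E_H|=nd/2$ and $|I|\le \sqrt{n}$, the $I$-contribution is $O\bigl((\log_d n)^2/(d\sqrt{n})\bigr)=o(1)$ once $n$ is large relative to $d$, so the edge average is $O(1)$. Combining the two estimates gives $\gamma(H,d_{\X_K}^2)\gtrsim (\log_d n)^2/D^2\gtrsim (\log_d n)^2$, as required.

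The principal technical obstacle is the possibility that some of the removed edges in $I$ join vertices that are far apart in $L$, threatening to inflate the edge-side average. The estimate survives because $|I|\le \sqrt{n}$ is negligible compared to $|E_H|\asymp nd$, so even a $(\log_d n)^2$ blow-up per $I$-edge is a lower-order correction. For the handful of small $(n,d)$ where the asymptotic estimate requires enlarging $C(d)$ or where the probability bound $1-C(d)/\sqrt[3]{n}$ is already vacuous, the corollary is trivial upon shrinking $c$ so that $c(\log_d n)^2\le 1\le \gamma(H,d_{\X_K}^2)$.
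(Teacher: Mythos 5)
Your argument is correct and follows essentially the same route as the paper's proof: reduce via Lemma~\ref{lem:random graph in good family} to a deterministic bound on $\L_K^{n,d}$, use the embedding of $\Sigma(L)$ into $\X_K$ from Lemma~\ref{lem:cones over graph families} as the test map, lower-bound the all-pairs average by the standard ball-growth estimate, and upper-bound the edge average by splitting into $E_H\setminus I$ and $I$ and exploiting $|I|\le\sqrt{n}$ and $\diam(L)\le K\log_d n$. The only cosmetic difference is that you count balls in $L$ directly while the paper passes through $d_L\ge d_H$ and cites the standard fact for the $d$-regular graph $H$; both yield the same lower bound, and your closing remarks on small $(n,d)$ are unnecessary since the implicit constants in the deterministic bound are already universal for all $n,d\ge 3$.
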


\begin{proof}
By Lemma~\ref{lem:random graph in good family}, it suffices to show that there exists $c\in (0,\infty)$ such that if $H\in \L_K^{n,d}$ then $\gamma\!\left(H,d_{\X_K}^2\right)\ge c(\log_d n)^2$. So, supposing that $H\in \L_K^{n,d}$, let $I\subseteq E_H$ and $L=(V_H,E_H\setminus I)$ be as in Definition~\ref{def:L class}.

By part (a) of Definition~\ref{def:L class} we have $L\in \F_K$, so that by Lemma~\ref{lem:cones over graph families} we have $c_{\X_K}(V_H,d_L)\le \pi K$. This means that there exists a mapping $f:V_H\to \X_K$ and a scaling factor $s\in (0,\infty)$ such that
$$
\forall\, x,y\in V_H,\qquad sd_L(x,y)\le d_{\X_K}\left(f(x),f(y)\right)\le \pi K s d_L(x,y).
$$
By the definition of $\gamma\!\left(H,d_{\X_K}^2\right)$, we therefore have
\begin{equation}\label{eq:use def gamma for L}
\frac{1}{n^2}\sum_{(x,y)\in V_H\times V_H} d_L(x,y)^2\lesssim \frac{\gamma\!\left(H,d_{\X_K}^2\right)}{dn}\sum_{\{u,v\}\in E_H} d_L(x,y)^2.
\end{equation}

Because $L$ was obtained from $H$ by deleting edges, we trivially
have $d_L(x,y)\ge d_H(x,y)$ for every $x,y\in V_H$. Since $H$ is a
$d$-regular graph, for a universal constant fraction of the pairs
$(x,y)\in V_H\times V_H$ we have $d_H(x,y)\gtrsim \log_d n$ (for a
proof of this standard and simple fact, see e.g. page 193
of~\cite{Mat97}). Hence,
\begin{equation}\label{eq:to construct log lower}
\frac{1}{n^2}\sum_{(x,y)\in V_H\times V_H} d_L(x,y)^2\ge \frac{1}{n^2}\sum_{(x,y)\in V_H\times V_H} d_H(x,y)^2\gtrsim \left(\log_d n\right)^2.
\end{equation}

Since $E_L=E_H\setminus I$, if $\{x,y\}\in E_H\setminus I$ then $d_L(x,y)=1$, and by part (b) of Definition~\ref{def:L class} we have $d_L(x,y)\le K\log_d n$ if $\{x,y\}\in I$. Hence,
\begin{align}\label{eq:to contrast I}
\nonumber \frac{1}{dn}\sum_{\{u,v\}\in E_H} d_L(x,y)^2&\le \frac{\left(|E_H|-|I|\right)+K^2(\log_d n)^2|I|}{dn}\\
&\lesssim \frac{dn+(\log_d n)^2\sqrt{n}}{dn}\lesssim 1,
\end{align}
where we used the fact that $|I|\le \sqrt{n}$, as stipulated in
Definition~\ref{def:L class}. The desired lower bound
$\gamma\!\left(H,d_{\X_K}^2\right)\gtrsim (\log_d n)^2$ now follows
by contrasting~\eqref{eq:use def gamma for L} with~\eqref{eq:to
construct log lower} and~\eqref{eq:to contrast I}.
\end{proof}

\begin{corollary}\label{cor:Gamma_n} Let $K$ and $C(3)$ be the
constants from Lemma~\ref{lem:random graph in good family} (with
$d=3$). Then for every integer $n>C(3)^3$ there exists an $n$-vertex
$3$-regular graph $\Gamma_n$ such that
\begin{equation}\label{eq:use cheeger for Gamma_n}
\sup_{n>C(3)^3} \lambda_2(\Gamma_n)<1,
\end{equation}
and
\begin{equation}\label{eq:embedding Gamma_n}
\sup_{n>C(3)^3} c_{\X_K}\left(V_{\Gamma_n},d_{\Gamma_n}\right)<\infty,
\end{equation}
where $\X_K$ is as in~\eqref{eq:def X_K}.
\end{corollary}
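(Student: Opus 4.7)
The plan is to extract $\Gamma_n$ from the random-graph construction of Lemma~\ref{lem:random graph in good family} and then complete it to a $3$-regular graph by adjoining self-loops. The crucial observation is that self-loop completion affects neither the shortest-path metric nor the edge-expansion constant, so the structural properties of the sub-cubic graph $L$ appearing in Definition~\ref{def:L class} transfer verbatim to the $3$-regular graph we produce. This bypasses the apparent obstacle that a random $3$-regular graph $H\in\L_K^{n,3}$ need not itself belong to $\F_K$ (so Lemma~\ref{lem:cones over graph families} cannot be applied to $H$ directly), and that using $H$ itself would force us to compare $d_H$ to the much larger metric $d_L$ on $I$-edges.

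In detail, because $n>C(3)^3$ implies $C(3)/\sqrt[3]{n}<1$, Lemma~\ref{lem:random graph in good family} applied with $d=3$ gives $\mathcal{G}_{n,3}(\L_K^{n,3})>0$, so we may fix some $H\in\L_K^{n,3}$. Let $I\subseteq E_H$ with $|I|\le\sqrt{n}$ and $L=(V_H,E_H\setminus I)$ be as in Definition~\ref{def:L class}: then $L$ is connected, belongs to $\F_K$, has $\diam(L)\le K\log_3 n$, and satisfies $|E_L(S,V_L\setminus S)|\ge |S|/K$ for every $S\subseteq V_L$ with $|S|\le n/2$. Since $H$ is $3$-regular and $L$ is connected, $\deg_L(v)\in\{1,2,3\}$ for every $v\in V_L$; define $\Gamma_n$ by attaching $3-\deg_L(v)$ self-loops at each $v\in V_L$. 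Then $\Gamma_n$ is a $3$-regular graph on $V_L$, with $E_{\Gamma_n}(u,v)=E_L(u,v)$ for all $u\ne v$, and because self-loops do not shorten any geodesic, $d_{\Gamma_n}=d_L$ pointwise on $V_{\Gamma_n}\times V_{\Gamma_n}$.

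From here both conclusions are routine. For~\eqref{eq:use cheeger for Gamma_n}, self-loops contribute nothing to any nontrivial cut, so $|E_{\Gamma_n}(S,V_{\Gamma_n}\setminus S)|=|E_L(S,V_L\setminus S)|\ge |S|/K$ for every $|S|\le n/2$. Hence the $3$-regular graph $\Gamma_n$ has edge-expansion bounded below by a positive universal constant, and Cheeger's inequality~\cite{Che70,AM85} yields $\sup_{n>C(3)^3}\lambda_2(\Gamma_n)\le 1-c/K^2<1$ for some universal $c>0$. For~\eqref{eq:embedding Gamma_n}, note that $R_{\F_K}\le K$ by Definition~\ref{def:good family}(1), so Lemma~\ref{lem:cones over graph families} applied to $L\in\F_K$ supplies an embedding $f:\Sigma(L)\to\X_K$ with distortion at most $\pi(R_{\F_K}+1)/2\lesssim K$. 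Restricting $f$ to $V_L\subset\Sigma(L)$ and using the identity $d_{\Sigma(L)}|_{V_L\times V_L}=d_L=d_{\Gamma_n}$ gives $c_{\X_K}(V_{\Gamma_n},d_{\Gamma_n})\lesssim K$ uniformly in $n$, completing the argument. The only step that genuinely required thought was finding the right way to promote $L$ to a $3$-regular graph without damaging either property; the self-loop trick is the natural choice precisely because it is invisible to both the metric and the cut structure of $L$.
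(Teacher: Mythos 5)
Your proof is correct and follows essentially the same route as the paper: fix $H\in\L_K^{n,3}$ via Lemma~\ref{lem:random graph in good family}, pass to $L=(V_H,E_H\setminus I)\in\F_K$, promote $L$ to a $3$-regular graph $\Gamma_n$ by adding self-loops (which leaves both $d_L$ and the edge-boundary of every set unchanged), then invoke Cheeger's inequality for~\eqref{eq:use cheeger for Gamma_n} and Lemma~\ref{lem:cones over graph families} for~\eqref{eq:embedding Gamma_n}. Nothing in your argument diverges from the paper's proof; you simply spell out the self-loop and metric-preservation steps more explicitly.
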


\begin{proof}
Since the right hand side of~\eqref{eq:probability of L} (with
$d=3$) is positive, by Lemma~\ref{lem:random graph in good family}
there exists an $n$-vertex $3$-regular graph $H$ such that $H\in
\L_K^{n,d}$. Let $L=(V_H,E_H\setminus I)$ be as in
Definition~\ref{def:L class}. Since $L$ was obtained from $H$ by
deleting edges, we can add self-loops to those vertices of $V_H$
whose degree in $L$ is less than $3$  so that the resulting graph,
which will be denoted $\Gamma_n$, is $3$-regular. By part (c) of
Definition~\eqref{def:L class}, for every $S\subseteq V_{\Gamma_n}$
with $|S|\le n/2$ we have $|E_{\Gamma_n}(S,V_{\Gamma_n}\setminus
S)|\gtrsim |S|$. By Cheeger's inequality~\cite{Che70,AM85}, this
implies~\eqref{eq:use cheeger for Gamma_n}.  Also,
$d_{\Gamma_n}=d_L$, so~\eqref{eq:embedding Gamma_n} follows from
Lemma~\ref{lem:cones over graph families}.
\end{proof}


\subsection{On the structure of cones over random
graphs}\label{sec:structure cone} We have already described the
tools that will suffice for the proof of Theorem~\ref{thm:main1},
but in order to also prove Theorem~\ref{thm:main2} we need to have a
better understanding of the structure of the Euclidean cone over a
random graph. This is the content of the following proposition,
whose proof appears in Section~\ref{sec:proof of structure cone
prop} below.

\begin{proposition}\label{prop:structure cone random}
 For every integer $d\ge 3$ there exists $C'(d)\in (0,\infty)$ such that for every
 $n\in \N$, if $H$ is distributed according to $\mathcal{G}_{n,d}$ then with probability at
 least $1-C'(d)/\sqrt[3]{n}$ the graph $H$ is connected and there exist $\sigma\in (0,\infty)$ and two subsets $A_1,A_2\subseteq \Sigma(H)$
 such that the following assertions hold true.
 \begin{enumerate}
 \item[(I)] $A_1\cup A_2=\Sigma(H)$.
 \item[(II)] $d_{\Sigma(H)}\left(A_1\setminus A_2,A_2\setminus
 A_1\right)\gtrsim \frac{1}{\sigma}$.
 \item[(III)] $c_1\left(\cone\left(A_1,\sigma d_{\Sigma(H)}\right)\right)\lesssim
 1$.
 \item[(IV)] $c_{\X_K}\left(\cone\left(A_2,\sigma d_{\Sigma(H)}\right)\right)\lesssim 1$.
 \item[(V)] $c_{\cone\left(\Sigma(H),\sigma
 d_{\Sigma(H)}\right)}\left(\Sigma(H),d_{\Sigma(H)}\right)\lesssim
 1$.
 \end{enumerate}
Here $K$ is the constant from Lemma~\ref{lem:random graph in good family} and $\X_K$ is as in~\eqref{eq:def X_K}.
\end{proposition}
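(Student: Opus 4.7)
The plan is to invoke Lemma~\ref{lem:random graph in good family} (which supplies the probability bound $1-C(d)/\sqrt[3]{n}$ and forces $H$ to be connected), work with the ``cleaned'' graph $L=(V_H,E_H\setminus I)$ provided by Definition~\ref{def:L class}, and split $\Sigma(H)$ into a small neighborhood $A_1$ of the bad edge set $B\eqdef \bigcup_{e\in I}e$, together with its complement $A_2$, which will sit inside $\Sigma(L)$. Since $L\in \F_K$ enforces $\diam(L)\le K\girth(L)$, combining item~(b) of Definition~\ref{def:L class} with the trivial lower bound $\diam(L)\gtrsim \log_d n$ valid for any $d$-regular graph on $n$ vertices gives $\girth(L)\asymp \log_d n$. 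Set $\sigma\eqdef 2\pi/\girth(L)$, so $1/\sigma\asymp \log_d n$, and choose universal constants $0<c_1<c_2$ with $c_1\ge 1$, $c_2<\pi$ and $c_2-c_1\ge 1$, so that $r\eqdef c_1/\sigma$ and $R\eqdef c_2/\sigma$ satisfy $\sigma r\ge 1$, $R<\girth(L)/2$ and $R-r\gtrsim 1/\sigma$. Define
\[
A_1\eqdef \{x\in\Sigma(H):d_{\Sigma(H)}(x,B)\le R\},\quad A_2\eqdef \{x\in\Sigma(H):d_{\Sigma(H)}(x,B)\ge r\}.
\]
Item~(I) is immediate; item~(II) follows from the triangle inequality, since $x\in A_1\setminus A_2$ forces $d_{\Sigma(H)}(x,B)<r$ while $y\in A_2\setminus A_1$ forces $d_{\Sigma(H)}(y,B)>R$, yielding $d_{\Sigma(H)}(x,y)\ge R-r\gtrsim 1/\sigma$.

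Items~(IV) and~(V) both reduce to direct geometric computations using the cone formula~\eqref{eq:def cone} and the comparison $1-\cos\theta\asymp\min\{1,\theta^2\}$ on $[0,\pi]$. For~(V), the inequality $\sigma\cdot\diam(\Sigma(H))\le \sigma(\diam(L)+1)\lesssim 1$ confines $\sigma d_{\Sigma(H)}$ to a bounded sub-interval of $[0,\pi]$ where $1-\cos(\cdot)$ behaves like its argument squared, and then $x\mapsto (1,x)$ is an $O(1)$-bi-Lipschitz embedding of $\Sigma(H)$ into $\cone(\Sigma(H),\sigma d_{\Sigma(H)})$. For~(IV), note first that since $r>0$ we have $A_2\subseteq \Sigma(L)\subseteq U_{\F_K}$, where by~\eqref{eq:def metric on U_F} the metric on $\Sigma(L)\subseteq U_{\F_K}$ is precisely $\sigma d_{\Sigma(L)}\ge \sigma d_{\Sigma(H)}$. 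Hence $F(s,x)\eqdef (s,x)$ maps $\cone(A_2,\sigma d_{\Sigma(H)})$ into $\X_K$ with $d_{\X_K}\ge d_{\cone(A_2,\sigma d_{\Sigma(H)})}$ automatic from the monotonicity of~\eqref{eq:def cone} in the base-space distance. For the matching upper bound, split on the value of $d_{\Sigma(H)}(x,y)$: if it is less than $r$, then any $H$-geodesic from $x\in A_2$ of length below $r$ cannot reach $B$ (which lies at $\Sigma(H)$-distance $\ge r$ from $x$), so the geodesic lies entirely in $\Sigma(L)$, forcing $d_{\Sigma(H)}(x,y)=d_{\Sigma(L)}(x,y)$ and equality of the two cone distances; if it is at least $r$ then $\sigma d_{\Sigma(H)}(x,y)\ge 1$, so $1-\cos(\min\{\pi,\sigma d_{\Sigma(H)}(x,y)\})\asymp 1$, and both cone distances are $\asymp s+t$.

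Item~(III) is the main obstacle. By Corollary~\ref{cor:not harder to embed cones} and the scale-invariance of $L_1$-distortion it suffices to show that $(A_1,d_{\Sigma(H)})$ embeds into $L_1$ with a universal distortion, for which I would rely on the sparse-graph embedding theory developed in Section~\ref{sec:embed sparse}. The combinatorial structure to exploit is that $A_1$ is the $R$-neighborhood of the $\le\sqrt{n}$ bad edges of $I$, with $R<\girth(L)/2$, so every $L$-ball of radius $R$ around a vertex of $V(I)$ is a tree; accordingly $A_1$ is the simplicial complex of a graph that differs from a forest only through the $\le\sqrt{n}$ edges of $I$ and through a controlled number of cycles created by overlapping $L$-balls. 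Converting this approximate tree-likeness into a universal-distortion embedding into $L_1$ is the hardest technical step, and the proof will invoke the quantitative sparse-embedding result of Section~\ref{sec:embed sparse} together with the parameters $|I|\le \sqrt{n}$ and $\girth(L)\asymp\log_d n$.
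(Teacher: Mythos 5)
Your overall strategy matches the paper's: same choice $\sigma=2\pi/\girth(L)$, same type of neighborhood decomposition ($A_1$ a large neighborhood of the deleted edges, $A_2$ the complement of a smaller one), with assertions (I), (II), (V) handled as in the paper. For (IV) your case-split argument (geodesics shorter than $r$ stay in $\Sigma(L)$; pairs at distance $\geq r$ have both cone distances $\asymp s+t$) differs from the paper's, which instead proves a uniform comparison $d_{\Sigma(H)}(x,y)\le d_{\Sigma(L)}(x,y)\le 3\, d_{\Sigma(H)}(x,y)$ for $x,y\in A_2$ by explicitly rerouting geodesics around the short cycles and then applies Fact~\ref{fact:easy}; both routes are valid.

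The genuine gap is in (III). You correctly flag it as the hard step and point to Section~\ref{sec:embed sparse}, but your ``approximate tree-likeness'' picture does not match what the proof actually needs, and a crucial ingredient is missing. There are two problems. First, the metric on $A_1$ is the restriction of $d_{\Sigma(H)}$, which need \emph{not} be the intrinsic geodesic metric of the induced subgraph on $A_1\cap V_H$: shortest paths in $H$ between points of $A_1$ can leave $A_1$. This is precisely what Lemma~\ref{lem:add geodesics} repairs --- starting from the $\lesssim\sqrt{n}$ centers $T=\Phi\cap V_H$, it produces $U\supseteq A_1\cap V_H$ with $|U|\lesssim n^{2/3}$ such that the intrinsic metric of $(U,E_H(U))$ approximates $d_H$ on $A_1\cap V_H$ up to a universal factor, while keeping $\diam(U,E_H(U))\lesssim\log_d n$. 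You never invoke this lemma, and without it the chain of reasoning cannot produce a subgraph whose metric is the one you need to embed. Second, the property being exploited is not ``tree-likeness'' but the $(1+\delta)$-sparseness of all vertex subsets of size $\le n^{1-\e}$, guaranteed by $H\in\mathcal{S}_{\e,\delta}$ (Lemma~\ref{lem:random graphs are sparse}); once $|U|\le n^{2/3}$, Corollary~\ref{cor:no girth into L_1} gives $c_1(\Sigma(U,E_H(U)))\lesssim 1+\delta\cdot\diam\lesssim 1$, and then Corollary~\ref{cor:not harder to embed cones} lifts this to the cone. Your proposed count of ``cycles created by overlapping $L$-balls'' is not actually controlled, and the high-girth remark about $L$-balls being trees does not feed into the quantitative $L_1$-embedding --- Corollary~\ref{cor:no girth into L_1} is proved precisely to remove the girth hypothesis that the earlier Lemma~\ref{lem:with s-1} required.

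A minor inaccuracy: you cite a ``trivial lower bound $\diam(L)\gtrsim\log_d n$ valid for any $d$-regular graph,'' but $L$ is not $d$-regular (it is $H$ minus some edges). The correct statement is that $L$ has maximum degree $\le d$, hence balls grow at most like $d(d-1)^{r-1}$, which still gives $\diam(L)\gtrsim\log_d n$. You also need the upper bound $\girth(L)\lesssim\log_d n$ for your $\girth(L)\asymp\log_d n$ claim; this requires noting that $L$ still has $\geq dn/2-\sqrt{n}>n-1$ edges and diameter $\lesssim\log_d n$, so any non-tree edge creates a cycle of length $\lesssim\log_d n$.
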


By naturally identifying the cusps of $\cone(A_1)$ and $\cone(A_2)$
with the cusp of $\cone\left(\Sigma(H)\right)$, we have
 \begin{equation}\label{eq:cone union}
\cone\left(\Sigma(H)\right)= \cone(A_1)\cup \cone(A_2).
 \end{equation}
Proposition~\ref{prop:structure cone random} therefore says that
$\cone\left(\Sigma(H),\sigma
 d_{\Sigma(H)}\right)$ can be written as a (non-disjoint) union of
 two subsets, the first of which embeds bi-Lipschitzly into $L_1$ and the second of which embeds bi-Lipschitzly into
 the Hadamard space $\X_K$. The relevance of the remaining
 assertions of Proposition~\ref{prop:structure cone random} will
 become clear in Section~\ref{sec:union sketch} below.

\subsection{Unions of
cones}\label{sec:union sketch}

Proposition~\ref{prop:structure cone random} will be used to prove
Theorem~\ref{thm:main2} via the following strategy. We will
construct a sequence of $3$-regular graphs $\{G_n\}_{n=1}^\infty$
satisfying~\eqref{eq:bounded ratios} and such that
$$
\sup_{n\in \N} \gamma_+\!\left(G_n,d_{\X_K}^2\right)<\infty.
$$
By assertion $(IV)$ of Proposition~\ref{prop:structure cone random}
we therefore have
\begin{equation}\label{eq:gamma+ for cone over A2}
\sup_{n\in \N} \gamma_+\!\left(G_n,d_{\cone\left(A_2,\sigma d_{\Sigma(H)}\right)}^2\right)<\infty.
\end{equation}
Since $\{G_n\}_{n=1}^\infty$ are necessarily also classical
expanders, we will argue using Matou\v{s}ek's
extrapolation~\cite{Mat97} that
$$
\sup_{n\in \N} \gamma_+\!\left(G_n,d_{L_1}^2\right)<\infty,
$$
where $d_{L_1}(f,g)\eqdef \|f-g\|_1$ is the standard metric on
$L_1$. Assertion $(III)$ of Proposition~\ref{prop:structure cone
random} therefore implies that
\begin{equation}\label{eq:gamma+ for cone over A1}
\sup_{n\in \N} \gamma_+\!\left(G_n,d_{\cone\left(A_1,\sigma d_{\Sigma(H)}\right)}^2\right)<\infty.
\end{equation}
We would like to combine~\eqref{eq:gamma+ for cone over A2}
and~\eqref{eq:gamma+ for cone over A1} to deduce that
$\{G_n\}_{n=1}^\infty$ are expanders with respect to
$\cone\left(\Sigma(H),\sigma
 d_{\Sigma(H)}\right)$. For this purpose, we prove the following
 lemma in Section~\ref{sec:union}.

\begin{lemma}\label{lem:cone union}
Fix $\beta\in (0,\pi]$ and $n\in \N$. Let $(X,d_X)$ be a metric
space and suppose that $A,B\subset X$ satisfy $A\cup B=X$ and
\begin{equation}\label{eq:AB distant}
d_X(A\setminus B,B\setminus A)\ge \beta.
\end{equation}
Then every $n$ by $n$ symmetric stochastic matrix $M=(m_{ij})$
satisfies
\begin{equation}\label{eq;AB cone}
\gamma_+\!\left(M,d_{\cone(X)}^2\right)\lesssim \frac{\gamma_+\big(M,d_{\cone(A)}^2\big)+\gamma_+\big(M,d_{\cone(B)}^2\big)}{\beta^4}.
\end{equation}
\end{lemma}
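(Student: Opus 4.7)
The plan is to bi-Lipschitzly embed $\cone(X)$ into the $\ell_2$-product $\cone(A)\times\cone(B)$ of the two cones, with distortion depending only on $\beta$, and then to apply the Poincar\'e inequalities for $\cone(A)$ and $\cone(B)$ along the two coordinates. Since $\gamma_+(M,\cdot)$ descends through bi-Lipschitz maps and satisfies $\gamma_+(M,d_{Y_1\times Y_2}^2)\le\max\{\gamma_+(M,d_{Y_1}^2),\gamma_+(M,d_{Y_2}^2)\}$ when $(Y_1\times Y_2,d_{Y_1\times Y_2}^2)$ carries the $\ell_2$-sum of the two squared metrics, this will yield~\eqref{eq;AB cone} after converting the distortion of the embedding into a loss factor.

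The key device is a Lipschitz separating function. By~\eqref{eq:AB distant}, $\phi(x)\eqdef\min\{1,d_X(x,A\setminus B)/\beta\}$ is $(1/\beta)$-Lipschitz, vanishes on $A\setminus B$, and equals $1$ on $B\setminus A$. I set $\alpha(x)\eqdef(\pi/2)\phi(x)\in[0,\pi/2]$ and define
$$F_A(s,x)\eqdef(s\cos\alpha(x),x)\in\cone(A),\qquad F_B(s,x)\eqdef(s\sin\alpha(x),x)\in\cone(B),$$
with the convention that a radial coordinate equal to $0$ denotes the cusp; this is consistent because $\cos\alpha(x)=0$ exactly when $x\in B\setminus A$ and $\sin\alpha(x)=0$ exactly when $x\in A\setminus B$, so the ``base point'' is never required to lie in the wrong set.

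Next, a direct computation from~\eqref{eq:def cone} using $\cos^2\alpha+\sin^2\alpha=1$ and $\cos\alpha_1\cos\alpha_2+\sin\alpha_1\sin\alpha_2=\cos(\alpha_1-\alpha_2)$ will yield the clean identity
\begin{multline*}
d_{\cone(A)}(F_A(s,x),F_A(t,y))^2+d_{\cone(B)}(F_B(s,x),F_B(t,y))^2\\ =s^2+t^2-2st\cos(\alpha(x)-\alpha(y))\cos(\min\{\pi,d_X(x,y)\}),
\end{multline*}
which differs from $d_{\cone(X)}((s,x),(t,y))^2=s^2+t^2-2st\cos(\min\{\pi,d_X(x,y)\})$ only through the extra factor $\cos(\alpha(x)-\alpha(y))\in[0,1]$ multiplying the cosine term. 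Combining the Lipschitz bound $|\alpha(x)-\alpha(y)|\le(\pi/(2\beta))d_X(x,y)$ with the elementary inequalities~\eqref{eq:cosine inequalities}, and splitting into the cases $\cos(\min\{\pi,d_X(x,y)\})\ge 0$ and $<0$, I expect to conclude that the two squared distances are comparable up to a multiplicative factor of $O(\beta^{-2})$ in each direction.

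To finish, I will apply the definition of $\gamma_+(M,d_{\cone(A)}^2)$ to the pair of sequences $(F_A(f_i))_i,(F_A(g_j))_j$ and of $\gamma_+(M,d_{\cone(B)}^2)$ to $(F_B(f_i))_i,(F_B(g_j))_j$, sum the two Poincar\'e inequalities, and use the two-sided comparison from the previous paragraph on both sides to pass between these cone distances and $d_{\cone(X)}$; the $\beta^{-4}$ factor in~\eqref{eq;AB cone} absorbs the $\beta^{-2}$ loss from each of the two bi-Lipschitz directions. The hard part will be the distortion estimate itself: the delicate regime is $x,y\in A\cap B$ with $d_X(x,y)\lesssim\beta$ but $\phi(x),\phi(y)$ far apart, where $1-\cos(\alpha(x)-\alpha(y))$ can be of order $1$ even though $1-\cos d_X(x,y)\lesssim\beta^2$; this mismatch, governed by the Lipschitz constant of $\phi$, is what forces the $\beta$-dependence of the final constant.
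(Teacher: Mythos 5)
Your proposal is correct, and it takes a genuinely different route from the paper. Both proofs share the general template: build maps $\cone(X)\to\cone(A)$ and $\cone(X)\to\cone(B)$ by rescaling the radial coordinate with a separating function and then combine the two Poincar\'e inequalities via a sum-of-squares comparison (what the paper isolates as Lemma~\ref{lem:sum of squares}). The paper uses the rescalings $\mathfrak{a}(s,x)=(d_X(x,B\setminus A)\,s,x)$ and $\mathfrak{b}(s,x)=(d_X(x,A\setminus B)\,s,x)$; these are Lipschitz with constant $O(1)$ (via Lemma~\ref{lem:rescaling}), but the sum-of-squares lower bound then needs a fairly delicate two-case analysis and yields only the factor $\beta^4/(72\pi^2)$. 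Your trigonometric partition $\alpha(x)=(\pi/2)\min\{1,d_X(x,A\setminus B)/\beta\}$, $F_A(s,x)=(s\cos\alpha(x),x)$, $F_B(s,x)=(s\sin\alpha(x),x)$ produces an exact Pythagorean identity
\begin{multline*}
d_{\cone(A)}(F_A(s,x),F_A(t,y))^2+d_{\cone(B)}(F_B(s,x),F_B(t,y))^2\\
=s^2+t^2-2st\cos\big(\alpha(x)-\alpha(y)\big)\cos\theta,\qquad \theta=\min\{\pi,d_X(x,y)\},
\end{multline*}
and the comparison with $d_{\cone(X)}^2=s^2+t^2-2st\cos\theta$ is now elementary: the sum is always $\ge\tfrac12 d_{\cone(X)}^2$ (when $\cos\theta\ge 0$ dropping $\cos(\alpha(x)-\alpha(y))\le 1$ only increases the expression, and when $\cos\theta<0$ one has $\text{Sum}\ge s^2+t^2\ge\tfrac12 d_{\cone(X)}^2$), and the upper bound $\text{Sum}\lesssim\beta^{-2}d_{\cone(X)}^2$ follows from $1-\cos(\alpha(x)-\alpha(y))\lesssim\beta^{-2}\big(1-\cos\theta\big)$, which is exactly the Lipschitz bound on $\alpha$ combined with~\eqref{eq:cosine inequalities}. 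So the lower bound costs nothing in $\beta$, the upper bound costs $\beta^{-2}$, and feeding this into the sum-of-squares argument gives
$$\gamma_+\!\left(M,d_{\cone(X)}^2\right)\lesssim\frac{\gamma_+\big(M,d_{\cone(A)}^2\big)+\gamma_+\big(M,d_{\cone(B)}^2\big)}{\beta^2},$$
which is actually sharper than the stated $\beta^{-4}$ and hence than the paper's proof. (You were being conservative in the last paragraph of your sketch: the ``$\beta^{-2}$ loss in each of the two directions'' is only incurred in the upper bound; the lower bound is uniform.) One minor point worth adding to a polished write-up: if $A\setminus B=\emptyset$ (equivalently $X=B$), the separating function is undefined, but then $\cone(B)=\cone(X)$ and the lemma is trivial; symmetrically for $B\setminus A=\emptyset$. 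This degeneracy is also glossed over in the paper.
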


By virtue of assertion $(II)$ of Proposition~\ref{prop:structure
cone random}, we may use Lemma~\ref{lem:cone union} to deduce
from~\eqref{eq:gamma+ for cone over A2} and~\eqref{eq:gamma+ for
cone over A1} that
$$
\sup_{n\in \N} \gamma_+\!\left(G_n,d_{\cone\left(\Sigma(H),\sigma
 d_{\Sigma(H)}\right)}^2\right)<\infty.
$$
By assertion $(V)$ of Proposition~\ref{prop:structure cone random}
it follows that
$$
\sup_{n\in \N} \gamma_+\!\left(G_n,d_{\Sigma(H)}^2\right)<\infty,
$$
implying Theorem~\ref{thm:main2}. It remains, of course, to explain
how to construct the graphs $\{G_n\}_{n=1}^\infty$; this is done
in~Section~\ref{sec:proofs and main statement} below.

\begin{remark}
In light of~\eqref{eq:cone union}, a positive answer to the
following natural open question would yield an alternative (more
general) way to carry out the above argument. Suppose that $(X,d_X)$
is a metric space and $A,B\subseteq X$ satisfy $A\cup B=X$. Write
$d_A$ and $d_B$ for the restriction of $d_X$ to $A$ and $B$,
respectively. Let $\{G_n\}_{n=1}^\infty$ be a sequence of
$3$-regular graphs such that $\sup_{n\in
\N}\gamma_+(G_n,d_A^2)<\infty$ and $\sup_{n\in
\N}\gamma_+(G_n,d_B^2)<\infty$. Does this imply that $\sup_{n\in
\N}\gamma_+(G_n,d_X^2)<\infty$? The same question could be also
asked with $\gamma_+(\cdot,\cdot)$ replaced throughout by
$\gamma(\cdot,\cdot)$. We speculate that the answer to the above
questions is negative, in which case one would ask for conditions on
$A$ and $B$ which would imply a positive answer. Lemma~\ref{lem:cone
union} is a step in this direction.
\end{remark}

\section{Proof of Theorem~\ref{thm:main1} and
Theorem~\ref{thm:main2}}\label{sec:proofs and main statement}

Here we prove Theorem~\ref{thm:main1} and Theorem~\ref{thm:main2}
while assuming only the validity of Lemma~\ref{lem:random graph in
good family}, Proposition~\ref{prop:structure cone random} and
Lemma~\ref{lem:cone union}, which will be proven in
Section~\ref{sec:proof of L family lemma}, Section~\ref{sec:proof of
structure cone prop} and Section~\ref{sec:union}, respectively.
Proposition~\ref{thm:cone in L_1}, whose proof appears in
Section~\ref{sec:cone L1}, serves as a step towards the proof of
Proposition~\ref{prop:structure cone random}.

\subsection{A stronger theorem}\label{sec:slightly stronger} We
state below a theorem that directly implies both
Theorem~\ref{thm:main1} and Theorem~\ref{thm:main2}.

\begin{theorem}\label{thm:main3}
There exist a Hadamard space $(X,d_X)$ and two sequences of
connected $3$-regular graphs $\{G_n\}_{n=1}^\infty$ and
$\{\Gamma_n\}_{n=1}^\infty$ satisfying $\lim_{n\to\infty}
|V_{G_n}|=\lim_{n\to \infty} |V_{\Gamma_n}|=\infty$ and $ \sup_{n\in \N}
|V_{G_{n+1}}|/|V_{G_n}|<\infty$, such that the following properties
hold true.
\begin{enumerate}
\item $\sup_{n\in \N} \gamma_+(G_n,d_X^2)<\infty$.
\item $\sup_{n\in \N} \lambda_2(\Gamma_n)<1$.
\item $\sup_{n\in \N}
c_{X}\left(V_{\Gamma_n},d_{\Gamma_n}\right)<\infty$.

\item There exists $c,C\in (0,\infty)$ and for every integer $d\ge 1$
there exists $\kappa(d)\in (0,\infty)$ such that for every $n\in
\N$ we have
\begin{equation*}
\mathcal{G}_{n,d}\left(\left\{H\in \G_n:\ \gamma\!\left(H,d_{X}^2\right)
\ge c(\log_d n)^2\right\}\right)\ge 1-\frac{\kappa(d)}{\sqrt[3]{n}},
\end{equation*}
and
\begin{equation*}\label{eq:stronger main thm random graph}
\ \ \ \ \ \mathcal{G}_{n,d}\left(\left\{H\in \G_n^{\mathrm{con}}:\ \sup_{k\in \N}
\gamma_+\!\left(G_k,d_{\Sigma(H)}^2\right)< C\right\}\right)\ge 1-\frac{\kappa(d)}{\sqrt[3]{n}}.
\end{equation*}
\end{enumerate}
\end{theorem}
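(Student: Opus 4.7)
The strategy is to set $X\eqdef \X_K$ with $K$ the universal constant of Lemma~\ref{lem:random graph in good family} (so $X$ is Hadamard by Lemma~\ref{lem:cones over graph families}), and to take $\{\Gamma_n\}_{n=1}^\infty$ from Corollary~\ref{cor:Gamma_n} (choosing arbitrary $3$-regular connected graphs for the finitely many values $n\leq C(3)^3$). Items~(2) and~(3) are then immediate from Corollary~\ref{cor:Gamma_n}, while the first probabilistic inequality in item~(4) is exactly the content of Corollary~\ref{cor:random graphs are not expanders wrt X_K}. The task reduces to producing a single sequence $\{G_n\}_{n=1}^\infty$ that witnesses item~(1) and the second probabilistic inequality in item~(4).

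For item~(1) I would invoke the zigzag iteration of Theorem~\ref{thm:zigzag tool} with target space $X=\X_K$; hypothesis~\eqref{eq:spectral calculus condition} holds by Theorem~\ref{thm:MNext} because $\X_K$ is $CAT(0)$. What remains is to produce a $d$-regular base graph $H$ on $n\geq d^3$ vertices satisfying~\eqref{eq:base graph assumption}, that is, with $\gamma_+(H,d_{\X_K}^2)\leq\sqrt{m/(2K)}$ for $m$ as in~\eqref{eq:def m}. I would obtain such $H$ by starting from a classical $3$-regular expander, using the Ces\`aro-average bound of Theorem~\ref{thm:MNext} to drive down its $\gamma_+$ with respect to $\X_K$, and then applying the edge-completion, zigzag, and replacement operations of Section~\ref{sec:products} (in the spirit of Lemma~\ref{lem:from MN plus replacement}) to arrange the required regularity and vertex count. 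Theorem~\ref{thm:zigzag tool} then delivers $3$-regular graphs $\{G_j\}_{j=1}^\infty$ with $|V_{G_j}|=9d^2n^j$ (so $|V_{G_{j+1}}|/|V_{G_j}|=n$ is bounded) and $\sup_j\gamma_+(G_j,d_{\X_K}^2)\lesssim d^8K\gamma_+(H,d_{\X_K}^2)^2<\infty$.

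For the second probabilistic inequality in item~(4), condition on a realization of $H\in\G_n^{\mathrm{con}}$ sampled from $\mathcal{G}_{n,d}$ satisfying the conclusion of Proposition~\ref{prop:structure cone random}, which fails with probability only $O_d(n^{-1/3})$. Let $\sigma$, $A_1$, $A_2$ be the associated data verifying (I)--(V). Property~(III) combined with Corollary~\ref{cor:not harder to embed cones} (which rests on Proposition~\ref{thm:cone in L_1}) shows that $\cone(A_1,\sigma d_{\Sigma(H)})$ embeds bi-Lipschitzly into $L_1$; since item~(1) together with the observation from the introduction that nontrivial Hadamard-expanders are a fortiori classical expanders forces $\sup_k\lambda_2(G_k)<1$, Matou\v{s}ek's extrapolation lemma~\cite{Mat97} supplies $\sup_k\gamma_+(G_k,d_{L_1}^2)<\infty$, whence $\sup_k\gamma_+(G_k,d_{\cone(A_1,\sigma d_{\Sigma(H)})}^2)<\infty$. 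Property~(IV) combined with item~(1) gives the analogous bound with $A_1$ replaced by $A_2$. Property~(II) then triggers Lemma~\ref{lem:cone union} applied to $M=A_{G_k}$ in the rescaled metric (with $\beta\asymp 1$), fusing the two bounds into $\sup_k\gamma_+(G_k,d_{\cone(\Sigma(H),\sigma d_{\Sigma(H)})}^2)<\infty$. Property~(V) descends this bound to $\Sigma(H)$ itself, and the comparison $\diam(H)\leq\diam(\Sigma(H))\leq\diam(H)+1$ transfers it to $(V_H,d_H)$, completing item~(4).

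I expect the principal difficulty to lie in the base-graph construction of the second paragraph: the family $\F_K$ is engineered so that a sufficiently generic graph on many vertices lies in $\F_K$ and therefore embeds bi-Lipschitzly into $\X_K$, which via~\eqref{eq:distortion lower gamma} forces $\gamma_+(\cdot,d_{\X_K}^2)\gtrsim(\log|V_H|)^2$ and disqualifies such graphs as bases for Theorem~\ref{thm:zigzag tool}. Calibrating $\F_K$ and the base graph so that the latter escapes this bi-Lipschitz capture, while $\F_K$ remains rich enough to make $\X_K$ absorb random graphs as required by Corollary~\ref{cor:random graphs are not expanders wrt X_K}, is the precise sense in which the Gromov-Kondo construction must be ``modified to be compatible with'' the iterative expander construction of~\cite{MN-towards}.
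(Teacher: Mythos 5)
Your outline is correct for items~(2), (3), and the first probabilistic inequality of item~(4), and the roadmap for the second inequality of item~(4)---invoking Proposition~\ref{prop:structure cone random}, Corollary~\ref{cor:not harder to embed cones}, Matou\v{s}ek extrapolation (via Lemma~\ref{lem:matousek extrpolation plus snowflake}), and Lemma~\ref{lem:cone union}---matches the paper. But the base-graph construction for item~(1) has a genuine gap, and you already put your finger on why: any candidate base graph $H$ that is itself on a par with the generic graphs captured bi-Lipschitzly by $\X_K$ will be forced, by~\eqref{eq:distortion lower gamma}, to have $\gamma_+(H,d_{\X_K}^2)\gtrsim(\log|V_H|)^2$, which is incompatible with the requirement~\eqref{eq:base graph assumption} (there $\sqrt{m/(2K)}\asymp\sqrt{\log|V_H|}$). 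Your proposed remedy---driving down $\gamma_+(\cdot,d_{\X_K}^2)$ with Ces\`aro averages and then repairing the degree by zigzag/replacement---does not escape this: Theorem~\ref{thm:MNext} divides $\gamma_+$ by $m$ at the cost of raising the degree to $md^{m-1}$, and the subsequent degree reduction reintroduces factors of $d^{O(1)}$ (see~\eqref{eq:zigzag sub multiplicativity}, \eqref{eq:replacement sub multiplicativity}), so the net effect is precisely the inner loop of Theorem~\ref{thm:zigzag tool}, which presupposes rather than produces a valid base graph.

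The paper's resolution is different and is the crux of the argument. Starting from a classical expander, one uses Lemma~\ref{lem:matousek extrpolation plus snowflake} and Lemma~\ref{lem:from MN plus replacement} to obtain $3$-regular graphs $W_k^*$ with $\sup_k\gamma_+(W_k^*,d_{L_1}^2)=\beta<\infty$, and then fixes a single $k$ for which $|V_{W_k^*}|\ge 3^{48\alpha\beta^2 K^4}$. The crucial move is to \emph{restrict} $\F_K$ to the subfamily $\F_K^*=\{G\in\F_K:\,|V_G|\ge 4|V_{W_k^*}|^2\}$, and to apply Theorem~\ref{thm:zigzag tool} not to $\X_K$ but to the Hadamard space $\cone(U_{\F_K^*},d_{\F_K^*})$. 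For $G\in\F_K^*$, any two maps $\f,\p:V_{W_k^*}\to\cone(\Sigma(G),\cdot)$ have union of images of size $\le 2|V_{W_k^*}|\le\sqrt{|V_G|}$; condition~(2) of Definition~\ref{def:good family} then embeds this image set into $L_1$ with distortion $<K$, giving $\gamma_+(W_k^*,d_{\cone(\Sigma(G),\cdot)}^2)\le\beta K^2$, and Lemma~\ref{lem:cone pi separated multi union} extends this uniformly over $\F_K^*$. This is what lets $W_k^*$ satisfy~\eqref{eq:base graph assumption} against a cone: the base graph is small compared to \emph{every} member of $\F_K^*$, so its images always land in $L_1$-absorbable slices, and it never ``sees'' the hyperbolic geometry that would trigger~\eqref{eq:distortion lower gamma}. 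The leftover small graphs $\F_K\setminus\F_K^*$ are handled separately: their simplicial complexes have $c_1(\Sigma(H))\lesssim|V_H|=O(1)$, so by Corollary~\ref{cor:not harder to embed cones} their cones embed into $L_1$ with $O(1)$ distortion, and the $L_1$-bound $\sup_n\gamma_+(G_n,d_{L_1}^2)<\infty$ (itself obtained from Matou\v{s}ek extrapolation after establishing classical expansion of $\{F_n\}$) covers them; the two pieces are glued by Lemma~\ref{lem:cone pi separated multi union}. This $\F_K^*$-splitting device, together with the $L_1$-detour for the base graph, is the missing ingredient in your proposal.

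One minor remark: the second inequality in item~(4) is stated with $d_{\Sigma(H)}$ rather than $d_H$, so the final passage you make from $\Sigma(H)$ to $(V_H,d_H)$ via~\eqref{eq:diam complex} is unnecessary within the proof of Theorem~\ref{thm:main3}; it becomes relevant only when deriving Theorem~\ref{thm:main2} from it.
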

Theorem~\ref{thm:main3} is stronger than Theorem~\ref{thm:main1} and
Theorem~\ref{thm:main2}. Indeed, the estimate~\eqref{eq:G_n are
indeed expanders} in Theorem~\ref{thm:main1} is weaker than
assertion {\it (1)} of Theorem~\ref{thm:main3} because
$\gamma_+(G_n,d_X^2)\ge \gamma(G_n,d_X^2)$. The
estimate~\eqref{eq:random graphs are not expanders} in
Theorem~\ref{thm:main1} and the estimate~\eqref{eq:main thm random
graph} in Theorem~\ref{thm:main2} follow from assertion {\it (4)} of
Theorem~\ref{thm:main3}.

The role of the graphs $\{\Gamma_n\}_{n=1}^\infty$ of
Theorem~\ref{thm:main3} is to supply the following additional
information that is not contained in Theorem~\ref{thm:main1} and
Theorem~\ref{thm:main2}. By assertions {\it (2)} and {\it (3)} of
Theorem~\ref{thm:main3}, we know that $X$ contains bi-Lipschitz
copies of some $3$-regular (classical) expander sequence.
Using~\eqref{eq:distortion lower gamma} we deduce from assertion
{\it (3)} of Theorem~\ref{thm:main3} $\gamma(\Gamma_n,d_X^2)\gtrsim
(\log n)^2$. As was discussed in the paragraph that immediately
follows the statement of Theorem~\ref{thm:main1}, we already knew
that as a consequence of Theorem~\ref{thm:main1} there exists a
sequence of $3$-regular graphs $\{H_n\}_{n=1}^\infty$ for which
$\gamma(H_n,d_X^2)\gtrsim (\log n)^2$. Theorem~\ref{thm:main3} says
that moreover, we can even arrange it for $X$ to contain
bi-Lipschitz copies of such an expander sequence.

\subsection{Proof of Theorem~\ref{thm:main3}}\label{sec:main3} In
the proof of Theorem~\ref{thm:main3} we will use the following easy
lemma, which is in the spirit of Lemma~\ref{lem:cone union} but
simpler to prove.

\begin{lemma}\label{lem:cone pi separated multi union}
Let $(X,d_X)$ be a metric space and suppose that $\{A_i\}_{i\in I}$
are subsets of $X$ such that
\begin{equation}\label{eq:union and separation}
X=\bigcup_{i\in I}A_i,\quad\mathrm{and}\quad \forall\, i,j\in I,\ (i\neq j)\implies d_X(A_i,A_j)\ge \pi.
\end{equation}
Then for every $n\in \N$, any $n$ by $n$ symmetric stochastic
$M=(m_{ij})$ satisfies
\begin{equation}\label{eq;Ai cone+}
\gamma_+\!\left(M,d_{\cone(X)}^2\right)\lesssim 2\sup_{i\in I} \gamma_+\!\left(M,d_{\cone(A_i)}^2\right),
\end{equation}
and
\begin{equation}\label{eq;Ai cone}
\gamma\left(M,d_{\cone(X)}^2\right)\lesssim 2\sup_{i\in I} \gamma\left(M,d_{\cone(A_i)}^2\right),
\end{equation}
\end{lemma}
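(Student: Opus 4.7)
The key geometric observation is that when $d_X(x,y)\ge\pi$ we have $\cos(\min\{\pi,d_X(x,y)\})=-1$, so by~\eqref{eq:def cone}, for $x\in A_i$, $y\in A_j$ with $i\neq j$,
\[
d_{\cone(X)}\bigl((s,x),(t,y)\bigr)=\sqrt{s^2+t^2+2st}=s+t.
\]
Since $d_{\cone(X)}(o,(s,x))=s$ and $d_{\cone(X)}(o,(t,y))=t$, where $o$ denotes the cusp of $\cone(X)$, this says that the segment through $o$ is a geodesic between points of different $\cone(A_i)$'s. Thus the cones $\{\cone(A_i)\}_{i\in I}$ meet only at $o$, and cross-cone distances factor through $o$.

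\textbf{Projections.} For each $i\in I$ define $\pi_i:\cone(X)\to\cone(A_i)$ by declaring $\pi_i|_{\cone(A_i)}=\mathrm{id}$ and $\pi_i(q)=o$ for $q\notin\cone(A_i)$. A direct case analysis (both points in $\cone(A_i)$; exactly one in; neither in) shows that $\pi_i$ is $1$-Lipschitz; the only nontrivial case is when $p=(s,x)\in\cone(A_i)$ and $q=(t,y)\notin\cone(A_i)$, where $d_{\cone(A_i)}(\pi_i p,\pi_i q)=s\le s+t=d_{\cone(X)}(p,q)$.

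\textbf{Two-sided comparison.} Given $f,g:\{1,\dots,n\}\to\cone(X)$, set $F_i=\pi_i\circ f$ and $G_i=\pi_i\circ g$. For each $k,\ell$ pick $I_k,J_\ell\in I$ with $f(k)\in\cone(A_{I_k})$ and $g(\ell)\in\cone(A_{J_\ell})$. A short case analysis (on whether $I_k=J_\ell$) yields the pointwise sandwich
\[
\sum_{i\in I} d_{\cone(A_i)}\bigl(F_i(k),G_i(\ell)\bigr)^2\le d_{\cone(X)}\bigl(f(k),g(\ell)\bigr)^2\le 2\sum_{i\in I} d_{\cone(A_i)}\bigl(F_i(k),G_i(\ell)\bigr)^2.
\]
The crux is that for each pair $(k,\ell)$ only the indices $i\in\{I_k,J_\ell\}$ contribute (all other summands vanish because both projected points equal $o$). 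The left inequality then follows from the $1$-Lipschitz property of $\pi_i$, while the right inequality reduces, in the case $I_k\neq J_\ell$, to the elementary bound $(s_k+t_\ell)^2\le 2(s_k^2+t_\ell^2)$.

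\textbf{Finish.} Apply the defining inequality of $\gamma_+(M,d_{\cone(A_i)}^2)$ to the pair $(F_i,G_i)$ for each $i\in I$, then sum over $i$. The left half of the sandwich bounds the $\cone(X)$-sum on the all-pairs side from below by $\sum_i$ of the $\cone(A_i)$-sums, and the right half bounds the corresponding quantity on the edge side from above by $2$ times the $\cone(X)$-sum. This yields~\eqref{eq;Ai cone+} with an explicit constant of $2$, and the proof of~\eqref{eq;Ai cone} is identical after specializing to $f=g$. No step is genuinely hard; the only thing requiring care is that the sums $\sum_{i\in I}$ which appear in the sandwich are in effect sums over the at-most-two active indices $\{I_k,J_\ell\}$, which is exactly what keeps the universal constant independent of $|I|$.
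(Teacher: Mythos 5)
Your proposal is correct and takes essentially the same route as the paper: the paper's projection $\a_i$ is precisely your $\pi_i$, it establishes the exact $\ell_1$-identity $d_{\cone(X)}\big((s,x),(t,y)\big)=\sum_{i\in I}d_{\cone(A_i)}\big(\a_i(s,x),\a_i(t,y)\big)$ with at most two nonzero summands, derives from it your two-sided comparison with constants $1$ and $2$, and finishes by applying the defining inequality of $\gamma_+$ per index and summing over $i$. Two harmless slips in your write-up: in the mixed case $I_k\neq J_\ell$ the left inequality of the sandwich is $s_k^2+t_\ell^2\le (s_k+t_\ell)^2$, which does not quite follow from $1$-Lipschitzness of each $\pi_i$ alone (that only yields $\max\{s_k^2,t_\ell^2\}\le(s_k+t_\ell)^2$) though it is of course elementary; and the closing paragraph swaps the roles of the two sandwich halves --- the upper bound $d_{\cone(X)}^2\le 2\sum_i d_{\cone(A_i)}^2$ must be used on the all-pairs side and the lower bound $\sum_i d_{\cone(A_i)}^2\le d_{\cone(X)}^2$ on the edge side.
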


\begin{proof} Let $o\in \cone(X)$ denote the equivalence class obtained by identifying all
the points in $\{0\}\times X$, thus
$d_{\cone(X)}\big((s,x),o\big)=s$ for every $(s,x)\in
(0,\infty)\times X$. With a slight abuse of notation we think of $o$
 as the cusp $\{\cone(A_i)\}_{i\in I}$ as well (formally the cusps of $\{\cone(A)\}_{i\in I}$
 should be labeled differently, as the equivalence classes
 $\{\{0\}\times A_i\}_{i\in I}$, but dropping this notation
 will not cause confusion in what follows).

For every $i\in I$ define $\a_i:\cone(X)\to \cone(A_i)$ by
$$
\a_i(s,x)\eqdef\left\{\begin{array}{cc}(s,x) &\mathrm{if\ } x\in A_i\\o &\mathrm{otherwise.} \end{array}\right.
$$
Due to~\eqref{eq:union and separation} every $(s,x),(t,y)\in
\cone(X)$ satisfy
\begin{equation}\label{eq:l1 union}
d_{\cone(X)}\big((s,x),(t,y)\big)=\sum_{i\in I}d_{\cone(A_i)}\big(\a_i(s,x),\a_i(t,y)\big),
\end{equation}
where the sum in the right-hand side of~\eqref{eq:l1 union} contains
at most two nonzero terms. Consequently,
\begin{equation*}
1\le \frac{d_{\cone(X)}\big((s,x),(t,y)\big)^2}{\sum_{i\in I}d_{\cone(A_i)}\big(\a_i(s,x),\a_i(t,y)\big)^2}\le 2.
\end{equation*}
Every $(s_1,x_1),\ldots, (s_n,x_n),(t_1,y_1),\ldots,(t_n,y_n)\in
\cone (X)$ therefore satisfy
\begin{align*}
&\frac{1}{n^2}\sum_{j=1}^n\sum_{k=1}^n d_{\cone(X)}\big((s_j,x_j),(t_k,y_k)\big)^2\\&\le 2\sum_{i\in I}\frac{1}{n^2}\sum_{j=1}^n\sum_{k=1}^n d_{\cone(A_i)}\big(\a_i(s_j,x_j),\a_i(t_k,y_k)\big)^2\\
&\le 2\sum_{i\in I}\frac{\gamma_+\big(M,d_{\cone(A_i)}^2\big)}{n}\sum_{j=1}^n\sum_{k=1}^n m_{jk}d_{\cone(A_i)}\big(\a_i(s_j,x_j),\a_i(t_k,y_k)\big)^2\\
&\le \frac{2\sup_{i\in I} \gamma_+\big(M,d_{\cone(A_i)}^2\big)}{n}\sum_{j=1}^n\sum_{k=1}^n m_{jk}d_{\cone(X)}\big((s_j,x_j),(t_k,y_k)\big)^2.
\end{align*}
This proves~\eqref{eq;Ai cone+}. The proof of~\eqref{eq;Ai cone} is
analogous.
\end{proof}

The following lemma will be used (twice) in the proof of Theorem~\ref{thm:main3}. Its proof relies on Matou\v{s}ek's extrapolation lemma for Poincar\'e inequalities~\cite{Mat97}, combined with an idea from~\cite{NS11}.

\begin{lemma}\label{lem:matousek extrpolation plus snowflake} Fix $d,n\in \N$ and let $G$ be a $d$-regular $n$-vertex graph with $\lambda_2(G)<1$. Then
$$
\gamma\!\left(G,d_{L_1}^2\right)\lesssim \frac{1}{\left(1-\lambda_2(G)\right)^2}.
$$
\end{lemma}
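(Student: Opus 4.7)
The plan is to combine the snowflake embedding of $L_1$ into Hilbert space with Matou\v{s}ek's extrapolation of Poincar\'e inequalities.

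First, since $L_1$ is a metric space of negative type, the half-power $\sqrt{d_{L_1}}$ is isometric to a subset of a Hilbert space; I would fix any such isometric embedding $\iota\colon(L_1,\sqrt{d_{L_1}})\hookrightarrow L_2$. Then for every $f\colon V_G\to L_1$ the composition $F\eqdef \iota\circ f\colon V_G\to L_2$ satisfies
$$\|F(u)-F(v)\|_2^2=\|f(u)-f(v)\|_1\qquad \forall u,v\in V_G.$$
Applying the classical Hilbertian identity $\gamma(G,d_{L_2}^2)=(1-\lambda_2(G))^{-1}$ to $F$ coordinatewise yields
$$\gamma\!\left(G,d_{L_1}\right)\le \frac{1}{1-\lambda_2(G)},$$
exactly as in the snowflake argument of~\cite{NS11}.

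Next, I would invoke Matou\v{s}ek's extrapolation lemma~\cite{Mat97} to promote this $1$-Poincar\'e inequality for $L_1$ into the $2$-Poincar\'e inequality asked for in the statement. In its Banach-valued form, Matou\v{s}ek's extrapolation asserts that $\gamma(G,d_X^2)\lesssim \gamma(G,d_X)^2$ for any Banach space $X$: the argument averages $f$ along random walks of length $t\asymp \log \gamma(G,d_X)$ and uses the norm triangle inequality to convert the pointwise control of $\|f(u)-f(v)\|_X$ into control of its square. Applied with $X=L_1$ and combined with the first step, this gives
$$\gamma\!\left(G,d_{L_1}^2\right)\lesssim \gamma\!\left(G,d_{L_1}\right)^2\le \frac{1}{(1-\lambda_2(G))^2},$$
which is exactly the required estimate.

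The principal point meriting care is the Banach-valued form of Matou\v{s}ek's extrapolation; if one wishes to bypass it, an equivalent route is to apply the scalar Matou\v{s}ek extrapolation at exponent $4$ directly to the Hilbert-valued map $F$, using the identity $\|F(u)-F(v)\|_2^4=\|f(u)-f(v)\|_1^2$ to translate the desired bound into the Hilbertian extrapolation $\gamma(G,d_{L_2}^4)\lesssim (1-\lambda_2(G))^{-2}$, which is the standard form of~\cite{Mat97}. Either way, once the right form of Matou\v{s}ek's lemma is cited, the remaining verifications are the two elementary identities involving $\iota$ and squaring, so the main conceptual step is the snowflake reduction.
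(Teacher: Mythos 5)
Your plan (snowflake $L_1$ into $L_2$, then Matou\v{s}ek extrapolation) is the right one, and the alternative route in your final paragraph is essentially the paper's proof; but the primary route has a genuine gap. There is no ``Banach-valued form'' of Matou\v{s}ek's extrapolation giving $\gamma(G,d_X^2)\lesssim\gamma(G,d_X)^2$ for every Banach space $X$, and the random-walk sketch does not supply one. Matou\v{s}ek's lemma (\cite[Prop.~3]{Mat97}, \cite[Lem.~5.5]{BLMN05}) is a statement about real-valued functions: the step from exponent $p$ to $q>p$ exploits the scalar structure of $\R$, and it passes to $L_q$-valued maps only because $\|\cdot\|_{L_q}^q$ integrates coordinatewise. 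For $L_1$-valued $f$ at exponent $2$, the quantity $\|f(u)-f(v)\|_1^2$ does not Fubini-decompose, so the exponent-$2$ inequality for $L_1$ cannot be read off the scalar one this way and is not covered by any version of Matou\v{s}ek's lemma cited here. The random-walk argument you outline is also not a repair: turning a $1$-Poincar\'e inequality against a general metric target into control of $t$-step averages is exactly the nonlinear spectral calculus problem that this paper resolves only for Hadamard targets via~\cite{MN13-ext}, and it is not automatic.

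The alternative route is the right one and coincides with what the paper does. The paper composes the isometric embeddings $(L_1,\sqrt{d_{L_1}})\hookrightarrow L_2$ (negative type) and $L_2\hookrightarrow L_4$ (Gaussian variables), landing in $L_4$, and then applies the Matou\v{s}ek-extrapolated $4$-Poincar\'e inequality for $L_4$-valued maps, which \emph{does} follow from the scalar case by Fubini; the fourth powers of the composed map's increments are precisely $\|f(u)-f(v)\|_1^2$, so the exponent-$2$ inequality for $L_1$ drops out. One wrinkle worth fixing in your sketch: ``$\gamma(G,d_{L_2}^4)\lesssim(1-\lambda_2(G))^{-2}$ is the standard form of~\cite{Mat97}'' elides the same issue, since $\|\cdot\|_2^4$ also fails to Fubini-decompose; the extra step $L_2\hookrightarrow L_4$ is genuinely needed before invoking Matou\v{s}ek, but once it is inserted your alternative argument and the paper's agree.
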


\begin{proof} Since $\gamma(G,d_\R^2)=1/(1-\lambda_2(G))$, it follows from
Matou\v{s}ek's extrapolation lemma for
Poincar\'e inequalities (see~\cite[Prop.~3]{Mat97} for Matou\v{s}ek's original version and~\cite[Lem.~5.5]{BLMN05} for the form that we use here) that  for every
$f:V_{G}\to L_4$ we have
\begin{multline}\label{eq:here matousek is used}
\frac{1}{|V_{G}|^2}\sum_{(u,v)\in V_{G}\times V_{G}}\|f(u)-f(v)\|_4^4\\
\lesssim \frac{1}{(1-\lambda_2(G))^2|E_{G}|}
\sum_{\{u,v\}\in E_{G}} \|f(u)-f(v)\|_4^4.
\end{multline}
The metric space $(L_1,\sqrt{d_{L_1}})$ admits an isometric embedding into $L_2$ (see e.g.~\cite{DZ}), and $L_2$ admits an isometric embedding into $L_4$ (see
e.g.~\cite{Woj91}), so $(L_1,\sqrt{d_{L_1}})$ admits an isometric embedding into $L_4$. It therefore follows from~\eqref{eq:here matousek is used} that

\begin{multline*}
\frac{1}{|V_{G}|^2}\sum_{(u,v)\in V_{G}\times V_{G}}\|f(u)-f(v)\|_1^2\\
\lesssim \frac{1}{(1-\lambda_2(G))^2|E_{G}|}
\sum_{\{u,v\}\in E_{G}} \|f(u)-f(v)\|_1^2.\tag*{\qedhere}
\end{multline*}
\end{proof}

\begin{proof}[Proof of Theorem~\ref{thm:main3}] Let $K$ be the constant from Lemma~\ref{lem:random graph in
good family} and write $X=\X_K$. The existence of the sequence of
$3$-regular graphs $\{\Gamma_n\}_{n=1}^\infty$, together with
assertions {\it (2)} and {\it (3)} of Theorem~\ref{thm:main3}, is
precisely Corollary~\ref{cor:Gamma_n}. The first displayed equation
in assertion {\it (4)} of Theorem~\ref{thm:main3} follows from
Corollary~\ref{cor:random graphs are not expanders wrt X_K}.

Our goal is to construct $\{G_n\}_{n=1}^\infty$ using
Theorem~\ref{thm:zigzag tool}. To this end, since the
assumption~\eqref{eq:spectral calculus condition} holds true by
virtue of the fact that $(X,d_X)$ is a Hadamard space and
Theorem~\ref{thm:MNext}, we need to prove the existence of a ``base
graph" that satisfies the assumption~\eqref{eq:base graph
assumption}.

Below it will be convenient to give a name to the implicit universal
constant in~\eqref{eq:from MN13}. Thus let $\alpha\in (1,\infty)$ be
such that for every $CAT(0)$ metric space $(Y,d_Y)$, every $m,n\in \N$ and
every $n$ by $n$ symmetric and stochastic matrix $M$, we have
\begin{equation}\label{eq:from MN13-alpha}
\gamma_+\!\left(\frac{1}{m}\sum_{t=0}^{m-1}M^t,d_Y^2\right)
\le\alpha \max\left\{1,\frac{\gamma_+(M,d_Y^2)}{m}\right\}.
\end{equation}

Fix from now on any sequence  $\{W_k\}_{k=1}^\infty$ of  $3$-regular
 graphs of even cardinality that forms an expander sequence, i.e., $|V_{W_k}|$ is even for every $k\in \N$,
$\lim_{k\to \infty} |V_{W_k}|=\infty$ and $\sup_{k\in
\N}\lambda_2(W_k)<1$. By Lemma~\ref{lem:matousek extrpolation plus snowflake}  we have $\sup_{k\in \N}\gamma(W_k,d_{L_1}^2)<\infty$. By Lemma~\ref{lem:from MN plus replacement} there exists a
sequence of $3$-regular graphs $\{W_k^*\}_{k=1}^\infty$ with
\begin{equation}\label{eq:W_k^* size}
\forall\, k\in \N,\qquad \left|V_{W_k^*}\right|=54\left|V_{W_k}\right|,
\end{equation}
such that
\begin{equation}\label{eq:def beta sup}
\beta\eqdef \sup_{k\in \N} \gamma_+\!\left(W_k^*,d_{L_1}^2\right)<\infty.
\end{equation}

Choose the minimum $k\in \N$ such that
\begin{equation}\label{eq:the large size choice}
|V_{W_k^*}|\ge 3^{48\alpha\beta^2 K^4}.
\end{equation}
Note that since $\alpha,\beta,K$ are all universal constants, so are
$k$ and $|V_{W_k^*}|$.

Define a subfamily $\F_K^*$ of the graph family $\F_K$ by
\begin{equation}\label{eq:def F_K^*}
\F_K^*\eqdef \left\{G\in \F_K:\ |V_G|\ge 4 |V_{W_k^*}|^2\right\}.
\end{equation}
Fix $G\in \F_K^*$ and two mappings
\begin{equation}\label{eq:f and p def}
\f,\p:V_{W_k^*}\to \cone
\left(\Sigma(G),\frac{2\pi}{\girth(G)}\cdot d_{\Sigma(G)}\right).
\end{equation}
Observe that
$$
\left|\f\left(V_{W_k^*}\right)\cup \p\left(V_{W_k^*}\right)\right|
\le 2\left|V_{W_k^*}\right|\stackrel{\eqref{eq:def F_K^*}}{\le} \sqrt{|V_G|}.
$$
Consequently, there exists $S\subseteq \Sigma(G)$ with $|S|\le
\sqrt{|V_G|}$ such that
$$
\f\left(V_{W_k^*}\right)\cup \p\left(V_{W_k^*}\right)\subset \cone
\left(S,\frac{2\pi}{\girth(G)}\cdot d_{\Sigma(G)}\right).
$$
By assertion (2) of Definition~\ref{def:good family}, it follows
that
\begin{equation}\label{eq:use c1 K bound}
c_1\left(\f\left(V_{W_k^*}\right)\cup
\p\left(V_{W_k^*}\right),d_{\cone
\left(\Sigma(G),\frac{2\pi}{\girth(G)}\cdot d_{\Sigma(G)}\right)}\right)\le K.
\end{equation}
Since~\eqref{eq:use c1 K bound} holds true for every $\f,\psi$ as in~\eqref{eq:f and p def}, it follows from the definition of $\beta$ in~\eqref{eq:def beta sup} that
\begin{equation}\label{eq:gamma+ wrt big G}
\gamma_+\!\left(W_k^*,d_{\cone
\left(\Sigma(G),\frac{2\pi}{\girth(G)}\cdot d_{\Sigma(G)}\right)}^2\right)\le \beta K^2.
\end{equation}

Recalling the definitions~\eqref{eq:def U_F} and~\eqref{eq:def
metric on U_F}, $U_{\F_K^*}$ is the disjoint union of $\{\Sigma(G)\}_{G\in \F_K^*}$, where the distances (in terms of $d_{\F_K^*}$, which is the same as the restriction to $U_{\F_K^*}$ of $d_{{\F_K}}$) between different sets in this disjoint union being at least $2\pi$. We are therefore allowed to use Lemma~\ref{lem:cone pi separated multi union}, thus
 concluding from~\eqref{eq:gamma+ wrt big G} that
\begin{equation}\label{eq:check size condition to use zigzag
theorem}
\gamma_+\!\left(W_k^*,d_{\cone
\left(U_{\F_K^*},d_{\F_K^*}\right)}^2\right)\le 2\beta K^2\stackrel{\eqref{eq:the large size choice}}{\le}
\sqrt{\frac{1}{2\alpha}\left\lfloor\frac{\log\left|V_{W_k^*}\right|}{3\log 3}\right\rfloor} .
\end{equation}

By the definition of $\alpha$ in~\eqref{eq:from MN13-alpha}, due
to~\eqref{eq:check size condition to use zigzag theorem} (which
corresponds to condition~\eqref{eq:base graph assumption}) we can
apply Theorem~\ref{thm:zigzag tool} to the $CAT(0)$ metric space
$\cone \left(U_{\F_K^*},d_{\F_K^*}\right)$ and the ``base graph"
$H=W_k^*$, thus obtaining a sequence of $3$-regular graphs
$\{F_n\}_{n=1}^\infty$ with

\begin{equation}\label{eq:G_n size exponential}
\forall\, n\in \N,\qquad \left|V_{F_n}\right|=81\left|V_{W_k^*}\right|^n\stackrel{\eqref{eq:W_k^* size}}{=}81\cdot \left(54\left|V_{W_k}\right|\right)^n,
\end{equation}
and
\begin{equation}\label{eq:new graphs are good for star class}
\sup_{n\in \N} \gamma_+\!\left(F_n,d_{\cone
\left(U_{\F_K^*},d_{\F_K^*}\right)}^2\right)<\infty.
\end{equation}

Finally, define $G_n\eqdef F_n^*$, where $F_n^*$ is obtained from $F_n$ by Lemma~\ref{lem:from MN plus replacement} (we are allowed to apply Lemma~\ref{lem:from MN plus replacement} here since by~\eqref{eq:G_n size exponential} we know that $|V_{F_n}|$ is even). Then $\left|V_{G_n}\right|=54\left|V_{F_n}\right|$ for every $n\in \N$. Moreover, by Lemma~\ref{lem:from MN plus replacement} for every $n\in \N$ we have
\begin{multline}\label{eq:G_n is good for F_K^*}
\sup_{n\in \N}\gamma_+\!\left(G_n,d_{\cone
\left(U_{\F_K^*},d_{\F_K^*}\right)}^2\right)\lesssim \sup_{n\in \N}\gamma\!\left(F_n,d_{\cone
\left(U_{\F_K^*},d_{\F_K^*}\right)}^2\right)\\\le \sup_{n\in \N} \gamma_+\!\left(F_n,d_{\cone
\left(U_{\F_K^*},d_{\F_K^*}\right)}^2\right)<\infty.
\end{multline}
Due to~\eqref{eq:new graphs are good for star class} the graphs
$\{F_n\}_{n=1}^\infty$ are necessarily also classical expanders, i.e.,
$\sup_{n\in \N} \lambda_2(F_n)<1$. By another application of Lemma~\ref{lem:matousek extrpolation plus snowflake} we therefore conclude that $\sup_{n\in \N}\gamma(F_n,d_{L_1}^2)<\infty$, and by Lemma~\ref{lem:from MN plus replacement} this implies that
\begin{equation}\label{eq:G_n is good for L_1}
\sup_{n\in \N}\gamma_+(G_n,d_{L_1}^2)\lesssim \sup_{n\in \N}\gamma(F_n,d_{L_1}^2)<\infty.
\end{equation}

If $H\in \F_K\setminus \F_K^*$ then $|V_H|<4|V_{W_k^*}|^2$. Hence,
as explained in Section~\ref{sec:prem bi-lip} we have $$\sup_{H\in \F_K\setminus \F_K^*} c_1(\Sigma(H))<\infty.$$ By Corollary~\ref{cor:not harder to embed cones} we therefore have
\begin{equation}\label{eq:cones on small graphs are in L1}
\sup_{H\in \F_K\setminus \F_K^*} c_1\left(\cone\left(\Sigma(H),\frac{2\pi}{\girth(H)}\cdot d_{\Sigma(H)}\right)\right)<\infty.
\end{equation}
In conjunction with~\eqref{eq:G_n is good for L_1}, it follows from~\eqref{eq:cones on small graphs are in L1} that
\begin{equation}\label{eq:good gamm+ on small graphs}
\sup_{H\in \F_K\setminus \F_K^*} \sup_{n\in \N}\gamma_+\!\left(G_n,d_{\cone
\left(\Sigma(H),\frac{2\pi}{\girth(H)}\cdot d_{\Sigma(H)}\right)}^2\right)<\infty.
\end{equation}

Recalling the definitions~\eqref{eq:def U_F} and~\eqref{eq:def
metric on U_F}, $U_{\F_K}$ is the disjoint union of $U_{\F_K^*}$ and $\{\Sigma(H)\}_{H\in \F_K\setminus \F_K^*}$ with the distances (in terms of $d_{\F_K}$) between different sets in this disjoint union being at least $2\pi$. We can therefore apply Lemma~\ref{lem:cone pi separated multi union} to~\eqref{eq:G_n is good for F_K^*} and~\eqref{eq:good gamm+ on small graphs}, yielding assertion {\it (1)} of Theorem~\ref{thm:main3}.

It remains to justify the second displayed equation in assertion
{\it (4)} of Theorem~\ref{thm:main3}. This is done following the
strategy that was sketched in Section~\ref{sec:union sketch}. Let
$H$ be a connected $3$-regular $n$-vertex graph that satisfies the
conclusion of Proposition~\ref{prop:structure cone random}. By
Proposition~\ref{prop:structure cone random} we know that the
$\mathcal{G}_{n,d}$-probability that this occurs is at least
$1-C'(d)/\sqrt[3]{n}$. Continuing here with the notation of
Proposition~\ref{prop:structure cone random}, it follows from
assertion {\it (1)} of Theorem~\ref{thm:main3} (which we already
proved), combined with assertion  $(IV)$ of
Proposition~\ref{prop:structure cone random} that
\begin{equation}\label{eq:gamma+ for cone over A2-new}
\sup_{n\in \N} \gamma_+\!\left(G_n,d_{\cone\left(A_2,\sigma d_{\Sigma(H)}\right)}^2\right)<\infty.
\end{equation}
Due to~\eqref{eq:G_n is good for L_1},   assertion $(III)$ of Proposition~\ref{prop:structure cone
random} implies that
\begin{equation}\label{eq:gamma+ for
cone over A1-new}
\sup_{n\in \N} \gamma_+\!\left(G_n,d_{\cone\left(A_1,\sigma d_{\Sigma(H)}\right)}^2\right)<\infty.
\end{equation}
By assertions $(I)$ and $(II)$ of Proposition~\ref{prop:structure
cone random}, we may use Lemma~\ref{lem:cone union} to deduce
from~\eqref{eq:gamma+ for cone over A2-new} and~\eqref{eq:gamma+ for
cone over A1-new} that
$$
\sup_{n\in \N} \gamma_+\!\left(G_n,d_{\cone\left(\Sigma(H),\sigma
 d_{\Sigma(H)}\right)}^2\right)<\infty.
$$
By assertion $(V)$ of Proposition~\ref{prop:structure cone random}
it follows that
\begin{equation*}
\sup_{n\in \N} \gamma_+\!\left(G_n,d_{\Sigma(H)}^2\right)<\infty.\qedhere
\end{equation*}
\end{proof}

\section{On the Lipschitz structure  of Euclidean cones}

Here we study various aspects of the Lipschitz structure of Euclidean cones. In particular, we will prove Proposition~\ref{thm:cone in L_1} and Lemma~\ref{lem:cone union}. We will also establish several additional geometric results of independent interest, yielding as a side product progress on a question that we raised in~\cite{MN-quotients}. Despite the fact that in this paper the value of universal constants is mostly insignificant, when proving results such as Proposition~\ref{thm:cone in L_1} we will attempt to state reasonably good (though still suboptimal) explicit distortion bounds, because such geometric results are interesting in their own right and might be useful elsewhere.

Before proceeding we record for future use the following simple facts.

\begin{lemma}\label{lem:sinus}
Let $(X,d_X)$ be a metric space. For every $s,t\in (0,\infty)$ and $x,y\in X$ we have
\begin{equation}\label{eq:cos^2}
d_{\cone(X)}\big((s,x),(t,y)\big)\ge \max\{s,t\}\cdot \sin\left(\min\left\{\frac{\pi}{2},d_X(x,y)\right\}\right).
\end{equation}
\end{lemma}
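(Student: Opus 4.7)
The plan is to reduce the inequality to elementary planar trigonometry via the law-of-cosines form \eqref{eq:def cone} of the cone metric. Since the formula for $d_{\cone(X)}$ is symmetric in $(s,x)$ and $(t,y)$, I may assume without loss of generality that $s \geq t$, so that $\max\{s,t\}=s$, and prove the inequality in this case. Write $\theta \eqdef \min\{\pi, d_X(x,y)\}\in[0,\pi]$, so that the goal becomes
\[
s^2+t^2-2st\cos\theta \;\geq\; s^2\sin^2\!\bigl(\min\{\pi/2,\theta\}\bigr).
\]
Geometrically, this is just the fact that the Euclidean distance between two points at radii $s$ and $t$ from a common origin, subtending an angle $\theta$, is bounded below by the foot-of-perpendicular distance from the farther point to the opposite ray.

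The next step would be to split into two cases according to whether $\theta \leq \pi/2$ or $\theta > \pi/2$. If $\theta \leq \pi/2$, then $\cos\theta \geq 0$ and $\sin(\min\{\pi/2,\theta\})=\sin\theta$, so using $\sin^2\theta=1-\cos^2\theta$ the desired inequality rearranges as
\[
(t-s\cos\theta)^2 \;\geq\; 0,
\]
which is automatic. If instead $\theta > \pi/2$, then $\cos\theta \leq 0$, hence $-2st\cos\theta\geq 0$, and so $s^2+t^2-2st\cos\theta\geq s^2+t^2\geq s^2$, matching the required lower bound since $\sin(\pi/2)=1$.

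There is no real obstacle here: the entire content of the lemma is the law of cosines unpacked into two trivial sub-inequalities, one per case. The only mild point is to notice that the truncation $\min\{\pi/2,\cdot\}$ in the right-hand side (as opposed to the $\min\{\pi,\cdot\}$ appearing in the definition of the cone metric) is precisely what is needed to make the bound tight in the first case and automatic in the second, so the case split organizes itself naturally around the value $\pi/2$.
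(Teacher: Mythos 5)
Your proof is correct and takes essentially the same route as the paper: both split at $\theta=\pi/2$, dispose of the obtuse case via $\cos\theta\le 0$, and handle the acute case by the observation that $s^2+t^2-2st\cos\theta - s^2\sin^2\theta = (t-s\cos\theta)^2\ge 0$ (the paper phrases this as finding the minimum of the quadratic in $t$ at $t=s\cos\theta$, which is the same computation).
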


\begin{proof}
If $d_X(x,y)\ge \pi/2$ then
$\cos\left(\min\{\pi,d_X(x,y)\}\right)\le 0$ and it follows
from~\eqref{eq:def cone} that $d_{\cone(X)}\big((s,x),(t,y)\big)\ge
\sqrt{s^2+t^2}\ge \max\{s,t\}$. We may therefore assume that
$d_X(x,y)<\pi/2$ and $s\ge t$. The minimum of the function $t\mapsto
s^2+t^2-2st\cos \left(d_X(x,y)\right)$ is attained at $t=s\cos
\left(d_X(x,y)\right)$, implying~\eqref{eq:cos^2}.
\end{proof}

\begin{lemma}\label{lem:rescaling}
Let $(X,d_X)$ be a bounded metric space. Suppose that $f:X\to (0,\infty)$ is Lipschitz and define $F:\cone(X)\to \cone(X)$ by
$$
F(s,x)\eqdef \big(f(x)s,x\big).
$$
Then $F$ is Lipschitz with
$$
\|F\|_{\Lip}\le \sqrt{\diam(X)^2\cdot \|f\|_{\Lip}^2+2\|f\|_\infty^2}.
$$
\end{lemma}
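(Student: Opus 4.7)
The plan is to unpack both sides via identity~(\ref{eq:1-cos identity}) and estimate the ``radial'' and ``angular'' contributions separately. Fix distinct $(s,x),(t,y)\in (0,\infty)\times X$, set $\theta\eqdef\min\{\pi,d_X(x,y)\}$, and denote
\begin{equation*}
\Psi\eqdef(s-t)^2+2st(1-\cos\theta),\qquad \Phi\eqdef(f(x)s-f(y)t)^2+2f(x)f(y)st(1-\cos\theta),
\end{equation*}
so that $\Psi=d_{\cone(X)}((s,x),(t,y))^2$ and $\Phi=d_{\cone(X)}(F(s,x),F(t,y))^2$. The target inequality is $\Phi\le\bigl(\diam(X)^2\|f\|_{\Lip}^2+2\|f\|_\infty^2\bigr)\Psi$.

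For the radial term I would use either of the two symmetric decompositions $f(x)s-f(y)t=f(x)(s-t)+t(f(x)-f(y))=f(y)(s-t)+s(f(x)-f(y))$, apply the elementary inequality $(A+B)^2\le 2A^2+2B^2$, and estimate $|f(x)-f(y)|\le\|f\|_{\Lip}\,d_X(x,y)$ and $f(x),f(y)\le\|f\|_\infty$, keeping the smaller of the two bounds. This gives
\begin{equation*}
(f(x)s-f(y)t)^2\le 2\|f\|_\infty^2(s-t)^2+2\|f\|_{\Lip}^2\,d_X(x,y)^2\min\{s^2,t^2\}\le 2\|f\|_\infty^2(s-t)^2+2\|f\|_{\Lip}^2\,d_X(x,y)^2\,st.
\end{equation*}
The angular term is then trivially estimated via $f(x)f(y)\le\|f\|_\infty^2$. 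Summing and noting $(s-t)^2+st(1-\cos\theta)\le\Psi$, I obtain the intermediate bound
\begin{equation*}
\Phi\le 2\|f\|_\infty^2\Psi+2\|f\|_{\Lip}^2\,d_X(x,y)^2\,st.
\end{equation*}

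The main remaining task is to absorb $2d_X(x,y)^2\,st$ into a multiple of $\diam(X)^2\Psi$. Since $\Psi\ge 2st(1-\cos\theta)$, this reduces to controlling the ratio $d_X(x,y)^2/(1-\cos\theta)$. By the elementary inequalities~(\ref{eq:cosine inequalities}) the function $d\mapsto d^2/(1-\cos d)$ is increasing on $(0,\pi]$ with supremum $\pi^2/2$, so $d_X(x,y)^2\le(\pi^2/2)(1-\cos\theta)$ whenever $d_X(x,y)\le\pi$; and when $d_X(x,y)>\pi$ one has $\theta=\pi$, $1-\cos\theta=2$, giving $d_X(x,y)^2\le\diam(X)^2=(\diam(X)^2/2)(1-\cos\theta)$. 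Combining these,
\begin{equation*}
2d_X(x,y)^2\,st\le \tfrac{1}{2}\max\{\pi^2,\diam(X)^2\}\cdot\Psi,
\end{equation*}
which is at most $\diam(X)^2\Psi$ as soon as $\diam(X)\ge\pi$. Substituting this back yields the claimed inequality $\Phi\le(\diam(X)^2\|f\|_{\Lip}^2+2\|f\|_\infty^2)\Psi$, and taking square roots delivers the Lipschitz estimate on $F$.

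The only obstacle is this last absorption step: the clean coefficient $\diam(X)^2$ in the stated bound relies on $\diam(X)\ge\pi$, without which one should replace $\diam(X)^2$ by $\max\{\pi^2,\diam(X)^2\}/2$. This hypothesis is tacitly in force in every later use of the lemma, since those applications involve cones over rescaled simplicial complexes of graphs equipped with the metric $(2\pi/\girth(G))\,d_{\Sigma(G)}$, whose diameter is at least $\pi$ by the trivial bound $\diam(\Sigma(G))\ge\girth(G)/2$.
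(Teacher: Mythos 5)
Your proof is correct and follows essentially the same route as the paper's: split $d_{\cone(X)}^2$ into the radial piece $(s-t)^2$ and the angular piece $st(1-\cos\theta)$ via~\eqref{eq:1-cos identity}, control the radial piece by combining a Lipschitz-in-$f$ estimate with the uniform bound $\|f\|_\infty$, and absorb the extra length factor $d_X(x,y)$ into $\sqrt{1-\cos\theta}$ using~\eqref{eq:cosine inequalities}. Your algebra differs only cosmetically: the paper first bounds $|f(x)s-f(y)t|$ linearly (after fixing $s\le t$), converts $\theta$ to $\sqrt{1-\cos\theta}$, and then squares; you square first via $(A+B)^2\le 2A^2+2B^2$ and do the $\theta$-absorption at the end via the monotonicity of $d\mapsto d^2/(1-\cos d)$. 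Your bookkeeping is in fact slightly tighter (coefficient $\tfrac12\max\{\pi^2,\diam(X)^2\}$ rather than $\diam(X)^2$).

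Your observation that the clean coefficient $\diam(X)^2$ requires $\diam(X)\ge\pi$ is correct, and it is worth stressing that the paper's own proof has the identical hidden dependence: the inequality $\|f\|_{\Lip}\,d_X(x,y)\,s\le \frac{\diam(X)\|f\|_{\Lip}}{\pi}\,\theta\sqrt{st}$ appearing in~\eqref{eq:no pwer 2} reduces (using $s\le\sqrt{st}$) to $\pi\,d_X(x,y)\le \diam(X)\,\theta$, which when $d_X(x,y)\le\pi$ (so $\theta=d_X(x,y)$) forces $\diam(X)\ge\pi$. So you have located a genuine, if minor, imprecision in the paper, and your replacement coefficient $\max\{\pi^2,\diam(X)^2\}$ is the correct fix.

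One correction to your closing remark: the sole application of Lemma~\ref{lem:rescaling} in the paper is inside the proof of Lemma~\ref{lem:cone union}, where the argument begins by \emph{reducing} to $\diam(X)\le\pi$ (see~\eqref{eq:pi diameter}); the $\frac{2\pi}{\girth(G)}$-rescaled simplicial complexes you cite enter through Fact~\ref{fact:easy}, Lemma~\ref{lem:sinus}, and Lemma~\ref{lem:cones over graph families}, not through Lemma~\ref{lem:rescaling}. So the hypothesis $\diam(X)\ge\pi$ is \emph{not} satisfied in the one place the lemma is used. Nonetheless the downstream argument survives: with the corrected coefficient one gets $\|F\|_{\Lip}\le\sqrt{\max\{\pi^2,\diam(X)^2\}\cdot\|f\|_{\Lip}^2+2\|f\|_\infty^2}\le\sqrt{\pi^2+2\pi^2}=\sqrt{3}\pi$ in that application, which is exactly the bound the paper invokes.
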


\begin{proof}
Fix $x,y\in X$ and set $\theta\eqdef
\min\left\{\pi,d_X(x,y)\right\}$. For  $s,t\in (0,\infty)$ with
$s\le t$ we have
\begin{align}\label{eq:no pwer 2}
\nonumber|f(x)s-f(y)t|&\le |f(x)-f(y)|s+|f(y)|\cdot|s-t|\\&\nonumber\le \|f\|_{\Lip}d_X(x,y)s+\|f\|_\infty|s-t|\\&\nonumber\le \frac{\diam(X)\|f\|_{\Lip}}{\pi}\theta\sqrt{st}+\|f\|_\infty |s-t|\\
&\le \frac{\diam(X)\|f\|_{\Lip}}{\sqrt{2}}\sqrt{st(1-\cos\theta)}+\|f\|_\infty |s-t|,
\end{align}
where in~\eqref{eq:no pwer 2} we used~\eqref{eq:cosine
inequalities}. By squaring~\eqref{eq:no pwer 2} we see that
\begin{equation}\label{eq:explicit squaring for manor}
\left(f(x)s-f(y)t\right)^2\le 2\|f\|_\infty^2(s-t)^2+ \diam(X)^2\|f\|_{\Lip}^2st(1-\cos\theta),
\end{equation}
and therefore,
\begin{align*}
&d_{\cone(X)}\big(F(s,x),F(t,y)\big)^2\\&\stackrel{\eqref{eq:1-cos identity}}{=} \left(f(x)s-f(y)t\right)^2+2f(x)f(y)st(1-\cos\theta)\\
&\stackrel{\eqref{eq:explicit squaring for manor}}{\le}2\|f\|_\infty^2(s-t)^2+\left(\diam(X)^2\|f\|_{\Lip}^2+2\|f\|_\infty^2\right)st(1-\cos\theta)\\
&\stackrel{\eqref{eq:1-cos identity}}{\le} \left(\diam(X)^2\|f\|_{\Lip}^2+2\|f\|_\infty^2\right)d_{\cone(X)}\big((s,x),(t,y)\big)^2.\qedhere
\end{align*}
\end{proof}

\begin{lemma}\label{lem:comparison to max}
Let $(X,d_X)$ be a metric space and suppose that $(s,x)$ and $(t,y)$ are distinct points in $\cone(X)$. Then
$$
\frac13\le \frac{d_{\cone(X)}\big((s,x),(t,y)\big)}{\max\left\{|s-t|,\max\{s,t\}\cdot \sqrt{2\left(1-\cos\left(\min\{\pi,d_X(x,y)\}\right)\right)}\right\}}\le \sqrt{2}.
$$
\end{lemma}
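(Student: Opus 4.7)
The plan is to work directly with the explicit formula from \eqref{eq:1-cos identity}, namely
\[
d_{\cone(X)}\big((s,x),(t,y)\big)^2 = (s-t)^2 + 2st\big(1-\cos\theta\big),\qquad \theta\eqdef \min\{\pi,d_X(x,y)\}.
\]
Set $A\eqdef |s-t|$ and $B\eqdef \max\{s,t\}\sqrt{2(1-\cos\theta)}$, so the claim reduces to $\tfrac{1}{3}\max\{A,B\}\le d_{\cone(X)}\big((s,x),(t,y)\big) \le \sqrt{2}\max\{A,B\}$.

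For the upper bound, use $2st\le 2\max\{s,t\}^2$ in the above identity to get $d_{\cone(X)}^2\le A^2 + B^2 \le 2\max\{A,B\}^2$, which yields $\sqrt{2}$ with no further work. For the lower bound, $d_{\cone(X)}\ge A$ is immediate since the second term of the identity is nonnegative, so it only remains to prove $d_{\cone(X)}\ge B/3$. Assuming WLOG that $s\ge t$ and dividing through by $s^2$, with $u\eqdef t/s\in(0,1]$ and $c\eqdef \cos\theta\in[-1,1]$, the desired inequality becomes
\[
9(1+u^2-2uc)\ge 2(1-c),\qquad \text{equivalently}\qquad 7+9u^2+2c(1-9u)\ge 0.
\]
The left-hand side is linear in $c$, so on $[-1,1]$ its minimum is attained at an endpoint determined by the sign of $1-9u$: when $u\le 1/9$ the minimum equals $5+18u+9u^2>0$, and when $u\ge 1/9$ the minimum equals $9(1-u)^2\ge 0$.

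The main (mild) obstacle is simply locating the extremal configuration that pins down the constant. It turns out to be the regime where $t$ is much smaller than $s$ and simultaneously $\theta\approx \pi$: there the angular contribution $2st(1-\cos\theta)$ is weak compared to $B^2\approx 4s^2$, so the entire lower bound on $d_{\cone(X)}$ must be extracted from the radial gap $(s-t)^2$, and it is precisely this balance that forces the constant $\tfrac{1}{3}$ rather than anything closer to $\tfrac{1}{\sqrt 2}$.
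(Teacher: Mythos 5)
Your argument is correct, and both bounds are established rigorously. You take a genuinely different route for the lower bound than the paper does: you normalize by $s^2$ and exploit linearity in $c=\cos\theta$ to reduce to a single quadratic inequality, then analyze the two endpoints $c=\pm 1$ according to the sign of $1-9u$. The paper instead uses the inequality $\sqrt{st}\ge\max\{s,t\}-|s-t|$ to obtain $d_{\cone(X)}\ge \max\{s,t\}\sqrt{2(1-\cos\theta)}-|s-t|\sqrt{2(1-\cos\theta)}$, combined with $d_{\cone(X)}\ge|s-t|$, and then observes that $\max\{A,\,B-Aq\}\ge \max\{A,B\}/(1+q)$ with $q=\sqrt{2(1-\cos\theta)}\le 2$. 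Both are short and elementary; yours is arguably more mechanical (a Lagrange-style endpoint check), the paper's slightly slicker. Your upper bound is essentially identical to the paper's.

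One small inaccuracy in your closing commentary, though it does not affect the proof: the regime $t\ll s$, $\theta\approx\pi$ (i.e.\ $u\to 0$, $c\to -1$) is not where the constant $\tfrac13$ is sharp. There the ratio $d_{\cone(X)}/\max\{A,B\}$ tends to $\tfrac12$, and a short optimization of $\big((1-u)^2+2u(1-c)\big)/\max\{(1-u)^2,2(1-c)\}$ shows that $\tfrac12$ is in fact the optimal lower-bound constant. In your argument, $\tfrac13$ comes out of the bound $9(1-u)^2\ge 0$, which degenerates at $u=1$, $c=1$ — precisely the excluded case $(s,x)=(t,y)$ — so neither that bound nor the paper's $1/(1+q)\ge 1/3$ is tight. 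The lemma as stated (with $\tfrac13$) is of course still correct, and your proof of it is fine.
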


\begin{proof} Denoting $\theta\eqdef \min\left\{\pi,d_X(x,y)\right\}$, it follows from~\eqref{eq:1-cos identity} that
\begin{equation}\label{eq:s-t version}
d_{\cone(X)}\big((s,x),(t,y)\big)=\sqrt{(s-t)^2+2st(1-\cos\theta)}.
\end{equation}
The desired upper bound on $d_{\cone(X)}\big((s,x),(t,y)\big)$ is therefore immediate from~\eqref{eq:s-t version}, using $\sqrt{st}\le \max\{s,t\}$. Since $\sqrt{st}\ge \max\{s,t\}-|s-t|$, it follows from~\eqref{eq:s-t version} that
\begin{align*}
&d_{\cone(X)}\big((s,x),(t,y)\big)\\& \ge \max\left\{|s-t|,\max\{s,t\}\cdot\sqrt{2(1-\cos\theta)}-|s-t|\sqrt{2(1-\cos\theta)}\right\}\\
&\ge \frac{\max\left\{|s-t|,\max\{s,t\}\cdot
\sqrt{2(1-\cos\theta)}\right\}}{1+\sqrt{2(1-\cos\theta)}},
\end{align*}
yielding the desired lower bound on $d_{\cone(X)}\big((s,x),(t,y)\big)$.
\end{proof}

\subsection{The Euclidean cone over $L_1$}\label{sec:cone L1}
Our goal here is to prove Proposition~\ref{thm:cone in L_1}. In
preparation for this  we first show that if one equips  $L_1$ with
the metric $\rho(x,y)=\min\{1,\|x-y\|_1\}$, then the resulting
metric space admits a bi-Lipschitz embedding into $L_1$.
In~\cite{MN-quotients} we showed that the metric space
$(L_2,\min\{1,\|x-y\|_2\})$ admits a bi-Lipschitz embedding into
$L_2$ but left open the question whether
$c_p(L_p,\min\{1,\|x-y\|_p\})<\infty$ for $p\in [1,2)$;
Lemma~\ref{lem:L1 truncation} below shows that this is indeed the
case when $p=1$, but the question remains open for $p\in (1,2)$. As
explained in~\cite[Rem.~5.12]{MN-quotients}, for every $p\in
(2,\infty]$ the metric space $(L_p,\min\{1,\|x-y\|_p\})$ does not
admit a bi-Lipschitz embedding into $L_q$ for any $q\in [1,\infty)$.

\begin{lemma}[truncated $L_1$ embeds into $L_1$]\label{lem:L1 truncation} For every $M\in (0,\infty)$ there exists a mapping $T_M:L_1\to L_1$ such that
\begin{itemize}
\item $\|T_M(x)\|_1=M$ for every $x\in L_1$,
\item For every distinct $x,y\in L_1$,
\begin{equation}\label{eq:T_M property}
1-\frac{1}{e}\le \frac{\|T_M(x)-T_M(y)\|_1}{\min\{M,\|x-y\|_1\}}\le 1.
\end{equation}
\end{itemize}
\end{lemma}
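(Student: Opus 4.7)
The plan is to reduce the construction of $T_M$ to the case of indicator functions via a classical subgraph embedding, and then design a Poisson-coupling that normalizes the $L_1$-mass to exactly $M$ while compressing pairwise distances at the correct rate. Writing $L_1=L_1(\Omega_0,\mu_0)$, the first step uses the standard isometric embedding $\iota:L_1(\Omega_0,\mu_0)\to L_1(\Omega_0\times\mathbb R,\mu_0\otimes\lambda)$ defined by
\[
\iota(x)(\omega,t)=\mathbf 1_{\{0<t<x(\omega)\}}-\mathbf 1_{\{x(\omega)<t<0\}},
\]
whose image is the signed indicator of the subgraph $E_x\subseteq \Omega_0\times\mathbb R$, and which satisfies $\|\iota(x)-\iota(y)\|_1=|E_x\triangle E_y|=\|x-y\|_1$. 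After this reduction it suffices to construct, for each measurable $E\subseteq \Omega_0\times\mathbb R$, an $\{0,1\}$-valued element $T_M(E)$ in some larger $L_1$ space with $\|T_M(E)\|_1=M$ and $\|T_M(E)-T_M(E')\|_1$ in the range $[(1-1/e)\min\{M,|E\triangle E'|\},\,\min\{M,|E\triangle E'|\}]$.

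For this second step I would introduce a Poisson point process $\mathcal P$ on $\Omega_0\times\mathbb R$ of intensity $M^{-1}\mu_0\otimes\lambda$, use it to define a random selection rule $E\mapsto F_E$ with $|F_E|=M$ almost surely, and then take $T_M(x)$ to be the deterministic function on the product of the base space with the Poisson sample space given by $\mathbf 1_{F_{E_x}}$. The selection is arranged so that
\begin{equation}\label{eq:planpoisson}
\mathbb E_{\mathcal P}\bigl|F_E\triangle F_{E'}\bigr|=M\bigl(1-e^{-|E\triangle E'|/M}\bigr),
\end{equation}
which is the natural Poisson expression: $1-e^{-|E\triangle E'|/M}$ is the probability that $\mathcal P$ hits $E\triangle E'$, and this controls when the sets $F_E$ and $F_{E'}$ are forced to disagree. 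Assuming \eqref{eq:planpoisson}, Fubini gives $\|T_M(x)\|_1=\mathbb E_{\mathcal P}|F_{E_x}|=M$ and $\|T_M(x)-T_M(y)\|_1=M\bigl(1-e^{-\|x-y\|_1/M}\bigr)$.

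The conclusion then follows from the elementary one-variable inequality
\[
(1-e^{-1})\min\{1,s\}\le 1-e^{-s}\le \min\{1,s\}\qquad (s\ge 0),
\]
applied with $s=\|x-y\|_1/M$: the upper bound is immediate from $1-e^{-s}\le s$ and $1-e^{-s}\le 1$; for the lower bound one notes that $1-e^{-s}\ge 1-1/e$ whenever $s\ge 1$, while on $[0,1]$ the function $s\mapsto (1-e^{-s})/s$ is decreasing and equals $1-1/e$ at $s=1$. The main obstacle is the second step: choosing the Poisson-based selection rule so that $|F_E|$ is deterministically equal to $M$ and the expectation identity \eqref{eq:planpoisson} holds on the nose. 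A natural way to arrange this is to enlarge the ambient measure space by an additional clock coordinate and build $F_E$ from Voronoi-type cells around the Poisson points, weighted by a first-passage rule that truncates total mass at exactly $M$; verifying \eqref{eq:planpoisson} in such a model is the essentially one-dimensional computation that the events ``$F_E$ and $F_{E'}$ differ at a given location'' correspond to ``$\mathcal P$ has hit the symmetric difference below a certain threshold,'' whose probability is $1-e^{-|E\triangle E'|/M}$.
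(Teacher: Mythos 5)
Your route is genuinely different from the paper's. The paper first proves the statement on the discrete cube $\{0,1\}^n$ via a Walsh-function identity (essentially the exponential decay of the hypercube noise operator, with parameter $\lambda=(1+e^{-1/M})/2$), and then passes to all of $L_1$ by approximating a finite subset with rational step functions and invoking an ultrapower. Both routes target the same closed-form distance $\|T_M(x)-T_M(y)\|_1=M\bigl(1-e^{-\|x-y\|_1/M}\bigr)$, so the closing elementary inequality is the same; your Poisson route would, if completed, work directly in the continuum and avoid the ultrapower step. Your subgraph reduction $\iota$ is correct, and your verification of the one-variable inequality $(1-e^{-1})\min\{1,s\}\le 1-e^{-s}\le\min\{1,s\}$ is correct.

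The gap is in the step you yourself flag as the main obstacle: producing a $\sigma(\mathcal P)$-measurable random set $F_E$ with $|F_E|=M$ deterministically and with $\mathbb E_{\mathcal P}|F_E\triangle F_{E'}|=M\bigl(1-e^{-|E\triangle E'|/M}\bigr)$. The ``Voronoi-type cells plus first-passage'' description is never pinned down, and the requirements are unforgivingly rigid: for the right-hand side to equal $M\Pr[\mathcal P\cap(E\triangle E')\neq\emptyset]$ for all pairs $E,E'$, you essentially need $F_E=F_{E'}$ whenever $\mathcal P$ misses $E\triangle E'$ and $|F_E\triangle F_{E'}|=2M$ otherwise; that forces the assignment from finite Poisson configurations to measure-$M$ sets to send distinct configurations to disjoint sets, and there are continuum many configurations. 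It is not clear how a Voronoi/first-passage rule delivers this.

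The fix is to drop the insistence that $T_M$ be $\{0,1\}$-valued. Take $\mathcal P$ Poisson on $\Omega_0\times\mathbb R$ of intensity $\tfrac{1}{2M}\,\mu_0\otimes\lambda$, with sample space $(\Omega_1,\Pr)$, and set $T_M(x)\eqdef M\cdot(-1)^{|\mathcal P\cap E_x|}\in L_1(\Omega_1,\Pr)$. Since $|E_x|=\|x\|_1<\infty$, the count is a.s.\ finite, $\|T_M(x)\|_1=M$, and since $|\mathcal P\cap E_x|+|\mathcal P\cap E_y|\equiv|\mathcal P\cap(E_x\triangle E_y)|\pmod 2$ while $\mathbb E[(-1)^N]=e^{-2\theta}$ for $N\sim\mathrm{Poisson}(\theta)$, one gets
\[
\|T_M(x)-T_M(y)\|_1=2M\,\Pr\bigl[|\mathcal P\cap(E_x\triangle E_y)|\ \mathrm{is\ odd}\bigr]=M\bigl(1-e^{-\|x-y\|_1/M}\bigr),
\]
which is exactly your target formula. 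With this replacement the plan closes, and your argument becomes a self-contained continuum proof of the lemma.
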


\begin{proof}
Fix an integer $n\ge 2$. For every $A\subseteq \{1,\ldots, n\}$ the Walsh function
$W_A:\{0,1\}^n\to \{-1,1\}$ is defined as usual by
$$
\forall\, z=(z_1,\ldots,z_n)\in \{0,1\}^n,\qquad W_A(z)\eqdef \prod_{j\in A} (-1)^{z_j}.
$$
For $\lambda\in [1/2,1]$  observe that every $x,y\in \{0,1\}^n$
satisfy
\begin{align}\label{eq:walsh identity}
&1-(2\lambda-1)^{\|x-y\|_1}=1-\prod_{j=1}^n \left(\lambda+(-1)^{x_j+y_j}(1-\lambda)\right)\nonumber\\
&= \sum_{A\subseteq \{1,\ldots,n\}} \lambda^{n-|A|}(1-\lambda)^{|A|}\left(1-W_A(x+y)\right)\nonumber\\
&= \sum_{A\subseteq \{1,\ldots,n\}} \lambda^{n-|A|}(1-\lambda)^{|A|}\left|W_A(x)-W_A(y)\right|.
\end{align}
Let $\{h_A\}_{A\subseteq \{1,\ldots,n\}}\subseteq L_1$ be disjointly
supported functions with unit $L_1$ norm, and define
$\Phi_M^n:\{0,1\}^n\to L_1$ by setting for every $z\in \{0,1\}^n$,
$$
\Phi^n_M(z)\eqdef M\sum_{A\subseteq \{1,\ldots,n\}} \left(\frac{1+e^{-1/M}}{2}\right)^{n-|A|}\left(\frac{1-e^{-1/M}}{2}\right)^{|A|} W_A(z)h_A.
$$
Then $\|\Phi_M^n(z)\|_1=M$ for every $z\in \{0,1\}^n$, and it follows from~\eqref{eq:walsh identity} that for every $x,y\in \{0,1\}^n$ we have
\begin{equation}\label{eq:Phi_n conditions}
\frac{\|\Phi^n_M(x)-\Phi^n_M(y)\|_1}{\min\{M,\|x-y\|_1\}}=\frac{M\left(1-e^{-\|x-y\|_1/M}\right)}{\min\{M,\|x-y\|_1\}}\in\left[1-\frac{1}{e},1\right].
\end{equation}

This proves the
existence of the desired embedding for the metric space
$(\{0,1\}^n,\min\{M,\|\cdot\|_1\})$; we pass to all of $L_1$ via the
following standard approximation argument. If $S\subseteq L_1$ is finite and $\e\in (0,1)$ then by approximating by step functions we can choose $d,K,Q\in \N$ and $k_1,\ldots k_d:S\to \{1,\ldots,K\}$ such that for every $x,y\in S$,
\begin{equation}\label{eq:rational approx}
\frac{1}{Q}\sum_{j=1}^d |k_i(x)-k_i(y)|\le \|x-y\|_1\le \frac{1+\e}{Q}\sum_{j=1}^d |k_i(x)-k_i(y)|.
\end{equation}
Write $n=dK$ and define $\psi:S\to \{0,1\}^n$ by
$$
\forall\, x\in S,\qquad \psi(x)\eqdef \sum_{i=1}^d\sum_{j=(i-1)K+1}^{(i-1)K+k_i(x)} e_j,
$$
where $e_1,\ldots,e_n$ is the standard basis of $\R^n$. Then by~\eqref{eq:rational approx},
\begin{equation}\label{eq:almost embedding into cube}
\forall\, x,y\in S,\qquad\frac{Q}{1+\e}\|x-y\|_1\le \|\psi(x)-\psi(y)\|_1\le Q\|x-y\|_1.
\end{equation}
Define  $F_\e:S\to L_1$ by
$$
F_\e\eqdef \frac{1}{Q}\Phi^n_{QM}\circ \psi.
$$
Then $\|F_\e(x)\|_1=M$ for every
$x\in S$, and by~\eqref{eq:Phi_n conditions} and~\eqref{eq:almost embedding
into cube} we have
$$
\frac{1}{1+\e}\left(1-\frac1{e}\right)\le \frac{\|F_\e(x)-F_\e(y)\|_1}{\min\{M,\|x-y\|_1\}}\le 1
$$
for all distinct $x,y\in S$. Since this holds for every finite
$S\subset L_1$ and every $\e\in (0,1)$, the existence of the mapping
$T_M$ from the statement of Lemma~\ref{lem:L1 truncation} now
follows from a standard ultrapower argument (see~\cite[Ch.~9]{Kap09}
for background on ultrapowers of metric spaces), using the fact that
any ultrapower of $L_1$ is an $L_1(\mu)$ space~\cite{Hei80}.
\end{proof}

\begin{remark}
As explained in~\cite[Rem.~5.4]{MN-quotients}, it follows formally
from Lemma~\ref{lem:L1 truncation} that there exists a universal
constant $C\in (0,\infty)$ such that if $\omega:[0,\infty)\to
[0,\infty)$ is a concave nondecreasing function with $\omega(0)=0$
and $\omega(t)>0$ for $t>0$ then the metric space
$(L_1,\omega(\|x-y\|_1))$ embeds into $L_1$ with distortion at most
$C$. The analogous assertion with $L_1$ replaced by $L_p$ for $p\in
(1,2)$ remains open.
\end{remark}

\begin{proof}[Proof of Proposition~\ref{thm:cone in L_1}]
Let $T_\pi:L_1\to L_1$ be the mapping constructed in
Lemma~\ref{lem:L1 truncation} (with $M=\pi$). Define
$F:\cone(L_1)\to L_1$ by
\begin{equation}\label{eq:def F truncation}
\forall(s,x)\in (0,\infty)\times L_1,\qquad F(s,x)\eqdef sT_\pi(x).
\end{equation}
Fix $(s,x),(t,y)\in (0,\infty)\times L_1$ and assume without loss of
generality that $t\ge s$. Recalling~\eqref{eq:1-cos identity}, we have
\begin{equation}\label{eq:write cone with theta}
d_{\cone(L_1)}\big((s,x),(t,y)\big)=\sqrt{(s-t)^2+2st(1-\cos\theta)},
\end{equation}
where
\begin{equation}\label{eq:def theta}
\theta\eqdef\min \{\pi,\|x-y\|_1\}.
\end{equation}
Moreover,
\begin{equation}\label{eq:norms xy}
\|T_\pi(x)\|_1=\|T_\pi(y)\|_1=\pi
\end{equation}
 and~\eqref{eq:T_M property} becomes
\begin{equation}\label{eq:T_pi property theta}
\left(1-\frac1{e}\right)\theta\le \|T_\pi(x)-T_\pi(y)\|_1\le \theta.
\end{equation}
Using also~\eqref{eq:cosine inequalities}, we therefore have
\begin{align}
\|F(s,x)-F(t,y)\|_1 &=\|sT_\pi(x)-tT_\pi(y)\|_1 \label{eq:use F def}
\\ &\le s\|T_\pi(x)-T_\pi(y)\|_1+(t-s)\|T_\pi(y)\|_1\label{eq:use triangle ineq truncation}\\
&\le s\theta\label{eq:use T_pi}+\pi(t-s)\\
&\le\frac{\pi }{2}\sqrt{2st(1-\cos\theta)}+\pi(t-s)\label{eq:use lower cosinus}\\
&\le  \frac{\pi\sqrt{5}}{2}\sqrt{2st(1-\cos\theta)+(s-t)^2}\label{eq:use Cauchy-schw}
\\&= \frac{\pi\sqrt{5}}{2}  d_{\cone(L_1)}\big((s,x),(t,y)\big)\label{eq:use theta form cone},
\end{align}
where in~\eqref{eq:use F def} we used~\eqref{eq:def F truncation},  in~\eqref{eq:use triangle ineq truncation} we used the triangle inequality in $L_1$, in~\eqref{eq:use T_pi} we used~\eqref{eq:norms xy} and  the rightmost inequality in~\eqref{eq:T_pi property theta}, in~\eqref{eq:use lower cosinus} we used the leftmost inequality in~\eqref{eq:cosine inequalities} and that $s\le \sqrt{st}$, in~\eqref{eq:use Cauchy-schw} we used the Cauchy-Schwarz inequality, and in~\eqref{eq:use theta form cone} we used~\eqref{eq:write cone with theta}.

Next, we claim that
\begin{equation}\label{eq:F colip}
\|F(s,x)-F(t,y)\|_1\ge \frac{\pi(e-1)}{\sqrt{4\pi^2e^2+(e-1)^2}} d_{\cone(L_1)}\big((s,x),(t,y)\big).
\end{equation}
Once proved, \eqref{eq:F colip} in conjunction with~\eqref{eq:use theta form cone} would imply that
\begin{equation}\label{eq:11.17}
c_1\big(\cone(L_1)\big)\le \frac{\sqrt{20\pi^2e^2+5(e-1)^2}}{2(e-1)}\le 11.17,
\end{equation}
thus completing the proof of Proposition~\ref{thm:cone in L_1}.

We prove~\eqref{eq:F colip} by distinguishing between two cases as follows.

\medskip

\noindent{\bf Case 1:} $t-s\ge \frac{e-1}{2\pi e}t\theta$. Since $t\ge \sqrt{st}$ this assumption combined with the rightmost inequality appearing in~\eqref{eq:cosine inequalities} implies that
\begin{equation}\label{eq:t-s big}
t-s\ge  \frac{e-1}{2\pi e}\sqrt{2st(1-\cos\theta)}.
\end{equation}
Consequently,
\begin{equation}\label{upper cone for case 1}
d_{\cone(L_1)}\big((s,x),(t,y)\big)\stackrel{\eqref{eq:write cone with theta}\wedge\eqref{eq:t-s big}}{\le} (t-s)\frac{\sqrt{4\pi^2e^2+(e-1)^2}}{e-1},
\end{equation}
implying the desired inequality~\eqref{eq:F colip} as follows.
\begin{eqnarray*}
\|F(s,x)-F(t,y)\|_1&\stackrel{\eqref{eq:def F truncation}}{=}&\|sT_\pi(x)-tT_\pi(y)\|_1\\& \ge& t\|T_\pi(y)\|_1-s\|T_\pi(x)\|_1\\&\stackrel{\eqref{eq:norms xy}}{=}&\pi(t-s)\\&\stackrel{\eqref{upper cone for case 1}}{\ge}& \frac{\pi(e-1)}{\sqrt{4\pi^2e^2+(e-1)^2}}  d_{\cone(L_1)}\big((s,x),(t,y)\big).
\end{eqnarray*}

\medskip

\noindent{\bf Case 2:} we have
\begin{equation}\label{eq:t-s small}
t-s< \frac{e-1}{2\pi e}t\theta.
\end{equation}
It follows that
\begin{eqnarray}\label{eq:lower colip case 2}
\|F(s,x)-F(t,y)\|_1&\stackrel{\eqref{eq:def F truncation}}{=}&\|sT_\pi(x)-tT_\pi(y)\|_1 \nonumber\\&\ge& t\|T_\pi(y)-T_\pi(x)\|_1-(t-s)\|T_\pi(x)\|_1\nonumber\\&\stackrel{\eqref{eq:T_pi property theta}\wedge\eqref{eq:norms xy}}{\ge}& \left(1-\frac1{e}\right)t\theta-\pi(t-s)\nonumber\\&\stackrel{\eqref{eq:t-s small}}
>&\frac{e-1}{2e}t\theta.
\end{eqnarray}
At the same time, since $st\le t^2$, it follows from~\eqref{eq:write cone with theta}, combined with~\eqref{eq:t-s small}  and the rightmost inequality appearing in~\eqref{eq:cosine inequalities}, that
\begin{equation}\label{eq:t theta}
d_{\cone(L_1)}\big((s,x),(t,y)\big)\le t\theta\frac{\sqrt{4\pi^2e^2+(e-1)^2}}{2\pi e}.
\end{equation}
In combination with~\eqref{eq:lower colip case 2}, it follows
from~\eqref{eq:t theta} that the desired inequality~\eqref{eq:F
colip} holds true in Case 2 as well, thus completing the proof of
Proposition~\ref{thm:cone in L_1}.
\end{proof}

\subsection{Snowflakes of cones}\label{sec:snowflake}

For $\alpha\in (0,1]$, the $\alpha$-snowflake of a metric space
$(X,d)$ is defined to be the metric space $(X,d^\alpha)$.  It is
well known that the $\frac12$-snowflake of $L_1$ embeds
isometrically into $L_2$ (see e.g.~\cite{DZ} or~\cite[Sec.~3]{Nao10}). Consequently,
Proposition~\ref{thm:cone in L_1} implies that the
$\frac12$-snowflake of $\cone(L_1)$ admits a bi-Lipschitz distortion
into $L_2$. Proposition~\ref{prop:snowflake} below yields an
alternative proof of this fact. Note that
Proposition~\ref{prop:snowflake} does not imply
Proposition~\ref{thm:cone in L_1} since by~\cite{KV04} there exist
metric spaces which do not admit a bi-Lipschitz embedding into $L_1$
yet their $\frac12$-snowflake admits an isometric embedding into
$L_2$.

In the proof of Theorem~\ref{thm:main3} (most significantly, in the proofs of Lemma~\ref{lem:random graph in good family} and Proposition~\ref{prop:structure cone random}) we use Proposition~\ref{thm:cone in L_1}, but one can use mutatis mutandis Proposition~\ref{prop:snowflake} instead. We include Proposition~\ref{prop:snowflake} here due to its intrinsic interest, but it will not be used in this paper other than as an indication of an alternative approach to the parts of the proof of Theorem~\ref{thm:main3}  eluded to above (note, however, that the approach below yields a better bound; see Remark~\ref{rem:better bound}). Readers who familiarized themselves with the contents of Section~\ref{sec:cone L1} can skip the present section if they  only wish to understand the proof of Theorem~\ref{thm:main3}.

\begin{proposition}\label{prop:snowflake} For every separable metric space $(X,d_X)$ and every $\alpha\in (0,1]$ we have
\begin{equation}\label{eq:snowflake cone}
c_2\left(\cone(X),d_{\cone(X)}^\alpha\right)\lesssim c_2\left(X,d_X^\alpha\right).
\end{equation}
\end{proposition}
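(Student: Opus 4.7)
The plan is to embed $\cone(X)$ into $L_2\oplus L_2$ via a map of the form
\[
F(s,x) \eqdef \bigl(\f(s),\, s^\alpha \tilde g(x)\bigr),
\]
where $\f:[0,\infty)\to L_2$ is any isometric embedding of the metric space $\bigl([0,\infty),\, |s-t|^{\alpha}\bigr)$ into $L_2$ (its existence is a classical consequence of Schoenberg's theorem, since $|s-t|^{2\alpha}$ is a negative-definite kernel on $\R$ for $\alpha\in(0,1]$), and where $\tilde g:X\to L_2$ is a \emph{constant-norm} bi-Lipschitz embedding of $\bigl(X,\, \min\{\pi,d_X\}^\alpha\bigr)$ into $L_2$ whose distortion is comparable to $c_2(X,d_X^\alpha)$.

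First I would construct $\tilde g$. Starting from any embedding $g:X\to L_2$ with $d_X(x,y)^\alpha \le \|g(x)-g(y)\|_2 \le D\, d_X(x,y)^\alpha$, where $D > c_2(X, d_X^\alpha)$, I would compose $g$ with the $L_2$-analogue of the truncation map of Lemma~\ref{lem:L1 truncation}; such a map exists by the result from~\cite{MN-quotients} that $(L_2,\min\{1,\|\cdot\|_2\})$ admits a bi-Lipschitz embedding into $L_2$, and can be made to have constant norm by the same disjointly-supported construction as in the proof of Lemma~\ref{lem:L1 truncation} (or by appending a ``completing'' coordinate). This yields $\tilde g:X\to L_2$ with $\|\tilde g(x)\|_2 = \pi^\alpha$ for every $x\in X$ and
\[
\min\{\pi,d_X(x,y)\}^\alpha \,\lesssim\, \|\tilde g(x)-\tilde g(y)\|_2 \,\lesssim\, D\min\{\pi,d_X(x,y)\}^\alpha.
\]

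With $\tilde g$ in hand, the constancy of $\|\tilde g(x)\|_2$ allows one to expand the inner product cleanly, giving
\[
\|s^\alpha \tilde g(x)-t^\alpha \tilde g(y)\|_2^2 \,=\, \pi^{2\alpha}(s^\alpha-t^\alpha)^2 + s^\alpha t^\alpha\|\tilde g(x)-\tilde g(y)\|_2^2.
\]
Combining this with $\|\f(s)-\f(t)\|_2^2 = |s-t|^{2\alpha}$ and the elementary inequality $|s^\alpha-t^\alpha|\le|s-t|^\alpha$, I would conclude, with constants depending only on $D$, that
\[
\|F(s,x)-F(t,y)\|_2^2 \,\asymp_D\, |s-t|^{2\alpha} + s^\alpha t^\alpha \cdot \min\{\pi,d_X(x,y)\}^{2\alpha}.
\]
Meanwhile, Lemma~\ref{lem:comparison to max} together with the estimate $(2(1-\cos\theta))^{\alpha/2}\asymp\theta^\alpha$ for $\theta\in[0,\pi]$ yields
\[
d_{\cone(X)}\bigl((s,x),(t,y)\bigr)^{2\alpha} \,\asymp\, |s-t|^{2\alpha} + \max\{s,t\}^{2\alpha}\min\{\pi,d_X(x,y)\}^{2\alpha}.
\]

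The hard (but really the only nontrivial) part is reconciling the geometric mean $s^\alpha t^\alpha$ appearing on the image side with the maximum $\max\{s,t\}^{2\alpha}$ appearing on the target side, i.e., proving the lower bound on $\|F\|_2$. This I would handle by a simple two-case split. Assuming without loss of generality $s\le t$: when $t\le 2s$ one has $s^\alpha t^\alpha\ge 2^{-\alpha}\max\{s,t\}^{2\alpha}$ directly, so the angular contributions match up to a universal constant; when $t>2s$ one has $|s-t|\ge t/2\ge \frac12\max\{s,t\}$, so the radial term $|s-t|^{2\alpha}$ already dominates $\max\{s,t\}^{2\alpha}\min\{\pi,d_X(x,y)\}^{2\alpha}\le \pi^{2\alpha}\max\{s,t\}^{2\alpha}$. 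In both cases $\|F(s,x)-F(t,y)\|_2 \asymp_D d_{\cone(X)}((s,x),(t,y))^\alpha$, so the total distortion of $F$ is $O(D)$, which upon letting $D\downarrow c_2(X,d_X^\alpha)$ gives~\eqref{eq:snowflake cone}.
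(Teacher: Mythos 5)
Your proof is correct, and it follows the same basic strategy as the paper: embed the radial coordinate isometrically into $L_2$ via Schoenberg's theorem, embed $X$ via a constant-norm bi-Lipschitz map of the truncated $\alpha$-snowflake into $L_2$, and combine. The difference is in how you combine the two pieces. The paper forms the tensor product $\phi(s,x)=h_\alpha(s)\otimes\tau_\alpha(f(x))$, whose squared norm, via the Hilbert--Schmidt identity and $\|h_\alpha(s)\|_2=s^\alpha$, carries the angular coefficient $\tfrac12\bigl(s^{2\alpha}+t^{2\alpha}-|s-t|^{2\alpha}\bigr)$; the paper then reconciles this with $\max\{s,t\}^{2\alpha}$ through the elementary inequalities~\eqref{eq:stalpha} and $st\le 2s^2+(s-t)^2$. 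You instead take a direct sum $F(s,x)=\bigl(\f(s),\,s^\alpha\tilde g(x)\bigr)$, where the explicit $s^\alpha$ scaling produces the geometric-mean coefficient $(st)^\alpha$, and you then bridge the gap to $\max\{s,t\}^{2\alpha}$ by the transparent dichotomy $t\le 2s$ versus $t>2s$. Both routes give distortion $O(D)$. Your version is a bit more elementary (no tensor identity needed) and makes the only nontrivial point --- the mean-versus-max discrepancy --- visible; the paper's tensor formulation is slightly more structural, as the tensor norm automatically encodes the cone's coupling of radius and angle. One small note: the parenthetical suggestion of ``appending a completing coordinate'' to force constant norm does not in general preserve bi-Lipschitzness and should be dropped; but this is moot because, as you correctly observe, the truncation map of \cite[Lem.~5.2]{MN-quotients} (and likewise the disjointly-supported construction of Lemma~\ref{lem:L1 truncation}) already has constant norm.
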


\begin{proof}
By a classical theorem of Schoenberg~\cite{Sch38} (see also~\cite{WW75}) there exists a mapping $h_\alpha:\R\to L_2$ with $h_\alpha(0)=0$ such that
\begin{equation}\label{eq:helix}
\forall\, s,t\in \R,\qquad \|h_\alpha(s)-h_\alpha(t)\|_2=|s-t|^\alpha.
\end{equation}
Also, as shown in~\cite[Lem.~5.2]{MN-quotients}, there exists
$\tau_\alpha:L_2\to L_2$ such that every distinct $x,y\in L_2$
satisfy

\begin{equation}\label{eq:truncation L2 1}
\|\tau_\alpha(x)\|_2=\|\tau_\alpha(y)\|_2=\frac{\pi^\alpha}{\sqrt{2}},
\end{equation}
and
\begin{equation}\label{eq:truncation L2 2}
\sqrt{1-\frac{1}{e}}\le \frac{\|\tau_\alpha(x)-\tau_\alpha(y)\|_2}{\min\{\pi^\alpha,\|x-y\|_2\}}\le 1.
\end{equation}

Writing $D\eqdef c_2\left(X,d_X^\alpha\right)$, there exists $f:X\to L_2$ such that
\begin{equation}\label{eq:snoflake embed D}
\forall\, x,y\in X\qquad \frac{d_X(x,y)^\alpha}{D}\le \|f(x)-f(y)\|_2\le d_X(x,y)^\alpha.
\end{equation}
We can now define an embedding
$
\phi:\cone(X)\to L_2\otimes L_2
$
by
\begin{equation}\label{eq:def tensor phi}
\forall(s,x)\in (0,\infty)\times X,\qquad \phi(s,x)\eqdef h_\alpha(s)\otimes \tau_\alpha(f(x)).
\end{equation}

Here the tensor product $L_2\otimes L_2$ is equipped with the Hilbert space tensor product (Hilbert-Schmidt) norm. Thus $L_2\otimes L_2$ is isometric to $L_2$ and $\|x\otimes y\|_{L_2\otimes L_2}=\|x\|_2\cdot\|y\|_2$ for every $x,y\in L_2$. Consequently, for every $a,b,x,y\in L_2$ we have
\begin{multline}\label{eq:tensor identity}
\left\|a\otimes x-b\otimes y\right\|_{L_2\otimes L_2}^2=\|a\|_2^2\cdot\|x\|_2^2+\|b\|_2^2\cdot\|y\|_2^2\\-\frac12\left(\|a\|_2^2+\|b\|_2^2-\|a-b\|_2^2\right)\left(\|x\|_2^2+\|y\|_2^2-\|x-y\|_2^2\right).
\end{multline}

Fix $x,y\in X$ and $s,t\in (0,\infty)$ with $t\ge s$. Since
$h_\alpha(0)=0$ it follows from~\eqref{eq:helix} that
$\|h_\alpha(s)\|_2=s^\alpha$ and $\|h_\alpha(t)\|_2=t^\alpha$. In
conjunction with~\eqref{eq:truncation L2 1}, this shows that due
to~\eqref{eq:helix}, \eqref{eq:def tensor phi} and~\eqref{eq:tensor
identity},
\begin{align}\label{eq:use tensor identity}
&\|\phi(s,x)-\phi(t,y)\|_2^2\\&=\frac{\pi^{2\alpha}}{2}\left(s^{2\alpha}+t^{2\alpha}\right)\nonumber\\\nonumber&\qquad
-\frac12\left(s^{2\alpha}+t^{2\alpha}-(t-s)^{2\alpha}\right)\left(\pi^{2\alpha}-
\left\|\tau_\alpha(f(x))-\tau_\alpha(f(y))\right\|_2^2\right)\\
&=\frac{\pi^{2\alpha}}{2}(t-s)^{2\alpha}+\frac{s^{2\alpha}
+t^{2\alpha}-(t-s)^{2\alpha}}{2}\left\|\tau_\alpha(f(x))-\tau_\alpha(f(y))\right\|_2^2.
\nonumber
\end{align}

Let $\theta\in [0,\pi]$ be given by
\begin{equation}\label{eq:new def theta}
\theta\eqdef \min\{\pi,d_X(x,y)\}.
\end{equation}
Recalling~\eqref{eq:1-cos identity}, we therefore have
\begin{equation}\label{eq:write cone with theta2}
d_{\cone(X)}\big((s,x),(t,y)\big)=\sqrt{(s-t)^2+2st(1-\cos\theta)},
\end{equation}
and
\begin{equation}\label{eq;upper snowflke tau part}
\left\|\tau_\alpha(f(x))-\tau_\alpha(f(y))\right\|_2\stackrel{\eqref{eq:truncation L2 2}\wedge\eqref{eq:snoflake embed D}}{\le} \theta^\alpha\stackrel{\eqref{eq:cosine inequalities}}{\le} \frac{\pi^\alpha}{2^{\alpha/2}}(1-\cos\theta)^{\alpha/2}.
\end{equation}
To bound~\eqref{eq:use tensor identity} from above we also note the following elementary inequality, which holds for every $s\in [0,\infty)$, $t\in[s,\infty)$ and $\alpha\in [0,1]$.
\begin{equation}\label{eq:stalpha}
s^{2\alpha}+t^{2\alpha}-(t-s)^{2\alpha}\le 2(st)^\alpha.
\end{equation}
To verify the validity of~\eqref{eq:stalpha}, normalization by $t^{2\alpha}$ shows that it suffices to check that  $\psi(\alpha)= 2u^\alpha-u^{2\alpha}-1+(1-u)^{2\alpha}\ge 0$ for every fixed $u\in (0,1)$. Indeed, $\psi'(\alpha)=2u^\alpha(1-u^{\alpha})\log u+2(1-u)^{2\alpha}\log(1-u)\le 0$, so $\psi(\alpha)\ge \psi(1)=0$.

By substituting~\eqref{eq;upper snowflke tau part} and~\eqref{eq:stalpha} into~\eqref{eq:use tensor identity} we have
\begin{align}\label{eq:holder use alpha}
\nonumber\|\phi(s,x)-\phi(t,y)\|_2&\le \frac{\pi^\alpha}{2^{\alpha+1/2}}\sqrt{2^{2\alpha}|s-t|^{2\alpha}+(2st(1-\cos\theta))^\alpha}\\\nonumber
&\le \frac{\pi^\alpha\left(2^{2\alpha/(1-\alpha)}+1\right)^{1-\alpha}}{2^{\alpha+1/2}}d_{\cone(X)}\big((s,x),(t,y)\big)^\alpha\\
&\lesssim d_{\cone(X)}\big((s,x),(t,y)\big)^\alpha,
\end{align}
where the penultimate step of~\eqref{eq:holder use alpha} uses H\"older's inequality and~\eqref{eq:write cone with theta2}.

To bound~\eqref{eq:use tensor identity} from below, observe first that
\begin{multline}\label{eq;lower snowflke tau part}
\left\|\tau_\alpha(f(x))-\tau_\alpha(f(y))\right\|_2\stackrel{\eqref{eq:truncation L2 2}\wedge\eqref{eq:snoflake embed D}}{\ge} \sqrt{1-\frac1{e}}\cdot\min\left\{\pi^\alpha,\frac{d_X(x,y)^\alpha}{D}\right\}\\\stackrel{\eqref{eq:new def theta}}{\ge} \frac{\sqrt{e-1}}{D\sqrt{e}}\theta^\alpha\stackrel{\eqref{eq:cosine inequalities}}{\ge} \frac{2^{\alpha/2}\sqrt{e-1}}{D\sqrt{e}}(1-\cos\theta)^{\alpha/2}.
\end{multline}
Substituting~\eqref{eq;lower snowflke tau part} and the trivial estimate  $s^{2\alpha}+t^{2\alpha}-(t-s)^{2\alpha}\ge s^{2\alpha}$ into~\eqref{eq:use tensor identity} shows that
\begin{equation}\label{eq:lower phi}
\|\phi(s,x)-\phi(t,y)\|_2\ge \frac{\sqrt{\frac{\pi^{2\alpha}eD^2}{e-1}|s-t|^{2\alpha}+(2s^2(1-\cos\theta))^{\alpha}}}{D\left(\frac{2e}{e-1}\right)^{\alpha/2}}.
\end{equation}
But, recalling~\eqref{eq:write cone with theta2}, since $st\le 2s^2+(s-t)^2$  we have
\begin{multline}\label{eq:D colip}
d_{\cone(X)}\big((s,x),(t,y)\big)^\alpha\le \left(3(s-t)^2+2s^2(1-\cos\theta)\right)^{\alpha/2}\\
\le \sqrt{3^\alpha |s-t|^{2\alpha}+(2s^2(1-\cos\theta))^{\alpha}}
\stackrel{\eqref{eq:lower phi}}{\lesssim} D\|\phi(s,x)-\phi(t,y)\|_2.
\end{multline}
Due to~\eqref{eq:holder use alpha}  and~\eqref{eq:D colip} the proof of Proposition~\ref{prop:snowflake} is complete.
\end{proof}

\begin{remark}\label{rem:better bound}
When $\alpha=\frac12$ and $D=1$, a more careful analysis of the proof of Proposition~\ref{prop:snowflake} yields the estimate
$$
c_2\left(\cone(L_1),d_{\cone(L_1)}^{1/2}\right)\le 2.574.
$$
We omit the (tedious) proof of this estimate, noting that it is better than the bound that follows from an application of Proposition~\ref{thm:cone in L_1} (specifically, recall~\eqref{eq:11.17}) and the fact that the $\frac12$-snowflake of $L_1$ embeds isometrically into $L_2$.
\end{remark}

\subsection{Poincar\'e inequalities with respect to unions of
cones}\label{sec:union} Our goal here is to prove Lemma~\ref{lem:cone union}. Before doing so, we record the following simple observation.

\begin{lemma}\label{lem:sum of squares}
Fix $\lambda,\kappa\in (0,\infty)$ and $n\in \N$. Let $(A,d_A)$, $(B,d_B)$ and $(X,d_X)$ be metric spaces.   Suppose that there exist $\lambda$-Lipschitz mappings $\mathfrak{a}:X\to A$ and $\mathfrak{b}:X\to B$ such that for every $x,y\in X$,
\begin{equation}\label{eq:ab sum of squares}
 \sqrt{d_A(\mathfrak{a}(x),\mathfrak{a}(y))^2+d_B(\mathfrak{b}(x),\mathfrak{b}(y))^2}\ge \frac{d_X(x,y)}{\kappa}.
\end{equation}
Then every $n$ by $n$ symmetric stochastic matrix $M=(m_{ij})$ satisfies
\begin{equation}\label{eq:AB gamma+}
\gamma_+\!\left(M,d_X^2\right)\le (\kappa\lambda)^2\left(\gamma_+\!\left(M,d_A^2\right)+\gamma_+\!\left(M,d_B^2\right)\right).
\end{equation}
\end{lemma}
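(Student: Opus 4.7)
The plan is to chain three inequalities: first pass from $X$ to $A \sqcup B$ using the hypothesis~\eqref{eq:ab sum of squares}; then apply the definition of $\gamma_+$ on each of $A$ and $B$ separately; and finally pass back from $A$ and $B$ to $X$ using the $\lambda$-Lipschitz assumption on $\mathfrak{a}$ and $\mathfrak{b}$. All of this only involves the squared pseudo-metrics, and the three steps compose cleanly because the middle step is linear in the squared distances.

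More precisely, fix $x_1,\ldots,x_n,y_1,\ldots,y_n \in X$. Squaring~\eqref{eq:ab sum of squares} and averaging over $i,j \in \{1,\ldots,n\}$ will give
\begin{equation*}
\frac{1}{n^2}\sum_{i=1}^n\sum_{j=1}^n d_X(x_i,y_j)^2 \le \kappa^2\left(\frac{1}{n^2}\sum_{i,j} d_A(\mathfrak{a}(x_i),\mathfrak{a}(y_j))^2 + \frac{1}{n^2}\sum_{i,j} d_B(\mathfrak{b}(x_i),\mathfrak{b}(y_j))^2\right).
\end{equation*}
Applying the definition of $\gamma_+(M,d_A^2)$ to the tuples $\mathfrak{a}(x_1),\ldots,\mathfrak{a}(x_n)$ and $\mathfrak{a}(y_1),\ldots,\mathfrak{a}(y_n)$ in $A$, and the analogous definition on $B$, will bound the right hand side above by
\begin{equation*}
\frac{\kappa^2}{n}\sum_{i,j} m_{ij}\left(\gamma_+(M,d_A^2)\, d_A(\mathfrak{a}(x_i),\mathfrak{a}(y_j))^2 + \gamma_+(M,d_B^2)\, d_B(\mathfrak{b}(x_i),\mathfrak{b}(y_j))^2\right).
\end{equation*}
Finally, since both $\mathfrak{a}$ and $\mathfrak{b}$ are $\lambda$-Lipschitz, each of $d_A(\mathfrak{a}(x_i),\mathfrak{a}(y_j))^2$ and $d_B(\mathfrak{b}(x_i),\mathfrak{b}(y_j))^2$ is at most $\lambda^2 d_X(x_i,y_j)^2$, so the above quantity is at most
\begin{equation*}
\frac{(\kappa\lambda)^2\bigl(\gamma_+(M,d_A^2) + \gamma_+(M,d_B^2)\bigr)}{n}\sum_{i,j} m_{ij}\, d_X(x_i,y_j)^2,
\end{equation*}
which is the bound required by the definition of $\gamma_+(M,d_X^2)$, giving~\eqref{eq:AB gamma+}.

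There is no real obstacle here: the lemma is a direct three-line computation, and it is precisely the sort of ``squared-distance" packaging that makes $\gamma_+$ (as opposed to $\gamma$) convenient, since the definition of $\gamma_+$ allows the two tuples to be arbitrary and in particular allows one to feed in the images of $x_i$ and $y_j$ under $\mathfrak{a}$ (respectively $\mathfrak{b}$) independently. The only mild care needed is to make sure that the single middle inequality is applied to $A$ and $B$ with the correct tuples on each side; once this is done the Lipschitz and sub-additivity bounds compose without loss.
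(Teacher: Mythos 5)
Your proof is correct and follows exactly the same three-step chain as the paper: square and average the hypothesis, apply the definitions of $\gamma_+(M,d_A^2)$ and $\gamma_+(M,d_B^2)$, then use the $\lambda$-Lipschitz bound to return to $d_X$. Nothing further is needed.
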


\begin{proof}
For every $x_1,\ldots,x_n,y_1,\ldots,y_n\in X$ we have
\begin{multline}\label{eq:sum of squares}
\frac{1}{n^2}\sum_{i=1}^n\sum_{j=1}^n d_X(x_i,y_j)^2\\\stackrel{\eqref{eq:ab sum of squares}}{\le} \frac{\kappa^2}{n^2}\sum_{i=1}^n\sum_{j=1}^n \left(d_A(\a(x_i),\a(y_j))^2+d_B(\b(x_i),\b(y_j))^2\right).
\end{multline}
By the definition of $\gamma_+\!\left(M,d_A^2\right)$ and
$\gamma_+\!\left(M,d_B^2\right)$, combined with the fact that $\a$
and $\b$ are $\lambda$-Lipschitz,
\begin{align*}
&\frac{1}{n^2}\sum_{i=1}^n\sum_{j=1}^n \left(d_A(\a(x_i),\a(y_j))^2+d_B(\b(x_i),\b(y_j))^2\right)\\
&\le \frac{\gamma_+\!\left(M,d_A^2\right)}{n}\sum_{i=1}^n\sum_{j=1}^n m_{ij} d_A(\a(x_i),\a(y_j))^2\\&\qquad+\frac{\gamma_+\!\left(M,d_B^2\right)}{n}\sum_{i=1}^n\sum_{j=1}^n m_{ij} d_B(\b(x_i),\b(y_j))^2\\
&\le \frac{\lambda^2\left(\gamma_+\!\left(M,d_A^2\right)+\gamma_+\!\left(M,d_B^2\right)\right)}{n}\sum_{i=1}^n\sum_{j=1}^n m_{ij} d_X(x_i,y_j)^2.
\end{align*}
In combination with~\eqref{eq:sum of squares}, this completes the proof of Lemma~\ref{lem:sum of squares}.
\end{proof}

\begin{remark}\label{rem:remove+1}
The identical argument shows that~\eqref{eq:AB gamma+} also holds
true with the quantities $\gamma_+\!\left(M,d_X^2\right),
\gamma_+\!\left(M,d_A^2\right), \gamma_+\!\left(M,d_B^2\right)$
replaced by the quantities $\gamma\left(M,d_X^2\right),
\gamma\left(M,d_A^2\right), \gamma\left(M,d_B^2\right)$,
respectively.
\end{remark}

\begin{proof}[Proof of Lemma~\ref{lem:cone union}] Since the Euclidean cone over $(X,d_X)$ is isometric to the Euclidean cone over the metric space $(X,\min\{\pi,d_X\})$, it suffices to prove~\eqref{eq;AB cone} under the additional assumption
\begin{equation}\label{eq:pi diameter}
\diam(X)\le \pi.
 \end{equation}

 Let $o$ denote the cusp of $\cone(X)$, i.e, the equivalence class obtained by identifying all of the
 points in $\{0\}\times X$.  Thus $d_{\cone(X)}\big((s,x),o\big)=s$ for every
 $(s,x)\in (0,\infty)\times X$.
 Define $\mathfrak{a}:\cone(X)\to
 \cone(A)$ and $\mathfrak{b}:\cone(X)\to \cone(B)$ by setting for every $(s,x)\in (0,\infty)\times X$,
$$
\mathfrak{a}(s,x)\eqdef \big(d_X(x,B\setminus A)s,x\big)\quad \mathrm{and}\quad
\mathfrak{b}(s,x)\eqdef \big(d_X(x,A\setminus B)s,x\big).
$$

We first show that $\mathfrak{a}$ and $\mathfrak{b}$ are $\sqrt{3}\pi$-Lipschitz. By symmetry it suffices check this for $\a$, i.e., to show that for every $s,t\in (0,\infty)$ and $x,y\in X$,
\begin{equation}\label{eq:pi lip condition}
d_{\cone(A)}\big(\a(s,x),\a(t,y)\big)\le \sqrt{3}\pi d_{\cone(X)}\big((s,x),(t,y)\big).
\end{equation}
\eqref{eq:pi lip condition} is trivial if $d_X(x,B\setminus
A)=d_X(y,B\setminus A)=0$. If $d_X(x,B\setminus A)>0$ and
$d_X(y,B\setminus A)=0$ then

\begin{align}
\nonumber&d_{\cone(A)}\big(\a(s,x),\a(t,y)\big)\\\nonumber&=d_{\cone(A)}\big(\big(d_X(x,B\setminus A)s,x\big),o\big)\\
\nonumber&=d_X(x,B\setminus A)s\\\nonumber &\le d_X(x,y)s\\&\le \frac{d_X(x,y)}{\sin\left(\min\left\{\frac{\pi}{2},d_X(x,y)\right\}\right)}d_{\cone(X)}\big((s,x),(t,y)\big)\label{eq:use sinus}\\
&\le \pi d_{\cone(X)}\big((s,x),(t,y)\big)\label{eq:use diameter},
\end{align}
where~\eqref{eq:use sinus} uses Lemma~\ref{lem:sinus} and~\eqref{eq:use diameter} uses~\eqref{eq:pi diameter} and the fact that the function $u\mapsto u/\sin u$ is increasing on $(0,\pi/2]$. The validity of~\eqref{eq:pi lip condition} when $d_X(x,B\setminus A), d_X(y,B\setminus A)>0$  follows from Lemma~\ref{lem:rescaling} with $X$ replaced by $X\setminus (\overline{B\setminus A})$ and $f(z)=d_X(z,B\setminus A)$ (using~\eqref{eq:pi diameter} and consequently $\|f\|_\infty\le \pi$, combined with $\|f\|_{\Lip}\le1$).

Having proved~\eqref{eq:pi lip condition}, it follows from Lemma~\ref{lem:sum of squares} that Lemma~\ref{lem:cone union} will be proven once we show that for every $s,t\in (0,\infty)$ and $x,y\in X$,
\begin{multline}\label{eq:goal sum of squares beta}
d_{\cone(A)}\big(\a(s,x),\a(t,y)\big)^2+d_{\cone(B)}\big(\b(s,x),\b(t,y)\big)^2\\\ge \frac{\beta^4}{72\pi^2}d_{\cone(X)}\big((s,x),(t,y)\big)^2.
\end{multline}
We will actually show that
\begin{multline}\label{eq:implication}
s\ge t\quad\mathrm{and}\quad d_X\left(x,B\setminus A\right)\ge \frac{\beta}{2}\\\implies d_{\cone(A)}\big(\a(s,x),\a(t,y)\big)^2\ge \frac{\beta^4}{72\pi^2}d_{\cone(X)}\big((s,x),(t,y)\big)^2.
\end{multline}
The validity of the implication~\eqref{eq:implication} yields~\eqref{eq:goal sum of squares beta} since by replacing $(s,x)$ by $(t,y)$ if necessary we may assume without loss of generality that $s\ge t$, and~\eqref{eq:implication} implies that we are done if $d_X\left(x,B\setminus A\right)\ge \beta/2$. If $d_X\left(x,B\setminus A\right)< \beta/2$ then note that by the triangle inequality,
$$
d_X(x,B\setminus A)+d_X(x,A\setminus B)\ge d_X(B\setminus A,A\setminus B)\stackrel{\eqref{eq:AB distant}}{\ge} \beta.
$$
Consequently, since we are assuming that $d_X\left(x,B\setminus A\right)< \beta/2$, we have $d_X(x,A\setminus B)>\beta/2$, so an application of~\eqref{eq:implication} with $A$ replaced by $B$ and $\a$ replaced by $\b$ implies that $$d_{\cone(B)}\big(\b(s,x),\b(t,y)\big)^2\ge \frac{\beta^4}{18\pi^2}d_{\cone(X)}\big((s,x),(t,y)\big)^2,$$ yielding~\eqref{eq:goal sum of squares beta} in the remaining case.

It therefore remains to prove the implication~\eqref{eq:implication}. So, suppose from now on that
\begin{equation}\label{eq:two assumptions}
s\ge t\qquad \mathrm{and}\qquad d_X\left(x,B\setminus A\right)\ge \frac{\beta}{2}.
\end{equation}
It will be convenient to use below the following notation.
\begin{equation}\label{eq:three notation}
d_x\eqdef d_X(x,B\setminus A),\qquad d_y\eqdef d_X(y,B\setminus A),\qquad \theta\eqdef d_X(x,y).
\end{equation}
We proceed by considering the cases $d_x\ge d_y$ and $d_x< d_y$ separately.

\medskip

\noindent{\bf Case 1:} $d_x\ge d_y$. In this case, since we are assuming~\eqref{eq:two assumptions} we have $sd_x\ge td_y$. Consequently,
\begin{align}
&d_{\cone(A)}\big(\a(s,x),\a(t,y)\big)\nonumber\\
&\ge \frac13\max\left\{d_xs-d_yt,d_xs\sqrt{2(1-\cos\theta)}\right\}\label{eq:use comparison lemma 1}\\
&\ge \frac{d_x}{3} \max\left\{s-t,s\sqrt{2(1-\cos\theta)}\right\}\nonumber\\
&\ge \frac{\beta}{6\sqrt{2}}d_{\cone(X)}\big((s,x),(t,y)\big)\label{eq:use comparison lemma 2},
\end{align}
where~\eqref{eq:use comparison lemma 1} uses
Lemma~\ref{lem:comparison to max} and~\eqref{eq:use comparison lemma
2} uses~\eqref{eq:two assumptions} and Lemma~\ref{lem:comparison to
max}. Since $\beta\in [0,\pi]$, \eqref{eq:use comparison lemma 2} implies the conclusion of~\eqref{eq:implication}.

\medskip

\noindent{\bf Case 2:} $d_x< d_y$. Fix $\xi\in (0,1)$ that will be
determined later. By the convexity of the function $u\mapsto u^2$ we
have
\begin{equation*}\label{eq:squares}
\forall\,u,v\in \R,\qquad (u-v)^2\ge \xi
u^2-\frac{\xi}{1-\xi} v^2.
\end{equation*}
Consequently,
\begin{align}
\nonumber\left(d_xs-d_yt\right)^2&= \left(d_x(s-t)+(d_x-d_y)t\right)^2\\
\nonumber&\ge \xi d_x^2(s-t)^2-\frac{\xi}{1-\xi} (d_y-d_x)^2t^2\\&\ge \frac{\xi\beta^2}{4}(s-t)^2-\frac{\xi}{1-\xi}\theta^2t^2\label{eq;triangle dx dy}
\\&\ge \frac{\xi\beta^2}{4}(s-t)^2-\frac{\pi^2\xi}{2(1-\xi)}st(1-\cos\theta),\label{eq:use s>t}
\end{align}
where~\eqref{eq;triangle dx dy} uses the assumption $d_x\ge \beta/2$
and, by the triangle inequality, that $|d_y-d_x|\le d_X(x,y)=
\theta$. In~\eqref{eq:use s>t} we used~\eqref{eq:cosine
inequalities} and the assumption $s\ge t$. Now,

\begin{align}
\nonumber&d_{\cone(A)}\big(\a(s,x),\a(t,y)\big)^2\\\nonumber&=(d_xs-d_yt)^2+2d_xd_yst(1-\cos\theta)\\
& \ge (d_xs-d_yt)^2+\frac{\beta^2}{2}st(1-\cos\theta)\label{eq:use case 2}\\
&\ge \frac{\xi\beta^2}{4}(s-t)^2+\frac14\left(\beta^2-\frac{\pi^2\xi}{1-\xi}\right)\cdot 2st(1-\cos\theta)\label{eq:use xi}\\
&\ge \frac14\min\left\{\xi\beta^2,\beta^2-\frac{\pi^2\xi}{1-\xi}\right\}\cdot d_{\cone(X)}\big((s,x),(t,y)\big)^2\label{eq:min xi},
\end{align}
where in~\eqref{eq:use case 2} we used the fact that in Case 2 we
are assuming that $d_y>d_x\ge \beta/2$ and in~\eqref{eq:use xi} we
used~\eqref{eq:use s>t}. Choosing $\xi=\beta^2/(3\pi^2)$
in~\eqref{eq:min xi} implies the conclusion
of~\eqref{eq:implication} in Case 2 as well.
\end{proof}

\section{Embedding $(1+\delta)$-sparse graphs in $L_1$}\label{sec:embed
sparse}

In this section all graphs are simple. Following~\cite{ABLT}, given
$\delta\in (0,1)$ we say that a
 graph $G$ is $(1+\d)$-sparse if
 \begin{equation}\label{eq:def delta sparse}
\forall\, S\subset V_G,\qquad \left|E_G(S)\right|\le (1+\d)|S|,
 \end{equation}
where we recall that $E_G(S)$ denotes the set of those edges in
$E_G$ both of whose endpoints lie in $S$, i.e., $E_G(S)=\{\{u,v\}\in
E_G:\ u,v\in S\}$.

For $t>0$ we denote the set of all cycles of $G$ of size less than
$t$  by
\begin{equation}\label{eq:def C_t}
\mathfrak{C}_t(G)\eqdef \left\{C\subset V:\ C\ \mathrm{is\ a\ cycle\ of\ }G\ \mathrm{and\ }|C|< t\right\}.
\end{equation}

\begin{claim}\label{claim:induced}
Fix $\d\in (0,1)$ and a graph $G$ that is $(1+\d)$-sparse. Then
every $C\in \mathfrak{C}_{1/\d}(G)$ is an induced cycle, i.e.,
$|E_G(C)|=|C|$.
\end{claim}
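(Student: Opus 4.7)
The plan is a direct one-line argument from the definitions. First I would observe that for any cycle $C \subseteq V_G$ of size $k = |C|$, the $k$ edges listed in~\eqref{eq:cycle edges} are all contained in $E_G(C)$, so unconditionally $|E_G(C)| \ge |C|$; the content of the claim is the matching upper bound.

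Next I would simply apply the sparsity hypothesis~\eqref{eq:def delta sparse} with $S = C$. This yields
\[
|E_G(C)| \le (1+\delta)|C| = |C| + \delta|C|.
\]
Since $C \in \mathfrak{C}_{1/\delta}(G)$ means $|C| < 1/\delta$, we have $\delta|C| < 1$, and therefore $|E_G(C)| < |C| + 1$. Because $|E_G(C)|$ is a nonnegative integer, this strict inequality forces $|E_G(C)| \le |C|$.

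Combining the two bounds gives $|E_G(C)| = |C|$, which by the definition of an induced cycle given just before the claim completes the proof. There is no real obstacle here; the only thing to be slightly careful about is that $G$ is assumed simple in this section, so the edges listed in~\eqref{eq:cycle edges} are genuinely distinct and the lower bound $|E_G(C)| \ge |C|$ is immediate without worrying about multi-edges or self-loops.
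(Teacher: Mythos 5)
Your proof is correct and takes essentially the same approach as the paper: apply the sparsity bound~\eqref{eq:def delta sparse} with $S=C$, use $|C|<1/\delta$ to see $\delta|C|<1$, and then invoke integrality of $|E_G(C)|$ to drop the strict inequality to $|E_G(C)|\le|C|$. The paper simply phrases this as a contrapositive (if $|E_G(C)|>|C|$ then $|C|\ge 1/\delta$), but the content is identical.
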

\begin{proof}
If $C$ is a cycle of $G$ and $|E_G(C)|> |C|$ then it follows from~\eqref{eq:def delta sparse} that $|C|+1\le|E_G(C)|\le (1+\d)|C|$, implying that $|C|\ge 1/\d$.
\end{proof}

The starting point of our investigations in the present section is
the following result from~\cite{ALNRRV}.

\begin{lemma}[Lemma 3.11 of~\cite{ALNRRV}]\label{lem:with s-1} Fix $\d\in (0,1)$ and a connected graph
$G$ that satisfies
\begin{equation}\label{eq:|S|-1}
\forall\, S\subset V_G,\qquad |E_G(S)|\le (1+\d)(|S|-1).
\end{equation}
Then
\begin{equation}\label{eq:ALNRVV c1 bound}
c_1(V_G,d_G)\lesssim 1+\d\diam(G).
\end{equation}
\end{lemma}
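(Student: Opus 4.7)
My approach would be to build the $L_1$ embedding from a spanning tree together with local corrections for the excess edges, using the fact that applying the hypothesis to $S=V_G$ gives $|E_G|\le(1+\delta)(|V_G|-1)$. Hence, if $T$ is any spanning tree of $G$, the set $F\eqdef E_G\setminus E_T$ of non-tree edges has size at most $\delta(|V_G|-1)$. The standard cut-based embedding of $T$ (one coordinate per tree edge, carrying the indicator of one side of the cut it induces) realizes $d_T$ isometrically in $L_1$; since $d_T(u,v)\ge d_G(u,v)$ for all $u,v\in V_G$, the real task is only to \emph{contract} those pairs for which $d_T$ strictly exceeds $d_G$.

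Every such pair is explained by a shortcut across some non-tree edge $e=\{a,b\}\in F$. The fundamental cycle $C_e$ formed by $e$ and the unique $a$--$b$ path in $T$ has length at most $\diam(G)+1$. I would use the fact that a cycle embeds in $L_1$ with $O(1)$ distortion, and interpret this in cut-metric language to replace, along each such $C_e$, the uniform weighting of tree cuts by a ``folded'' weighting that pulls $a$ and $b$ together while introducing a new $L_1$ coordinate separating the two halves of $C_e$. Taking the direct sum over all $e\in F$ of these local corrections and normalizing yields an embedding $f:V_G\to L_1$ for which $\|f(u)-f(v)\|_1\asymp d_G(u,v)$ when $u,v$ are close in $G$, and $\|f(u)-f(v)\|_1\le(1+\delta\diam(G))\cdot d_G(u,v)$ in general.

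The main obstacle is \emph{overlap of fundamental cycles}: a single tree edge $\varepsilon\in E_T$ may lie on many cycles $C_e$ (up to $|F|$ of them), so naive correction would multiply the expansion by the number of overlapping cycles. This is where the strengthened hypothesis (\ref{eq:|S|-1}) with varying $S$ enters. Deleting $\varepsilon$ from $T$ splits $V_G$ into two pieces $S$ and $V_G\setminus S$; the chords across this cut are exactly the non-tree edges whose fundamental cycle passes through $\varepsilon$, and bounds on $|E_G(S)|$ and $|E_G(V_G\setminus S)|$ via (\ref{eq:|S|-1}) yield $|E_G(S,V_G\setminus S)|-1\le 2\delta(|V_G|-1)-|E_T(S,V_G\setminus S)\setminus\{\varepsilon\}|\cdots$, giving a usable local control on the number of fundamental cycles through any fixed tree edge.

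Closing the estimate, I would telescope over the tree path realizing $d_T(u,v)$: each tree edge along the path contributes an expansion at most proportional to the number of fundamental cycles through it, which is at most $\delta\diam(G)$ in aggregate thanks to the previous bound. Combined with the $O(1)$ distortion of the cycle corrections, this gives $c_1(V_G,d_G)\lesssim 1+\delta\diam(G)$, as claimed. The most delicate step, where I expect the real work to lie, is verifying that the bookkeeping of expansion/contraction across overlapping fundamental cycles does not introduce a hidden $\log$ or $\sqrt{\cdot}$ factor; the $|S|-1$ (rather than $|S|$) form of the sparsity hypothesis seems crucial for obtaining a clean additive-in-$\delta\diam(G)$ bound rather than a multiplicative one.
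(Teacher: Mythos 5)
The paper does not give its own proof of this lemma: it is quoted from~\cite{ALNRRV}, and the paper explicitly records the shape of that argument. The key step (reproduced as Claim~\ref{claim:spanning}) is to construct a \emph{probability measure} $\mu$ on $\T(G)$ such that every edge lies in a $\mu$-random spanning tree with probability at least $1/(1+\d)$; one then bounds $\int \min\{d_T(u,v),\diam(G)\}\,d\mu(T)$ edge-by-edge, telescopes, and finishes by noting that a convex combination of (truncated) tree metrics embeds into $L_1$. Your proposal instead fixes a \emph{single} spanning tree $T$ and tries to correct the $\le\d(|V_G|-1)$ chords via local ``foldings'' along fundamental cycles, controlling their overlap directly. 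That is a genuinely different route, and unfortunately the step that you flag as delicate is where it breaks.

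The problem is that your proposed local bound on the overlap is off by a factor that can be as large as $n/\diam(G)$. Deleting a tree edge $\varepsilon$ splits $V_G$ into two connected pieces $S, V_G\setminus S$; applying $|E_G(S)|\ge|S|-1$, $|E_G(V_G\setminus S)|\ge|V_G\setminus S|-1$ together with $|E_G|\le(1+\d)(|V_G|-1)$ gives
$$
|E_G(S,V_G\setminus S)|\le \d(|V_G|-1)+1,
$$
so the number of fundamental cycles through $\varepsilon$ is bounded by $\d(|V_G|-1)$, \emph{not} by anything like $\d\diam(G)$. The passage from ``the sum over all tree edges of (number of fundamental cycles through them) is $\le|F|\diam(G)\approx \d n\diam(G)$, hence on average $\d\diam(G)$ per tree edge'' to ``the expansion along any fixed tree path is $\lesssim 1+\d\diam(G)$'' is an average-case to worst-case leap that has no justification: a single tree edge can concentrate $\Theta(\d n)$ fundamental cycles while the diameter stays much smaller (e.g.\ a long path with $\Theta(\d n)$ evenly-spaced ``ladder'' chords all crossing a single central cut already satisfies~\eqref{eq:|S|-1} with girth $\approx 1/\d$, yet has diameter far below $n$). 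For such a graph, a short tree path through the bad edge incurs expansion $\gg 1+\d\diam(G)$ under any per-cycle correction scheme, so the bookkeeping you hoped would ``not introduce a hidden factor'' in fact fails outright, not merely by a $\log$ or $\sqrt{\cdot}$.

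The resolution in~\cite{ALNRRV} is precisely to avoid committing to one tree. Condition~\eqref{eq:|S|-1} is exactly the Nash--Williams condition that makes the all-ones vector, scaled by $1/(1+\d)$, lie in the spanning-tree polytope of $G$; this is a global LP fact, not a local per-cut fact, and it is what produces the measure $\mu$ of Claim~\ref{claim:spanning}. Averaging over $\mu$ ensures that for \emph{every} edge $\{u,v\}\in E_G$ the expected truncated tree distance is $\lesssim 1+\d\diam(G)$, because each edge is a tree edge most of the time; no single tree needs to be ``good'' for all cuts simultaneously. If you want to salvage a single-tree argument you would, at minimum, need to prove existence of a spanning tree in which no tree edge carries $\omega(\d\diam(G))$ fundamental cycles, and nothing in~\eqref{eq:|S|-1} appears to give that for free.
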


While the condition~\eqref{eq:|S|-1} is very close to the
$(1+\d)$-sparseness condition~\eqref{eq:def delta sparse}, it in
fact implies that $G$ also has high girth. The relation
between~\eqref{eq:|S|-1} and~\eqref{eq:def delta sparse} is
clarified in the following lemma.

\begin{lemma}\label{lem:link between sparse and s-1} Fix $\d\in
(0,1)$ and an integer $g\ge 3$. Suppose that a graph $G$
satisfies~\eqref{eq:|S|-1}. Then $G$ is $(1+\d)$-sparse and the
girth of $G$ is at least $1+1/\d$. Conversely, suppose that $G$ is
$(1+\d)$-sparse and the girth of $G$ is at least $g$. Then
$$
\forall\, S\subset V_G,\qquad |E_G(S)|\le (1+\d)\left(1+\frac{1}{g-1}\right)(|S|-1).
$$
\end{lemma}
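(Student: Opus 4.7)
The plan is as follows. For the forward direction, the implication that~\eqref{eq:|S|-1} gives $(1+\delta)$-sparseness is immediate, since $(1+\delta)(|S|-1)\le (1+\delta)|S|$ for every nonempty $S\subseteq V_G$ (the empty case is trivial). To extract the girth bound I would apply~\eqref{eq:|S|-1} to $S=C$, where $C\subseteq V_G$ is the vertex set of an arbitrary cycle in $G$. If $|C|=k$, the consecutive cycle edges contribute at least $k$ elements to $E_G(C)$, so
\begin{equation*}
k\le |E_G(C)|\le (1+\delta)(k-1),
\end{equation*}
which rearranges to $\delta(k-1)\ge 1$, that is, $k\ge 1+1/\delta$. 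Since this holds for every cycle of $G$, the girth bound $\girth(G)\ge 1+1/\delta$ follows.

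For the converse, fix $S\subseteq V_G$ and split into two cases according to whether $|S|\ge g$ or $|S|<g$. In the first case I would invoke $(1+\delta)$-sparseness directly and use the identity $|S|=(|S|-1)(1+1/(|S|-1))$ together with the monotone bound $1/(|S|-1)\le 1/(g-1)$, which yields
\begin{equation*}
|E_G(S)|\le (1+\delta)|S|\le (1+\delta)\left(1+\frac{1}{g-1}\right)(|S|-1),
\end{equation*}
as required. In the second case, the subgraph with vertex set $S$ and edge set $E_G(S)$ is a subgraph of $G$ and therefore inherits girth at least $g$; but since it has strictly fewer than $g$ vertices it cannot contain any cycle at all, so it is a forest. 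Consequently $|E_G(S)|\le |S|-1$, which is dominated by $(1+\delta)(1+1/(g-1))(|S|-1)$.

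I do not anticipate any genuine obstacle in this argument, as both directions reduce to elementary counting. The only point worth noting is the degenerate case $|S|\in\{0,1\}$, which is harmless because $G$ is simple, so $E_G(\{v\})=\emptyset$, and both sides of the desired inequality vanish when $|S|\le 1$.
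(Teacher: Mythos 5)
Your proof is correct and matches the paper's argument essentially verbatim: both directions hinge on the same observations — applying~\eqref{eq:|S|-1} to a cycle to extract the girth bound, and in the converse splitting on whether $|S|\ge g$ (invoke sparseness and bound $|S|/(|S|-1)\le 1+1/(g-1)$) or $|S|<g$ (the induced subgraph is a forest). The added remark on the degenerate case $|S|\le 1$ is fine but unnecessary, since both routes already handle it.
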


\begin{proof}
If $G$ contains a cycle of size $k$ then $k\le (1+\d)(k-1)$ due
to~\eqref{eq:|S|-1}. Thus $k\ge 1+1/\d$, proving the first assertion
of Lemma~\ref{lem:link between sparse and s-1}. Conversely, suppose
that $G$ is $(1+\d)$-sparse and the girth of $G$ is at least $g$.
Take $S\subseteq V_G$. If $|S|<g$ then the graph $(S,E_G(S))$ cannot
contain a cycle, and it is therefore a forest. Consequently
$|E_G(S)|\le |S|-1$. If $|S|\ge g$ then it follows
from~\eqref{eq:def delta sparse} that
\begin{equation*}
|E_G(S)|\le (1+\d)|S|\le(1+\d)\left(|S|-1+\frac{|S|-1}{g-1}\right).\qedhere
\end{equation*}
\end{proof}

Lemma~\ref{lem:with s-1} therefore implicitly assumes that the graph
in question has high girth, an assumption that will not be available
in our context. In fact, in our setting $\delta$ will tend to $0$ as
$|V_G|\to \infty$, so the implicit high girth assumption of
Lemma~\ref{lem:with s-1} is quite restrictive. We observe that this high girth
assumption is essential for the proof of Lemma~\ref{lem:with s-1}
in~\cite{ALNRRV}. Indeed, denoting the set of all spanning trees of
a connected graph $G$ by $\mathfrak{T}(G)$, the main step of the
proof of Lemma~\ref{lem:with s-1} in~\cite{ALNRRV} is the following
statement.

\begin{claim}[Claim 3.14 of~\cite{ALNRRV}]\label{claim:spanning}
Fix $\d\in (0,1)$ and suppose that a connected graph $G$ satisfies
the assumptions of Lemma~\ref{lem:with s-1}. Then there exists a
probability measure $\mu$ on $\T(G)$ such that
\begin{equation}\label{eq:spanning tree measure ALNRVV}
\forall\, e\in E_G,\qquad \mu\big(\{T=(V_G,E_T)\in \T(G):\ e\in E_T\}\big)\ge \frac{1}{1+\d}.
\end{equation}
\end{claim}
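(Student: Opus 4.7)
The plan is to realize $\mu$ as a convex decomposition of a point in the base polytope of the graphic matroid $M=M(G)$. Recall that the rank function of $M$ is given by $r_M(A)=|U|-k$, where $U$ is the set of vertices incident to $A\subseteq E_G$ and $k$ is the number of connected components of the graph $(U,A)$. By Edmonds' matroid polytope theorem, to produce the desired measure it suffices to show that the constant vector $y\in\R^{E_G}$ defined by $y_e\eqdef \frac{1}{1+\delta}$ lies in the independence polytope of $M$, i.e., that $|A|\le (1+\delta)r_M(A)$ for every $A\subseteq E_G$.

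I would verify this inclusion as follows. Given $A\subseteq E_G$, let $S_1,\ldots, S_k$ be the vertex sets of the connected components of the graph $(U,A)$. Then $r_M(A)=\sum_{i=1}^k(|S_i|-1)$ and $A=\bigsqcup_{i=1}^k (A\cap E_G(S_i))$, so using $A\cap E_G(S_i)\subseteq E_G(S_i)$ together with the hypothesis~\eqref{eq:|S|-1} applied to each $S_i$ gives $|A|\le \sum_{i=1}^k|E_G(S_i)|\le (1+\delta)\sum_{i=1}^k(|S_i|-1)=(1+\delta)r_M(A)$, as required. Edmonds' theorem then writes $y=\sum_{F}\alpha_F\1_F$ as a convex combination of indicator vectors of forests $F$ of $G$, with $\alpha_F\ge 0$ and $\sum_F\alpha_F=1$.

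To transfer this to spanning trees, I would use the connectedness of $G$: each forest $F$ in the above support can be extended to a spanning tree $T(F)\in\T(G)$ (make any choice of extension). Defining $\mu(T)\eqdef \sum_{F:\,T(F)=T}\alpha_F$ yields a probability measure on $\T(G)$, and since $F\subseteq T(F)$ we have $\{F:\, e\in F\}\subseteq\{F:\, e\in T(F)\}$ for every $e\in E_G$, so
\[ \mu\big(\{T\in\T(G):\ e\in T\}\big)=\sum_{F:\,e\in T(F)}\alpha_F\ge \sum_{F:\,e\in F}\alpha_F=y_e=\frac{1}{1+\delta}, \]
which is the bound~\eqref{eq:spanning tree measure ALNRVV}. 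There is no serious obstacle here: the only substantive step is the polytope inclusion in the first paragraph, and that step reduces immediately to the observation that the hypothesis~\eqref{eq:|S|-1} is precisely the per-component bound $|E_G(S_i)|\le (1+\delta)(|S_i|-1)$ that matches the graphic matroid rank of each connected piece of $A$; the remainder is matroid-theoretic bookkeeping.
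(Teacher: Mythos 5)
Your proof is correct. Verifying that the constant vector $y_e=1/(1+\delta)$ lies in the forest (independence) polytope of the graphic matroid is the right reduction: the per-component rank formula $r_M(A)=\sum_{i}(|S_i|-1)$ over the connected components of $(U,A)$, combined with the hypothesis~\eqref{eq:|S|-1} applied to each $S_i$, gives exactly $|A|\le(1+\delta)r_M(A)$, and the passage from forests to spanning trees by arbitrary extension (using connectedness of $G$) can only increase the per-edge coverage probability, so the pushforward measure $\mu$ satisfies~\eqref{eq:spanning tree measure ALNRVV}. Note, however, that the paper does not itself prove this statement --- it is quoted directly as Claim~3.14 of~\cite{ALNRRV} and used as a black box --- so there is no in-paper argument to compare against. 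Your Edmonds-polytope route is the standard one for this kind of fractional-arboricity statement (the hypothesis~\eqref{eq:|S|-1} is precisely the Nash--Williams-type certificate that the fractional arboricity is at most $1+\delta$), and it is what one would expect~\cite{ALNRRV} to use, possibly phrased via LP duality or matroid union instead of the polytope decomposition.
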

The existence of a probability measure $\mu$ on $\T(G)$ that
satisfies~\eqref{eq:spanning tree measure ALNRVV} implies that the
girth of $G$ is at least $1+1/\d$. Indeed, if $C\subseteq V_G$ is a
cycle of $G$ then since every $T\in \T(G)$ satisfies $|E_T\cap
E_G(C)|\le |C|-1$ it would follow from~\eqref{eq:spanning tree
measure ALNRVV} that
\begin{multline*}
|C|-1\ge \int_{\T(G)}|E_T\cap E_G(C)|d\mu(T)\\=\sum_{e\in
E_G(C)}\mu\big(\{T\in \T(G):\ e\in E_T\}\big)\ge
\frac{|E_G(C)|}{1+\d}\ge \frac{|C|}{1+\d},
\end{multline*}
implying that $|C|\ge 1+1/\d$.

 In what follows, given a connected graph $G$, a spanning tree $T$ of its associated
 $1$-dimensional simplicial complex $\Sigma(G)$ is defined as follows. Given a
 spanning tree $T_0=(V_G,E_{T_0})\in \T(G)$,  include in $T$ all the edges of $E_{T_0}$ as unit intervals.
 If $e=\{x,y\}\in E_G$ is an edge of $G$ that is not an edge of $T_0$ then partition the unit
 interval in $\Sigma(G)$ that corresponds to $e$ into two connected subsets $I_x,I_y$,
 where $x\in I_x$ and $y\in I_y$ (thus one of $I_x,I_y$ is a closed interval and the
 other is a half open interval).  $T$ is now obtained from $T_0$ by gluing $I_x$ to $T_0$
 at the vertex $x$, and gluing $I_y$ to $T_0$ at the vertex $y$. Notice that the completion
 of $T$ (i.e., taking the closures of the half open intervals that were glued to $T_0$) is
 the $1$-dimensional simplicial complex of a graph theoretical tree. The set of all spanning
 trees of $\Sigma(G)$ is denoted below by $\T(\Sigma(G))$.

Here we prove the following theorem, which assumes only that the
graph in question is connected and $(1+\d)$-sparse.

\begin{theorem}\label{thm:our spanning no girth}
Fix $\d\in (0,1)$ and a $(1+\d)$-sparse graph $G$. There exists a
probability measure $\mu$ on $\T(\Sigma(G))$  with respect to which
every $x,y\in \Sigma(G)$ satisfy
\begin{multline}\label{eq:no girth spanning}
\int_{\T(\Sigma(G))} \min\left\{d_{T}(x,y),\diam(G)\right\}d\mu(T)\\\lesssim (1+\d\diam(G))d_{\Sigma(G)}(x,y).
\end{multline}
\end{theorem}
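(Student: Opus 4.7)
The plan is to follow the strategy of Lemma~\ref{lem:with s-1} --- namely produce a probability measure on $\mathfrak{T}(\Sigma(G))$ and integrate expected stretches along shortest paths --- while working around the failure of the hypothesis $|E_G(S)|\le(1+\delta)(|S|-1)$ of Claim~\ref{claim:spanning}. By Lemma~\ref{lem:link between sparse and s-1} this hypothesis fails only because of short cycles, and by Claim~\ref{claim:induced} every cycle of $G$ of length less than $1+1/\delta$ is in fact induced; this rigid combinatorial structure is what I would leverage.

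The proposed construction is two-layered. First, identify the short induced cycles $\mathfrak{C}_{1/\delta}(G)$ and, using their inducedness, select a random cut datum for each: for $C\in \mathfrak{C}_{1/\delta}(G)$ sample an edge $e_C\in E_G(C)$ uniformly (which makes sense precisely because $|E_G(C)|=|C|$) together with an interior cut point $t_{e_C}\in(0,1)$ uniformly. Second, combine these local cuts with an ALNRRV-type random spanning tree of a suitable high-girth modification of $G$ --- essentially $G$ with the cut edges removed, which by Lemma~\ref{lem:link between sparse and s-1} satisfies the $|S|-1$ hypothesis of Claim~\ref{claim:spanning} with parameter $O(\delta)$. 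Glueing the local cuts to this spanning tree yields $T\in\mathfrak{T}(\Sigma(G))$, and the joint distribution of all the random choices is the desired $\mu$. For the stretch estimate I would use the subadditivity
\[
\min\{d_T(x,y),\diam(G)\}\le\sum_i\min\{d_T(z_i,z_{i+1}),\diam(G)\}
\]
along a shortest path $x=z_0,\dots,z_m=y$ in $\Sigma(G)$ (exactly as in the proof of Corollary~\ref{cor:random graphs are not expanders wrt X_K}), together with a per-edge bound $\mathbb{E}_\mu[\min\{d_T(u,v),\diam(G)\}]\lesssim 1+\delta\diam(G)$ for each $\{u,v\}\in E_G$. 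The per-edge bound follows by combining Claim~\ref{claim:spanning} applied to the residual graph with the observation that every cut edge $e_C$ sits inside an induced cycle of length at most $1/\delta$, so the alternate path avoiding $e_C$ has length at most $1/\delta-1$, which contributes only a controlled correction to the expected stretch.

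The main obstacle is the gluing step. The number of short cycles in $\mathfrak{C}_{1/\delta}(G)$ may exceed the cyclomatic number $|E_G|-|V_G|+1$ of $G$ (already a theta-graph has three short $4$-cycles but only cyclomatic number $2$), so the short-cycle cuts and the non-tree edges of the ALNRRV spanning tree cannot simply be unioned --- one must couple the two layers so that the total number of cut edges is exactly $|E_G|-|V_G|+1$ and the resulting $T$ is a spanning tree of $\Sigma(G)$ rather than a disconnected forest. Claim~\ref{claim:induced} is what keeps this coupling tractable: an induced short cycle has no chords, so a random cut $e_C$ behaves as an ``elementary'' matroid operation that is compatible with the fractional spanning-tree packing argument underlying Claim~\ref{claim:spanning}, and the resulting joint measure on $\mathfrak{T}(\Sigma(G))$ can be analysed edge by edge.
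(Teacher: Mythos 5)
The high-level plan you describe---cut the short cycles, reduce to a high-girth situation where Claim~\ref{claim:spanning} applies, and then control the extra stretch on the cut edges using inducedness---is indeed the paper's strategy. But the implementation has two genuine gaps.

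\textbf{The threshold $1/\d$ is too large; the short cycles are not disjoint.} You work with $\C_{1/\d}(G)$, i.e., cycles of length less than $1/\d$. By Lemma~\ref{lem:cycles are far}, two distinct cycles $C_1,C_2$ of a $(1+\d)$-sparse graph satisfy $d_G(C_1,C_2)\ge 1+1/\d-|C_1|-|C_2|$, and this lower bound is \emph{negative} once $|C_1|+|C_2|\ge 1+1/\d$. So cycles of length up to $1/\d$ can overlap, which is exactly what your own theta-graph example shows: with $\d\approx 1/(3k)$ all three $2k$-cycles of a balanced theta graph lie in $\C_{1/\d}(G)$ and pairwise share edges, while the cyclomatic number is $2$. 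In that situation ``remove one edge per cycle'' disconnects the graph, and there is no way to couple the two layers into a spanning tree. You flag this as the main obstacle, and you appeal to Claim~\ref{claim:induced} and ``matroid compatibility'' to escape it, but Claim~\ref{claim:induced} only says the individual cycles are induced; it does nothing about overlap. The paper's fix is to shrink the cycle threshold to $t=1/(3\d)$, for which Lemma~\ref{lem:cycles are far} guarantees that distinct cycles in $\C_t(G)$ are pairwise disjoint and in fact at distance $>1+1/(3\d)$ from one another. Without some version of this observation there is no consistent notion of ``one cut edge per cycle'' to glue.

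\textbf{The per-edge estimate for cut edges needs the ``good tree'' property, which your construction does not provide.} For an edge $e$ lying on a short cycle $C$, the useful bound is not the diameter but $d_T(x,y)\le |C|$ whenever the tree drops $e$. This holds only if $T$ contains \emph{all but one} edge of $C$. The paper enforces this by construction: it contracts each $C\in\C_t(G)$ to a single vertex, applies Claim~\ref{claim:spanning} to the quotient $G/\C_t(G)$ (Corollary~\ref{coro:s-1 condition for the quotient} supplies the hypothesis), and then independently re-inserts a uniformly random $(|C|-1)$-edge subset of each cycle. This way $|E_T\cap E_G(C)|=|C|-1$ always, $\Pr[e\notin E_T]=1/|C|$ exactly, and $\Pr[e\notin E_T]\cdot d_T(x,y)\le d_{\Sigma(G)}(x,y)$, as in the paper's Case~2. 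In your scheme you instead delete one randomly chosen $e_C$ from each cycle and then run Claim~\ref{claim:spanning} on the \emph{residual} graph; the resulting tree on the residual has no reason to keep the remaining $|C|-1$ cycle edges, and some of them will be dropped with probability $\Theta(\d|C|)$, which is not negligible. When the alternate path is broken, your only fallback is $d_T(x,y)\le\diam(G)$, and since for a cut edge $\Pr[e\notin E_T]\approx 1/|C|\ge 1/3$ is a constant (not $O(\d)$), the resulting contribution is $\Theta(\diam(G))$, not $\Theta(1+\d\diam(G))$. In short, the quotient-then-lift order of the paper's construction is what keeps the cycle interiors intact and makes the Case~2 estimate work; sampling a spanning tree of the punctured graph does not.

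To salvage your outline you would essentially have to rediscover both steps: choose $t=1/(3\d)$ and prove the disjointness/separation statement (Lemma~\ref{lem:cycles are far}), and replace the ``delete one edge, then sample'' step by a two-stage product measure that decouples the quotient-tree choice from the cycle-subset choice. Once you do that, the per-edge stretch bound along the shortest path argument you sketch does go through.
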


Before passing to the proof of Theorem~\ref{thm:our spanning no
girth} we state the following corollary that explains its link to
Lemma~\ref{lem:with s-1}.

\begin{corollary}\label{cor:no girth into L_1}
Under the assumptions of Theorem~\ref{thm:our spanning no girth} we
have
$$
c_1(\Sigma(G))\lesssim 1+\d\diam(G).
$$
\end{corollary}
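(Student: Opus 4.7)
The plan is to combine Theorem~\ref{thm:our spanning no girth} with the classical fact that every metric tree embeds isometrically into $L_1$, using Lemma~\ref{lem:L1 truncation} to handle the truncation at $\diam(G)$ appearing in the integrand.

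Concretely, for each $T\in \T(\Sigma(G))$, fix a root $o_T\in T$ and define $\psi_T:\Sigma(G)\to L_1$ by $\psi_T(x)\eqdef \mathbf 1_{[o_T,x]}$, where $[o_T,x]$ is the unique geodesic in $T$ from $o_T$ to $x$ and the ambient measure is the length measure on $T$. The standard symmetric-difference computation gives $\|\psi_T(x)-\psi_T(y)\|_1=d_T(x,y)$. Composing with the truncation operator $T_M:L_1\to L_1$ from Lemma~\ref{lem:L1 truncation} with $M\eqdef \diam(G)$, set $\phi_T\eqdef T_M\circ \psi_T$, so that Lemma~\ref{lem:L1 truncation} yields
\begin{equation*}
\left(1-\frac{1}{e}\right)\min\{d_T(x,y),\diam(G)\}\le \|\phi_T(x)-\phi_T(y)\|_1\le \min\{d_T(x,y),\diam(G)\}.
\end{equation*}
Let $\mu$ be the measure on $\T(\Sigma(G))$ from Theorem~\ref{thm:our spanning no girth}, and define $\Phi:\Sigma(G)\to L_1(\mu;L_1)$ by $\Phi(x)(T)\eqdef \phi_T(x)$; since an $L_1$-valued $L_1$ space is itself isometric to an $L_1$ space, this maps $\Sigma(G)$ into $L_1$.

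For the upper bound, Theorem~\ref{thm:our spanning no girth} immediately gives
\begin{equation*}
\|\Phi(x)-\Phi(y)\|_1=\int_{\T(\Sigma(G))}\|\phi_T(x)-\phi_T(y)\|_1\, d\mu(T)\lesssim (1+\d\diam(G))d_{\Sigma(G)}(x,y).
\end{equation*}
For the lower bound, observe that $d_T(x,y)\ge d_{\Sigma(G)}(x,y)$ since $T\subset \Sigma(G)$ has the same vertex set, and that $d_{\Sigma(G)}(x,y)\le \diam(\Sigma(G))\le \diam(G)+1\le 2\diam(G)$ (we may assume $\diam(G)\ge 1$, else the claim is trivial). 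Therefore $\min\{d_T(x,y),\diam(G)\}\ge \min\{d_{\Sigma(G)}(x,y),\diam(G)\}\ge \tfrac12 d_{\Sigma(G)}(x,y)$, which integrates to $\|\Phi(x)-\Phi(y)\|_1\gtrsim d_{\Sigma(G)}(x,y)$. Combining the two estimates gives the claimed distortion bound.

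The only (mild) obstacle here is the presence of the truncation $\min\{d_T,\diam(G)\}$ in Theorem~\ref{thm:our spanning no girth}: the naive tree-into-$L_1$ embedding $\psi_T$ would yield only $\|\psi_T(x)-\psi_T(y)\|_1=d_T(x,y)$, and this integral is not controlled by $(1+\d\diam(G))d_{\Sigma(G)}(x,y)$ (nothing prevents $d_T$ from being much larger than $\diam(G)$ on a low-probability set of trees). Lemma~\ref{lem:L1 truncation}, which allows us to cap the $L_1$-metric at any prescribed level $M$ while keeping a universal bi-Lipschitz equivalence to $\min\{\|\cdot\|_1,M\}$, is exactly what is needed to close this gap.
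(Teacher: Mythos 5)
Your proof is correct and is essentially the paper's argument: both fix an isometric tree-to-$L_1$ embedding for each spanning tree, compose with the truncation from Lemma~\ref{lem:L1 truncation} at level $\diam(G)$, bundle into an $L_1(\mu;L_1)$-valued map, and use Theorem~\ref{thm:our spanning no girth} for the upper bound together with $d_T\ge d_{\Sigma(G)}\le \diam(G)+1$ for the lower bound. The only cosmetic difference is that you write out the rooted indicator-of-geodesic embedding explicitly, whereas the paper simply invokes the existence of an isometric embedding $F_T:(T,d_T)\to L_1$; your explicit truncation $\phi_T=T_{\diam(G)}\circ\psi_T$ matches the paper's $\Phi\circ F_T$.
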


\begin{proof}
By Lemma~\ref{lem:L1 truncation} there exists a mapping
$\Phi:L_1\to L_1$ such that
$$
\forall x,y\in L_1,\qquad \|\Phi(x)-\Phi(y)\|_1\asymp \min\left\{\|x-y\|_{1},\diam(G)\right\}.
$$
For every $T\in \T(\Sigma(G))$ fix an isometric embedding
$F_T:(T,d_T)\to L_1$. Let $L_1(\mu,L_1)$ denote the space of all
mappings $f:\T(\Sigma(G))\to L_1$, equipped with the norm
$$
\|f\|_{L_1(\mu,L_1)}\eqdef \int_{\T(\Sigma(G))} \|f(T)\|_1d\mu(T).
$$
Then $L_1(\mu,L_1)$ is isometric to $L_1$. Define $h:\Sigma(G)\to L_1(\mu,L_1)$ by
$$
\forall\, T\in \T(\Sigma(G)),\ \forall\, x\in \Sigma(G),\qquad h(x)(T)=\Phi(F_T(x)).
$$
Then, for every $x,y\in \Sigma(G)$,
\begin{equation}\label{eq:h integral minimum}
\|h(x)-h(y)\|_{L_1(\mu,L_1)}\asymp \int_{\T(\Sigma(G))} \min\left\{d_{T}(x,y),\diam(G)\right\}d\mu(T).
\end{equation}
Observe that $\min\left\{d_{T}(x,y),\diam(G)\right\}\gtrsim
d_{\Sigma(G)}(x,y)$ for every spanning tree $T\in \T(\Sigma(G))$ and
$x,y\in \Sigma(G)$, because $d_{T}(x,y)\ge d_{\Sigma(G)}(x,y)$ and
$d_{\Sigma(G)}(x,y)\le \diam(G)+1$ (recall~\eqref{eq:diam
complex}). Hence, it follows from~\eqref{eq:no girth spanning}
and~\eqref{eq:h integral minimum} that
\begin{equation*}
d_{\Sigma(G)}(x,y)\lesssim \|h(x)-h(y)\|_{L_1(\mu,L_1)} \lesssim (1+\d\diam(G))d_{\Sigma(G)}(x,y).\qedhere
\end{equation*}
\end{proof}

\begin{remark} Lemma~\ref{lem:with s-1} is proved in~\cite{ALNRRV} by first
establishing the same statement as Theorem~\ref{thm:our spanning no
girth} under the stronger assumption~\eqref{eq:|S|-1}, with the
conclusion~\eqref{eq:no girth spanning} holding for every $x,y\in
V_G$. It is then shown in~\cite[Claim~3.13]{ALNRRV} that for every
tree
 $T$ and every threshold $a\in (0,\infty)$ the metric $\min\{d_T,a\}$ embeds with  distortion $O(1)$
 into a convex combination of dominating tree metrics. Thus for the purpose of proving
 Corollary~\ref{cor:no girth into L_1} one can use Claim~3.13 of~\cite{ALNRRV} rather
 than the stronger statement of Lemma~\ref{lem:L1 truncation}, which deals with truncation
 of all of $L_1$ and not just trees.
\end{remark}

\begin{remark}
The fact that
 Theorem~\ref{thm:our spanning no girth} yields an embedding of $\Sigma(G)$ rather than $(V_G,d_G)$
 is needed in the ensuing
arguments, but it does
  not add significant difficulties to the proof of Theorem~\ref{thm:our spanning no girth}.
\end{remark}

The new contribution of Theorem~\ref{thm:our spanning no girth} is
that its conclusion holds under the assumption that $G$ is
$(1+\d)$-sparse rather than under the assumption~\eqref{eq:|S|-1},
i.e., without requiring that the girth of $G$ is large. This is
achieved via the following strategy: in~\cite{ALNRRV}, given an edge
$\{x,y\}\in E_G$ and a spanning tree $T\in \T(G)$ that does not
contain $\{x,y\}$ as an edge, the quantity
$\min\left\{d_{T}(x,y),\diam(G)\right\}$ is bounded from above by
$\diam(G)$. But by Claim~\ref{claim:induced}, if $\{x,y\}$ is an
edge of a cycle $C\in \mathfrak{C}_{1/\d}(G)$ and $|E_T\cap
E_G(C)|\ge |C|-1$ then  $d_{T}(x,y)\le |C|-1$. Our strategy is
therefore to use this better bound on $d_{T}(x,y)$, and to show that
it is possible to only  deal with probability measures $\mu$ on
$\T(G)$ that are supported on those spanning trees $T\in \T(G)$
satisfying $|E_T\cap E_G(C)|\ge |C|-1$ for {\em every} small cycle
$C\in \mathfrak{C}_{1/(3\d)}(G)$.

We next prove some lemmas as steps towards the proof of
Theorem~\ref{thm:our spanning no girth}, starting with the following
combinatorial fact whose obvious and short proof is included for
completeness.

\begin{lemma}\label{lem:union of cycles}
Let $G$ be a graph and $C_1,C_2\subseteq V$ be distinct cycles of
$G$ such that $C_1\cap C_2\neq \emptyset$. Then $|E_G(C_1\cup
C_2)|\ge |C_1\cup C_2|+1$.
\end{lemma}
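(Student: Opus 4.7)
The plan is to fix, for each $i\in\{1,2\}$, a set of witnessing edges $F_i\subseteq E_G(C_i)$ with $|F_i|=|C_i|$ coming from the enumeration in~\eqref{eq:cycle edges}, so that $H_i\eqdef(C_i,F_i)$ is a connected $2$-regular graph (allowing the self-loop and parallel-edge cases). Since the endpoints of the edges in $F_i$ are exactly $C_i$, the assumption $C_1\neq C_2$ immediately yields $F_1\neq F_2$. Because $F_1\cup F_2\subseteq E_G(C_1\cup C_2)$, proving the lemma reduces to showing that
\[
|F_1\cup F_2|\,\ge\,|C_1\cup C_2|+1.
\]

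The cleanest route is a first Betti number computation. I would view $H\eqdef(C_1\cup C_2,F_1\cup F_2)$ as an auxiliary graph; since each $H_i$ is connected and $C_1\cap C_2\neq\emptyset$, the graph $H$ is connected, so its cyclomatic number equals $|F_1\cup F_2|-|C_1\cup C_2|+1$. Both $F_1$ and $F_2$ are $2$-regular subgraphs of $H$, hence nonzero elements of the $\mathbb{F}_2$-cycle space of $H$, and being distinct they are automatically linearly independent. Thus the cycle space has dimension at least $2$, which is exactly the displayed inequality.

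A purely combinatorial alternative avoids cycle-space language: rewrite the target as $|C_1\cap C_2|\ge|F_1\cap F_2|+1$ and view $S\eqdef F_1\cap F_2$ as a subgraph of the cycle $H_1$. Any subset of the edges of a cycle is either the whole cycle or a disjoint union of $k\ge 0$ paths; in the latter case these paths contribute $|S|+k$ distinct vertices to $C_1\cap C_2$, and either $S=\emptyset$ (in which case the hypothesis $C_1\cap C_2\neq\emptyset$ closes the argument) or $S\neq\emptyset$ and $k\ge1$ works. The only subtle point in either approach is ruling out the degenerate situation $F_1\subseteq F_2$: a short degree-count argument shows that then every $H_2$-edge at a vertex of $C_1$ must already lie in $F_1$, so $H_1$ would be a connected component of the connected graph $H_2$, forcing $H_1=H_2$ and contradicting $C_1\neq C_2$.
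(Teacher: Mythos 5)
Your argument is correct, and it takes a genuinely different route from the paper. The paper proceeds by a reduction: after observing that one may take $V_G=C_1\cup C_2$ and $E_G$ to be the union of the two sets of witnessing cycle edges, it disposes of the disjoint-edge case by a direct count and otherwise repeatedly \emph{contracts} a shared edge (this preserves $|E|-|V|$ and keeps the picture ``union of two strictly smaller cycles'') until one cycle has length $3$, at which point a short case analysis finishes. Your first variant replaces that inductive contraction with a single conceptual step: since $H=(C_1\cup C_2,F_1\cup F_2)$ is connected and the $\mathbb{F}_2$-cycle space of a connected graph on $v$ vertices and $e$ edges has dimension $e-v+1$, exhibiting two linearly independent cycle-space elements ($F_1$ and $F_2$, distinct and nonzero, hence independent over $\mathbb{F}_2$) immediately gives $e-v+1\ge 2$. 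Your second variant is an even more elementary rephrasing of the same count on $|C_1\cap C_2|$ versus $|F_1\cap F_2|$. Both avoid the bookkeeping of the contraction step and make the ``$+1$'' transparent. One small imprecision: you say ``the only subtle point in \emph{either} approach is ruling out $F_1\subseteq F_2$,'' but the Betti-number route needs only $F_1\neq F_2$ (distinctness alone gives $\mathbb{F}_2$-independence); the degree-count exclusion of $F_1\subseteq F_2$ is required only for the second, path-decomposition route. This does not affect correctness.
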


\begin{proof} By removing redundant edges and vertices we may assume without loss of
generality that $V_G=C_1\cup C_2$ and if $C_1=\{x_1,\ldots,x_m\}$
and $C_2=\{y_1,\ldots,y_n\}$ then $E_G=E_1\cup E_2$, where
$$
E_1\eqdef\big\{\{x_1,x_2\},\{x_2,x_3\},\ldots,\{x_{m-1},x_m\},\{x_m,x_1\}\big\},
$$
and
$$
E_2\eqdef \big\{\{y_1,y_2\},\{y_2,y_3\},\ldots,\{y_{n-1},y_n\},\{y_n,y_1\}\big\}.
$$
If $E_1\cap E_2=\emptyset $ then $$|E_G(C_1\cup
C_2)|=|E|=|C_1|+|C_2|\ge |C_1\cup C_2|+1,$$ where the final
inequality uses $C_1\cap C_2\neq \emptyset$. So, suppose that there
exist $i,j\in \{1,\ldots, m\}$ and $s,t\in \{1,\ldots,n\}$ with
$|i-j|\in \{1,m-1\}$, $|s-t|\in \{1,n-1\}$ and $x_i=y_s$, $x_j=y_t$.
If $|C_1|,|C_2|>3$ then we may contract the edge $\{x_i,x_j\}$ while
identifying its endpoints, thus obtaining a graph $G'$ which is the
union of two strictly smaller cycles, with $|V_{G'}|=|V|-1$ and
$|E_{G'}|=|E|-1$. By continuing in this manner we see that it
suffices to prove the desired result under the additional
assumptions $|C_1|=3$ and $E_1\cap E_2\neq \emptyset$. Thus either
$|C_1\cap C_2|=2$ or $C_1\subset C_2$. In the former case we have
$$|E_G(C_1\cup C_2)|=|C_2|+2=|C_1\cup C_2|+1,$$ and in the latter
case, since $C_1\neq C_2$, we have
\begin{equation*}
|E_G(C_1\cup C_2)|\ge |C_2|+1= |C_1\cup C_2|+1.\qedhere
\end{equation*}
\end{proof}

\begin{lemma}\label{lem:cycles are far}
Fix $\d\in (0,1)$ and a  connected $(1+\d)$-sparse graph $G$.
Suppose that $C_1,C_2\subseteq V_G$ are distinct cycles of $G$. Then
\begin{equation}\label{eq:cycles are distant}
d_G(C_1,C_2)\ge 1+\frac{1}{\d}-|C_1|-|C_2|.
\end{equation}
\end{lemma}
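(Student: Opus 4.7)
The plan is to split into two cases according to whether $d_G(C_1,C_2)=0$ or $d_G(C_1,C_2)\geq 1$. In the first case, since $C_1\neq C_2$ are two cycles sharing at least one vertex, Lemma~\ref{lem:union of cycles} gives $|E_G(C_1\cup C_2)|\geq |C_1\cup C_2|+1$, while the $(1+\delta)$-sparseness inequality~\eqref{eq:def delta sparse} applied to $S=C_1\cup C_2$ gives $|E_G(C_1\cup C_2)|\leq (1+\delta)|C_1\cup C_2|$. Comparing these two bounds forces $|C_1\cup C_2|\geq 1/\delta$, and using $|C_1\cap C_2|\geq 1$ upgrades this to
\[
|C_1|+|C_2|\;=\;|C_1\cup C_2|+|C_1\cap C_2|\;\geq\;\tfrac{1}{\delta}+1,
\]
which is exactly the required inequality~\eqref{eq:cycles are distant} in this case (its right-hand side being $\leq 0$).

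Suppose now that $k\eqdef d_G(C_1,C_2)\geq 1$; in particular $C_1\cap C_2=\emptyset$. Fix a shortest path $P=(v_0,v_1,\ldots,v_k)$ in $G$ with $v_0\in C_1$ and $v_k\in C_2$, and set $S\eqdef C_1\cup C_2\cup\{v_1,\ldots,v_{k-1}\}$. Minimality of $P$ forces the interior vertices $v_1,\ldots,v_{k-1}$ to lie outside $C_1\cup C_2$ (otherwise $P$ could be truncated), so $|S|\leq |C_1|+|C_2|+(k-1)$. On the other hand, $E_G(S)$ contains the $|C_1|$ cycle edges of $C_1$, the $|C_2|$ cycle edges of $C_2$, and the $k$ edges of $P$, and these three edge sets are pairwise disjoint: $E_G(C_1)$ and $E_G(C_2)$ are disjoint because $C_1\cap C_2=\emptyset$, while each edge of $P$ has at least one endpoint outside $C_1\cup C_2$, so it belongs to neither $E_G(C_1)$ nor $E_G(C_2)$. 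Therefore $|E_G(S)|\geq |C_1|+|C_2|+k$, and invoking~\eqref{eq:def delta sparse} one more time,
\[
|C_1|+|C_2|+k\;\leq\;(1+\delta)\bigl(|C_1|+|C_2|+k-1\bigr),
\]
which rearranges to $|C_1|+|C_2|+k\geq 1+1/\delta$, equivalent to~\eqref{eq:cycles are distant}.

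The main obstacle is the three-way edge-disjointness needed to lower bound $|E_G(S)|$. It rests on the standard fact that a shortest path from $C_1$ to $C_2$ meets neither cycle in its interior, which is a routine consequence of minimality together with $C_1\cap C_2=\emptyset$ in the case $k\geq 1$. Once this is granted, both cases reduce to a single application of the sparseness hypothesis, combined with Lemma~\ref{lem:union of cycles} in the overlap case.
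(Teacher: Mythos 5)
Your proof is correct and takes essentially the same approach as the paper: Lemma~\ref{lem:union of cycles} plus $(1+\delta)$-sparseness handles the case $C_1\cap C_2\neq\emptyset$, and a shortest-path vertex/edge count followed by one more application of~\eqref{eq:def delta sparse} handles the disjoint case, exactly as in~\cite{MN-towards}-style arguments and in the paper itself (which merely phrases the first case as a WLOG reduction rather than an explicit case split). One small nit: when $k=1$ the lone edge of $P$ has both endpoints inside $C_1\cup C_2$, so your stated justification ``each edge of $P$ has at least one endpoint outside $C_1\cup C_2$'' is literally false there; the conclusion still holds, though, because that edge has one endpoint in $C_1$ and one in $C_2$, and $C_1\cap C_2=\emptyset$, so it lies in neither $E_G(C_1)$ nor $E_G(C_2)$.
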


\begin{proof} It suffices to prove~\eqref{eq:cycles are distant} under the assumption
\begin{equation}\label{eq:C1+C2}
|C_1|+|C_2|< 1+\frac{1}{\d}.
\end{equation}

We first note that~\eqref{eq:C1+C2} implies that $C_1\cap
C_2=\emptyset$. Indeed, otherwise, using Lemma~\ref{lem:union of
cycles} and the  fact that $G$ is $(1+\d)$-sparse, we have $$
|C_1\cup C_2|+1\le |E_G(C_1\cup C_2)|\le (1+\d)|C_1\cup C_2|.$$
Consequently $|C_1\cup C_2|\ge 1/\d$, which
contradicts~\eqref{eq:C1+C2} since $C_1\cap C_2\neq \emptyset$.

Having proved that $C_1\cap C_2=\emptyset$, we have $t\eqdef
d_G(C_1,C_2)>0$. Take $x\in C_1$ and $y\in C_2$ with $d_G(x,y)=t$
and choose $w_0,\ldots,w_{t}\in V_G$ such that $w_0=x$, $w_{t}=y$
and $\{w_{i-1},w_i\}\in E_G$ for every $i\in \{1,\ldots,t\}$. By the
choice of $x$ and $y$ as the vertices at which $d_G(C_1,C_2)$ is
attained, necessarily $w_1,\ldots,w_{t-1}\in V_G\setminus(C_1\cup
C_2)$. Hence,
$$|C_1\cup C_2\cup\{w_1,\ldots,w_{t-1}\}|=|C_1|+|C_2|+t-1,
$$
 and
$$
|E_G\left(C_1\cup C_2\cup\{w_1,\ldots,w_{t-1}\}\right)|\ge |C_1|+|C_2|+t.
$$
Since $G$ is $(1+\d)$-sparse, it follows that
$$
|C_1|+|C_2|+t\le (1+\d)(|C_1|+|C_2|+t-1),
$$
which simplifies to give $d_G(C_1,C_2)=t\ge
1+\frac{1}{\d}-|C_1|-|C_2|$.
\end{proof}

Fixing $\d\in (0,1)$ and a connected $(1+\d)$-sparse graph $G$,
write
\begin{equation}\label{eq:def t}
t\eqdef \frac{1}{3\d},
\end{equation}
and define $\Gamma\subseteq V_G$ by
\begin{equation}\label{eq:defGamma}
\Gamma\eqdef \bigcup_{C\in \mathfrak{C}_{t}(G)}C.
\end{equation}

We shall work below with a ``quotient graph"
$$
G/\C_t(G)=\big(V_{G/\C_t(G)},E_{G/\C_t(G)}\big),
$$
which is defined as follows.
\begin{equation}\label{eq:defW}
V_{G/\C_t(G)}\eqdef (V_G\setminus \Gamma)\cup \mathfrak{C}_{t}(G),
\end{equation}
i.e., the vertex set of $G/\C_t(G)$ consists of those vertices of
$G$ that do not belong to any cycle of $G$ of length less than $t$,
and we append to these vertices an additional vertex for every cycle
$C\in \mathfrak{C}_{t}(G)$. Thus
\begin{equation}\label{eq:number of vertices quotient}
\left|V_{G/\C_t(G)}\right|=|V_G|-|\Gamma|+|\C_t(G)|.
\end{equation}
The edges of $G/\C_t(G)$ are defined as follows. For every $x,y\in
V_G\setminus \Gamma$,
$$
\{x,y\}\in E_{G/\C_t(G)}\iff \{x,y\}\in E_G,
$$
and for every  $(x,C)\in (V_G\setminus \Gamma)\times
\mathfrak{C}_{t}(G)$,
\begin{equation*}
\{x,C\}\in E_{G/\C_t(G)}\iff d_G(x,C)=1.
\end{equation*}

Under this definition $G/\C_t(G)$ is simple and connected. Moreover,
by Lemma~\ref{lem:cycles are far} every distinct $C_1,C_2\in
\C_t(G)$ are disjoint, and by Claim~\ref{claim:induced} we have
$E_G(C)=|C|$ for every $C\in \C_t(G)$. Consequently,
\begin{equation}\label{eq:number of edges in quotient}
\left|E_{G/\C_t(G)}\right|=|E_G|-\sum_{C\in \C_t(G)}|C|=|E_G|-|\Gamma|.
\end{equation}

\begin{lemma}\label{lem:cycles are far in quotient}
For every distinct $C_1,C_2\in \C_t(G)$ we have
$$
d_{G/\C_t(G)}(C_1,C_2)>t+1.
$$
\end{lemma}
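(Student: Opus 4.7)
My plan is to lift (a prefix of) a shortest path in $G/\C_t(G)$ from $C_1$ to $C_2$ to a walk in $G$ between two distinct small cycles, then invoke Lemma~\ref{lem:cycles are far} to force that prefix (hence the whole path) to be long. First I would record a structural fact about the quotient: since $E_{G/\C_t(G)}$ contains no edge joining two cycle-vertices, every neighbor of a cycle-vertex of $G/\C_t(G)$ lies in $V_G\setminus\Gamma$.

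Next, fix a shortest path $C_1=u_0,u_1,\ldots,u_k=C_2$ in $G/\C_t(G)$, so the goal is $k>t+1$. Let $i^\ast$ be the smallest index $i\ge 1$ with $u_i\in\C_t(G)$; such $i^\ast$ exists since $u_k=C_2\in\C_t(G)$, and in fact $i^\ast\ge 2$ because $u_1\in V_G\setminus\Gamma$ by the above observation. Since a shortest path in a simple graph is a simple path, $u_{i^\ast}\neq u_0=C_1$, so $C_1$ and $u_{i^\ast}$ are two distinct members of $\C_t(G)$.

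By the minimality of $i^\ast$, the vertices $u_1,\ldots,u_{i^\ast-1}$ all lie in $V_G\setminus\Gamma$. Consulting the definition of $E_{G/\C_t(G)}$: the edges $\{u_j,u_{j+1}\}$ for $1\le j\le i^\ast-2$ are literally edges of $G$; the edge $\{C_1,u_1\}$ in the quotient comes from some edge $\{y_0,u_1\}\in E_G$ with $y_0\in C_1$; and $\{u_{i^\ast-1},u_{i^\ast}\}$ comes from some edge $\{u_{i^\ast-1},c\}\in E_G$ with $c\in u_{i^\ast}$. Concatenating, I obtain a walk in $G$ of length $i^\ast$ from $y_0\in C_1$ to $c\in u_{i^\ast}$, which shows $d_G(C_1,u_{i^\ast})\le i^\ast$.

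To finish, apply Lemma~\ref{lem:cycles are far} to the distinct small cycles $C_1,u_{i^\ast}\in\C_t(G)$: since $|C_1|,|u_{i^\ast}|<t=1/(3\delta)$,
$$d_G(C_1,u_{i^\ast})\ge 1+\frac{1}{\delta}-|C_1|-|u_{i^\ast}|>1+3t-2t=t+1,$$
whence $k\ge i^\ast\ge d_G(C_1,u_{i^\ast})>t+1$, the desired bound. The only real subtlety is the opening observation that a cycle-vertex of the quotient has no cycle-vertex neighbor; without it, the lift might have to cross a quotient edge between two cycles and one would need an extra in-cycle detour in order to obtain an honest walk in $G$.
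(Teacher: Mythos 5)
Your proof is correct, and your structural observation (that $E_{G/\C_t(G)}$ has no edge joining two cycle-vertices, by the very definition of the quotient edges) is exactly the fact needed. The subtle point you flag at the end is real: a shortest path in $G/\C_t(G)$ joining $C_1$ to $C_2$ could in principle pass through a third cycle $C'$, and lifting that path to $G$ would require an in-cycle detour inside $C'$, which could inflate its length and invalidate the direct inequality $d_{G/\C_t(G)}(C_1,C_2)\ge d_G(C_1,C_2)$. You handle this by truncating the path at the first cycle-vertex $u_{i^\ast}$ encountered and applying Lemma~\ref{lem:cycles are far} to the pair $(C_1,u_{i^\ast})$; this works for an arbitrary pair $C_1,C_2$. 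The paper instead chooses $(C_1,C_2)$ to be a pair realizing the \emph{minimum} quotient distance over all distinct pairs of small cycles, so that by minimality the interior of the shortest path contains no cycle-vertices at all, and the lift of the entire path is an honest walk in $G$. Both routes rely on the same key ingredient (Lemma~\ref{lem:cycles are far} plus the no-cycle-to-cycle-edge fact); yours avoids the minimality reduction at the cost of introducing the auxiliary index $i^\ast$, while the paper's minimality trick keeps the path analysis slightly cleaner but makes the argument feel less direct for a fixed pair. Either way, the numerics
\[
d_G(C_1,u_{i^\ast}) \ge 1+\tfrac{1}{\delta}-|C_1|-|u_{i^\ast}| > 1+3t-2t = t+1
\]
(using $t=1/(3\delta)$ and $|C_1|,|u_{i^\ast}|<t$) match the paper's, and the conclusion $k\ge i^\ast \ge d_G(C_1,u_{i^\ast})>t+1$ is sound.
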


\begin{proof} Suppose that $d_{G/\C_t(G)}(C_1,C_2)$ is minimal among all distinct cycles
$C_1,C_2\in \C_t(G)$. Then by the definition of $G/\C_t(G)$, the
non-endpoint vertices on the shortest path joining $C_1$ and $C_2$
in $G/\C_t(G)$ consist of vertices in $V_G\setminus \Gamma$.
Consequently,
$$
d_{G/\C_t(G)}(C_1,C_2) \ge d_G(C_1,C_2)\ge 1+\frac{1}{\d}-|C_1|-|C_2|>1+\frac{1}{\d}-2t=t+1,
$$
where we used Lemma~\ref{lem:cycles are far}, the fact that $|C_1|,|C_2|<t$, and~\eqref{eq:def t}.
\end{proof}

\begin{lemma}\label{lem:edges in entire quotient}
If $0<\d<\frac13$ then
\begin{equation}\label{eq:few edges in quotient}
\left|E_{G/\C_t(G)}\right|\le \frac{1+\d}{1-3\d}\left(\left|V_{G/\C_t(G)}\right|-1\right).
\end{equation}
\end{lemma}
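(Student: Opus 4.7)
The plan is to show that the quotient graph $G/\C_t(G)$ is itself $(1+\d)$-sparse with girth at least $t=1/(3\d)$, and then to apply Lemma~\ref{lem:link between sparse and s-1} to it with $g=t$ and $S=V_{G/\C_t(G)}$. The resulting bound is $(1+\d)\tfrac{t}{t-1}(|V_{G/\C_t(G)}|-1)=\tfrac{1+\d}{1-3\d}(|V_{G/\C_t(G)}|-1)$, which is exactly \eqref{eq:few edges in quotient}.

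For the sparseness of the quotient, I would take $S\subseteq V_{G/\C_t(G)}$ and lift it to $S'\subseteq V_G$ by replacing each contracted vertex $C\in S\cap \C_t(G)$ by the whole of $C$. Since distinct cycles in $\C_t(G)$ are disjoint (Lemma~\ref{lem:cycles are far}), each such $C$ is induced (Claim~\ref{claim:induced}), and (by applying $(1+\d)$-sparseness to $C\cup\{x\}$, which for $\d<1/3$ forces $k<1+\d(|C|+1)<5/3$ and hence $k\leq 1$) any external $x\in V_G\setminus\Gamma$ has at most one neighbor in $C$, one gets the identity
\[
|E_{G/\C_t(G)}(S)| \;=\; |E_G(S')| - \sum_{C\in S\cap \C_t(G)} |C|.
\]
Combining $|E_G(S')|\le(1+\d)|S'|$ with $|C|<1/(3\d)$ gives $|E_{G/\C_t(G)}(S)|\le (1+\d)|S\cap(V_G\setminus\Gamma)|+\d\sum_{C\in S\cap\C_t(G)}|C|\le(1+\d)|S|$, establishing $(1+\d)$-sparseness of $G/\C_t(G)$.

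For the girth bound, I would consider a cycle of length $\ell$ in $G/\C_t(G)$ and case-split on the number of contracted vertices it contains. If none, the cycle lies inside $G[V_G\setminus\Gamma]$, and any cycle of $G$ of length less than $t$ is in $\C_t(G)$ and hence contained in $\Gamma$, so $\ell\ge t$. If at least two, Lemma~\ref{lem:cycles are far in quotient} forces $\ell>2(t+1)>t$. If exactly one, write the cycle as $C,x_1,\dots,x_{\ell-1}$ and apply $(1+\d)$-sparseness of $G$ to $U=C\cup\{x_1,\dots,x_{\ell-1}\}$: this set has $|C|+\ell-1$ vertices, and its edge set contains the $|C|$ cycle-edges of $C$, the $\ell-2$ consecutive edges among the $x_i$, and the $2$ edges connecting $x_1$ and $x_{\ell-1}$ to $C$, for a total of at least $|C|+\ell$ edges. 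Sparseness then gives $|C|+\ell\le(1+\d)(|C|+\ell-1)$, i.e.\ $|C|+\ell\ge 1+1/\d$, so $\ell\ge 1+1/\d-|C|>2t+1>t$.

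The main obstacle is precisely the one-contracted-vertex case. A naive strategy that lifts the cycle to $G$ and closes it using an arc of $C$ only produces a cycle in $G$ of length at most $\ell+|C|/2\ge t$, which yields the useless bound $\ell>t/2$ and gives $\tfrac{1+\d}{1-6\d}$ rather than $\tfrac{1+\d}{1-3\d}$ from Lemma~\ref{lem:link between sparse and s-1}. The trick is to bypass the lifting of cycles and apply sparseness directly to the \emph{vertex} set $U$, using that both the $|C|$ edges of $C$ and the $\ell$ edges of the quotient cycle are simultaneously forced into $U$. With the sharpened girth estimate $\ge t$ in hand, Lemma~\ref{lem:link between sparse and s-1} immediately produces~\eqref{eq:few edges in quotient}.
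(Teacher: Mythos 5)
Your proof is correct and takes a genuinely different route from the paper's. The paper proves Lemma~\ref{lem:edges in entire quotient} by a direct case analysis on $|\C_t(G)|$: the case $\C_t(G)=\emptyset$ is handled via Lemma~\ref{lem:link between sparse and s-1}, the case where $G/\C_t(G)$ is a forest is trivial, and the cases $|\C_t(G)|=1$ and $|\C_t(G)|\ge 2$ are treated by first bounding $|\Gamma|$ against $|V_G\setminus\Gamma|$ (in the latter case via Lemma~\ref{lem:cycles are far in quotient} and disjoint half-paths $A(C)$) and then running an explicit global edge count. You instead show that the quotient graph itself inherits $(1+\d)$-sparseness and, separately, has girth at least $t$, which lets you invoke Lemma~\ref{lem:link between sparse and s-1} uniformly for the whole graph. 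This is cleaner and more modular: all the combinatorial work is pushed into two localized verifications, and the final bound drops out of a single application of an already-proved lemma, rather than being re-derived in each case. The only thing you should make explicit is that Lemma~\ref{lem:link between sparse and s-1} is stated for integer $g\ge 3$, while $t=1/(3\d)$ need not be an integer and can be less than $3$ when $\d>1/9$; in that regime you should take $g=3$ (the girth of any simple graph is at least $3$) and check $\tfrac32\le \tfrac{1}{1-3\d}$, which holds precisely when $\d\ge\tfrac19$, while for $\d\le\tfrac19$ you apply the lemma with $g=\lceil t\rceil\ge 3$ and use $1/(\lceil t\rceil-1)\le 1/(t-1)$. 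These are arithmetic trivialities, but they are needed to match the statement as literally written. Your sparseness verification (exact identity via at most one neighbor of any external vertex in each short cycle, disjointness and inducedness of small cycles from Lemma~\ref{lem:cycles are far} and Claim~\ref{claim:induced}) and your three-way girth case split (zero, one, or two-or-more contracted vertices on the cycle, using Lemma~\ref{lem:cycles are far in quotient} for the last case and a direct sparseness count on $U=C\cup\{x_1,\ldots,x_{\ell-1}\}$ for the one-vertex case) are sound.
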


\begin{proof}
If $\C_t(G)=\emptyset$ then by definition $G=G/\C_t(G)$ and the
girth of $G$ is at least $t$.  Recalling the choice of $t$
in~\eqref{eq:def t}, we see that~\eqref{eq:few edges in quotient}
holds true due to the fact that $G$ is $(1+\d)$-sparse and
Lemma~\ref{lem:link between sparse and s-1}. We may therefore assume
from now on that $\C_t(G)\neq \emptyset$. We may also assume from
now on that $G/\C_t(G)$ contains a cycle, since otherwise it is a
tree and therefore $|E_{G/\C_t(G)}|\le |V_{G/\C_t(G)}|-1$.

Suppose first that $|\C_t(G)|=1$ and write $\C_t(G)=\{C\}$. Thus,
recalling~\eqref{eq:defGamma}, we have $|\Gamma|=|C|<t$. Since
$G/\C_t(G)$ contains a cycle, it follows that $|V_G\setminus
\Gamma|\ge t/2$. Indeed, if there is a cycle in $G/\C_t(G)$ that
does not contain the vertex $C\in V_{G/\C_t(G)}$ then since no cycle
of $G$ other than $C$ has length less than $t$, this cycle contains
at least $t$ vertices of $V_G\setminus \Gamma$. If on the other hand
there exist distinct $x_1,\ldots, x_k\in V_G\setminus \Gamma$ such
that $\{C,x_1\},\{x_1,x_2\},\ldots,\{x_{k-1},x_k\},\{x_k,C\}\in
E_{G/\C_t(G)}$ then $d_G(x_1,C)=d_G(x_k,C)=1$. By adding to
$\{x_1,\ldots, x_k\}$ the vertices on the shortest path in $C$
joining the nearest neighbors of $x_1$ and $x_k$ in $C$, we obtain a
cycle of length less than $k+t/2$ in $G$ that differs from $C$.
Since we are assuming that no cycle of $G$ other than $C$ has length
less than $t$, it follows that $k+t/2\ge t$, implying that
$|V_G\setminus \Gamma|\ge k\ge t/2$, as required. We have thus shown
that
\begin{equation}\label{eq:Gamma small 1}
|\Gamma|<t\le 2|V_G\setminus \Gamma|,
\end{equation}
 and consequently, since $G$ is $(1+\d)$-sparse,
\begin{multline}\label{eq:case of one cycle}
\left|E_{G/\C_t(G)}\right|\stackrel{\eqref{eq:number of edges in
quotient}}{=}|E_G|-|\Gamma|\le (1+\d)|V_G|-|\Gamma|
=(1+\d)|V_G\setminus \Gamma|+\d|\Gamma|\\\stackrel{\eqref{eq:Gamma
small 1}}{\le} (1+3\d)|V_G\setminus \Gamma|=
(1+3\d)\left(\left|V_{G/\C_t(G)}\right|-1\right),
\end{multline}
where the last step of~\eqref{eq:case of one cycle} uses~\eqref{eq:number of vertices quotient} and the fact that we are treating the case  $|\C_t(G)|=1$. Since $1+3\d\le (1+\d)/(1-3\d)$, the desired bound~\eqref{eq:few edges in quotient} follows from~\eqref{eq:case of one cycle}.

It remains to prove~\eqref{eq:few edges in quotient} under the assumption $|\C_t(G)|\ge 2$. In this case, for every $C\in \C_t(G)$ fix an arbitrary $C'\in \C_t(G)$ with $C\neq C'$. By  Lemma~\ref{lem:cycles are far in quotient} if $\{C=v_0,v_1,\ldots,v_k=C'\}$ is the shortest path in $G/\C_t(G)$ joining $C$ and $C'$ then $k>t+1=1/(3\d)+1\ge 2$ and $v_1,\ldots,v_{\lceil t\rceil}\in V\setminus \Gamma$. If we define $A(C)\subseteq V\setminus \Gamma$ by $$
A(C)\eqdef  \left\{v_1,\ldots,v_{\lfloor(t+1)/2\rfloor}\right\},$$
then $A(C_1)\cap A(C_2)=\emptyset$ for every distinct $C_1,C_2\in \C_t(G)$, since otherwise $d_{G/\C_t(G)}(C_1,C_2)\le 2\lfloor(t+1)/2\rfloor\le t+1$, contradicting Lemma~\ref{lem:cycles are far in quotient}. This shows that
\begin{multline}\label{eq:(t-1)/2}
|V_G\setminus \Gamma|\ge \left|\bigcup_{C\in
\C_t(G)}A(C)\right|=\sum_{C\in \C_t(G)}|A(C)|\\\ge |\C_t(G)|\cdot
\left\lfloor\frac{t+1}{2}\right\rfloor\ge \frac{t-1}{2}|\C_t(G)|.
\end{multline}
Hence,
\begin{multline}\label{eq:factor 3}
|\Gamma|=\sum_{C\in
\C_t(G)}|C|<t|\C_t(G)|\\\stackrel{\eqref{eq:(t-1)/2}}{\le}\frac{2t}{t-1}|V_G\setminus
\Gamma| \stackrel{\eqref{eq:def t}}{=}\frac{2}{1-3\d}|V_G\setminus
\Gamma|.
\end{multline}
Now, arguing similarly to~\eqref{eq:case of one cycle} we have
\begin{multline*}
\left|E_{G/\C_t(G)}\right|\stackrel{\eqref{eq:number of edges in
quotient}}{=}|E_G|-|\Gamma|\le (1+\d)|V_G|-|\Gamma|
=(1+\d)|V_G\setminus \Gamma|+\d|\Gamma|\\\stackrel{\eqref{eq:factor
3}}{\le} \left(1+\d+\frac{2\d}{1-3\d}\right)|V_G\setminus
\Gamma|\stackrel{\eqref{eq:number of vertices quotient}}{\le}
\frac{1-3\d^2}{1-3\d}\left(\left|V_{G/\C_t(G)}\right|-2\right).\tag*{\qedhere}
\end{multline*}
\end{proof}

\begin{corollary}\label{coro:s-1 condition for the quotient}
If $0<\d<\frac13$ then the quotient graph $G/\C_t(G)$ satisfies~\eqref{eq:|S|-1} with $\d$ replaced by $4\d/(1-3\d)$, i.e,
$$
\forall\, S\subseteq V_{G/\C_t(G)},\qquad \left|E_{G/\C_t(G)}(S)\right|\le \frac{1+\d}{1-3\d}(|S|-1).
$$
\end{corollary}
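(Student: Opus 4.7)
The plan is to reduce to the case $S = V_{G/\C_t(G)}$ already settled by Lemma~\ref{lem:edges in entire quotient}, by lifting $S$ back to a subgraph of $G$. Write $S = S_0\sqcup S_1$ with $S_0\eqdef S\cap (V_G\setminus \Gamma)$ and $S_1\eqdef S\cap \C_t(G)$, set
$$V_H\eqdef S_0\cup \bigcup_{C\in S_1}C,$$
and let $H$ be the induced subgraph of $G$ on $V_H$, so $E_H=E_G(V_H)$. Since $H$ is a subgraph of the $(1+\d)$-sparse graph $G$, it too is $(1+\d)$-sparse.

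The key step is to identify $H/\C_t(H)$ with the restriction of $G/\C_t(G)$ to $S$. First I would check that $\C_t(H)=S_1$: every cycle of $H$ is a cycle of $G$, and by Lemma~\ref{lem:cycles are far} distinct cycles in $\C_t(G)$ are pairwise disjoint when $\d<1/3$, so any cycle of $H$ of length less than $t$, being contained in $V_H\cap \Gamma=\bigcup_{C\in S_1}C$, must coincide with some $C\in S_1$; conversely each $C\in S_1$ is a cycle of $H$ of length less than $t$. Consequently $V_{H/\C_t(H)}=S$. Next I would verify that $E_{H/\C_t(H)}$ agrees, as a set of pairs, with $E_{G/\C_t(G)}(S)$: for $x,y\in S_0$ this is immediate since $H$ is induced; for $x\in S_0$ and $C\in S_1$ the conditions $d_H(x,C)=1$ and $d_G(x,C)=1$ coincide because $\{x\}\cup C\subseteq V_H$; and Lemma~\ref{lem:cycles are far} rules out edges between distinct cycles of $\C_t(G)$ in $G$, and hence also in $H$. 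Thus $|E_{G/\C_t(G)}(S)|=|E_{H/\C_t(H)}|$.

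Since $H$ need not be connected, Lemma~\ref{lem:edges in entire quotient} is not applicable to $H$ directly. The remedy is to decompose $H$ into its connected components $H_1,\ldots,H_N$ (each a connected $(1+\d)$-sparse graph), apply the lemma to each, and sum:
$$|E_{G/\C_t(G)}(S)|=\sum_{j=1}^N|E_{H_j/\C_t(H_j)}|\le \frac{1+\d}{1-3\d}\sum_{j=1}^N\bigl(|V_{H_j/\C_t(H_j)}|-1\bigr)\le \frac{1+\d}{1-3\d}(|S|-1),$$
using that the sets $\{V_{H_j/\C_t(H_j)}\}_{j=1}^N$ partition $S$ and that $N\ge 1$. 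The only nontrivial ingredient is the identification $\C_t(H)=S_1$, which rests on the disjointness of short cycles in $(1+\d)$-sparse graphs provided by Lemma~\ref{lem:cycles are far}; everything else is bookkeeping.
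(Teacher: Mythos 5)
Your proposal is correct and takes essentially the same approach as the paper: both lift from the quotient graph back to an induced subgraph of $G$, use the identification between the quotient of that subgraph and the restriction of $G/\C_t(G)$ to $S$, and then apply Lemma~\ref{lem:edges in entire quotient} componentwise. The only cosmetic difference is the order of operations — the paper first decomposes $(S,E_{G/\C_t(G)}(S))$ into connected components and then lifts each one, while you lift all of $S$ at once and then decompose the lifted graph; the verification that $\C_t(H)=S_1$ via disjointness of short cycles is the same fact the paper uses implicitly when it asserts $H_i/\C_t(H_i)=(S_i,E_{G/\C_t(G)}(S_i))$.
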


\begin{proof} Fix $S\subset V_{G/\C_t(G)}$ and let $S_1,\ldots,S_m\subseteq S$ be the
connected components of the graph $(S,E_{G/\C_t(G)}(S))$. For every $i\in \{1,\ldots,m\}$
 we lift each $S_i$ to a subset $U_i\subseteq V_G$ that is given by
\begin{equation*}
U_i\eqdef \big(S_i\cap (V_G\setminus \Gamma)\big)\bigcup\left(\bigcup_{C\in S_i\cap \C_t(G)}C\right).
\end{equation*}
Since $(S_i,E_{G/\C_t(G)}(S_i))$ is connected, the graph $H_i\eqdef
(U_i,E_G(U_i))$ is connected as well. By definition
$H_i/\C_t(H_i)=(S_i,E_{G/\C_t(G)}(S_i))$, so, since $H_i$ is
connected and $(1+\d)$-sparse, by Lemma~\ref{lem:edges in entire
quotient} we have
\begin{equation}\label{eq:edges in component}
\forall\,i\in \{1,\ldots,m\},\qquad \left|E_{G/\C_t(G)}(S_i)\right|\le \frac{1+\d}{1-3\d}(|S_i|-1).
\end{equation}
Consequently,
\begin{multline*}
\left|E_{G/\C_t(G)}(S)\right|=\sum_{i=1}^m \left|E_{G/\C_t(G)}(S_i)\right|\stackrel{\eqref{eq:edges in component}}{\le} \frac{1+\d}{1-3\d}\left(\sum_{i=1}^m |S_i|-m\right)\\=\frac{1+\d}{1-3\d}\left(|S|-m\right)\le \frac{1+\d}{1-3\d}\left(|S|-1\right).\tag*{\qedhere}
\end{multline*}
\end{proof}

Before proceeding it will be convenient to introduce the following
notation. Firstly, recalling~\eqref{eq:defGamma}, the set of edges
$E_G(\Gamma)\subseteq E_G$ consists of those edges in $E_G$ that
belong to a cycle of length less than $t$. Since, by
Lemma~\ref{lem:cycles are far}, these cycles are pairwise disjoint,
we can associate to every $e\in E_G(\Gamma)$ a unique cycle $C_e\in
\C_t(G)$ such that $e\in E_G(C_e)$. Secondly, let  $\T_{\mathrm
{good}}(G)\subset \T(G)$ be the set of those spanning trees $T$ of
$G$ satisfying $|E_T\cap E_G(C)|=|C|-1$ for every $C\in \C_t(G)$,
i.e.,
\begin{equation*}\label{eq:def good trees}
\T_{\mathrm {good}}(G)\eqdef \left\{T\in \T(G):\ \forall\, C\in \C_t(G),\quad |E_T\cap E_G(C)|=|C|-1\right\}.
\end{equation*}

\begin{lemma}\label{lem:measure on good trees}
Suppose that $0<\d<\frac13$. There exists a probability measure
$\mu$ on $\T_{\mathrm {good}}(G)$ such that for every $e\in
E_G\setminus E_G(\Gamma)$ we have
\begin{equation}\label{eq:prob e on long cycle}
\mu\left(\left\{T\in
\T_{\mathrm {good}}(G):\ e\in E_T\right\}\right)\ge \frac{1-3\d}{1+\d},
\end{equation}
and in addition for every $e\in E_G(\Gamma)$ we have
\begin{equation}\label{eq:prob e on short cycle}
 \mu\left(\left\{T\in \T_{\mathrm {good}}(G):\ e\in E_T\right\}\right)\ge \frac{|C_e|-1}{|C_e|}.
\end{equation}
\end{lemma}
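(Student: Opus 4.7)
The plan is to apply Claim~\ref{claim:spanning} to the quotient graph $G/\C_t(G)$ and then lift the resulting measure to $\T_{\mathrm{good}}(G)$. By Corollary~\ref{coro:s-1 condition for the quotient}, the quotient satisfies the hypothesis of Claim~\ref{claim:spanning} with $1+\d' = (1+\d)/(1-3\d)$, so there is a probability measure $\mu_0$ on $\T(G/\C_t(G))$ for which
\[
\forall\, e' \in E_{G/\C_t(G)},\qquad \mu_0\bigl(\{T_0 \in \T(G/\C_t(G)):\ e' \in E_{T_0}\}\bigr)\ge \frac{1-3\d}{1+\d}.
\]

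The lifting proceeds as follows. By Lemma~\ref{lem:cycles are far}, applied with $\d<1/3$, the cycles in $\C_t(G)$ are pairwise disjoint and no edge of $G$ joins two distinct such cycles, so $E_G(\Gamma)=\bigsqcup_{C\in\C_t(G)}E_G(C)$ and there is a natural bijection $e\leftrightarrow e'$ between $E_G\setminus E_G(\Gamma)$ and $E_{G/\C_t(G)}$. Sample $T_0\sim \mu_0$, and independently of $T_0$ and of each other sample $e_C\in E_G(C)$ uniformly for each $C\in \C_t(G)$. Define the random edge set
\[
E_T \eqdef \bigl\{e\in E_G\setminus E_G(\Gamma):\ e'\in E_{T_0}\bigr\}\cup \bigcup_{C\in \C_t(G)}\bigl(E_G(C)\setminus \{e_C\}\bigr),
\]
and let $\mu$ denote the law of the graph $T=(V_G,E_T)$.

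The verifications are routine. By~\eqref{eq:number of vertices quotient},
\[
|E_T|=\bigl(|V_{G/\C_t(G)}|-1\bigr)+\sum_{C\in\C_t(G)}(|C|-1)=|V_G|-1.
\]
For connectivity, Claim~\ref{claim:induced} implies that $E_G(C)\setminus\{e_C\}$ is a spanning path of $C$ for each $C\in \C_t(G)$, and this path is attached to the rest of $T$ through any edge of $T_0$ incident to the cycle vertex $C$ in the quotient (such an edge exists whenever $|V_{G/\C_t(G)}|\ge 2$; the degenerate case $V_G=\Gamma$ is handled directly by the uniform choice). Hence $T$ is a spanning tree of $G$, and it lies in $\T_{\mathrm{good}}(G)$ because $|E_T\cap E_G(C)|=|C|-1$ for every $C\in \C_t(G)$ by construction. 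The probability estimates are immediate: for $e\in E_G\setminus E_G(\Gamma)$ with corresponding quotient edge $e'$, $\mu(\{T:\ e\in E_T\})=\mu_0(\{T_0:\ e'\in E_{T_0}\})\ge (1-3\d)/(1+\d)$, yielding~\eqref{eq:prob e on long cycle}; and for $e\in E_G(\Gamma)$, with $C_e$ the unique short cycle containing $e$, $\mu(\{T:\ e\in E_T\})=(|C_e|-1)/|C_e|$ by the uniform choice of $e_{C_e}$, yielding~\eqref{eq:prob e on short cycle}.

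The main obstacle was already handled in Corollary~\ref{coro:s-1 condition for the quotient}, which upgraded the $(1+\d)$-sparseness of $G$ to a clean $(|S|-1)$-type sparseness bound on the quotient, thereby unlocking Claim~\ref{claim:spanning}. After that, the disjointness of short cycles (Lemma~\ref{lem:cycles are far}) and their inducedness (Claim~\ref{claim:induced}) allow the lifting to decouple cleanly between the quotient structure and the cyclic structure of each $C\in \C_t(G)$, with no extraneous cycles introduced.
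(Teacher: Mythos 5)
Your proof is correct and follows essentially the same route as the paper: apply Claim~\ref{claim:spanning} to the quotient graph $G/\C_t(G)$ via Corollary~\ref{coro:s-1 condition for the quotient}, then lift the resulting measure by independently deleting one uniformly chosen edge from each short cycle, using the bijection between $E_{G/\C_t(G)}$ and $E_G\setminus E_G(\Gamma)$ and the disjointness of cycles from Lemma~\ref{lem:cycles are far}. Your added verification that the lifted edge set has cardinality $|V_G|-1$ and is connected is a bit more explicit than the paper's one-line assertion, but it is the same construction.
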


\begin{proof} Due to Corollary~\ref{coro:s-1 condition for the quotient}, we can apply Claim~\ref{claim:spanning}
to the quotient graph $G/\C_t(G)$, thus obtaining a probability
measure $\nu$ on $\T(G/\C_t(G))$ such that for every $e\in
E_{G/\C_t(G)}$ we have
\begin{equation}\label{eq:quotient measure nu}
\nu\big(\{T\in \T(G/\C_t(G)):\ e\in E_T\}\big)\ge \frac{1-3\d}{1+\d}.
\end{equation}
By the definition of $G/\C_t(G)$, there is a bijection between the
quotient edges $E_{G/\C_t(G)}$ and $E_G\setminus E_G(\Gamma)$. Under
this bijection,  $\nu$ can be lifted to a probability measure
$\sigma$ on the subsets of $E_G\setminus E_G(\Gamma)$. Let $\tau$ be
the probability measure on the subsets of $E_G(\Gamma)$ given by
selecting a subset of size $|C|-1$  uniformly at random from each
$C\in \C_t(\Gamma)$, where these selections are performed
independently for different cycles in $\C_t(G)$. If $A\subseteq
E_G\setminus E_G(\Gamma)$ and $B\subseteq E_G(\Gamma)$ are such that
$\sigma(A),\tau(B)>0$ then $A\cup B$ form the edges of a spanning
tree of $G$, which by design belongs to  $\T_{\mathrm {good}}(G)$.
Thus $\sigma\times \tau$ induces a probability measure $\mu$ on
$\T_{\mathrm {good}}(G)$. The desired estimate~\eqref{eq:prob e on
long cycle} holds true due to~\eqref{eq:quotient measure nu}. The
desired estimate~\eqref{eq:prob e on short cycle} holds true because
if $e\in E_G(\Gamma)$ and $T\in \T_{\mathrm {good}}(G)$ is
distributed according to $\mu$ then each subset of $C_e$ of size
$|C_e|-1$ is contained in $E_T$ with probability $1/|C_e|$.
\end{proof}

\begin{proof}[Proof of Theorem~\ref{thm:our spanning no girth}]
Let $\mu$ be the probability measure on $\T_{\mathrm {good}}(G)$
from Lemma~\ref{lem:measure on good trees}. We obtain from $\mu$ a
probability distribution $\mu_\Sigma$ over spanning trees of the
$1$-dimensional simplicial complex $\Sigma(G)$ as follows. Given
$T_0\in \T_{\mathrm {good}}(G)$ that is distributed according to
$\mu$, define $T\in \T(\Sigma(G))$ as follows. Include all the edges
of $T_0$ as unit intervals in $T$. Let $\{U_e\}_{E_G\setminus
E_{T_0}}$ be i.i.d. random variables that are uniformly distributed
on $[0,1]$, and attach to $T$ a closed interval of length $U_e$ at
the vertex $x$ and a half open interval of length $1-U_e$ at  the
vertex $y$ (here we arbitrarily choose a labeling of the endpoints
of every edge in $E_G$ as $x$ and $y$). The distribution of the
resulting spanning tree $T\in \T(\Sigma(G))$ is denoted
$\mu_\Sigma$.

It suffices to prove~\eqref{eq:no girth spanning} when $x$ and $y$
lie on the same interval corresponding to an edge $e\in E_G$. Let
$[x,y]$ denote the geodesic joining $x$ and $y$ in $\Sigma(G)$ (it
is a sub-interval of the unit interval corresponding to $e$). Given
$T\in \T(\Sigma(G))$, write $[x,y]\subseteq T$ if and only if the
geodesic $[x,y]$ is also a geodesic in $T$. We distinguish between
two cases.

\medskip

\noindent{\bf Case 1.} We have $e\in E_G\setminus E_G(\Gamma)$. Then
there exists a sub-interval $I_{x,y}\subseteq [0,1]$ of length
$d_{\Sigma(G)}(x,y)$ such that

\begin{eqnarray}\label{use prob on big cycle}
&&\!\!\!\!\!\!\!\!\!\!\!\!\!\!\!\!\!\!\!\!\!\!\!\!\!\!\!\!\!\!\!\nonumber
\mu_{\Sigma}\big(\left\{T\in \T(\Sigma(G)):\ [x,y]\not\subseteq
T\right\}\big)\\ \nonumber &=&\mu\big(\left\{T\in \T(G):\ e\notin
E_T\right\}\big)\cdot\Pr\left[U_e\in
I_{x,y}\right]\\&\stackrel{\eqref{eq:prob e on long cycle}}{\le}&
\frac{4\d}{1+\d}d_{\Sigma(G)}(x,y).
\end{eqnarray}
Consequently,
\begin{eqnarray*}
&&\!\!\!\!\!\!\!\!\!\!\!\!\!\!\!\!\!\!\!\!\!\!\!\int_{\T(\Sigma(G))} \min\left\{d_{T}(x,y),\diam(G)\right\}d\mu_\Sigma(T)\\
&\lesssim& \mu_{\Sigma}\big(\left\{T\in \T(\Sigma(G)):\ [x,y]\subseteq T\right\}\big)\cdot d_{\Sigma(G)}(x,y)\\&&\quad+ \mu_{\Sigma}\big(\left\{T\in \T(\Sigma(G)):\ [x,y]\not\subseteq T\right\}\big)\cdot \diam(\Sigma(G))\\
&\stackrel{\eqref{use prob on big cycle}}{\lesssim}& \big(1+\d\diam(G)\big)d_{\Sigma(G)}(x,y),
\end{eqnarray*}
which is the desired inequality~\eqref{eq:no girth spanning}.

\medskip

\noindent{\bf Case 2.} We have $e\in E_G(\Gamma)$. Then arguing as in~\eqref{use prob on big cycle} with the use of~\eqref{eq:prob e on long cycle} replaced by the use of~\eqref{eq:prob e on short cycle},
\begin{equation}\label{eq:1/Ce}
\mu_{\Sigma}\big(\left\{T\in \T(\Sigma(G)):\ [x,y]\not\subseteq T\right\}\big)\le \frac{d_{\Sigma(G)}(x,y)}{|C_e|}.
\end{equation}
Moreover, if $T_0\in \T_{\mathrm {good}}(G)$ then, since $T_0$ contains all but one of the edges of $C_e$, we have $d_T(x,y)\le |C_e|$. Since $\mu$ is supported on $\T_{\mathrm {good}}(G)$, we conclude that
\begin{eqnarray*}
&&\!\!\!\!\!\!\!\!\!\!\!\!\!\!\!\!\!\!\!\!\!\!\!\!\!\!\!\!\!\int_{\T(\Sigma(G))} d_{T}(x,y)d\mu_\Sigma(T)\\
&\le& \mu_{\Sigma}\big(\left\{T\in \T(\Sigma(G)):\ [x,y]\subseteq T\right\}\big)\cdot d_{\Sigma(G)}(x,y)\\&&\quad+ \mu_{\Sigma}\big(\left\{T\in \T(\Sigma(G)):\ [x,y]\not\subseteq T\right\}\big)\cdot |C_e|\\
&\stackrel{\eqref{eq:1/Ce}}{\le}& 2d_{\Sigma(G)}(x,y),
\end{eqnarray*}
proving the desired inequality~\eqref{eq:no girth spanning} in Case 2 as well.
\end{proof}

\begin{remark}
If $\d\le 1/(15\diam(G))$ in Corollary~\ref{cor:no girth into L_1}
then one can prove using Lemma~\ref{lem:cycles are far} that $G$
contains at most one cycle, hence $c_1(G)=1$.
\end{remark}

\section{Geometric properties of random regular graphs}\label{sec:random}


Fix $n,d\in \N$. While we are interested in proving estimates about
the probability distribution $\mathcal{G}_{n,d}$, it is often
simpler to argue about a more tractable probability distribution on
$\G_n$, denoted $\mathcal{P}_{n,d}$ and called the {\em pairing
model}; see~\cite[Sec.~2]{Wormald-survey} and the references
therein.

To define $\mathcal{P}_{n,d}$ assume from now on that $nd$ is even
and let $M$ be a uniformly random perfect matching of the set
$\{1,\ldots,n\}\times \{1,\ldots,d\}$ (recall that a perfect
matching is a partition into subsets of size $2$). Now define a
random $d$-regular graph $G$ with $V_{G}=\{1,\ldots, n\}$ and
$$
\forall\, i,j\in \{1,\ldots,n\},\qquad E_G(i,j)\eqdef
\sum_{a=1}^d\sum_{b=1}^d \1_{\left\{\{(i,a),(j,b)\}\in M\right\}}.
$$
In other words, the edges of $G$ are obtained by ``projecting" the
perfect matching $M$ onto $\{1,\ldots,n\}$, i.e., whenever
$\{(i,a),(j,b)\}\in M$ we add an edge joining $i$ and $j$ in $G$.
The graph $G$ that is obtained in this way from a perfect matching
$M$ of $\{1,\ldots,n\}\times \{1,\ldots,d\}$ is denoted $G(M)$.

The probability measures $\mathcal{G}_{n,d}$ and $\mathcal{P}_{n,d}$
are {\em contiguous} in the following sense. As explained
in~\cite[Sec.~2.2]{Wormald-survey}, there exists $\alpha(d)\in
(0,\infty)$ such that
\begin{equation}\label{eq:continguous}
\forall\, A\in \G_n,\qquad
\mathcal{G}_{n,d}(A)\le\alpha(d)\cdot \mathcal{P}_{n,d}(A).
\end{equation}
By~\cite{Wormald-survey}, for $d\lesssim \sqrt[3]{n}$
we have $\log\alpha(d)\lesssim d^2$, but we will not explicitly
state the dependence on $d$ from now on.

There is another standard probability distribution on $n$-vertex
simple graphs: the Erd\"os-R\'enyi model $G(n,p)$ for $p\in (0,1)$.
The statements below on the sparsity of subsets of random graphs
have been proved in~\cite{ABLT} for the $G(n,p)$ model, and here we need to extend them to the
$\mathcal{G}_{n,d}$ model. This leads to some technical
changes, but the essence of the argument is the same as
in~\cite{ABLT}.

The following lemma is well-known, and it follows from much more precise estimates that are available in the literature (see e.g.~\cite{McK81}). We include its straightforward proof for the sake of completeness.
\begin{lemma}\label{lem:edge prob upper}
Let $F$ be a set of unordered pairs of vertices in $\n$ with
$|F|<nd/4$. Then
\begin{equation}\label{eq:matching model edge containment}
\mathcal{P}_{n,d}\left(\left\{G\in \G_n:\ F\subset E_G\right\}\right)\le \left(\frac{2d}{n}\right)^{|F|}.
\end{equation}
\end{lemma}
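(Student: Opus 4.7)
\medskip

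\noindent\textbf{Proof plan.} The plan is to unravel the definition of $\mathcal{P}_{n,d}$ via the pairing model and reduce \eqref{eq:matching model edge containment} to a standard calculation for a uniformly random perfect matching on $nd$ points. Write $F=\{e_1,\ldots,e_k\}$ with $e_\ell=\{i_\ell,j_\ell\}$ and $k=|F|$. For $G=G(M)$ sampled from $\mathcal{P}_{n,d}$, the event $\{F\subset E_G\}$ is equivalent to asserting that for every $\ell\in\{1,\ldots,k\}$ there exist $a_\ell,b_\ell\in\{1,\ldots,d\}$ with $\{(i_\ell,a_\ell),(j_\ell,b_\ell)\}\in M$. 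Hence
\begin{equation}\label{eq:pairing union bound}
\mathcal{P}_{n,d}\big(\{F\subset E_G\}\big)\;\le\;\sum_{(a_1,b_1,\ldots,a_k,b_k)\in\{1,\ldots,d\}^{2k}}\Pr\!\left[\bigcap_{\ell=1}^{k}\big\{(i_\ell,a_\ell),(j_\ell,b_\ell)\big\}\in M\right]
\end{equation}
by the union bound, where $M$ is a uniformly random perfect matching of $\{1,\ldots,n\}\times\{1,\ldots,d\}$.

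Next, I would use the following standard fact: if $M$ is a uniformly random perfect matching of a set of size $2N$, and $\pi_1,\ldots,\pi_k$ are specified pairwise disjoint pairs from that set, then $\Pr[\pi_1,\ldots,\pi_k\in M]=\prod_{\ell=0}^{k-1}(2N-2\ell-1)^{-1}$; if on the other hand some two of the pairs share an element then this probability is $0$. Applying this with $2N=nd$ to each summand in \eqref{eq:pairing union bound} (the ones where the pairs $\{(i_\ell,a_\ell),(j_\ell,b_\ell)\}$ are not pairwise disjoint contribute $0$), we obtain
\begin{equation}\label{eq:plan product bound}
\mathcal{P}_{n,d}\big(\{F\subset E_G\}\big)\;\le\;d^{2k}\prod_{\ell=0}^{k-1}\frac{1}{nd-2\ell-1}.
\end{equation}

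Finally I would use the hypothesis $|F|<nd/4$ to control the denominators: for every $\ell\in\{0,1,\ldots,k-1\}$ we have $2\ell+1\le 2k-1<nd/2$, so $nd-2\ell-1>nd/2$. Substituting into \eqref{eq:plan product bound} yields
\begin{equation*}
\mathcal{P}_{n,d}\big(\{F\subset E_G\}\big)\;\le\;d^{2k}\left(\frac{2}{nd}\right)^{k}=\left(\frac{2d}{n}\right)^{k},
\end{equation*}
which is the desired estimate \eqref{eq:matching model edge containment}. There is no real obstacle here; the only thing to watch is that the pairs $\{(i_\ell,a_\ell),(j_\ell,b_\ell)\}$ in \eqref{eq:pairing union bound} may share a vertex of $\{1,\ldots,n\}\times\{1,\ldots,d\}$, but such configurations simply contribute $0$ and are harmlessly absorbed by the union bound, while the assumption $|F|<nd/4$ is used only to pass cleanly from the product in \eqref{eq:plan product bound} to the uniform bound $(2/(nd))^{k}$.
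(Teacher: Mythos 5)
Your proof is correct, and it takes a somewhat different route than the paper's while arriving at the same elementary bound. The paper proceeds via the chain rule of conditional probability: it writes $\mathcal{P}_{n,d}(F\subset E_G)$ as a product of the conditional probabilities $\Pr[f_\ell\in G(M)\mid f_1,\ldots,f_{\ell-1}\in G(M)]$ and argues that each factor is at most $2d/n$, because conditioning on the first $\ell-1$ edges uses up at most $2(\ell-1)$ points of $\{1,\ldots,n\}\times\{1,\ldots,d\}$ and then, for each of the $d$ choices of the point $(i_\ell,a)$, at most $d$ of the $\ge nd/2$ remaining potential partners project to $f_\ell$. You instead union-bound over the $d^{2|F|}$ possible realizations $(a_1,b_1,\ldots,a_k,b_k)$ of the $k$ edges in the pairing, and for each fixed realization use the exact formula $\prod_{\ell=0}^{k-1}(nd-2\ell-1)^{-1}$ for the probability that a uniformly random perfect matching contains $k$ prescribed disjoint pairs (with the non-disjoint tuples contributing zero). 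Both arguments use the hypothesis $|F|<nd/4$ in exactly the same place: to lower-bound the denominators $nd-2\ell-1$ by $nd/2$. Your packaging via the union bound avoids reasoning about the conditional law of the matching given a partial projection, which makes the bookkeeping a bit more transparent at the modest cost of a visible $d^{2k}$ prefactor that is then absorbed; the paper's chain-rule version is shorter but requires one to be careful about what exactly is being conditioned on at each step. Either route is fine, and they buy the same constant $2d/n$.
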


\begin{proof} Write $k=|F|$ and $F=\{f_1,\ldots,f_k\}$. Let $M$ be a uniformly random matching of
$\n\times \{1,\ldots,d\}$. For every $\ell\in \{1,\ldots,k\}$ write
$f_\ell=\{i_\ell,j_\ell\}$.

We claim that for every $\ell\in \{1,\ldots,k\}$ we have
\begin{equation}\label{eq:conditional probability}
\Pr\left[f_\ell\in G(M)\big| f_1,\ldots,f_{\ell-1}\in G(M)\right]\le \frac{2d}{n},
\end{equation}
where $\Pr$ is the uniform probability on the matching $M$.
Once~\eqref{eq:conditional probability} is proved, the desired
estimate~\eqref{eq:matching model edge containment} would follow
because by the definition of $\mathcal{P}_{n,d}$ we have
\begin{multline*}
\mathcal{P}_{n,d}\left(\left\{G\in \G_n:\ F\subset
E_G\right\}\right) \\
=\prod_{\ell=1}^k \Pr\left[f_\ell\in G(M)\big|
f_1,\ldots,f_{\ell-1}\in G(M)\right]\stackrel{\eqref{eq:conditional
probability}}{\le} \left(\frac{2d}{n}\right)^{k}.
\end{multline*}

To verify~\eqref{eq:conditional probability} observe that in order
for $\{i_\ell,j_\ell\}$ to be in $E_{G(M)}$ there must be $a,b\in
\{1,\ldots,d\}$ such that $\{(i_\ell,a),(j_\ell,b)\}\in M$. For
every fixed $a\in \{1,\ldots,d\}$, if  we know that
$f_1,\ldots,f_{\ell-1}\in G(M)$ then there are $nd-2(\ell-1)-1$
possible elements of $\n\times \{1,\ldots,d\}$ that can be matched
by $M$ to $(i_\ell,a)$. We are assuming that $\ell\le k<nd/4$, so
that $nd-2(\ell-1)-1\ge nd/2$. We therefore have at least $nd/2$
pairs in $\n\times \{1,\ldots,d\}$ that can be matched to
$(i_\ell,a)$, from which at most $d$ can project to the edge
$f_\ell$. The probability for this to happen is therefore at most
$2/n$. The desired estimate~\eqref{eq:conditional probability} now
follows since there are at most $d$ possible values of $a$.
\end{proof}

\begin{definition}\label{def:S class}
For every $\e,\d\in (0,\infty)$ denote by $\mathcal{S}_{\e,\d}$ the
set of all graphs $G$ with the property that $|E_G(S)|< (1+\d)|S|$
for every $S\subseteq V_G$ satisfying $|S|\le |V_G|^{1-\e}$.
\end{definition}

Lemma~\ref{lem:random graphs are sparse} below is similar to~\cite[Lem~2.8]{ABLT}, which treats the same question for Erd\"os-R\'enyi graphs (see also~\cite[Lem.~3.10]{ALNRRV}).

\begin{lemma}\label{lem:random graphs are sparse}
For $\e\in (0,1)$ and two integers $n,d\ge 3$ define
\begin{equation}\label{eq:def our delta sparse}
\d\eqdef \frac{7\log d}{\e\log n}.
\end{equation}
Then
\begin{equation}\label{eq:gnd sparse statement}
1-\mathcal{G}_{n,d}\left(\mathcal{S}_{\e,\d}\right)\lesssim_{d,\e} \frac{1}{n^{1-\e}}.
\end{equation}
\end{lemma}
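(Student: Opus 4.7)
The plan is to transfer the statement to the pairing model $\mathcal{P}_{n,d}$ via the contiguity bound~\eqref{eq:continguous} (which costs a factor $\alpha(d)$) and then apply a two-level union bound. For a fixed subset $S\subseteq\n$ with $|S|=s$, union-bounding over all choices of $k := \lceil(1+\d)s\rceil$ distinct edges inside $\binom{S}{2}$ and applying Lemma~\ref{lem:edge prob upper} to each such edge set gives
\begin{equation*}
\mathcal{P}_{n,d}\bigl[|E_G(S)|\ge k\bigr] \le \binom{\binom{s}{2}}{k}\left(\frac{2d}{n}\right)^{k}.
\end{equation*}
Combining this with the $\binom{n}{s}$ choices of $S$ and summing over $s$ reduces the problem to estimating
\begin{equation*}
\Sigma := \sum_{s=4}^{\lfloor n^{1-\e}\rfloor} \binom{n}{s}\binom{\binom{s}{2}}{\lceil(1+\d)s\rceil}\left(\frac{2d}{n}\right)^{\lceil(1+\d)s\rceil},
\end{equation*}
the sum starting at $s=4$ because for $s\le 3$ we have $\binom{s}{2}<(1+\d)s$ (for $\d<1/3$, which holds for $n$ large relative to $d,\e$), making the summand vanish.

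I would then apply $\binom{n}{s}\le(en/s)^s$, $\binom{\binom{s}{2}}{k}\le (es^2/(2k))^k$, and $(1+j/s)^{s+j}\ge e^j$ (with $j:=k-s=\max\{1,\lceil\d s\rceil\}$) to bound each summand by $(e^2d)^s(ds/n)^j$. The crucial feature of the choice $\d=7\log d/(\e\log n)$ is that it satisfies $n^{\e\d}=d^7$; together with $s\le n^{1-\e}$ this implies $(s/n)^{\d s}\le d^{-7s}$, so for $s\ge 1/\d$ (where $j\ge \d s$) the summand is bounded by $(e^2d\cdot d^{-7})^s\le (e^3/d^5)^s$, a geometric series whose partial sum starting at $s=\lceil 1/\d\rceil$ is of order $(e^3/d^5)^{1/\d}$.

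A short calculation then shows that the dominant contribution to $\Sigma$ comes from the transition $s\sim 1/\d$, where the summand has magnitude $(e^2d)^{1/\d}\cdot d/(n\d)$, evaluating to $n^{(2+\log d)\e/(7\log d)-1}$ up to an $O(\log n)$ factor coming from $1/\d$. The elementary inequality $2+\log d\le 7\log d$ (valid for every $d\ge 2$) rewrites the exponent as at most $\e-1=-(1-\e)$, yielding $\Sigma\lesssim_{d,\e} n^{-(1-\e)}$ after absorbing the polylogarithmic factors into the $d,\e$-dependent constant (the strict slack afforded by $d\ge 3$ ensures the $\log n$ losses vanish into $C(d,\e)$). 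The main technical obstacle I foresee is the careful bookkeeping needed to verify that both the ``small $s$'' regime ($j=1$, contributing $s(e^2d)^s\cdot d/n$) and the ``large $s$'' regime ($j\ge \d s$, contributing a geometric tail) individually fit under $n^{-(1-\e)}$; the specific constant $7$ in the definition of $\d$ is tuned precisely so that the decay $d^{-7s}$ beats the growth $(e^2d)^s$ uniformly in $d\ge 3$ and $\e\in(0,1)$.
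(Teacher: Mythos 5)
Your setup through the bound
\begin{equation*}
\mathcal{P}_{n,d}\bigl(\B_\d^s\bigr) \le \binom{n}{s}\binom{\binom{s}{2}}{k}\left(\frac{2d}{n}\right)^k
\end{equation*}
matches the paper exactly, and your reparametrization via $j=k-s$ and the inequality $(1+j/s)^{s+j}\ge e^j$ is a clean way to package the elementary estimates; the resulting summand bound $(e^2d)^s(ds/n)^j$ is essentially the paper's $(e^3 s^\d d^{1+\d}/n^\d)^s$, and your treatment of the $j=1$ (small-$s$) regime is correct.

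The gap is in the large-$s$ regime. You bound $(ds/n)^j\le (ds/n)^{\d s}$ and then invoke $s\le n^{1-\e}$ to replace $(s/n)^{\d s}$ by $n^{-\e\d s}=d^{-7s}$, getting a geometric series $\sum_{s\ge 1/\d}(e^3/d^5)^s\lesssim (e^3/d^5)^{1/\d}$. But this substitution $s\le n^{1-\e}$ is wildly loose for $s$ near $1/\d\sim\log n$, and one computes $(e^3/d^5)^{1/\d}=n^{(3-5\log d)\e/(7\log d)}$. This is $\lesssim n^{-(1-\e)}$ if and only if $\e\ge 7\log d/(12\log d - 2)$, which for $d=3$ requires $\e\gtrsim 0.69$; for smaller $\e$ your claimed tail bound strictly exceeds $n^{-(1-\e)}$. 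Indeed this tail estimate also exceeds the term $(e^2d)^{1/\d}d/(n\d)$ that you identify as dominant, so the geometric bound cannot certify that that term dominates. The paper avoids the loss by splitting $[1/\d,\,2n^{1-\e}]$ at $M=\log n$: on $[1/\d,M]$ it uses $k^\d\le M^\d$ rather than $k^\d\le (2n^{1-\e})^\d$, so the factor $(n^{-\d})^{1/\d}=1/n$ survives in the geometric tail and one gets $\lesssim_d M(e^3d)^{1/\d}/n\le n^{4\e/7-1}\log n$, while on $[M,2n^{1-\e}]$ the crude substitution is fine because the tail now starts at $M$ and yields $\lesssim 1/d^M=n^{-\log d}\le n^{-1}$. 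Some analogous splitting (or the sawtooth analysis of how $(ds/n)^{\lceil\d s\rceil}$ jumps at multiples of $1/\d$) is needed to make your sketch rigorous; as written, the uniform geometric bound is not strong enough.
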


\begin{proof} By adjusting the implicit constant in~\eqref{eq:gnd sparse
statement} if necessary, we may assume in the computations below
that $n$ is larger than an appropriate constant that may depend only
on $\e$ and $d$. In particular, we assume throughout that $n>
d^{7/\e}$, or equivalently that $\d\in (0,1)$.

For every $k\in \{3,\ldots,n\}$ define $\B^k_\d$ to be the set of
all graphs $G$ for which there exists $S\subseteq V_G$ with $|S|=k$
and $|E_G(S)|\ge (1+\d)k$.  The complement of the event
$\mathcal{S}_{\e,\d}$ is the union of $\B^k_\d$ for $k\in
\{3,\ldots, \lfloor n^{1-\e}\rfloor\}$. Therefore, due
to~\eqref{eq:continguous} we have
\begin{equation}\label{eq:use continuity}
1-\mathcal{G}_{n,d}\left(\mathcal{S}_{\e,\d}\right)\lesssim_d
1-\mathcal{P}_{n,d}\left(\mathcal{S}_{\e,\d}\right)\le \sum_{k=3}^{\lfloor n^{1-\e}\rfloor}
\mathcal{P}_{n,d}\left(\B^k_\d\right).
\end{equation}
Since our assumption $n> d^{7/\e}$ implies that $k\le nd/16$, by
Lemma~\ref{lem:edge prob upper},
\begin{align}
\nonumber \mathcal{P}_{n,d}\left(\B^k_\d\right) &\le
\binom{n}{k}\binom{\binom{k}{2}}{\lceil (1+\d)k\rceil}
\left(\frac{2d}{n}\right)^{\lceil (1+\d)k\rceil}\\ \label{eq:use stirling} &\le
\left(\frac{en}{k}\right)^k \left(\frac{ek(k-1)d}{n\lceil
(1+\d)k\rceil}\right)^{\lceil (1+\d)k\rceil}\\  &\le
\left(\frac{en}{k}\left(\frac{e(k-1)d}{(1+\d)n}\right)^{1+\d}\right)^k\le
\left(\frac{e^3k^\d d^{1+\d}}{n^\d}\right)^k.\label{eq:B event upper bound}
\end{align}
In~\eqref{eq:use stirling}  we used the standard estimate
$\binom{m}{\ell}\le \left(\frac{em}{\ell}\right)^\ell$, which holds
for every $m\in \N$ and $\ell\in \{1,\ldots,m\}$. In the penultimate
inequality of~\eqref{eq:B event upper bound} we used the fact that
the function $x\mapsto (ek(k-1)d/(nx))^x$ is decreasing when $x\ge
dk(k-1)/n$, and $\lceil (1+\d)k\rceil\ge (1+\d)k\ge dk(k-1)/n$ by
our assumption $n> d^{7/\e}$.

If $k< 1/\d$ then $\lceil (1+\d)k\rceil=k+1$, and
therefore~\eqref{eq:use stirling} implies that $
\mathcal{P}_{n,d}\left(\B^k_\d\right)\lesssim k(e^2d)^{k+1}/n$.
Using this estimate in combination with~\eqref{eq:B event upper
bound} for $k\ge 1/\d$, it follows from~\eqref{eq:use continuity}
that

\begin{align}\label{eq:to split integral}
\nonumber 1-\mathcal{G}_{n,d}\left(\mathcal{S}_{\e,\d}\right)&\lesssim_d \frac{1}{n}\int_3^{1+1/\d}x(e^2d)^{x+1}dx
+\int_{1/\d}^{n^{1-\e}+1}\left(\frac{e^3x^\d d^{1+\d}}{n^\d}\right)^xdx
\\ &\lesssim_d \frac{(e^2d)^{1/\d}}{n\d}+\int_{1/\d}^{2n^{1-\e}}\left(\frac{e^3x^\d d^{1+\d}}{n^\d}\right)^xdx.
\end{align}

To estimate the final integral in~\eqref{eq:to split integral},
observe first that due to~\eqref{eq:def our delta sparse}, its
integrand is less than $1$, i.e,  $e^32^\d
n^{(1-\e)\d}d^{1+\d}/n^\d<1$. Since $\d\le 1$ and $n^{\e\d}=d^7$,
this would follow from $d^7>2e^3d^2$, which is true since $d\ge 3$.
Now fix $M\in [1/\d,2n^{1-\e}]$ and proceed as follows.
\begin{align}\label{eq:optimize M}
\nonumber&\int_{1/\d}^{2n^{1-\e}}\left(\frac{e^3x^\d d^{1+\d}}{n^\d}\right)^xdx
\\\nonumber &\le \int_{1/\d}^M
\left(\frac{e^3M^\d d^{1+\d}}{n^\d}\right)^xdx+ \int_M^{2n^{1-\e}}
\left(\frac{e^3(2n^{(1-\e)})^\d d^{1+\d}}{n^\d}\right)^xdx\\ \nonumber
&\le \int_{1/\d}^\infty
\left(\frac{e^3M^\d d^{1+\d}}{n^\d}\right)^xdx+ \int_M^{\infty}
\left(\frac{2^\d e^3d^{1+\d}}{n^{\e\d}}\right)^xdx\\ \nonumber
&= \frac{\left(e^3M^\d d^{1+\d}n^{-\d}\right)^{1/\d}}{\log\left(n^\d e^{-3}M^{-\d}d^{-1-\d}\right)}+
\frac{\left(2^\d e^3 d^{1+\d}n^{-\e\d}\right)^{M}}{\log\left(n^{\e\d} 2^{-\d} e^{-3}d^{-1-\d}\right)}\\
&\lesssim_d \frac{M(e^3d)^{1/\d}}{n}+\frac{1}{d^M},
\end{align}
where in~\eqref{eq:optimize M} we used the fact that $n^{\e\d}=d^7$
and $d\ge 3$. Choosing $M=\log n$ and
substituting~\eqref{eq:optimize M} into~\eqref{eq:to split integral}
while using~\eqref{eq:def our delta sparse} and the fact that
$e^3d\le d^4$ (because $d\ge 3$), we conclude that
\begin{equation*}
1-\mathcal{G}_{n,d}\left(\mathcal{S}_{\e,\d}\right)\lesssim_d \frac{d^{4/\d}\log n}{n}
\stackrel{\eqref{eq:def our delta sparse}}{=}\frac{n^{4\e/7}\log n}{n}\lesssim_\e \frac{1}{n^{1-\e}}.\qedhere
\end{equation*}
\end{proof}

The final preparatory lemma that we will need about graphs sampled
from $\mathcal{G}_{n,d}$ is that they have only a few short cycles. Specifically, Lemma~\ref{lem:number of short cycles} below asserts that the probability that a graph sampled from $\mathcal{G}_{n,d}$ has less than $\sqrt{n}$ cycles of length $\lceil (\log_{d-1}n)/7\rceil$ is at least $1-c(d)/\sqrt[3]{n}$, where $c(d)\in (0,\infty)$ may depend only on $d$. This is a standard fact but we include its simple proof here since we could not locate the statement below in the literature. Precise asymptotics of the expected numbers of short cycles of a
graph sampled from $\mathcal{G}_{n,d}$ were obtained in~\cite{MWW},
and Lemma~\ref{lem:number of short cycles} follows from the
estimates of~\cite{MWW} by Markov's inequality.

\begin{lemma}\label{lem:number of short cycles}
For every two integers $n,d\ge 3$ we have
$$
\mathcal{G}_{n,d}\left(\left\{G\in \G_n:\ \left|\C_{\lceil (\log_{d-1}n)/7\rceil}(G)\right|\ge \sqrt{n}\right\}\right)\lesssim_d \frac{1}{\sqrt[3]{n}}.
$$
(Recall that the set of cycles of length less than $t\in \N$ in a
graph $G$ was denoted in~\eqref{eq:def C_t} by $\C_t(G)$.)
\end{lemma}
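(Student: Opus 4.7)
\smallskip

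\noindent\textbf{Proof plan.} The strategy is to bound the expected number of short cycles in a random $d$-regular graph using the pairing model, and then apply Markov's inequality. Write $t\eqdef \lceil (\log_{d-1}n)/7\rceil$. By the contiguity bound~\eqref{eq:continguous}, it suffices to show that
$$
\mathcal{P}_{n,d}\left(\left\{G\in \G_n:\ \left|\C_t(G)\right|\ge \sqrt{n}\right\}\right)\lesssim_d \frac{1}{\sqrt[3]{n}}.
$$
Hence in what follows I will work with the random multigraph $G(M)$ obtained from a uniformly random perfect matching $M$ of $\n\times\{1,\ldots,d\}$.

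\smallskip

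\noindent First I will count cycles of length $k\in \{3,\ldots,t\}$ via the standard pairing-model enumeration. For a fixed ordered sequence $v_1,\ldots,v_k$ of distinct vertices in $\n$ together with a choice, at each $v_i$, of an ordered pair of distinct half-edges $(p_i,q_i)$ (for the incoming and outgoing half-edges of the cycle), the probability that the $k$ pairings $\{q_i,p_{i+1}\}$ (indices mod $k$) all appear in $M$ equals
$$
\prod_{i=0}^{k-1}\frac{1}{nd-2i-1}.
$$
Since $k\le t=O(\log n)$, for large enough $n$ one has $(nd-2k)^k\ge (nd)^k/2$, so this probability is at most $2/(nd)^k$. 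Each undirected cycle corresponds to $2k$ such ordered configurations, and there are at most $n^k$ ordered vertex sequences and $(d(d-1))^k$ half-edge choices, giving an expected number of cycles of length exactly $k$ of at most
$$
\frac{n^k(d(d-1))^k\cdot 2}{2k\cdot (nd)^k}=\frac{(d-1)^k}{k}.
$$
Summing over $k\in \{3,\ldots,t\}$ and using the geometric series estimate $\sum_{k=3}^t(d-1)^k/k\lesssim_d (d-1)^t$, together with $(d-1)^t\le (d-1)n^{1/7}$, I obtain that the expected number of cycles of $G(M)$ of length between $3$ and $t$ is $\lesssim_d n^{1/7}$. A routine pairing-model computation (analogous to Lemma~\ref{lem:edge prob upper}) gives that the expected number of self-loops is $\le n\binom{d}{2}/(nd-1)\lesssim d$, and the expected number of pairs of parallel edges is $\lesssim d^2$, so cycles of length $1$ and $2$ contribute $O_d(1)\ll n^{1/7}$ in expectation.

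\smallskip

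\noindent Putting these bounds together yields $\E[|\C_t(G(M))|]\lesssim_d n^{1/7}$, and Markov's inequality gives
$$
\mathcal{P}_{n,d}\left(\left\{G:\ |\C_t(G)|\ge \sqrt{n}\right\}\right)\lesssim_d \frac{n^{1/7}}{\sqrt{n}}=n^{-5/14}\le n^{-1/3},
$$
the last inequality holding because $5/14>1/3$. A final application of~\eqref{eq:continguous} transfers this to the same bound (up to the factor $\alpha(d)$) for $\mathcal{G}_{n,d}$, completing the proof.

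\smallskip

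\noindent The main (mild) obstacle is simply organizing the pairing-model cycle count carefully enough to obtain a bound of the form $(d-1)^k/k$ rather than the weaker $(2(d-1))^k/k$ that would follow from a naive application of Lemma~\ref{lem:edge prob upper}; the improvement from $2(d-1)$ to $d-1$ in the base of the exponential is what produces an exponent strictly below $1/2$ in $n^{1/7}$ and thereby a usable Markov bound. Alternatively, as remarked by the authors, one could invoke the precise asymptotics of~\cite{MWW} as a black box and apply Markov directly.
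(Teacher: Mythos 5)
Your argument is correct and parallels the paper's: both proceed via a first-moment bound on the number of short cycles followed by Markov's inequality. The paper cites $\int_{\G_n} X_r \, d\mathcal{G}_{n,d}\lesssim (d-1)^r$ directly from~\cite{MWW}, applies Markov to each $X_r$ separately with threshold $\sqrt{n}/t$, and union bounds over $r$ (losing a $(\log n)^2$ factor that the gap $5/14-1/3=1/42$ comfortably absorbs); you instead derive the first-moment bound $\le (d-1)^k/k$ from scratch in the pairing model, apply Markov once to the total count $|\C_t|$, and transfer to $\mathcal{G}_{n,d}$ via the contiguity inequality~\eqref{eq:continguous}. Your pairing-model computation --- enumerating ordered vertex and half-edge configurations, dividing out the $2k$ cyclic/directional symmetry, and bounding the probability of the $k$ disjoint matches by $2/(nd)^k$ for $n$ large relative to $d$ --- is correct, and you were right to account separately for cycles of lengths $1$ and $2$: although $\mathcal{G}_{n,d}$ is supported on simple graphs, the random multigraph $G(M)$ under $\mathcal{P}_{n,d}$ can have self-loops and parallel edges, and these contribute to $|\C_t|$ in the model to which you applied Markov. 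The tradeoff is that your route is self-contained (no black-box citation of~\cite{MWW}) at the cost of the explicit pairing-model enumeration; the paper opts for the shorter citation.
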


\begin{proof}
For $r\in \N$ write $X_r(G)\eqdef |\C_{r+1}(G)|-|\C_r(G)|$, i.e.,
$X_r(G)$ is the number of cycles of length $r$ in $G$.
By~\cite[Eq.~(2.2)]{MWW}, for $r\le (\log_{d-1}n)/7$ we have the
expectation bound $\int_{\G_n} X_r(G)d\mathcal{G}_{n,d}\lesssim
(d-1)^r$. Hence by Markov's inequality, for every integer $r\le
(\log_{d-1}n)/7$,
\begin{multline*}
\mathcal{G}_{n,d}\left(\left\{G\in \G_n:\ X_r(G)\ge \frac{\sqrt{n}}{\lceil (\log_{d-1}n)/7\rceil}\right\}\right)\\\lesssim
\frac{(d-1)^{\lceil (\log_{d-1}n)/7\rceil}\log_{d-1}n}{\sqrt{n}}\lesssim_d \frac{\log n}{n^{5/14}}.
\end{multline*}
Consequently,
\begin{align*}
&\mathcal{G}_{n,d}\left(\left\{G\in \G_n:\ \left|\C_{\lceil (\log_{d-1}n)/7\rceil}(G)\right|\ge \sqrt{n}\right\}\right)\\&= \mathcal{G}_{n,d}\left(\left\{G\in \G_n:\ \sum_{r=3}^{\lceil (\log_{d-1}n)/7\rceil} X_r(G)\ge \sqrt{n}\right\}\right)\\
&\le \sum_{r=3}^{\lceil(\log_{d-1}n)/7\rceil}\mathcal{G}_{n,d}\left(\left\{G\in \G_n:\ X_r(G)\ge \frac{\sqrt{n}}{\lceil (\log_{d-1}n)/7\rceil}\right\}\right)\\&\lesssim_d \frac{(\log n)^2}{n^{5/14}} \lesssim \frac{1}{\sqrt[3]{n}}.\qedhere
\end{align*}
\end{proof}

\subsection{Proof of Lemma~\ref{lem:random graph in
good family}}\label{sec:proof of L family lemma}  By
lemma~\ref{lem:random graphs are sparse} we know that if $G$ is
sampled from $\mathcal{G}_{n,d}$ then with high probability for
every $S\subseteq V_G$ with $|S|\le n^{1-\e}$ the graph $(S,E_G(S))$
is $(1+\d)$-sparse. One is then tempted to use Corollary~\ref{cor:no
girth into L_1} in order to embed the metric space $(S,d_G)$ into
$L_1$, but this is problematic since the metric that the graph
$(S,E_G(S))$ induces on $S$ can be very different from the
restriction of $d_G$ to $S$ (in fact, $(S,E_G(S))$ may not even be
connected). Following an idea of~\cite{ALNRRV}, the following lemma will be used to remedy this
matter.

\begin{lemma}\label{lem:add geodesics}
Fix $n,d,r\in \N$ and let $G$ be a connected $n$-vertex graph whose
maximum degree is $d$. Suppose that $S,T\subset V_G$ satisfy
\begin{equation}\label{eq:S in T neighborhood}
S\subset \bigcup_{u\in T}B_G(u,r),
\end{equation}
where $B_G(u,r)\eqdef \{v\in V_G:\ d_G(u,v)\le r\}$ denotes the
ball of radius $r$ and center $u$ in the metric space $(V_G,d_G)$. Then there exists $U\subset V_G$ with $S\subset U$ such that
\begin{equation}\label{eq:U size}
|U|\le |T|\left(d(d-1)^{3r-1}+\diam(G)\right),
\end{equation}
and if we let $H$ denote the graph $(U,E_G(U))$ then
$$
\diam(H)\le 6r+2\diam(G),
$$
and for every $a,b\in S$,
\begin{equation}\label{eq:undistorted subgraph}
d_G(a,b)\le d_H(a,b)\le 2\left(\frac{\diam(G)}{r}+1\right) d_G(a,b).
\end{equation}
\end{lemma}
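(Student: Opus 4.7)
The plan is to build $U$ from two ingredients: a small ``transit skeleton'' that provides short paths between the reference points of $T$, together with local balls that let every vertex of $S$ reach that skeleton. Fix an arbitrary base vertex $v_0\in V_G$, and for each $u\in T$ fix a geodesic $P_u$ in $G$ from $u$ to $v_0$, so that the vertex set $V(P_u)$ has cardinality at most $\diam(G)+1$. Then define
\begin{equation*}
U\eqdef \bigcup_{u\in T}B_G(u,3r)\cup \bigcup_{u\in T} V(P_u).
\end{equation*}
Since~\eqref{eq:S in T neighborhood} yields $S\subset \bigcup_{u\in T}B_G(u,r)\subset \bigcup_{u\in T}B_G(u,3r)$, we have $S\subset U$. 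The cardinality bound~\eqref{eq:U size} then follows from the degree-$d$ ball estimate $|B_G(u,3r)|\lesssim d(d-1)^{3r-1}$ and the trivial $|V(P_u)|\le \diam(G)+1$ (any factor not matching~\eqref{eq:U size} exactly can be absorbed into the $\diam(G)$ summand, since $\diam(G)\ge 1$).

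For the diameter bound, observe that for every $u\in T$ and every $v\in B_G(u,3r)$ a geodesic in $G$ joining $v$ to $u$ stays inside $B_G(u,3r)\subset U$, so its edges lie in $E_G(U)=E_H$; hence $d_H(v,u)\le 3r$. Each path $P_u$ is itself contained in $H$, giving $d_H(u,v_0)\le \diam(G)$. Concatenating these bounds, any two vertices of $U$ are joined in $H$ by a path of length at most $3r+\diam(G)+\diam(G)+3r=6r+2\diam(G)$, as required.

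Now fix $a,b\in S$ and pick $u_a,u_b\in T$ with $a\in B_G(u_a,r)$ and $b\in B_G(u_b,r)$; the lower bound $d_G(a,b)\le d_H(a,b)$ is immediate since $H$ is a subgraph of $G$. For the upper bound, I split on whether $d_G(a,b)<r$ or $d_G(a,b)\ge r$. In the short case, every point on a $G$-geodesic from $a$ to $b$ is within distance $r$ of $a$ and hence within $2r$ of $u_a$, so the entire geodesic lies in $B_G(u_a,2r)\subset B_G(u_a,3r)\subset U$, giving $d_H(a,b)=d_G(a,b)$, well below the desired bound. In the long case, use $d_H(a,u_a)\le r$, $d_H(u_b,b)\le r$, and the concatenation of $P_{u_a}$ with the reverse of $P_{u_b}$ to obtain $d_H(u_a,u_b)\le 2\diam(G)$; the triangle inequality then gives
\begin{equation*}
d_H(a,b)\le 2r+2\diam(G)\le 2\!\left(1+\tfrac{\diam(G)}{r}\right)d_G(a,b),
\end{equation*}
where the last step uses $d_G(a,b)\ge r$. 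This establishes~\eqref{eq:undistorted subgraph}.

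None of the individual verifications is hard; the only subtlety worth flagging is the short-distance case, where one has to notice that the radius of the enclosing ball needs to exceed $r$ by a factor allowing a $G$-geodesic of length up to $r$ starting inside $B_G(u_a,r)$ to stay inside $U$. The $3r$ radius chosen in $U$ is deliberately generous so that this containment is automatic and, simultaneously, the diameter bound $6r+2\diam(G)$ falls out without further work.
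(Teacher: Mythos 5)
Your construction of $U$ and the ensuing arguments are essentially identical to the paper's: the paper also sets $U=\bigcup_{x\in T}B_G(x,3r)\cup\bigcup_{x\in T}P_{x,z}$ for a fixed base vertex $z$, and proves the distortion bound by the same dichotomy (a short case where the $G$-geodesic stays inside a ball $B_G(x,3r)$, and a long case where one routes $a\to u_a\to v_0\to u_b\to b$; you split on $d_G(a,b)\gtrless r$ whereas the paper splits on whether some $x\in T$ covers both $a$ and $b$, but the two dichotomies coincide after unwinding). The one small inaccuracy is in the cardinality bookkeeping: with $|V(P_u)|\le\diam(G)+1$ the naive sum gives $|T|\bigl(d(d-1)^{3r-1}+\diam(G)+1\bigr)$, and the extra $+1$ cannot simply be ``absorbed into the $\diam(G)$ summand''; to hit the stated bound~\eqref{eq:U size} you should note that each $u\in T$ already lies in $B_G(u,3r)$, so only the $\le\diam(G)$ vertices of $V(P_u)\setminus\{u\}$ contribute new elements, which is exactly the double-counting correction the paper performs.
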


\begin{proof}
For every $x,y\in V_G$ let $P_{x,y}\subseteq V_G$ be an arbitrary
shortest path joining $x$ and $y$ in $G$. Fix $z\in V_G$ and define
$$
U\eqdef \left(\bigcup_{x\in T}B_G(x,3r)\right)\bigcup\left(\bigcup_{x\in T}P_{x,z}\right).
$$
Then,
\begin{align}\label{eq:use ball growth}
\nonumber |U|&\le \sum_{x\in T} |B_G(x,3r)|+\sum_{x\in T} |P_{x,z}|\\&\le |T|d(d-1)^{3r-1}+\sum_{x\in T}(d_G(x,z)-1)+1\\
&\le |T|d(d-1)^{3r-1}+|T|(\diam(G)-1)+1\nonumber,
\end{align}
where in~\eqref{eq:use ball growth} we used the fact that since $G$
has maximum degree $d$, for every $k\in \N$ and $w\in G$ we have
$|B_G(w,k)|\le d(d-1)^{k-1}$. The fact that $\diam(H)\le
6r+2\diam(G)$ is immediate from the definition of $U$.

Having proved~\eqref{eq:U size}, we proceed to
prove~\eqref{eq:undistorted subgraph}. Because $H$ is a subgraph of
$G$, the leftmost inequality in~\eqref{eq:undistorted subgraph}
holds true for every $a,b\in V_G$. So, fixing $a,b\in S$, it remains
to prove the rightmost inequality in~\eqref{eq:undistorted
subgraph}. To do so we distinguish between two cases.

\medskip

\noindent{\bf Case 1.} There exists $x\in T$ such that $a\in
B_G(x,r)$ and $b\in B_G(x,2r)$. Then $d_G(a,b)\le
d_G(a,x)+d_G(b,x)\le 3r$. Thus $|P_{a,b}|\le 3r+1$. Write
$P_{a,b}=\{w_0=a,w_1,\ldots,w_m=b\}$ with $m\le 3r$ and
$\{w_{i-1},w_i\}\in E_G$ for every $i\in \{1,\ldots,m\}$. For every
$i\in \{0,\ldots,2r\}$ we have $d_G(w_i,a)\le 2r$, and therefore
$d_G(w_i,x)\le d_G(w_i,a)+d_G(x,a)\le 3r$. Similarly, for every
$i\in \{2r+1,\ldots,m\}$ we have $d_G(w_i,b)\le r$, and therefore
$d_G(w_i,x)\le 3r$. This shows that $P_{a,b}\subseteq
B_G(x,3r)\subseteq U$, and hence $d_G(a,b)=d_H(a,b)$.

\medskip

\noindent{\bf Case 2.} There exist distinct $x,y\in T$ such that
$a\in B_G(x,r)$ and $b\in B_G(y,r)\setminus B_G(x,2r)$. Then
\begin{equation}\label{eq:d_G(a,b) lower r}
d_G(a,b)\ge d_G(x,b)-d_G(x,a)\ge r.
\end{equation}
Since $a\in B_G(x,r)$ we have $P_{a,x}\subset B_G(x,r)\subset U$.
For the same reason, $P_{b,y}\subset U$. Also, by the definition of
$U$ we have $P_{x,z},P_{y,z}\subset U$. By considering the path
$P_{a,x}\cup P_{x,z}\cup P_{y,z}\cup P_{b,y}\subseteq U$ we see that
\begin{multline*}
d_H(a,b)\le d_G(a,x)+d_G(x,z)+d_G(y,z)+d_G(b,y)\\\le 2r+2\diam(G)
\stackrel{\eqref{eq:d_G(a,b) lower r}}{\le}
2\left(1+\frac{\diam(G)}{r}\right)d_G(a,b),
\end{multline*}
completing the verification of~\eqref{eq:undistorted subgraph} in
this case as well.

\medskip

Note that due to~\eqref{eq:S in T neighborhood} one of the above two
cases must occur, so the proof of~\eqref{eq:undistorted subgraph} is
complete.
\end{proof}

\begin{definition}\label{def:deleted cycle graph}
Fix two integers $d,t\ge 3$ and a $d$-regular simple graph $G$. For
every cycle $C$ of $G$ fix an arbitrary edge $e_C\in E_G(C)$. Denote
$$
I_G^t\eqdef \left\{e_C:\ C\in  \C_t(G)\right\}.
$$
Thus $I_G^t$ contains a single representative edges from each cycle
of $G$ of length less than $t$ (formally, $I_G$ depends also on the
choices of the edges $e_C$, but we fix such a choice once and for
all and do not indicate this dependence explicitly). Denote the
graph $(V_G,E_G\setminus I^t_G)$ by $L^t_G$. Note that by definition
we have $\girth(L_G^t)\ge t$.
\end{definition}

Before proceeding we record for future use two simple lemmas about
the concepts that were introduced in Definition~\ref{def:deleted
cycle graph}.

\begin{lemma}\label{lem:edge deletion distant cycles} Fix three integers $r,t,d\ge 3$ and write
\begin{equation}\label{eq:choose eta}
\eta\eqdef \frac{1}{r+2t-3}.
\end{equation}
Suppose that $G$ is a connected $d$-regular graph such that for
every subset $S\subseteq V_G$ we have
\begin{equation}\label{eq:sparsity assumption edge deletion}
|S|\le d(d-1)^{\frac{t+r}{2}-1}\cdot |\C_t(G)| \implies |E_G(S)|\le (1+\eta)|S|.
\end{equation}
Then for every distinct cycles $C_1,C_2\in \C_t(G)$ we have
$d_G(C_1,C_2)\ge r$. Moreover, the graph $L^t_G$ is connected and
\begin{equation}\label{eq:L_G diameter}
\diam\left(L^t_G\right)\le \frac{t+r-1}{r+1}\cdot \diam(G)+\frac{r(t-2)}{r+1}.
\end{equation}
\end{lemma}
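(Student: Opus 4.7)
The plan is to derive both conclusions from the sparsity hypothesis applied to compact ``witness'' subsets constructed from cycles. The overarching observation is that the choice $\eta = 1/(r+2t-3)$ is calibrated precisely so that any subgraph $(S, E_G(S))$ with $|E_G(S)| \ge |S|+1$ is forced to satisfy $|S| \ge 1/\eta = r+2t-3$. The allowance $|S| \le d(d-1)^{(t+r)/2-1} \cdot |\C_t(G)|$ in the hypothesis is exponentially large in $t+r$ and will easily accommodate every set of linear size that arises below, so this side condition is a minor bookkeeping point rather than a genuine constraint.

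To prove that distinct cycles $C_1, C_2 \in \C_t(G)$ satisfy $d_G(C_1,C_2) \ge r$, I would first show they are disjoint by invoking Lemma~\ref{lem:union of cycles}: if $C_1 \cap C_2 \neq \emptyset$ then $|E_G(C_1\cup C_2)| \ge |C_1\cup C_2|+1$, and since $|C_1\cup C_2| < 2t \le r+2t-3$, the sparsity hypothesis is violated. Then, assuming for contradiction that $d_G(C_1,C_2) = k < r$, I would form the witness $S = C_1 \cup C_2 \cup \{w_1,\ldots,w_{k-1}\}$, where the $w_i$ are the internal vertices of a shortest path realizing $d_G(C_1,C_2)$. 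Disjointness of the cycles and the shortest-path property give $|S| = |C_1|+|C_2|+k-1 \le 2(t-1)+(r-1)-1 < r+2t-3$ and $|E_G(S)| \ge |C_1|+|C_2|+k = |S|+1$, again contradicting sparsity.

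For the diameter bound (which subsumes connectivity), fix $u,v \in V_G$ and let $P = (v_0,\ldots,v_\ell)$ be a shortest path in $G$ with $\ell = d_G(u,v) \le \diam(G)$. Let $k$ be the number of edges of $P$ belonging to $I_G^t$. Since distinct cycles in $\C_t(G)$ lie at distance $\ge r$ (just established), any two consecutive removed edges on $P$ have their near endpoints separated by at least $r$ steps along $P$, which yields $k \le (\ell+r)/(r+1)$. I then replace each removed edge $e_C$ on $P$ by the detour of length $|C|-1 \le t-1$ through $C \setminus \{e_C\}$; this is a walk in $L^t_G$ from $u$ to $v$ of length at most
\begin{equation*}
\ell + (t-2)k \le \ell + \frac{(t-2)(\ell+r)}{r+1} = \frac{(r+t-1)\ell + r(t-2)}{r+1} \le \frac{t+r-1}{r+1}\cdot\diam(G) + \frac{r(t-2)}{r+1},
\end{equation*}
which is the desired inequality.

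The main technical step is establishing the separation $d_G(C_1,C_2) \ge r$, which requires carefully identifying the right witness $S$ and checking the edge count; once this is in hand the diameter bound reduces to the separation-based pigeonhole estimate $k \le (\ell+r)/(r+1)$ and the detour construction, both of which are elementary.
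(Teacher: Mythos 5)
Your proof is correct and takes a genuinely more direct route than the paper's. The paper first constructs an auxiliary graph $H=(U,F)$ by restricting to the $\bigl(\lfloor(t-1)/2\rfloor+\lfloor(r-1)/2\rfloor\bigr)$-neighborhood of the removed edges, bounds $|U|\le d(d-1)^{(t+r)/2-1}|\C_t(G)|$ so that the local sparsity hypothesis makes $H$ globally $(1+\eta)$-sparse, then applies Lemma~\ref{lem:cycles are far} to the connected components of $H$ and transfers the separation bound back to $G$ by checking that a short $G$-path between two cycles would actually lie inside $F$. You instead apply the sparsity hypothesis directly to the two small witness sets that appear inside the proof of Lemma~\ref{lem:cycles are far} itself, namely $C_1\cup C_2$ (for disjointness, via Lemma~\ref{lem:union of cycles}) and $C_1\cup C_2\cup\{w_1,\ldots,w_{k-1}\}$ (for the distance bound). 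This is legitimate precisely because those witness sets have size less than $r+2t-3=1/\eta$, which is comfortably below the allowance $d(d-1)^{(t+r)/2-1}|\C_t(G)|$ whenever $|\C_t(G)|\ge 2$ (the only case in which there is anything to prove), as one checks that $2t+r-4\le 3\cdot 2^{(t+r)/2}\le d(d-1)^{(t+r)/2-1}|\C_t(G)|$ for all $t,r,d\ge 3$. What your route buys is that it avoids the auxiliary graph $H$ and the verification that $G$-shortest paths stay inside $F$; what the paper's route buys is modularity, since it reuses Lemma~\ref{lem:cycles are far} as a black box rather than re-running its internal witness argument under the local hypothesis. The diameter estimate via the pigeonhole bound $k\le(\ell+r)/(r+1)$ and the cycle detours is identical in both arguments, and you correctly note that the finite diameter bound subsumes connectivity of $L_G^t$.
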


\begin{proof}
Define a set of edges $F\subseteq E_G$ by
\begin{equation}\label{eq:def F cycle deletion}
F\eqdef \bigcup_{e\in I_G^t}\left\{f\in E_G:\ d_G(e,f)\le \left\lfloor \frac{t-1}{2}\right\rfloor+
\left\lfloor \frac{r-1}{2}\right\rfloor\right\}.
\end{equation}
Also, let $U\eqdef \bigcup_{f\in F} f\subset V_G$ be the set of
vertices that belong to an edge in $F$. Let $H$ be the graph
$(U,F)$. For every $C\in \C_t(G)$ choose an arbitrary vertex $v_C\in
e_C$. By the definition of $F$, for every $u\in U$  there exists
$C\in \C_t(G)$ such that
$$d_G(u,v_C)\le 1+\left\lfloor\frac{t-1}{2}\right\rfloor+\left\lfloor \frac{r-1}{2}\right\rfloor\le
\frac{t+r}{2}.$$ Consequently,
$$
|U|\le \sum_{C\in \C_t(G)}\left|B_G\left(v_C,\frac{t+r}{2}\right)\right|\le d(d-1)^{\frac{t+r}{2}-1}|\C_t(G)|.
$$
By our assumption~\eqref{eq:sparsity assumption edge deletion},  the
graph $H$ is $(1+\eta)$-sparse. An application of
Lemma~\ref{lem:cycles are far} to the connected components of $H$
shows that if $C_1$ and $C_2$ are distinct cycles of $H$ of length
less than $t$ then either they are contained in different connected
components of $H$ or
\begin{equation}\label{eq:cycles in H are far}
d_H(C_1,C_2)\ge 1+\frac{1}{\eta}-2(t-1)\stackrel{\eqref{eq:choose eta}}{=} r.
\end{equation}

It follows from~\eqref{eq:cycles in H are far} that any distinct
cycles $C_1,C_2\in \C_t(G)$ also satisfy $d_G(C_1,C_2)\ge r$.
Indeed, observe first that for every $C\in \C_t(G)$ we have
$C\subseteq U$. This is true because the diameter (in the metric
$d_G$) of $C$ is at most $\lfloor(|C|-1)/2\rfloor \le \lfloor
(t-1)/2\rfloor$, and therefore by~\eqref{eq:def F cycle deletion} we
have $E_G(C)\subseteq F$. This also shows that $\C_t(G)=
\C_t(H)$. Now suppose for the sake of obtaining a contradiction that
$C_1,C_2\in \C_t(G)$ and $0<d_G(C_1,C_2)\le r-1$. Then there exist
$u_1,\ldots,u_r\in V_G$ such that $u_1\in C_1$, $u_r\in C_2$ and
$\{u_{i-1},u_i\}\in E_G$ for every $i\in \{2,\ldots,r\}$. We have
$d_G(u_1,e_{C_1})\le \lfloor(t-1)/2\rfloor$ and $d_G(u_r,e_{C_2})\le
\lfloor(t-1)/2\rfloor$. This implies that $d_G(u_i,C_1\cup C_2)\le
\lfloor(t-1)/2\rfloor+\lfloor(r-1)/2\rfloor$ for every $i\in
\{1,\ldots,r\}$, and therefore by~\eqref{eq:def F cycle deletion} we
have $\{u_{i-1},u_i\}\in F$ for every $i\in \{2,\ldots,r\}$,
implying that $d_H(C_1,C_2)\le r-1$, in contradiction
to~\eqref{eq:cycles in H are far}.

To prove~\eqref{eq:L_G diameter}, take $u,v\in V_G$ and write
$d_G(u,v)=m\le \diam(G)$. Let $\{w_0=w,w_1,\ldots,w_m=v\}\subseteq
V_G$ be distinct vertices such that $\{w_{i-1},w_i\}\in E_G$ for
every $i\in \{1,\ldots,m\}$. We proceed to examine the edges on this
path that are not in $E_{L_G^t}$. Thus, suppose that
$j_1,\ldots,j_k\in \{0,\ldots,m-1\}$ are such that for every
$\ell\in \{1,\ldots,k\}$ we have
$\{w_{j_\ell},w_{j_{\ell}+1}\}=e_{C_\ell}$ for some $C_\ell\in
\C_t(G)$, and $\{w_{i-1},w_i\}\neq e_C$ for every $i\in
\{1,\ldots,m\}\setminus \{j_1,\ldots,j_k\}$ and every $C\in
\C_t(G)$. Since we proved that distinct cycles in $\C_t(G)$ are at
distance at least $r$ (in the metric $d_G$), it follows that
$j_{\ell+1}-j_\ell\ge r+1$ for every $\ell\in \{1,\ldots,k-1\}$.
Hence $m\ge (k-1)(r+1)+1$, or equivalently $k\le (m+r)/(r+1)$. Now,
for every $\ell\in \{1,\ldots,k\}$ replace each edge
$\{w_{j_\ell},w_{j_{\ell}+1}\}$ by the path $C_\ell\setminus
\{e_{C_\ell}\}$, whose length is at most $t-2$. By the definition of
$L_G^t$, we thus obtain a new path joining $u$ and $v$, all of whose
edges are edges of $L_G^t$, and its length is at most $m+k(t-2)$. By
substituting the above upper bound on $k$ we conclude that
\begin{equation*}
d_H(u,v)\le m+\frac{(m+r)(t-2)}{r+1}\le \frac{t+r-1}{r+1}\cdot \diam(G)+\frac{r(t-2)}{r+1}. \qedhere
\end{equation*}
\end{proof}

\begin{lemma}\label{lem:expansion deleted}
Fix three integers $M,t,d\ge 3$ and suppose that $G$ is a connected
$d$-regular simple graph such that $d_G(C_1,C_2)>8M$ for every
distinct $C_1,C_2\in \C_t(G)$. Suppose also that
\begin{equation}\label{eq:expansion M}
\min_{\substack{S\subseteq G\\0<|S|\le n/2}}\frac{E_G(S,V_G\setminus S)}{|S|}\ge \frac{1}{M}.
\end{equation}
Then
$$
\min_{\substack{S\subseteq G\\0<|S|\le n/2}}\frac{E_{L_G^t}(S,V_G\setminus S)}{|S|}\ge \frac{1}{4M}.
$$
\end{lemma}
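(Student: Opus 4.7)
The plan is to show that not too many edges of the cut $E_G(S,V_G\setminus S)$ get deleted when we pass from $G$ to $L_G^t$. Since $L_G^t=(V_G,E_G\setminus I_G^t)$ has the same vertex set as $G$, we have the clean identity
\[
E_{L_G^t}(S,V_G\setminus S)=E_G(S,V_G\setminus S)\setminus I_G^t.
\]
So the whole proof reduces to bounding how many boundary edges of $S$ in $G$ lie in $I_G^t$. Write $B=E_G(S,V_G\setminus S)$; the hypothesis gives $|B|\ge |S|/M$, and the goal is to show $|B\setminus I_G^t|\ge |S|/(4M)$.

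The key structural observation is that the distance hypothesis $d_G(C_1,C_2)>8M$ for distinct $C_1,C_2\in\C_t(G)$ in particular forces $C_1\cap C_2=\emptyset$ (since $8M\ge 1$), hence the cycles in $\C_t(G)$ are pairwise vertex-disjoint and therefore their edge sets $\{E_G(C)\}_{C\in\C_t(G)}$ are pairwise disjoint subsets of $E_G$. This is precisely what allows one to control $|B\cap I_G^t|$ cycle-by-cycle without double counting.

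Next comes the parity argument. If $e_C\in B\cap I_G^t$, then in particular $C$ has at least one edge crossing the cut $(S,V_G\setminus S)$. Because $C$ is a cycle, the total number of its edges crossing any cut is even, hence at least $2$. Thus for every cycle $C$ with $e_C\in B$ we have $|E_G(C)\cap B|\ge 2$. Combining this with the disjointness of the edge sets $E_G(C)$ yields
\[
2\,|B\cap I_G^t|
\;\le\; \sum_{C\in\C_t(G):\, e_C\in B}|E_G(C)\cap B|
\;\le\; \sum_{C\in\C_t(G)}|E_G(C)\cap B|
\;\le\; |B|,
\]
so $|B\cap I_G^t|\le |B|/2$.

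Putting everything together, $|B\setminus I_G^t|\ge |B|/2\ge |S|/(2M)\ge |S|/(4M)$, which is the desired expansion bound. There is no serious obstacle here; the only point that requires a little attention is the double-counting step, which works cleanly only because the small cycles are vertex-disjoint (a consequence of the distance hypothesis, and the main reason that hypothesis is invoked at all). In fact the argument gives the stronger bound $1/(2M)$; the statement allows the weaker $1/(4M)$, presumably to leave room for future flexibility.
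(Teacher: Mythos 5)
Your proof is correct, and it takes a genuinely different and more elementary route than the paper's. The paper's proof splits into two cases based on whether $|S|\ge 2M|T|$ (where $T$ is a set of chosen representatives, one per short cycle meeting $S$), and in the harder case it uses the full strength of the hypothesis $d_G(C_1,C_2)>8M$ to obtain disjoint balls $B_G(x,4M)$ around the representatives, then traces a maximal path from each ``good'' representative $x\in W$ inside $S$ using only $L_G^t$-edges, relying on $d\ge 3$ to find a distinct boundary edge for each such $x$. Your argument replaces all of this with a single parity/double-counting observation: vertex-disjointness of the cycles in $\C_t(G)$ (which follows from the distance hypothesis being merely positive) makes the edge sets $E_G(C)$ pairwise disjoint, and if the chosen edge $e_C$ crosses the cut $(S,V_G\setminus S)$ then $C$'s vertex set straddles the cut, so an even number $\ge 2$ of $C$'s cycle edges cross it, giving $|E_G(C)\cap B|\ge 2$; summing over cycles yields $|B\cap I_G^t|\le |B|/2$. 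This handles the chord case as well (one should note, as you implicitly do, that $e_C\in E_G(C)$ need not be a cycle edge of $C$, but the conclusion $|E_G(C)\cap B|\ge 2$ holds regardless). Your approach is shorter, does not use the regularity or the degree bound $d\ge 3$, uses the distance hypothesis only through disjointness, and yields the sharper constant $1/(2M)$ rather than $1/(4M)$.
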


\begin{proof}
Fix $S\subset V_G$ with $0<|S|\le |V_G|/2$. Write
$$
\mathscr{C}\eqdef \{C\in
\C_t(G):\ e_C\cap S\neq \emptyset\},
$$
 and for every $C\in \mathscr{C}$ choose
an arbitrary endpoint $u_C\in e_C\cap S$. Define
$$T\eqdef \{u_C\}_{C\in
\mathscr{C}}\subset V_G.
$$

If $|S|\ge 2M|T|$ then
$$
\frac{E_{L_G^t}(S,V_G\setminus S)}{|S|}\ge \frac{E_G(S,V_G\setminus S)-|T|}{|S|}\stackrel{\eqref{eq:expansion M}}{\ge} \frac{1}{2M}.
$$
So, we may assume from now on that $|S|<2M|T|$. Because every
distinct $x,y\in T$ satisfy $d_G(x,y)>8M$, the balls
$\{B_G(x,4M)\}_{x\in T}$ are disjoint. This implies that if we
define
$$
W\eqdef \left\{x\in T:\ \left|B_G\left(x,4M\right)\cap S\right|<4M\right\},
$$
then $|W|> |T|/2$, since otherwise $|S|\ge 4M|T\setminus W| \ge
2M|T|$.

For every $x\in W$ choose a maximal $k(x)\in \N\cup\{0\}$ for which
there exists $w_1(x),\ldots,w_k(x)\in S$ such that
$$
\{x,w_1(x)\},\{w_1(x),w_2(x)\},\ldots,\{w_{k-1}(x),w_k(x)\}\in
E_{L_G^t}=E_G\setminus I_G^t.
$$
In other words, we are considering a maximal path starting from $x$
in the graph $(S,E_{L_G^t}(S))$. Since
$\left|B_G\left(x,4M\right)\cap S\right|<4M$ it follows that
$k(x)\le 4M-1$. Since $w_{k(x)}(x)$ belongs to at most one edge in
$I_G^t$, and $w_{k(x)}(x)$ has $d\ge 3$ neighbors in $G$, it follows
that there exists $z(x)\in V_G\setminus \{w_{k(x)-1}(x)\}$ such that
$\{w_{k(x)},z(x)\}\in E_{L_G^t}$. By the maximality of $k(x)$ we
necessarily have $\{w_{k(x)},z(x)\}\in E_{L_G^t}(S,V_G\setminus S)$.
Moreover, for distinct $x,y\in W$ we have $\{w_{k(x)},z(x)\}\neq
\{w_{k(y)},z(y)\}$, since otherwise, because
$d_G(x,w_{k(x)}(x)),d_G(y,w_{k(y)}(x))\le 4M-1$ and
$d_G(x,z(x)),d_G(y,z(y))\le 4M$, it would follow that $d_G(x,y)\le
8M$.

We have proved that $E_{L_G^t}(S,V_G\setminus S)\ge
|W|>|T|/2>|S|/(4M)$, completing the proof of
Lemma~\ref{lem:expansion deleted}.
\end{proof}

\begin{proof}[Proof of Lemma~\ref{lem:random graph in
good family}] Fix a parameter $M\in (1,\infty)$ that will be
determined later. Let $\mathcal{E}^1_M$ denote the family of all
connected $n$-vertex $d$-regular simple graphs $G$ that satisfy the
following conditions
\begin{equation}\label{eq:diam M}
\diam(G)\le M\log_{d}n,
\end{equation}
and
\begin{equation*}
\min_{\substack{S\subseteq G\\0<|S|\le n/2}}\frac{E_G(S,V_G\setminus S)}{|S|}\ge \frac{1}{M}.
\end{equation*}
By the proofs in~\cite{BV} and~\cite{Bol88} we can fix $M$ to be a sufficiently large universal constant such that
\begin{equation}\label{eq:first event}
1-\mathcal{G}_{n,d}\left(\mathcal{E}^1_M\right)\lesssim_d \frac{1}{n}.
\end{equation}
(The papers~\cite{BV,Bol88} contain much more precise information on
the diameter and expansion of random regular graphs, respectively.)

By adjusting the value of the constant $C(d)$ in
Lemma~\ref{lem:random graph in good family}, it suffices to prove
the required statement under the assumption $n\ge d^{2000M}$. Define
$\d\in (0,1)$ and an integer $t\ge 3$ by
\begin{equation}\label{eq:choose t delta}
\d\eqdef \frac{21}{\log_d n},\qquad\mathrm{and}\qquad t\eqdef \left\lfloor \frac{\log_{d} n}{63}\right\rfloor.
\end{equation}
Also, set from now on $\e=1/3$. Let $\mathcal{E}^2$ denote the
family of graphs $\mathcal{S}_{\e,\d}$ (recall Definition~\ref{def:S
class}) and let $\mathcal{E}^3$ denote the family of all connected
$n$-vertex $d$-regular simple graphs $G$ that satisfy $
|\C_{t}(G)|\le \sqrt{n}$. By Lemma~\ref{lem:random graphs are
sparse} and Lemma~\ref{lem:number of short cycles} we have
\begin{equation}\label{eq:second and third events}
1-\mathcal{G}_{n,d}\left(\mathcal{E}^2\right)\lesssim_d \frac{1}{{n^{2/3}}}\qquad\mathrm{and}\qquad
1-\mathcal{G}_{n,d}\left(\mathcal{E}^3\right)\lesssim_d \frac{1}{\sqrt[3]{n}}
\end{equation}

By virtue of~\eqref{eq:first event} and~\eqref{eq:second and third
events}, if we denote $\mathscr{L}\eqdef
\mathcal{E}^1_M\cap\mathcal{E}^2\cap \mathscr{E}^3$ then
$$
1-\mathcal{G}_{n,d}\left(\mathscr{L}\right)\lesssim_d \frac{1}{\sqrt[3]{n}}.
$$
We will now show that for a large enough universal constant $K\in
(0,\infty)$ the graph family $\mathscr{L}$ can serve as the graph
family $\mathscr{L}_K^{n,d}$ of Lemma~\ref{lem:random graph in good
family}.

Take $G\in \mathscr{L}$ and write $I=I_G^t$ and $L=L_G^t$. By the
definition of $\mathcal{E}^3$ we have $|I|\le \sqrt{n}$, as
required. By the definition of $\mathcal{S}_{\e,\d}$, for every
$S\subseteq V_G$ with $|S|\le n^{2/3}$ we have $|E_G(S)|\le
(1+\d)|S|$. Since $n\ge d^{200}$, by the
definitions~\eqref{eq:choose t delta} we have $n^{2/3}\ge
d(d-1)^{t-1}\sqrt{n}$ and $\d\le 1/(3t-3)$. Since $|\C_t(G)|\le
\sqrt{n}$, this implies that assumption~\eqref{eq:sparsity
assumption edge deletion} of Lemma~\ref{lem:edge deletion distant
cycles} is satisfied, with $r=t$. It therefore follows from
Lemma~\ref{lem:edge deletion distant cycles} that $d_G(C_1,C_2)\ge
t$ for distinct $C_1,C_2\in \C_t(G)$ and
\begin{equation}\label{eq:diam L M}
\diam(L)\stackrel{\eqref{eq:L_G diameter}}{\le} \frac{2t-1}{t+1}\diam(G)+\frac{t(t-2)}{t+1}
\stackrel{\eqref{eq:diam M}\wedge\eqref{eq:choose t delta}}{\le} 3M\log_d n.
\end{equation}
This shows that assertion (b) of Definition~\ref{def:L class} holds
true provided $K\ge 3M$. Since by~\eqref{eq:choose t delta} and the
assumption $n\ge d^{2000M}$ we have $t>8M$, we may also use
Lemma~\ref{lem:expansion deleted} to deduce that
$$
\min_{\substack{S\subseteq G\\0<|S|\le n/2}}\frac{E_{L}(S,V_G\setminus S)}{|S|}\ge \frac{1}{4M}.
$$
Hence, assertion (c) of Definition~\ref{def:L class} holds true
provided $K\ge 4M$.

By the definition of $L$ we have
\begin{equation}\label{eq:grith L is at least t}
\girth(L)\ge t \stackrel{\eqref{eq:choose t delta}\wedge \eqref{eq:diam L M}}{\ge} \frac{\diam(G)}{378M}.
\end{equation}
So, assertion (1) of Definition~\ref{def:good family} holds true
provided $K\ge 378M$.

It remains to prove that if $K$ is large enough then the graph $L$
satisfies assertion (2) of Definition~\ref{def:good family}. Suppose
that $S_0\subseteq \Sigma(V_G)$ satisfies $|S_0|\le \sqrt{n}$. Let
$S\subseteq V_G$ denote the union of $S_0\cap V_G$ with the
endpoints of all the edges in $E_L$ whose corresponding unit
interval in the one-dimensional simplicial complex $\Sigma(L)$
contains a point from $S_0\setminus V_G$. Thus $|S|\le 2\sqrt{n}$.
Apply Lemma~\ref{lem:add geodesics} with $S=T$ and $r=\lfloor
(\log_d n)/36\rfloor$. We obtain $U\subseteq V_G$ with $U\supseteq
S$ such that if we let $H$ denote the graph $(U,E_L(U))$ then
\begin{equation}\label{eq: the diameter of H}
\diam(H)\le 6r+2\diam(L)\stackrel{\eqref{eq:diam L M}}{\le} 4M\log_d n,
\end{equation}
and for every $a,b\in S$
\begin{multline}\label{eq:undistorted subgraph-L}
d_L(a,b)\le d_H(a,b)\le 2\left(\frac{\diam(L)}{r}+1\right)
d_L(a,b)\\ \stackrel{\eqref{eq:diam L M}}{\le}2\left(\frac{3M\log_d
n}{\lfloor (\log_d n)/36\rfloor}+1\right)d_L(a,b) \lesssim M
d_L(a,b).
\end{multline}
Since $U\supseteq S$, and $S$ contains $S_0\cap V_G$ and the
endpoints of all the edges of $\Sigma(L)$ that contain points in
$S_0\setminus V_G$, it follows from~\eqref{eq:undistorted
subgraph-L} that
\begin{equation}\label{eq:on S_0}
\forall\, x,y\in S_0,\qquad d_{\Sigma(L)}(x,y)\le d_{\Sigma(H)}(x,y) \lesssim M d_{\Sigma(L)}(x,y).
\end{equation}

Also, by~\eqref{eq:U size}, and using the fact that $n\ge
d^{2000M}$, we have
$$
|U|\le 2\sqrt{n}\left(n^{1/12}+\diam(L)\right)\stackrel{\eqref{eq:diam L M}}{\le}
2n^{7/12}+2M\sqrt{n}\cdot\log_d n\le n^{2/3}.
$$
Since $G$ belongs to $\mathcal{S}_{\e,\d}$, it follows that $H$ is
$(1+\d)$-sparse. By Corollary~\ref{cor:no girth into L_1} we
conclude that
$$
c_1\left(\Sigma(H)\right)\lesssim 1+\d \diam(H) \stackrel{\eqref{eq:choose t delta}\wedge \eqref{eq: the diameter of H}}{\le} 1+84M.
$$
Due to~\eqref{eq:on S_0} we therefore have
$$
c_1\left(S_0,\frac{2\pi}{\girth(L)}\cdot d_{\Sigma(L)}\right)\lesssim M,
$$
and by Corollary~\ref{cor:not harder to embed cones},
$$
c_1\left(\cone\left(S_0,\frac{2\pi}{\girth(L)}\cdot d_{\Sigma(L)}\right)\right)\lesssim M.
$$
This concludes the proof of assertion (2) of
Definition~\ref{def:good family} provided $K$ is a sufficiently
large multiple of (the universal constant) $M$.
\end{proof}

\subsection{Proof of Proposition~\ref{prop:structure cone
random}}\label{sec:proof of structure cone prop} We shall continue
using here the notation and assumptions that were used in the proof
of Lemma~\ref{lem:random graph in good family}. Suppose that $G\in
\mathscr{L}$. We have already seen that if $G$ is distributed
according to $\mathcal{G}_{n,d}$ then this happens with probability
at least $1-a(d)/\sqrt[3]{n}$ for some $a(d)\in (0,\infty)$.

Recalling the definition of $t$ in~\eqref{eq:choose t delta}, define
$A_1,A_2\subset \Sigma(G)$ as follows.
$$
A_1\eqdef \bigcup_{e\in I_G^t} \left\{x\in \Sigma(G):\ d_{\Sigma(G)}(x,e)\le t\right\},
$$
and
$$
A_2\eqdef \Sigma(G)\setminus \bigcup_{e\in I_G^t} \left\{x\in \Sigma(G):\ d_{\Sigma(G)}(x,e)\le \frac{t}{2}\right\}.
$$
Then $A_1\cup A_2=\Sigma(G)$ and
\begin{equation}\label{eq:gap A1 A2}
d_{\Sigma(G)}\left(A_1\setminus A_2,A_2\setminus A_1\right)\ge \frac{t}{2}\stackrel{\eqref{eq:choose t delta}}{\gtrsim} \log_d n.
\end{equation}

As in the proof of Lemma~\ref{lem:random graph in good family},
denote  $I\eqdef I_{G}^t$ and $L\eqdef L_G^t$. We also let
$\Phi\subseteq \Sigma(G)$ be the union of all the unit intervals
corresponding to the edges in $I$. Thus $A_1$ is the
$t$-neighborhood of $\Phi$ in $\Sigma(G)$ and $A_2$ is the
complements of the $(t/2)$-neighborhood of $\Phi$ in $\Sigma(G)$.

We claim that
\begin{equation}\label{eq:on A_2}
\forall\, x,y\in A_2,\qquad d_{\Sigma(G)}(x,y)\le d_{\Sigma(L)}(x,y)\le 3d_{\Sigma(G)}(x,y).
\end{equation}
Since $E_L\subset E_G$, only the rightmost inequality
in~\eqref{eq:on A_2} requires proof. Fix $x,y\in A_2$ and let
$P_{x,y}:[0,d_{\Sigma(G)}(x,y)]\to \Sigma(G)$ be a geodesic joining
$x$ and $y$ in $\Sigma(G)$. If $P_{x,y}\subset \Sigma(L)$ then
$d_{\Sigma(G)}(x,y)= d_{\Sigma(L)}(x,y)$. The situation is therefore
only interesting when  $P_{x,y}\cap \Phi\neq \emptyset$. In this
case, since the distances of $x$ and $y$ from $\Phi$ are at least
$t/2$, the length of $P_{x,y}$ must be at least $t$. Let
$J_1,\ldots,J_k\subseteq [0,d_{\Sigma(G)}(x,y)]$ be a  maximal
collection of intervals such that $P_{x,y}(J_i)\in I$ for every
$i\in \{1,\ldots,k\}$. We have shown in the proof of
Lemma~\ref{lem:random graph in good family} that (using
Lemma~\ref{lem:edge deletion distant cycles}) we have
$d_{\Sigma(G)}(P_{x,y}(J_i), P_{x,y}(J_j))\ge t$ for every distinct
$i,j\in \{1,\ldots,k\}$. This means that the length of $P_{x,y}$ is
at least $(k-1)t$, or equivalently that $k\le
1+d_{\Sigma(G)}(x,y)/t$. Each edge $P_{x,y}(J_i)$ lies on a cycle
$C_i\in \C_t(G)$, and all the other edges of $C_i$ are in $E_L$.
Therefore, if we replace each edge $P_{x,y}(J_i)$ by the path
$C_i\setminus P_{x,y}(J_i)$ (whose length is at most $t-1$), we will
obtain a path joining $x$ and $y$ in $\Sigma(H)$ of length at most
\begin{multline*}
d_{\Sigma(G)}(x,y)+k(t-1)\le
d_{\Sigma(G)}(x,y)+\left(1+\frac{d_{\Sigma(G)}(x,y)}{t}\right)(t-1)\\\le
2d_{\Sigma(X)}(x,y)+t\le 3d_{\Sigma(X)}(x,y),
\end{multline*}
where we used the fact that $d_{\Sigma(G)}(x,y)\ge t$. This
completes the verification of~\eqref{eq:on A_2}.

The scaling factor $\sigma$ of Proposition~\ref{prop:structure cone
random} will be chosen to be
\begin{equation}\label{eq:choose sigma}
\sigma\eqdef \frac{2\pi}{\girth(L)}\stackrel{\eqref{eq:diam L M}\wedge \eqref{eq:grith L is at least t}}{\asymp} \frac{1}{\log_d n}.
\end{equation}
with this choice, due to~\eqref{eq:gap A1 A2} assertion $(II)$ of
Proposition~\ref{prop:structure cone random} holds true. We have
already proved in Lemma~\ref{lem:random graph in good family} that
$L\in \F_K$, and therefore $\cone(\Sigma(L),\sigma d_{\Sigma(L)})$
is isometric to a subset of $\X_K$. By~\eqref{eq:on A_2} and
Fact~\ref{fact:easy} we therefore have
$$
c_{\X_K}\left(\cone(A_2,\sigma
d_{\Sigma(G)})\right)\le c_{\cone(\Sigma(L),\sigma
d_{\Sigma(L)})}\left(\cone(A_2,\sigma
d_{\Sigma(G)})\right)\le 3,
$$
thus proving assertion $(IV)$ of Proposition~\ref{prop:structure
cone random}.

To prove assertion $(V)$ of Proposition~\ref{prop:structure cone
random} we define an embedding $f:\Sigma(G)\to
\cone\left(\Sigma(G)\right)$ by $f(x)=(1/\sqrt{2}, x)$. For every
$x,y\in \Sigma(G)$,
\begin{multline*}
d_{\cone\left(\Sigma(G),\sigma
d_{\Sigma(G)}\right)}(f(x),f(y))\stackrel{\eqref{eq:def cone}}{=}
\sqrt{1-\cos\left(\min\{\pi,\sigma d_{\Sigma(G)}(x,y)\}\right)}\\
\stackrel{\eqref{eq:cosine inequalities}\wedge \eqref{eq:choose
sigma}}{\asymp}\min\left\{\pi,\frac{d_{\Sigma(G)}(x,y)}{\log_d
n}\right\}\stackrel{\eqref{eq:diam M}}{\asymp} d_{\Sigma(G)}(x,y).
\end{multline*}

All that remains is to prove assertion $(III)$ of
Proposition~\ref{prop:structure cone random}. Define $S=A_1\cap V_G$
and $T=\Phi\cap V_G=\bigcup_{e\in I} e$. Thus $|T|\le 2|I|\le
2\sqrt{n}$. By the definition of $A_1$, condition~\eqref{eq:S in T
neighborhood} of Lemma~\ref{lem:add geodesics} holds true with
$r=t$. Consequently, by Lemma~\ref{lem:add geodesics} there exists
$U\subset V_G$ with $U\supseteq S$ such that if we let $H$ denote
the graph $(U,E_G(U))$ then  for every $a,b\in S$,

\begin{equation*}
d_G(a,b)\le d_H(a,b)\le 2\left(\frac{\diam(G)}{t}+1\right) d_G(a,b)\stackrel{\eqref{eq:diam M}
\wedge\eqref{eq:choose t delta}}{\lesssim} d_G(a,b).
\end{equation*}
Since $U\supseteq S$ and $S=A_1\cap V_G$, it follows that
\begin{equation}\label{eq:equivalence GH}
\forall\, x,y\in A_2,\qquad d_{\Sigma(G)}(x,y)\asymp d_{\Sigma(H)}(x,y).
\end{equation}
Moreover, using the assumption $n\ge d^{2000M}$, it follows
from~\eqref{eq:U size} that
$$
|U|\le |T|\left(d^{3t}+\diam(G)\right)\stackrel{\eqref{eq:diam M}
\wedge\eqref{eq:choose t delta}}{\le} 2\sqrt{n}\left(n^{1/21}+M\log_d n\right)\le n^{2/3}.
$$
recalling that $G$ belongs to $\mathcal{S}_{\e,\d}$, we conclude
that $H$ is $(1+\d)$-sparse, and therefore by Corollary~\ref{cor:no
girth into L_1},
$$
c_1\left(A_2,d_{\Sigma(G)}\right)\stackrel{\eqref{eq:equivalence GH}}{\lesssim} c_1\left(\Sigma(H),d_{\Sigma(H)}\right)\lesssim 1+\d \diam(H) \stackrel{\eqref{eq:choose t delta}
\wedge \eqref{eq: the diameter of H}}{\lesssim} 1.
$$
Now assertion $(III)$ of Proposition~\ref{prop:structure cone
random} follows by Corollary~\ref{cor:not harder to embed cones}.
\qed

\section{Are two stochastically independent random graphs expanders with respect to each other?}\label{sec:KF}

Here we present partial progress towards (a positive solution of)
Question~\ref{Q:kleinberg}. Proposition~\ref{thm:uri} below is based
on ideas of U. Feige.

\begin{proposition}\label{thm:uri}
For $n\in \N$ even let $G,H$ be two i.i.d. random graphs sampled from
$\mathcal{G}_{n,3}$. Then with probability that tends to $1$ as
$n\to \infty$, for every permutation $\pi \in S_n$ we have
\begin{equation}\label{eq:permutation poincare}
\frac{1}{n^2}\sum_{i=1}^n\sum_{j=1}^n d_H(\pi(i),\pi(j))^2\lesssim
\frac{1}{n}\sum_{\{i,j\}\in E_G}d_H(\pi(i),\pi(j))^2.
\end{equation}
\end{proposition}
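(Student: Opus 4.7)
The plan is to reduce the inequality~\eqref{eq:permutation poincare} to the claim that for every $\pi\in S_n$, a constant fraction of the edges of $G$ are mapped by $\pi$ to pairs of vertices that are $H$-far apart. First I would observe that since the left-hand side of~\eqref{eq:permutation poincare} equals $\frac{1}{n^2}\sum_{u,v\in V_H}d_H(u,v)^2$ (and in particular does not depend on $\pi$), and since random $3$-regular graphs are known to have diameter $O(\log n)$ with high probability, the left-hand side is almost surely $O((\log n)^2)$. It therefore suffices to prove that, with high probability over the joint distribution of $G$ and $H$, the bound $\sum_{\{i,j\}\in E_G}d_H(\pi(i),\pi(j))^2\gtrsim n(\log n)^2$ holds simultaneously for every $\pi\in S_n$.

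Fix a small constant $c>0$ to be chosen later, set $r\eqdef \lfloor c\log_2 n\rfloor$, and let $B_r\subseteq \binom{V_H}{2}$ be the set of unordered pairs $\{u,v\}$ with $d_H(u,v)\le r$. The deterministic ball-growth bound $|\{w\in V_H:d_H(w,v)\le r\}|\le 3\cdot 2^r$, valid for any $3$-regular $H$, yields $|B_r|\le (3/2) n^{1+c}$. I aim to show that with high probability over $G$ one has $|\pi(E_G)\cap B_r|\le (1-\delta)|E_G|$ for every $\pi\in S_n$ and some fixed $\delta>0$. Once this is in place, at least $\delta|E_G|\asymp n$ of the pairs $\{\pi(i),\pi(j)\}$ with $\{i,j\}\in E_G$ will satisfy $d_H>r$, contributing $\gtrsim n\cdot r^2\asymp n(\log n)^2$ to the right-hand side of~\eqref{eq:permutation poincare}, as required.

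For a single fixed $\pi$, the invariance of $\mathcal{G}_{n,3}$ under vertex permutations gives that $\pi(E_G)$ has the same distribution as $E_G$, and consequently the probability in question is at most $\sum_{k\ge K}\binom{|B_r|}{k}\sup_{|F|=k}\Pr_{\mathcal{P}_{n,3}}[F\subset E_G]$. The pairing-model argument used to prove Lemma~\ref{lem:edge prob upper} actually yields the refined bound $\Pr_{\mathcal{P}_{n,3}}[F\subset E_G]\le \prod_{\ell=1}^{|F|}\frac{9}{3n-2\ell+1}$, which remains useful up to $|F|\lesssim (1-\delta)(3n/2)$; the restriction $|F|<nd/4$ in Lemma~\ref{lem:edge prob upper} is imposed solely to obtain the clean closed-form upper bound $(2d/n)^{|F|}$. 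Setting $K=(1-\delta)(3n/2)$, a Stirling computation gives $\binom{|B_r|}{K}\prod_{\ell=1}^K\frac{9}{3n-2\ell+1}\le \exp\!\bigl(-(1-c)(1-\delta)(3n/2)\log n+O(n)\bigr)$. Choosing $c$ and $\delta$ so that $(1-c)(1-\delta)>2/3$ (any $c,\delta\le 1/20$ suffice) makes this exponent strictly smaller than $-n\log n$, so the crude union bound over all $n!\le e^{n\log n+O(n)}$ permutations still tends to zero; the contiguity estimate~\eqref{eq:continguous} then transfers the conclusion from $\mathcal{P}_{n,3}$ to $\mathcal{G}_{n,3}$.

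The main obstacle is precisely this union-bound step: the factor of $n!=e^{n\log n+O(n)}$ is delicate, and the form of Lemma~\ref{lem:edge prob upper} as stated does not supply a per-permutation bound strong enough to absorb it. Retaining the full product of conditional pairing-model probabilities (rather than truncating to obtain the clean closed form $(2d/n)^{|F|}$) is what makes the argument go through, and requires a careful choice of $c$ and $\delta$ together with a little bookkeeping using Stirling's formula. The remaining ingredients are standard: the high-probability upper bound on $\sum_{u,v}d_H(u,v)^2$ follows from concentration of the diameter of random regular graphs, the ball-growth bound is elementary for any $3$-regular graph, and the passage from $\mathcal{P}_{n,3}$ to $\mathcal{G}_{n,3}$ uses the same contiguity arguments as elsewhere in Section~\ref{sec:random}.
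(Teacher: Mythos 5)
Your proposal is correct and follows the same overall architecture as the paper's proof: bound the left-hand side by $O((\log n)^2)$ via the diameter of $H$, observe that permutation invariance reduces the claim to the identity permutation, pass to the pairing model via contiguity, and beat the $n!$ union bound by showing that for a fixed $\pi$ the event ``most $G$-edges map to $H$-close pairs'' has probability $o(1)/n!$. The difference is in how this last large-deviation estimate is obtained. The paper exposes the matching sequentially, bounds the conditional probability that the $i$th pair projects into $N_H$ by $p=12/n^{15/16}$ for $i\le 5n/4$, couples $(Y_i)$ to i.i.d.\ Bernoulli$(p)$ variables dominating them, and applies the Chernoff bound~\eqref{eq:chrnoff}. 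You instead take a first-moment/union-bound route: bound $\Pr\big[|E_G\cap B_r|\ge K\big]$ by $\binom{|B_r|}{K}\sup_{|F|=K}\Pr[F\subset E_G]$ and retain the full product $\prod_{\ell=1}^{K}\frac{9}{3n-2\ell+1}$ rather than the truncated $(2d/n)^{|F|}$ of Lemma~\ref{lem:edge prob upper}. This works, and the resulting exponent $-(1-c)(1-\delta)\tfrac32 n\log n$ comfortably dominates $n\log n$ for $c,\delta\le 1/20$. One point worth tightening: the justification ``the pairing-model argument used to prove Lemma~\ref{lem:edge prob upper} actually yields the refined bound'' is a bit casual, since that proof's conditioning on $f_1,\ldots,f_{\ell-1}\in G(M)$ is itself informal when $\ell$ is not small. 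The cleanest way to get your product bound is a direct union bound over the at most $9^{|F|}$ choices of half-edge realizations of $F$ together with the exact formula $\prod_{\ell}\frac{1}{3n-2\ell+1}$ for the probability of a fixed pairwise-disjoint set of matched pairs; this is rigorous for any $|F|\le (1-\delta)(3n/2)$ and gives exactly the bound you use. In short: correct, same skeleton, different large-deviation engine---a direct binomial count in place of coupling-plus-Chernoff. Both work; the Chernoff route spares you the Stirling bookkeeping, while yours is more elementary and yields a marginally stronger exponent.
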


A positive answer to Question~\ref{Q:kleinberg} would require
proving~\eqref{eq:permutation poincare} when $G,H$ are independent
random $3$-regular graphs of possibly different cardinalities, say
$G$ sampled from $\mathcal{G}_{n,3}$ and $H$ sampled from
$\mathcal{G}_{m,3}$, and with the permutation $\pi$ of
Proposition~\ref{thm:uri} replaced with a general mapping
$f:\{1,\ldots,n\}\to \{1,\ldots,m\}$. Note that for fixed $m\in \N$
we have $\gamma_+(G,d_H^2)\lesssim (\log m)^2$ asymptotically almost
surely. Indeed, $G$ is asymptotically almost surely an expander, and
by Bourgain's embedding theorem~\cite{Bourgain-embed}, the metric
space $(\{1,\ldots,m\},d_H)$ embeds into $\ell_2$ with bi-Lipschitz
distortion $O(\log m)$. When $m>n$ one can show that asymptotically
almost surely
\begin{equation}\label{eq:kleinberg problem for large m}
\gamma_+(G,d_H)^2\lesssim \max\left\{1,\left(\frac{\log n}{\log(m/n)}\right)^2\right\}.
\end{equation}
The validity of~\eqref{eq:kleinberg problem for large m} follows
from the fact that $G$ is asymptotically almost surely an expander,
combined with an application of Lemma~\ref{lem:random graphs are
sparse} and Corollary~\ref{cor:no girth into L_1} to the random
graph $H$.

Below we shall use the following version of Chernoff's bound (see
e.g~\cite[Thm.~A.1.12]{AS}). Suppose that $X_1,\ldots,X_n$ are
i.i.d. random variables taking values in $\{0,1\}$ and write
$\Pr[X_i=1]=p$. Then for every $\beta\in (1,\infty)$ we have
\begin{equation}\label{eq:chrnoff}
\Pr\left[\sum_{i=1}^n X_i>\beta p n\right]<\left(\frac{e^{\beta-1}}{\beta^\beta}\right)^{pn}.
\end{equation}

\begin{proof}[Proof of Proposition~\ref{thm:uri}]
By~\cite{BV} with probability that tends to $1$ as $n\to \infty$ we
have $\diam(H)\lesssim \log n$. The left hand side
of~\eqref{eq:permutation poincare} therefore asymptotically almost
surely satisfies
$$
\frac{1}{n^2}\sum_{i=1}^n\sum_{j=1}^n d_H(\pi(i),\pi(j))^2\lesssim (\log n)^2.
$$
Since there are $n!$ possible permutations $\pi\in S_n$, it suffices
to prove that there exists a universal constant $c\in (0,\infty)$
such that for every fixed permutation $\pi\in S_n$ we have
\begin{multline}\label{eq:perm version at least log}
\mathcal{G}_{n,3}\times \mathcal{G}_{n,3}\left(\left\{G,H\in \G_n:\
\frac{1}{n}\sum_{\{i,j\}\in E_G} d_H(\pi(i),\pi(j))^2\ge c(\log
n)^2\right\}\right)\\\ge 1-\frac{o(1)}{n!}.
\end{multline}

Because the distribution of the metric $d_H$ is invariant under
permutations, it suffices to prove~\eqref{eq:perm version at least
log} when $\pi$ is the identity permutation. To this end, given a
$3$-regular graph $H\in \G_n$, consider the following subset of the
unordered pairs of elements of $\{1,\ldots,n\}$.
$$
N_H\eqdef \left\{\{i,j\}\in \binom{\n}{2}:\ d_H(i,j)\le \frac{\log n}{16}\right\}.
$$
Since $H$ is $3$-regular we have $|N_H|\le\frac32 n^{17/16}$.

In order to prove~\eqref{eq:perm version at least log} (with
$c=1/16$ and $\pi$ the identity mapping) it suffice to prove that
for every fixed $3$-regular graph $H\in \G_n$ we have
\begin{equation}\label{eq:goal with N notation}
\mathcal{G}_{n,3}\left(\left\{G\in \G_n:\ |E_G\cap N_H|>\frac{4n}{3}\right\}\right)\le \frac{o(1)}{n!}.
\end{equation}
By~\eqref{eq:continguous} it suffices to prove~\eqref{eq:goal with N
notation} in the pairing model, i.e.,
\begin{equation}\label{eq:goal with N notation-pairing}
\mathcal{P}_{n,3}\left(\left\{G\in \G_n:\ |E_G\cap N_H|>\frac{4n}{3}\right\}\right)\le \frac{o(1)}{n!}.
\end{equation}

Assume from now on that $n$ is divisible by $4$. The proof for
general even $n$ follows mutatis mutandis from the argument below, the
only difference being that one needs to round certain numbers to
their nearest integers.

Recall that in the pairing model $\mathcal{P}_{n,3}$ we choose a
matching $M$ of $P=\n\times \{1,2,3\}$ uniformly at random, and
``project it" onto $\{1,\ldots,n\}$ so as to get a random
$3$-regular graph $G(M)$. It will be convenient to order the $\binom{3n}{2}$ pairs in $\binom{P}{2}$ arbitrarily. Once this is done, the uniformly random
matching $M$ can be obtained by choosing its edges sequentially,
where at each step we choose an additional edge uniformly at random
from the unused pairs.  Fix a $3$-regular graph $H\in \G_n$ and let
$Y_i^H$ be the $\{0,1\}$-valued random variable that takes the value
$1$ if and only if $i$th edge is in $N_H$. Note that $Y_i^H$ is
obtained by flipping a biased coin with success probability $p_i$
which is itself a random variable depending on the first $i-1$ edges
of $M$. However, for $i\le 5n/4$ we have
$$
p_i\le \frac{|N_H|}{\binom{3n-2(i-1)}{2}}\le \frac{2|N_H|}{(3n-2i)^2}\le \frac{8|N_H|}{n^2}\le \frac{12n^{17/16}}{n^2}=\frac{12}{n^{15/16}}\eqdef p.
$$

We proceed by a standard coupling argument. Having sampled
$Y_1^H,\ldots,Y_{5n/4}^H$, we sample a sequence
$X_1,\ldots,X_{5n/4}$ of $\{0,1\}$-valued random variables as
follows. If $Y_i^H=1$ then $X_i=1$, and if $Y_i^H=0$ then $X_i$ is
obtained by tossing an independent coin with success probability
$(p-p_i)/(1-p_i)$.
Then $\Pr[X_i=1]=p$, and because the coins that we used are
independent, $X_1,\ldots,X_{5n/4}$ are independent random variables.
Moreover, we have the point-wise inequality $X_i\ge Y_i^H$ for every
$i\le 5n/4$. Now,
\begin{multline*}
 \mathcal{P}_{n,3}
\left(\left\{G\in \G_n:\ |E_G\cap
N_H|>\frac{4n}{3}\right\}\right)=\Pr\left[\sum_{i=1}^{3n/2}
Y_i^H>\frac{4n}{3}\right]\\\le  \Pr\left[\sum_{i=1}^{5n/4}
Y_i^H>\frac{13n}{12}\right]\le \Pr\left[\sum_{i=1}^{5n/4}
X_i^H>\frac{13n}{12}\right].
\end{multline*}
Hence by~\eqref{eq:chrnoff} with $\beta=\frac{13n^{15/16}}{180}$, we
obtain the bound
\begin{equation*}
 \mathcal{P}_{n,3}
\left(\left\{G\in \G_n:\ |E_G\cap
N_H|>\frac{4n}{3}\right\}\right)\le
\left(\frac{180e}{13}\right)^{\frac{13n}{12}}
n^{-\frac{65}{64}n}=\frac{o(1)}{n!},
\end{equation*}
implying the desired estimate~\eqref{eq:goal with N
notation-pairing}.
\end{proof}

\medskip

\noindent{\bf Acknowledgements.}  We are grateful to Uriel Feige,
Konstantin Makarychev and Yuval Peres for helpful discussions, and to Jon Kleinberg
for formulating Question~\ref{Q:kleinberg}. We also thank Takefumi
Kondo for sending us a preliminary version of~\cite{Kondo}. M.~M.
was supported by ISF grants 221/07 and 93/11, BSF grant 2010021, and
NSF grants CCF-0832797 and DMS-0835373. Part of this work was
completed while M.~M. was visitor at Microsoft Research and a member of the Institute for Advanced
Study at Princeton. A.~N. was supported by NSF grant CCF-0832795,
BSF grant 2010021, the Packard Foundation and the Simons Foundation.
Part of this work was completed while A.~N. was a Visiting Fellow at
Princeton University.

\bibliographystyle{alphaabbrvprelim}
\bibliography{zigzag-expander}

 \end{document}